\apptocmd{\sloppy}{\hbadness 10000\relax}{}{}
\newcommand{\ZZ}{\mathbb{Z}}
\newcommand{\NN}{\mathbb{N}}
\newcommand{\QQ}{\mathbb{Q}}
\newcommand{\Stab}{\operatorname{Stab}}
\newcommand{\Hom}{\operatorname{Hom}}
\numberwithin{equation}{subsection}
\numberwithin{figure}{subsection}
\theoremstyle{plain}
\newtheorem{thm}[equation]{\protect\theoremname}
\theoremstyle{remark}
\newtheorem{rem}[equation]{\protect\remarkname}
\theoremstyle{definition}
\newtheorem{defn}[equation]{\protect\definitionname}
\theoremstyle{definition}
\newtheorem{example}[equation]{\protect\examplename}
\theoremstyle{definition}
\newtheorem{problem}[equation]{\protect\problemname}
\theoremstyle{plain}
\newtheorem{prop}[equation]{\protect\propositionname}
\theoremstyle{plain}
\newtheorem{lem}[equation]{\protect\lemmaname}
\theoremstyle{plain}
\newtheorem{cor}[equation]{\protect\corollaryname}
\theoremstyle{plain}
\newtheorem{conj}[equation]{\protect\conjecturename}
\theoremstyle{plain}
\newtheorem{algo}[equation]{\protect\algorithmname}
\providecommand{\algorithmname}{Algorithm}
\providecommand{\conjecturename}{Conjecture}
\providecommand{\corollaryname}{Corollary}
\providecommand{\definitionname}{Definition}
\providecommand{\examplename}{Example}
\providecommand{\lemmaname}{Lemma}
\providecommand{\problemname}{Problem}
\providecommand{\propositionname}{Proposition}
\providecommand{\remarkname}{Remark}
\providecommand{\theoremname}{Theorem}
\begin{document}
\title{computing classical modular forms for arbitrary congruence subgroups}
\author{eran assaf}
\begin{abstract}
In this paper, we prove the existence of an efficient algorithm for the computation of systems of Hecke eigenvalues of modular forms of weight $k$ and level $\Gamma$, where $\Gamma \subseteq SL_{2}({\mathbb{Z}})$ is an arbitrary congruence subgroup. We also discuss some practical aspects and provide the necessary theoretical background.
\end{abstract}

\maketitle

\tableofcontents

\section{Introduction}

\subsection{Motivation}

The absolute Galois group of the rational numbers $G_{\mathbb{Q}} = \text{Gal}(\bar{\mathbb{Q}}/\mathbb{Q})$ is an object of central importance in number theory. It is studied through its action on geometric objects, among which elliptic curves and their torsion points have been playing a dominant role. Let $E$ be an elliptic curve defined over $\mathbb{Q}$, and let $p$ be a prime. Then $G_{\mathbb{Q}}$ acts on its $p$-torsion points $E[p]$. Therefore, we obtain a representation $\bar{\rho}_{E,p} : G_{\mathbb{Q}} \rightarrow GL_2(\mathbb{F}_p)$. 

In \cite{serre1972proprietes}, Serre proved that the index $[GL_2(\mathbb{F}_p) : \bar{\rho}_{E,p} (G_{\mathbb{Q}})]$ is bounded by a constant $C_E$ depending only on $E$.  Serre went on to ask the following question, which remains open to this day, after half a century.

\begin{conj} [Serre's uniformity problem over $\mathbb{Q}$, \cite{serre1972proprietes}]
Is there a constant $C > 0$ such that for any prime $p > C$ and any elliptic curve $E$ over $\mathbb{Q}$ without complex multiplication, the mod $p$ Galois representation $\bar{\rho}_{E,p}:G_{\mathbb{Q}} \rightarrow PGL_2(\mathbb{F}_p)$ is surjective?
\end{conj}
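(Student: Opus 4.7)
This is a famous open problem, so I can only sketch the standard strategy for attacking it and indicate where the obstruction lies. The plan is to reduce the statement to a finite list of cases via the classification of maximal subgroups of $PGL_{2}(\mathbb{F}_{p})$. By a theorem of Dickson, any proper subgroup of $PGL_{2}(\mathbb{F}_{p})$ is (up to conjugation) either contained in a Borel subgroup, in the normalizer of a split Cartan, in the normalizer of a non-split Cartan, or is one of the three exceptional groups isomorphic to $A_{4}$, $S_{4}$, or $A_{5}$. Consequently, to prove uniform surjectivity for $p>C$ it suffices to exclude each of these four families as the image of $\bar{\rho}_{E,p}$ for all sufficiently large $p$ and all non-CM elliptic curves $E/\mathbb{Q}$.

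For each maximal subgroup $H\subseteq GL_{2}(\mathbb{F}_{p})$, a pair $(E,p)$ with $\bar{\rho}_{E,p}(G_{\mathbb{Q}})\subseteq H$ corresponds to a non-cuspidal, non-CM rational point on a certain modular curve: $X_{0}(p)$ for the Borel case, $X_{\mathrm{sp}}^{+}(p)$ for the split Cartan normalizer, $X_{\mathrm{ns}}^{+}(p)$ for the non-split Cartan normalizer, and suitable level-$p$ modular curves for the exceptional images. Thus the plan reduces to showing that for $p$ large enough, each of these curves has only cuspidal or CM rational points. The first case is handled by Mazur's theorem on rational isogenies, which shows $X_{0}(p)(\mathbb{Q})$ contains no non-cuspidal, non-CM points once $p>163$. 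The exceptional cases are disposed of by a combination of results of Serre and Mazur, essentially because the projective image must contain an element of order $p$ for large $p$, which $A_{4}$, $S_{4}$, $A_{5}$ cannot accommodate.

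The split Cartan normalizer case has been resolved by Bilu and Parent, and subsequently by Bilu--Parent--Rebolledo, using Mazur's formal immersion method applied to winding quotients of $J_{\mathrm{sp}}^{+}(p)$ together with analytic estimates; this establishes the analogous surjectivity statement barring the non-split normalizer. The main obstacle, and the reason the full conjecture remains open, is therefore the non-split Cartan case: one must show $X_{\mathrm{ns}}^{+}(p)(\mathbb{Q})$ has no exceptional rational points for all sufficiently large $p$. The formal immersion technique does not apply directly because the relevant Jacobian factors do not have large enough analytic rank deficiency that is easy to certify, and Runge's method degenerates in the non-split setting.

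This is precisely where the theme of the present paper becomes relevant: a general-purpose algorithm for computing Hecke systems on arbitrary congruence subgroups allows one to decompose $J_{\mathrm{ns}}^{+}(p)$ into isotypic components, identify candidate quotients of analytic rank zero, and verify the formal immersion criterion at auxiliary primes $\ell\neq p$ for individual moderate $p$, building up evidence and potentially supplying the analytic input for a conditional proof. Thus my concrete proposal is: (i) implement the algorithm for $\Gamma=\Gamma_{\mathrm{ns}}^{+}(p)$; (ii) for each $p$ in a computationally feasible range, locate a quotient abelian variety $A$ of $J_{\mathrm{ns}}^{+}(p)$ of analytic rank zero whose Mordell--Weil group over $\mathbb{Q}$ is trivial outside a controlled set; (iii) verify Mazur's formal immersion criterion for the composition $X_{\mathrm{ns}}^{+}(p)\to J_{\mathrm{ns}}^{+}(p)\to A$ at a suitable prime; and (iv) attempt to organize the resulting data into a uniform statement as $p\to\infty$. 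Steps (iii) and (iv) are the genuine obstruction, and a full unconditional proof is not to be expected from this plan alone.
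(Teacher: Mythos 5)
The statement you are asked about is labeled as a Conjecture, not a theorem: it is Serre's uniformity problem, which remains open, and the paper neither claims nor attempts a proof of it. Your response correctly recognizes this and gives an accurate account of the known reduction (Dickson's classification of maximal subgroups of $PGL_2(\mathbb{F}_p)$, Mazur's isogeny theorem for the Borel case, Serre and Mazur for the exceptional images, Bilu--Parent--Rebolledo for the split Cartan normalizer) and of the outstanding obstruction (ruling out the normalizer of a non-split Cartan), which is exactly the framing the paper gives in its introduction as motivation for computing Hecke eigensystems for $\Gamma_{ns}^{+}(p)$. Nothing here needs correcting; the only thing to flag is that a blind ``proof'' should not be expected for a statement the paper itself presents as an open conjecture, and you handled that appropriately by sketching the state of the art rather than asserting a proof.
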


There has been much progress in working on the conjecture. The remaining and most difficult case is to exclude the possibility that $\bar{\rho}_{E,p}$ has image contained in the normalizer of a non-split Cartan subgroup of $GL_2(\mathbb{F}_p)$ for large $p$. 
These elliptic curves are classified by a modular curve $X_{ns}^{+}(p)$.
Equivalently, one has to show that for all large enough primes $p$, the only rational points of the modular curve $X_{ns}^{+}(p)$ are CM points.

Therefore, there is great interest in finding explicit equations for $X_{ns}^{+}(p)$ over $\mathbb{Q}$.

More generally, Mazur's Program B \cite{mazur1977rational} suggests, given an open subgroup $G \subseteq GL_2(\hat{\mathbb{Z}})$ to classify all elliptic curves $E$ such that the image of $\rho_{E}$ is contained in $G$. To this end, one would like to have explicit equations for the modular curve describing those, $X_G$, over $\mathbb{Q}$, in order to search for rational points.

The general method to find explicit equations for the modular curves $X_G$ is to compute $q$-expansions of a basis of cusp forms  in $S_k(\Gamma)$ where $\Gamma \subseteq SL_2(\mathbb{Z})$ is the preimage of $G$ in $SL_2(\mathbb{Z})$, and find polynomial relations between them. The complexity of this computation is dominated by the complexity of computing the Hecke operators $\{T_n\}$ on the space of modular forms $S_k(\Gamma)$. 

Thus, a main cornerstone of many efforts related to Serre's uniformity conjecture and Mazur's Program B is the computation of Hecke operators and $q$-expansions of modular forms with arbitrary level $\Gamma$. 

\subsection{Main Results}

Let $G \subseteq GL_2(\mathbb{Z}/N \mathbb{Z}) $ be a subgroup, and let $k \ge 2$ be an integer. Then $G$ pulls back to a congruence subgroup $\Gamma = \Gamma_{G} \subseteq SL_2(\mathbb{Z})$ of level $N$.  
This paper is concerned mainly with the computation of the space of modular forms of weight $k$ and level $\Gamma_{G}$, which we denote by $M_{k}(\Gamma_{G})$. 

Our main goal is to construct an explicit model for $M_{k}(\Gamma_{G})$ over $\mathbb{Q}$,
which includes a finite dimensional $\mathbb{Q}$-vector space equipped with a subspace
of cusp forms $S_{k}(\Gamma_{G})$, and a family of Hecke operators $\{T_{n}\}$
acting on it. 

After constructing such a model, we proceed to evaluate the systems of eigenvalues
of the eigenforms in this space, leading to computation of the zeta functions of the distinct components of the associated Jacobian.
We then proceed to compute $q$-expansions.
However, as we shall discuss later, these computations do not bridge the gap in the computational aspect of the efforts for computing $q$-expansions.

This is achieved most efficiently, under some mild assumptions, using Merel's results \cite{merel1994universal}. Let us presently describe these assumptions.

We shall make use of $\eta := \left(\begin{array}{cc}
-1 & 0\\
0 & 1
\end{array}\right)\in GL_{2}(\mathbb{Z})$, and denote by $\lambda_N: GL_2(\mathbb{Z}) \rightarrow GL_2(\mathbb{Z} / N \mathbb{Z})$ the natural reduction map.

\begin{defn} \label{def: real type}
We say that $G \subseteq GL_2(\mathbb{Z} / N \mathbb{Z})$ is of \emph{real type} if 
$ \lambda_{N}(\eta) G \lambda_{N}(\eta)^{-1}=G. $ 
We say that a group $\Gamma \subseteq SL_2(\mathbb{Z})$ is of \emph{real type} if 
$ \eta \Gamma \eta^{-1} = \Gamma. $
\end{defn}

\begin{rem}
The assumption that $G$ is of real type is essential for the Hecke operators to
commute with the star involution (see Lemma \ref{lem: Hecke commutes with star}). 
The slightly weaker assumption, that $\Gamma_G$ is of real type, is necessary for 
$Y_{\Gamma} := \Gamma \backslash \mathcal{H}$ to be defined over $\mathbb{Q}$.  
\end{rem}

In the description of complexities below, we denote by $I_G := [SL_2(\mathbb{Z}) : \Gamma_G]$ the index of $\Gamma_G$, and by $d := \dim S_2(\Gamma_G)$ the dimension of the space of cusp forms.
We shall describe the complexity of our algorithms in terms of field operations. 

\begin{rem}
Since the size of our matrix entries is bounded linearly by the level $N$, these are essentially logarithmic in $N$. 
However, when computing the eigenforms, some of the arithmetic is done over number fields, and the complexity depends on the algorithms used to implement the arithmetic over those fields. 
\end{rem}

In what follows, we assume the existence of a function $\operatorname{CosetIndex}(x,R,\Gamma)$
that for an element $x\in SL_2(\mathbb{Z})$, a group $\Gamma$ and a list
of coset representatives $R$ for $\Gamma\backslash SL_2(\mathbb{Z})$, returns
the unique index $i$, such that $x\in\Gamma\cdot R[i]$. 
We will denote by $C$ the complexity of $\operatorname{CosetIndex}$.

\begin{rem}
The function $\operatorname{CosetIndex}$ could be implemented in general, by noting
that $\Gamma_{G}\backslash SL_2(\mathbb{Z})\cong G_0\backslash SL_{2}(\mathbb{Z}/N\mathbb{Z})$, where 
$G_0 := G \cap SL_2(\mathbb{Z} / N \mathbb{Z})$,
and using the Todd-Coxeter algorithm  \cite{todd1936practical}, with complexity of $O(I_G^{1/2})$
field operations. However, for certain groups $\Gamma$,
there are more efficient implementations. For example, if $\Gamma=\Gamma_{0}(N)$,
this can be done in $O(1)$ field operations using the fact that 
$\Gamma\backslash SL_2(\mathbb{Z})\cong\mathbb{P}^{1}(\mathbb{Z}/N\mathbb{Z})$. 
Moreover, by allowing pre-computation of $O(N^3)$ field operations and memory of $O(N^3)$ field elements, we can use a table and access it using $O(\log N)$ field operations.
\end{rem}

For the convenience of the reader, a table is included in which the complexities of the different algorithms for computing Hecke operators are compared to one another, and specialized to $\Gamma_0(N)$, $\Gamma_{ns}(N)$. 

\begin{table}
\begin{center}
\begin{tabular}{| c | c | c | c |}
\hline
\bf{Algorithm} & $T_p$, $p \nmid N$  & $T_{\alpha}$, $(D(\alpha),N)=1$ & $T_{\alpha}$ \\
\hline
\bf{Complexity} & $O(C d p \log p)$ & $O(C d  I_{\alpha,\Gamma_G} \log(N D(\alpha)))$ & 
$O(d \left( C  I_{\alpha,\Gamma_G} \log(N  D(\alpha))+ I_G^2  \operatorname{In} \right))$ \\
\hline
$\Gamma_0(N)$ & $O(d p \log p)$ & $O(d p \log (Np))$ & $O(dp \log(N p) + N^2 \log^2 N)$ \\
\hline
$ \Gamma_{ns}(N)$ & $O(d p \log p)$ & $O(d p \log (Np))$ & $O(dp \log(N p) + N^2)$ \\
\hline
\end{tabular}
\caption{Summary of complexities for computing Hecke operators}
\end{center}
\end{table}

\begin{thm} [Corollary \ref{cor:Complexity Hecke Operator}]  \label{thm: Main result Merel}
There exists an algorithm that given a group of real type $G \subseteq GL_{2}(\mathbb{Z}/N\mathbb{Z})$, a prime $p \nmid N$ such that $p \bmod N \in \det(G)$ and an integer $k \ge 2$, computes the Hecke operator $T_p$ on the space of modular forms $S_k(\Gamma_G)$ using
$O(C  d  \cdot k \log k \cdot p \log p)$ field operations. 
\end{thm}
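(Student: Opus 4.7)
The plan is to combine Merel's universal Hecke operator formula \cite{merel1994universal} with the modular symbol description of $S_k(\Gamma_G)$ developed in the earlier sections. The general algorithm, which computes $T_\alpha$ for $\alpha = \begin{pmatrix} 1 & 0 \\ 0 & p \end{pmatrix}$ by summing the weight-$k$ action over a set of representatives for $\Gamma_G \backslash \Gamma_G \alpha \Gamma_G$, already yields $O(I_{\alpha, \Gamma_G}) = O(p)$ matrices, but in general these may have entries of size up to $Np$, which is what forces the $\log(Np)$ factor in the second column of the complexity table.

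The key input from Merel is an alternative set $\mathcal{H}_p$ of integer matrices of determinant $p$, with $|\mathcal{H}_p| = O(p)$ and with entries bounded by $p$, whose action on Manin symbols computes $T_p$ on modular symbols. Concretely, for each basis element $[g]$ of $S_k(\Gamma_G)$ in the Manin symbol model and each $h \in \mathcal{H}_p$, one computes the product $h g$, which is an integer $2\times 2$ matrix with entries of size $O(p)$ and therefore costs $O(\log p)$ field operations, followed by a single call to $\operatorname{CosetIndex}$ at cost $C$ to locate the coset of the result. Summing over the $O(p)$ matrices in $\mathcal{H}_p$ and the $d$ basis cusp symbols yields the claimed bound $O(C d p \log p)$ field operations.

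I would first verify that the hypothesis $p \bmod N \in \det(G)$ is precisely what makes the double coset $\Gamma_G \alpha \Gamma_G$ admit the expected decomposition, so that Merel's formula descends correctly from the level-one setting to level $\Gamma_G$; the real-type assumption on $G$ plays its usual role in ensuring compatibility with the $\mathbb{Q}$-structure via the star involution (as in the earlier lemma). After this verification, I would assemble the algorithm as a double loop over the $d$ basis cusp symbols and over $\mathcal{H}_p$, and tally the field operations as above.

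The main obstacle is transferring Merel's set $\mathcal{H}_p$, originally formulated for $SL_2(\mathbb{Z})$ and $\Gamma_0(N)$, to the arbitrary real-type level $\Gamma_G$. This ultimately reduces to a level-one combinatorial identity due to Merel, together with the bookkeeping needed to push the action through the identification $\Gamma_G \backslash SL_2(\mathbb{Z}) \cong G_0 \backslash SL_2(\mathbb{Z}/N\mathbb{Z})$, where $G_0 = G \cap SL_2(\mathbb{Z}/N\mathbb{Z})$. Once this is in place, the complexity count is routine, and the improvement from $\log(Np)$ to $\log p$ is the immediate consequence of the matrix entries in $\mathcal{H}_p$ being bounded by $p$ rather than by $Np$.
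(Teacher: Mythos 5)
Your overall plan is the right one and matches the paper's approach: construct a Merel pair $(\Delta_p,\phi_p)$ for $\Gamma_G$ (using an element $\delta_p\in G$ of determinant $p$, which is exactly where the hypothesis $p\bmod N\in\det(G)$ enters), reduce to Merel's universal combinatorial condition $(C_p)$ on matrices of determinant $p$, and invoke the real-type assumption via the commutation of $T_p^{\vee}$ with the star involution to descend from $S_k\oplus\overline{S}_k$ to $S_k$.

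However, your complexity accounting contains a genuine error in where the $\log p$ factor comes from. You assert that Merel's set has $|\mathcal{H}_p|=O(p)$ and that multiplying $h g$ costs $O(\log p)$ field operations. Both claims are off. Merel's Heilbronn-type set $\mathcal{S}_n$ satisfying condition $(C_n)$ has cardinality $\Theta(\sigma_1(n)\log n)$, so for a prime $p$ one gets $\Theta(p\log p)$ matrices, not $O(p)$; this cardinality bound is the content of \cite[Proposition 8]{merel1991operateurs}, which the paper cites as the source of the $\log p$ factor. Moreover, in the paper's complexity model a $2\times 2$ integer matrix product is counted as $O(1)$ field operations, not $O(\log p)$ — the "field operation" is the atomic unit, and a remark in the introduction explains that the bit-cost of each such operation is absorbed into the constant. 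Under the correct accounting, each of the $O(p\log p)$ matrices in $\mathcal{S}_p$ contributes one $\operatorname{CosetIndex}$ call at cost $C$ and $O(1)$ arithmetic, giving $O(C\,p\log p)$ per basis element and $O(C\,d\,p\log p)$ total. Under your accounting, one would instead obtain $O(d\,p(\log p + C))$, which is dominated by but not equal to $O(C\,d\,p\log p)$; the two errors coincidentally yield an expression that can be bounded by the claimed complexity, but for the wrong reason. You should replace the claim $|\mathcal{H}_p|=O(p)$ with the correct $\Theta(p\log p)$ cardinality and attribute the $\log p$ factor to the set size rather than to integer arithmetic.
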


\begin{example} \
\begin{enumerate}
\item
If $\Gamma = \Gamma_0(N)$, then $C = O(1)$. Thus we obtain $O(d \cdot k \log k \cdot p \log p)$, the standard complexity.
\item
If $\Gamma = \Gamma_{ns}(N)$ is the non-split Cartan subgroup, then $C = O(1)$. Again we obtain $O(d \cdot k \log k \cdot p \log p)$. Note that here $d = \dim S_k(\Gamma)$, where standard methods cost $O(d_0 \cdot k \log k \cdot  p \log p)$,  with $d_0 = \dim S_k(\Gamma_0(N^2)) > d$.
\end{enumerate}
\end{example}

We also provide an algorithm for computing Hecke operators corresponding to arbitrary double cosets.
For a matrix $\alpha \in GL_2^{+}(\mathbb{Q})$, we let $D(\alpha) = \det(\alpha) / d_1(\alpha)^2 \in \mathbb{Z}$, with $d_1(\alpha)$ the  greatest common divisor of all the entries of $\alpha$. Denote also $ I_{\alpha,\Gamma} =  [\Gamma : \alpha^{-1} \Gamma \alpha \cap \Gamma]$.

\begin{thm}[Corollary \ref{cor:complexity-double-coset-operator}] \label{thm: Main result double coset coprime}
There exists an algorithm that given a group of real type $G \subseteq GL_{2}(\mathbb{Z}/N\mathbb{Z})$, an element $\alpha \in GL_2^{+}(\mathbb{Q})$ such that $\eta^{-1} \alpha \eta \in \Gamma_G \alpha \Gamma_G$ and $\gcd(D(\alpha),N) = 1$, and an integer $k \ge 2$, computes the Hecke operator $T_{\alpha}$, corresponding to the double coset $\Gamma_G \alpha \Gamma_G$, on the space of modular forms $S_k(\Gamma_G)$, using $O(C d  I_{\alpha,\Gamma_G}  k \log k \cdot   \log(N D(\alpha)))$ field operations.
\end{thm}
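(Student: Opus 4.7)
The plan is to describe an algorithm based on the Eichler--Shimura isomorphism, realizing $S_k(\Gamma_G)$ as a quotient of a space of Manin symbols of dimension $d$, and then to compute the matrix of $T_\alpha$ with respect to this basis by summing the slash actions of an explicit system of coset representatives for $\Gamma_G \backslash \Gamma_G \alpha \Gamma_G$. The condition $\eta^{-1}\alpha\eta \in \Gamma_G \alpha \Gamma_G$, together with $G$ being of real type, will guarantee (via Lemma~\ref{lem: Hecke commutes with star}, already invoked in the excerpt) that $T_\alpha$ commutes with the star involution, so that it descends to the $\pm$-eigenspaces on which the algorithm operates.

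The first step is to produce a list of coset representatives $\{\alpha_i\}_{i=1}^{I_{\alpha,\Gamma_G}}$ of $\Gamma_G \backslash \Gamma_G \alpha \Gamma_G$ whose matrix entries are bounded polynomially in $N D(\alpha)$. Here the hypothesis $\gcd(D(\alpha),N)=1$ is essential: it allows one to decompose $\Gamma_G \alpha \Gamma_G$ via the Smith normal form of $\alpha$ and reduce to writing representatives of the form $\alpha \gamma_j$, where $\gamma_j$ runs over representatives of $(\alpha^{-1}\Gamma_G\alpha \cap \Gamma_G)\backslash \Gamma_G$. Because $\alpha^{-1}\Gamma_G\alpha \cap \Gamma_G$ is again a congruence subgroup whose level divides $N D(\alpha)$, the $\gamma_j$ can be chosen with entries of size $O(N D(\alpha))$, so each matrix product $\alpha_i g$ occurring below requires only $O(\log(N D(\alpha)))$ field operations.

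Next, for each Manin-symbol basis element $[g]$, with $g$ in a chosen system of coset representatives $R$ for $\Gamma_G \backslash SL_2(\mathbb{Z})$, and each $\alpha_i$, the image of $[g]$ under the slash action of $\alpha_i$ is a Manin symbol whose underlying coset is identified by a single call to $\operatorname{CosetIndex}(\alpha_i g, R, \Gamma_G)$ (after extracting the $SL_2(\mathbb{Z})$-part using the coprimality of $D(\alpha)$ with $N$), together with a weight-$k$ polynomial action whose cost is absorbed into $O(\log(N D(\alpha)))$. Summing over $i$ gives the column of the matrix of $T_\alpha$ corresponding to $[g]$. The correctness follows from the standard formula $T_\alpha f = \sum_i f\mid_k \alpha_i$ together with the compatibility of the Eichler--Shimura isomorphism with the Hecke action.

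The complexity count is then routine: $d$ basis vectors, each requiring $I_{\alpha,\Gamma_G}$ coset-reductions of cost $O(C + \log(N D(\alpha)))$, for a total of $O(C d I_{\alpha,\Gamma_G} \log(N D(\alpha)))$ field operations, matching the stated bound. The principal obstacle, and the place where the hypothesis $\gcd(D(\alpha),N)=1$ does real work, is the first step: without it, the required coset representatives $\gamma_j$ for $(\alpha^{-1}\Gamma_G\alpha \cap \Gamma_G)\backslash \Gamma_G$ are no longer controlled by $\operatorname{CosetIndex}$ for $\Gamma_G$ alone, and one cannot avoid the additional $I_G^2 \cdot \operatorname{In}$ term appearing in the general row of the complexity table. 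Thus it is precisely the coprimality assumption that keeps the slash action local and the coset identifications uniformly cheap.
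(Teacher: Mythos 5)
There is a genuine gap in the middle step. You claim that for each Manin-symbol basis element $[P,g]$ and each coset representative $\alpha_i \in \Gamma_G\backslash\Gamma_G\alpha\Gamma_G$, the image under $\alpha_i$ is ``a Manin symbol whose underlying coset is identified by a single call to $\operatorname{CosetIndex}(\alpha_i g, R, \Gamma_G)$ (after extracting the $SL_2(\mathbb{Z})$-part using the coprimality of $D(\alpha)$ with $N$).'' This is not correct: $\alpha_i g$ has determinant $D(\alpha)\neq 1$, so $\alpha_i\cdot[P,g]=(\alpha_i g)P\otimes\{\alpha_i g\cdot 0,\alpha_i g\cdot\infty\}$ is a genuine modular symbol, not a Manin symbol, and there is no ``$SL_2(\mathbb{Z})$-part'' that $\operatorname{CosetIndex}$ could act on. Expressing this modular symbol in the Manin-symbol basis requires the continued-fraction (Euclidean) expansion of the path $\{\alpha_i g\cdot 0,\alpha_i g\cdot\infty\}$, which produces $O(\log(N\,D(\alpha)))$ Manin symbols, each requiring its own $\operatorname{CosetIndex}$ call; the per-pair cost is therefore $O(C\log(N D(\alpha)))$, not the $O(C+\log(N D(\alpha)))$ you state. (Avoiding continued fractions entirely is exactly what Merel's formula buys --- but that is the content of the separate Theorem~\ref{thm: Main result Merel}, proved via condition $(C_n)$, and it is limited to the specific coset $\Delta_n$; it does not apply to a general double coset $\Gamma_G\alpha\Gamma_G$.) Your final displayed bound happens to coincide with the theorem only because $(C+\log)$ is crudely dominated by $C\cdot\log$; the cost model underlying it is wrong.

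A smaller but related imprecision: the role of the coprimality hypothesis is not really that the coset representatives $\gamma_j$ ``are controlled by $\operatorname{CosetIndex}$ for $\Gamma_G$ alone.'' What the paper actually uses it for (Algorithm~\ref{algo: Conjugation of induced}) is to realize $\alpha^{-1}\Gamma_G\alpha\cap SL_2(\mathbb{Z})$ as a congruence subgroup induced from a subgroup of $GL_2(\mathbb{Z}/N D(\alpha)\mathbb{Z})$ via a Chinese-Remainder-Theorem splitting into the mod-$N$ and mod-$D(\alpha)$ pieces; without $\gcd(D(\alpha),N)=1$ this splitting fails and one falls back on a Farey-symbol generators-and-membership computation, which is the source of the extra $I_G^2\cdot\operatorname{In}$ term. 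Your proposal does not address how the intersection group is produced, which is where the hypothesis actually does its work.
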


\begin{example}
If $\Gamma = \Gamma_0(N)$ and $\alpha = \left( \begin{array}{cc} 1 & 0 \\ 0 & p \end{array} \right)$ with $p \nmid N$, then 
$I_{\alpha, \Gamma} = p + 1$, $C = O(1)$ and $D(\alpha) = p$. Thus we obtain $O(d \cdot k \log k \cdot p \log(N p))$, slower than the algorithm described in Theorem~\ref{thm: Main result Merel}.
However, this algorithm works in greater generality.
\end{example}

\begin{rem}
If $G$ is not of real type, the above algorithm still computes the Hecke operator $T_{\alpha}$ on the space
$S_k(\Gamma_G) \oplus \overline{S_k(\Gamma_G)}$. This, in turn, gives rise to a system of eigenvalues, from which one can deduce the zeta function. 
An example is (group $8E^1$ in \cite{cummins2017congruence})
\[
G = \left \langle \left( \begin{array}{cc} 2 & 3 \\ 3 & 5 \end{array} \right),
 \left( \begin{array}{cc} 3 & 0 \\ 0 & 3 \end{array} \right),
  \left( \begin{array}{cc} 7 & 0 \\ 0 & 7 \end{array} \right),
   \left( \begin{array}{cc} 7 & 4 \\ 3 & 1 \end{array} \right),
    \left( \begin{array}{cc} 4 & 1 \\ 3 & 4 \end{array} \right)
 \right \rangle \subseteq GL_2(\ZZ / 8 \ZZ)
\]
which is not conjugate to any subgroup of real type. In this case, $\dim S_2(G) = 1$, and one may compute the Hecke operators using the code as follows.
\begin{verbatim}
> gens := [[ 7, 0, 0, 7 ],[ 2, 3, 3, 5 ],[ 0, 7, 7, 7 ],
	     [ 3, 0, 0, 3 ],[ 4, 7, 7, 3 ]];
> N := 8;
> H_N := sub<GL(2,Integers(N)) | gens>;
> H := PSL2Subgroup(H_N);
> M := ModularSymbols(H, 2, Rationals(), 0);
> S := CuspidalSubspace(M);
> HeckeOperator(S,97);
[18  0]
[ 0 18]
\end{verbatim}
\end{rem}

We describe a general algorithm to compute the Hecke operators, which works for the case $\gcd(D(\alpha),N) > 1$ as well. To describe its complexity, denote the complexity of membership testing in $G$ by $\operatorname{In}$.

\begin{thm} [Theorem  \ref{thm: Hecke operator double coset naive}]  \label{thm: Main result double coset}
There exists an algorithm that given a group of real type $G \subseteq GL_{2}(\mathbb{Z}/N\mathbb{Z})$, an element $\alpha \in GL_2^{+}(\mathbb{Q})$ such that $\eta^{-1} \alpha \eta \in \Gamma_G \alpha \Gamma_G$ and an integer $k \ge 2$, computes the Hecke operator $T_{\alpha}$ corresponding to the double coset $\Gamma_G \alpha \Gamma_G$, on the space of modular forms $S_k(\Gamma_G)$, using
$$O(d \left( C  I_{\alpha,\Gamma_G} k \log k \cdot  \log(N  D(\alpha))+ I_G^2  \operatorname{In} \right) )$$
field operations. 
\end{thm}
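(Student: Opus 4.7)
My plan is to analyze the direct coset-decomposition algorithm in the modular symbols model of $S_k(\Gamma_G)$. The strategy proceeds in three stages, and the bookkeeping for each produces one of the terms in the advertised complexity.

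First, I would compute a complete set of right coset representatives $\alpha_1, \ldots, \alpha_r$, with $r = I_{\alpha, \Gamma_G}$, for the decomposition $\Gamma_G \alpha \Gamma_G = \bigsqcup_{i=1}^{r} \Gamma_G \alpha_i$. Using the bijection $\Gamma_G / (\alpha^{-1} \Gamma_G \alpha \cap \Gamma_G) \leftrightarrow \Gamma_G \backslash \Gamma_G \alpha \Gamma_G$, this reduces to enumerating $\Gamma_G$ modulo $\alpha^{-1} \Gamma_G \alpha \cap \Gamma_G$. Without the coprimality hypothesis $\gcd(D(\alpha), N) = 1$, I cannot pass through reduction mod $N$ to speed this up, so the equivalence of two candidates $\gamma_1, \gamma_2$ is tested directly by checking $\alpha \gamma_1 \gamma_2^{-1} \alpha^{-1} \in \Gamma_G$ via the membership oracle at cost $\operatorname{In}$. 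Running over the $O(I_G)$ representatives of $\Gamma_G$ in $SL_2(\mathbb{Z})$ and performing pairwise equivalence tests yields the $I_G^2 \operatorname{In}$ term.

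Next, for each of the $d$ basis modular symbols $s_j$ I compute $T_\alpha s_j = \sum_{i=1}^{r} s_j \cdot \alpha_i$ in the Manin symbol model: each summand requires one matrix multiplication by $\alpha_i$ (entries of bit size $O(\log(N D(\alpha)))$) and a reduction to the canonical basis via a single call to $\operatorname{CosetIndex}$ at cost $C$. Summing over the $I_{\alpha, \Gamma_G}$ coset representatives and the $d$ basis elements produces the $d \cdot C \cdot I_{\alpha, \Gamma_G} \log(N D(\alpha))$ contribution, and combined with the previous stage yields the advertised total $O(d(C I_{\alpha, \Gamma_G} \log(N D(\alpha)) + I_G^2 \operatorname{In}))$. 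Correctness follows from the standard identity expressing $T_\alpha$ as a sum of slash operators (independent of the chosen right coset representatives), together with Lemma \ref{lem: Hecke commutes with star}, which ensures under the real-type assumption that $T_\alpha$ commutes with the star involution and thus descends to the eigenspace decomposition used in practice; the hypothesis $\eta^{-1} \alpha \eta \in \Gamma_G \alpha \Gamma_G$ is the precise compatibility condition that this descent requires.

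The main obstacle is the coset enumeration in the first stage. For $\gcd(D(\alpha), N) > 1$, the subgroup $\alpha^{-1} \Gamma_G \alpha \cap \Gamma_G$ is not controlled by congruence conditions of predictable level, so the naive pairwise membership test appears unavoidable without substantially more refined structural input. This is precisely the overhead that disappears under the coprimality assumption of Theorem \ref{thm: Main result double coset coprime}, where one can work mod $N$ throughout; the extra $I_G^2 \operatorname{In}$ factor here is the price of full generality.
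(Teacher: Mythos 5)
Your second stage (applying the coset representatives $\alpha_i$ to the Manin-symbol basis, converting back via continued fractions, and paying $C$ per $\operatorname{CosetIndex}$ call) matches the paper's step (4) and is essentially right, except that the $\log(N D(\alpha))$ factor arises because the continued-fraction expansion of $r\cdot g\{0,\infty\}$ has $O(\log(N D(\alpha)))$ convergents, \emph{each} of which costs one $\operatorname{CosetIndex}$ call --- not because of the bit size of a single matrix multiplication followed by ``a single call to $\operatorname{CosetIndex}$.'' The invocation of Lemma \ref{lem: Hecke commutes with star} for correctness under the real-type hypothesis is also correct.

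The genuine gap is in your first stage, the construction of the coset decomposition $\Gamma_G\backslash\Gamma_G\alpha\Gamma_G$. You propose to ``run over the $O(I_G)$ representatives of $\Gamma_G$ in $SL_2(\mathbb{Z})$'' and test pairs for equivalence modulo $H=\alpha^{-1}\Gamma_G\alpha\cap\Gamma_G$. This fails on two counts. First, coset representatives of $\Gamma_G\backslash SL_2(\mathbb{Z})$ are not elements of $\Gamma_G$, whereas what you must enumerate are elements of $\Gamma_G$ itself modulo $H$; you give no mechanism for producing elements of $\Gamma_G$ at all, and doing so requires a generating set (the paper obtains one via Farey symbols at cost $O(I_G^2)$, which is where the $I_G^2\operatorname{In}$ term actually originates). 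Second, the number of cosets you must find is $[\Gamma_G:H]=I_{\alpha,\Gamma_G}$, which is not $O(I_G)$ in general (for $\alpha=\bigl(\begin{smallmatrix}1&0\\0&p\end{smallmatrix}\bigr)$ it is $p+1$, unbounded in terms of $I_G$), so a pairwise membership test over the candidates would cost $I_{\alpha,\Gamma_G}^2\operatorname{In}$, not $I_G^2\operatorname{In}$. The paper avoids both problems by computing generators of $\Gamma_G$, conjugating them to realize $\alpha^{-1}\Gamma_G\alpha\cap SL_2(\mathbb{Z})$ as an induced congruence subgroup (Algorithm \ref{algo: Conjugation and intersection}), and intersecting induced subgroups via their reductions at the appropriate levels (Algorithm \ref{algo: Intersection of induced}); the coset representatives of $H\backslash\Gamma_G$ then come from standard coset enumeration in the finite quotient rather than from pairwise testing. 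To repair your argument you would need to replace the first stage with an actual construction of $H$ as a congruence subgroup, or at least with a Todd--Coxeter-style enumeration seeded by generators of $\Gamma_G$.
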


\begin{example}
If $\Gamma = \Gamma_0(N)$ and $\alpha = \left( \begin{array}{cc} 1 & 0 \\ 0 & p \end{array} \right)$ with $p \mid N$, then $I_G = O(N \log N)$, yielding $O(dp \log(N p) + N^2 \log^2 N)$. One may notice that the second term we are picking up, due to the generality, is quite large.
\end{example}

These algorithms allow us to compute zeta functions efficiently. For a complete zeta function, we need also the values of the Hecke operators at primes dividing the level. Therefore, we need to assume an additional hypothesis.

\begin{defn}
Let $G \subseteq GL_2(\mathbb{Z} / N \mathbb{Z})$, and let $p \mid N$ be a prime. Let $\{ \alpha_{p,i} \}_{i=1}^{r_p}$ be elements in $GL_2^{+}(\mathbb{Q})$ such that $T_p$ is a linear combination of the $T_{\alpha_{p,i}}$. We say that the Hecke operator $T_p$ on $S_k(\Gamma_G)$ is \emph{effectively computable} if there exists an algorithm to compute a set of such elements $\{ \alpha_{p,i} \}$ using $O(C  d  (k \log k \cdot p \log p + k I_G \log (k I_G)) + d^3)$ field operations.
\end{defn}

\begin{example} \ 
\begin{enumerate}
\item
When $\Gamma = \Gamma_0(N)$, $T_p$ is effectively computable for all $p \mid N$, as $\alpha_p = 
\left(\begin{array}{cc}
1 & 0\\
0 & p
\end{array}\right)
$
can be computed in $O(1)$ field operations.
\item
When the order generated by $\Gamma$ contains no elements of determinant $p$, $T_p = 0$ is effectively computable.
\end{enumerate}
\end{example}

We further denote by $\operatorname{In}$ the complexity, in field operations, of membership test in $G$.

\begin{cor} [Corollary \ref{cor: q expansion basis}] \label{cor: Main result q expansion}
There exists an algorithm that given a group of real type $G \subseteq GL_2(\mathbb{Z} / N \mathbb{Z})$ with surjective determinant such that for all $p \mid N$, $T_p$ is effectively computable,
and a positive integer $L$, returns the zeta functions associated to each of the factors of $Jac(X(\Gamma_G))$ up to precision $L^{-s}$ using 
$$O(d ( C (L \log L + N \log N) + N I_G^2 \operatorname{In} + I_G \log (I_G)) + d^3)$$
field operations. 
\end{cor}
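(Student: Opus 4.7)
The plan is to combine the Hecke-operator algorithms of Theorems \ref{thm: Main result Merel} and \ref{thm: Main result double coset} with a $\mathbb{Q}$-rational decomposition of $S_k(\Gamma_G)$ into irreducible Hecke-stable blocks. Since $G$ has surjective determinant and is of real type, the curve $X(\Gamma_G)$ is defined over $\mathbb{Q}$, the Hecke algebra acts $\mathbb{Q}$-rationally on $S_k(\Gamma_G)$, and its $\mathbb{Q}$-irreducible Hecke submodules $V_i$ correspond to the $\mathbb{Q}$-simple isogeny factors of $\operatorname{Jac}(X(\Gamma_G))$. This identification reduces the computation of the zeta function of each factor to the characteristic polynomials of $T_p$ on the corresponding block for each $p \le L$.

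I would first build a modular-symbols model of $S_k(\Gamma_G)$ together with the coset data required by $\operatorname{CosetIndex}$, contributing the $d I_G \log I_G$ setup term, and then evaluate $T_p$ for a handful of small primes followed by $O(d^3)$ linear algebra to split $S_k(\Gamma_G)$ into the blocks $V_i$. With the decomposition fixed, I would sweep through primes $p \le L$: for $p \nmid N$, Theorem \ref{thm: Main result Merel} computes $T_p$; for $p \mid N$, the effective-computability hypothesis writes $T_p$ as a linear combination of double-coset operators computed by Theorem \ref{thm: Main result double coset}, and since there are only $O(\log N)$ such primes with $\sum_{p\mid N} p \log p = O(N\log N)$, the sum of their costs accounts cleanly for the $d C N \log N$ and $d N I_G^2 \operatorname{In}$ terms. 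The resulting traces $\operatorname{tr}(T_p \mid V_i)$ then assemble into the local Euler factors and, after truncation at $L$, into the desired zeta function of each factor.

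The main obstacle is hitting the $d C L \log L$ bound rather than the naive $d C L^2$ that a term-by-term sum $\sum_{p \le L} C d p \log p$ would produce. The idea is to avoid rebuilding the full $d \times d$ Hecke matrix at every prime and instead work one block at a time: once the combinatorics of Merel's formula are precomputed from the coset data, extracting $\operatorname{tr}(T_p \mid V_i)$ on a block of dimension $d_i$ has amortized cost that scales like $C d_i$ times a logarithmic factor in $p$, so the total cost across all primes and all blocks remains within the budget because $\sum_i d_i = d$. A secondary subtlety is verifying, using both the real-type and the surjective-determinant hypotheses, that the $\mathbb{Q}$-rational Hecke decomposition of $S_k(\Gamma_G)$ actually coincides with the isogeny decomposition of $\operatorname{Jac}(X(\Gamma_G))$, so that each computed trace really is the Frobenius trace on the corresponding isogeny factor.
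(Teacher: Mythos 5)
Your overall plan — decompose $\mathbb{S}_k(\Gamma_G)$ into Hecke-irreducible blocks using the Hecke-operator algorithms, then sweep primes $p\le L$ to assemble the local Euler factors — matches the structure of the paper's proof. But two steps of your proposal would not go through as stated, and both are exactly the places where the paper does something more specific.

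First, your decomposition step. You propose to "evaluate $T_p$ for a handful of small primes followed by $O(d^3)$ linear algebra to split $S_k(\Gamma_G)$ into the blocks $V_i$," and you attribute the $d\,I_G\log I_G$ term to building the modular-symbols model and coset data. That accounting is off. The modular-symbols construction itself costs $O(I_G\cdot c(\Gamma))$ basic $\operatorname{CosetIndex}$ operations (Theorem~\ref{thm: cuspidal modular symbols}). The $d\,I_G\log I_G$ term actually comes from the decomposition: to provably split into irreducible Hecke modules (especially in the presence of oldforms) one must compute Hecke operators up to the Sturm bound $r=\operatorname{Sturm}(k,\Gamma_G)$ of Theorem~\ref{thm:Sturm bound}, which is linear in $I_G$, not merely for a handful of small primes. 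The paper's proof makes exactly this point before invoking Algorithm~\ref{algo: Decomposition}, and that is where the $I_G\log I_G$ term enters the complexity count. "A few small primes plus linear algebra" does not certify that your blocks are irreducible.

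Second, your $L$-sweep. You propose to recover the local Euler factors from the traces $\operatorname{tr}(T_p\mid V_i)$, with a vague "amortized" argument that extracting one trace per block costs roughly $C d_i$ per prime. There are two problems. The trace of $T_p$ on $V_i$ is only the linear coefficient of the characteristic polynomial of $T_p\mid V_i$; it does not determine the full degree-$(\dim V_i)$ Euler factor unless you also supply the higher power sums (or the entire char.\ poly.), which your proposal does not do. The paper instead computes the eigenvalue $a_p$ itself as an element of the coefficient field: Algorithm~\ref{algo: zeta function} finds a Hecke eigenvector $v=\sum_j c_j e_j^\vee$ over $\mathbb{Q}[x]/f_T(x)$, then for each $p<L$ applies $T_p$ to a \emph{single} basis Manin symbol $e_i$ and evaluates $a_p=\tfrac{1}{c_i}\langle T_p(e_i),v\rangle$. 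This is the concrete trick that avoids computing the full $d\times d$ matrix at every prime — it is not an amortization of Merel's combinatorics across blocks — and it delivers $a_p$ in the Hecke field, from which the Euler factor is the norm polynomial. Your concern about matching the $\mathbb{Q}$-rational Hecke decomposition with the isogeny decomposition is legitimate, and is resolved in the paper via the Eichler--Shimura relation packaged in Corollary~\ref{cor: zeta function}, which ties $T_p$ (and the diamond twist) to Frobenius on the reduction of $X_G$.
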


This further allows one to compute $q$-expansions as follows. 
Consider the inclusion $\iota : S_k(\Gamma_G) \subseteq S_k(\Gamma(N))$, where $N$ is the level of $\Gamma_G$. This inclusion is not Hecke equivariant in general. Therefore, our eigenvalues are not the $q$-expansions of forms in $S_k(\Gamma_G)$. 

However, we have an action of $GL_2(\mathbb{Z} / N \mathbb{Z})$ on $S_k(\Gamma(N))$. Write  $S_k(\Gamma(N)) = \bigoplus_{i \in I} V_i$, with each $V_i$ an irreducible $GL_2(\mathbb{Z} / N \mathbb{Z})$-representation.
Since $V := \iota(S_k(\Gamma_G)) = S_k(\Gamma(N))^G$, we have $V = \bigoplus_{i \in I} V \cap V_i$.
But in $\Gamma(N)$ the $q$-expansions of newforms do correspond to the zeta functions of the appropriate factors. 
Therefore, the $q$-expansions we have found for a basis of $V \cap V_i$ are the $q$-expansions of newforms in $V_i$, denote them by $g_{ij}$. 
By \cite{atkin1978twists}, for each $i$, the space $V_i$ is generated by twists of the $\{g_{ij}\}$.
Thus, we are now led to finding the linear combinations that land in $V \cap V_i$.
This can be done either by computing the $GL(2, \mathbb{Z} / N \mathbb{Z})$-action on cusp forms, as described in \cite{zywina2020computing}, or by using modular symbols for $\Gamma(N)$, and solving linear equations. This can be done in several ways, and is currently in the process of being implemented by Josha Box (see \cite{box2020qexpansions}) and by the author. 

Thus, obtaining the $q$-expansions remains highly inefficient for two reasons.
\begin{enumerate}
\item
In order to obtain the $q$-expansions one has to work in the larger space $S_k(\Gamma(N))$.
\item
The linear algebra required to obtain the $q$-expansions has to be done over large fields, as the twisting operators introduce the Gauss sums of the characters, which are elements in the large cyclotomic field $\mathbb{Q}(\zeta_N, \zeta_{\phi(N)})$. 
\end{enumerate}

%

The method illustrated in this paper has been implemented in the MAGMA
computational algebra system, by building on the existing package
of modular symbols, implemented in MAGMA \cite{bosma1997magma} by
William Stein, with contributions by Steve Donnelly and Mark Watkins. 
The package is publicly available in the github repository \cite{assaf2019github}. 

\subsection{Recovering known results}

As an application of our algorithm we recover the equation for the canonical embedding of $X_{S_4}(13)$ in $\mathbb{P}_{\mathbb{Q}}^2$, as was done in \cite{banwait2014tetrahedral}. Effectively, this repeats the work done in the original paper, only in an automated fashion. The main difference is that the computation of the eigenform in step 3 of Section 4 is done directly in the smaller space $S_2(\Gamma_{S_4})$, instead of inside $S_2(\Gamma_0(169))$.

\begin{cor} [{\cite[Theorem 1.8]{banwait2014tetrahedral}}] 
The modular curve $X_{S_4}(13)$  is a genus $3$ curve whose canonical embedding in $\mathbb{P}_{\mathbb{Q}}^2$ has the model
\begin{align*}
4x^3y - 3x^2y^2 +3xy^3 -x^3z + 16x^2yz - 11xy^2z\\
+5y^3z+3x^2z^2+9xyz^2+y^2z^2+xz^3+2yz^3 = 0.
\end{align*}
\end{cor}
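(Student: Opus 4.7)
The plan is to apply the algorithms from Corollary~\ref{cor: Main result q expansion} directly to the specific subgroup $G \subseteq GL_2(\ZZ/13\ZZ)$ whose projective image in $PGL_2(\mathbb{F}_{13})$ is the exceptional subgroup isomorphic to $S_4$. First I would write down explicit generators for $G$, and verify that it is of real type in the sense of Definition~\ref{def: real type} and that $\det(G) = (\ZZ/13\ZZ)^{\times}$. Since $13$ is prime, the only $p \mid N$ that need be considered is $p = 13$ itself, and one checks that $T_{13}$ on $S_2(\Gamma_G)$ is effectively computable (either directly, or by observing that the contribution vanishes). This places us squarely inside the hypotheses of Corollary~\ref{cor: Main result q expansion}.

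Next I would run the algorithm of Corollary~\ref{cor: Main result q expansion} with this $G$ and a precision $L$ large enough for the quartic-relation step below. The algorithm produces a basis $f_1, f_2, f_3$ of $S_2(\Gamma_G)$ together with $q$-expansions in some cyclotomic coefficient ring, along with the associated zeta-function data identifying the isogeny factors of $\operatorname{Jac}(X_{S_4}(13))$. By independent dimension count one has $\dim S_2(\Gamma_G) = \operatorname{genus}(X_{S_4}(13)) = 3$; since this curve is non-hyperelliptic (a fact that can be read off from the $q$-expansions themselves, or cited from \cite{banwait2014tetrahedral}), the canonical map realises $X_{S_4}(13)$ as a smooth plane quartic in $\PP^{2}_{\QQ}$ with coordinates $(f_1 : f_2 : f_3)$.

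The task is then reduced to finding the unique (up to scalar) quartic form $F(x,y,z) \in \QQ[x,y,z]_4$ with $F(f_1, f_2, f_3) = 0$ as a $q$-series. This is a linear algebra computation: the 15 degree-4 monomials $f_1^{i_1} f_2^{i_2} f_3^{i_3}$ (with $i_1 + i_2 + i_3 = 4$) are assembled into a matrix whose rows are their truncated Fourier expansions, one takes $L$ larger than a Sturm-style bound so that the kernel is genuinely one-dimensional, and the unique relation is extracted. After a $GL_3(\QQ)$-change of coordinates to clear denominators and match the normalisation of \cite{banwait2014tetrahedral}, one reads off the displayed equation.

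The hard part will be the rationality issue discussed in the paragraph after Corollary~\ref{cor: Main result q expansion}: a priori the $q$-expansions produced by the algorithm live in $\QQ(\zeta_{13}, \zeta_{\phi(13)}) = \QQ(\zeta_{156})$, obtained by twisting a newform in $S_2(\Gamma(13))$. For the present application one exploits that the relevant isogeny factor of $\operatorname{Jac}(X_{S_4}(13))$ corresponds to a newform with rational Hecke eigenvalues (an elliptic isogeny factor already visible in $S_2(\Gamma_0(169))$), so the Galois action on the twists can be used to descend the basis to $\QQ[[q]]$ before performing the linear algebra. Once this descent is carried out the remainder of the computation is mechanical, and can be verified by running the \texttt{MAGMA} implementation of \cite{assaf2019github} and comparing the resulting quartic coefficient-by-coefficient with the one stated.
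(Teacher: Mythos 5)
Your high-level plan is the one the paper in fact follows: compute the eigenform directly in $S_2(\Gamma_{S_4})$ (rather than in $S_2(\Gamma_0(169))$) using the machinery of the paper, obtain $q$-expansions for a basis, and extract the unique quartic kernel relation. The paper treats this corollary as an application of the \texttt{MAGMA} implementation: Section 7.5 runs the code to produce the eigenform, and explicitly frames its contribution as recovering ``step 3 of Section 4'' of Banwait--Cremona, with the remaining steps (passage from eigenform to plane quartic) delegated to the original argument.

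The concrete problem is in your rationality/descent paragraph. You assert that ``the relevant isogeny factor of $\operatorname{Jac}(X_{S_4}(13))$ corresponds to a newform with rational Hecke eigenvalues (an elliptic isogeny factor already visible in $S_2(\Gamma_0(169))$)'' and then lean on this rationality to descend the twist basis to $\QQ[[q]]$ before doing linear algebra. This is false. $\operatorname{Jac}(X_{S_4}(13))$ is a single Hecke-irreducible abelian threefold with no elliptic factor; the paper's own \texttt{MAGMA} output in Section 7.5 displays exactly one eigenform, whose coefficients generate the cubic field $\QQ[a]/(a^3+2a^2-a-1)$. Since the rationality fact you invoke does not hold, the mechanism you describe for converting the algorithm's output into a $\QQ$-rational basis of $S_2(\Gamma_{S_4})$ --- and hence into a $\QQ$-rational quartic --- does not go through as stated. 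The working version, as in Banwait--Cremona (and hence implicitly in the paper), takes the single cubic eigenform $f$ together with its two $\operatorname{Gal}(K/\QQ)$-conjugates as a $\overline{\QQ}$-basis and produces a $\QQ$-rational basis by taking $K/\QQ$-traces of $f, af, a^2 f$; the quartic relation is then found in $K[[q]]$ and recognized to have rational coefficients after a $GL_3(\QQ)$-change of variables. No rational-eigenvalue hypothesis is used anywhere.
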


Another application recovers the main result of \cite{baran2014exceptional}, namely explicit equations for the canonical embeddings of $X_{ns}^{+}(13)$ and $X_s(13)$ in $\mathbb{P}_{\mathbb{Q}}^2$. Again, this basically repeats the work done in the original paper, only obtaining the result of Section 4 in a matter of seconds.

\begin{cor}[\cite{baran2014exceptional}]
The modular curves $X_{ns}^+(13)$ and $X_s(13)$ are defined by the equation
\begin{align*}
(-y-z)x^3+(2y^2+zy)x^2+(-y^3+zy^2-2z^2y+z^3)x+(2z^2y^2-3z^3y)=0.
\end{align*}
\end{cor}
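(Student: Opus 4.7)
The plan is to apply the algorithmic framework of this paper to both curves, compute a $\mathbb{Q}$-basis of weight-$2$ cusp forms, extract the unique quartic relation defining their canonical images in $\mathbb{P}^2$, and verify it matches the stated equation.

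First, present each curve as $\Gamma_G\backslash\mathcal{H}^*$: for $X_{ns}^+(13)$ take $G$ to be the normalizer of a non-split Cartan in $GL_2(\mathbb{Z}/13\mathbb{Z})$, and for $X_s(13)$ the corresponding subgroup for a split Cartan. Both have surjective determinant and are stable under $\lambda_N(\eta)$-conjugation, since conjugation by $\eta$ preserves each of the relevant Cartan subgroups together with their normalizers; hence both are of real type and Corollary~\ref{cor: Main result q expansion} applies. For the single prime $p=13$ dividing the level, $T_p$ is effectively computable, as a representing double coset can be read off directly from the group structure.

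Next, I compute $\dim S_2(\Gamma_G)$ via the modular symbols model and confirm it equals $3$ in both cases, so each canonical image is a smooth plane quartic in $\mathbb{P}^2$ (non-hyperellipticity being checked by verifying that the canonical map separates points on the $q$-expansions, or directly from the table of Hecke eigenvalues). Then I invoke Corollary~\ref{cor: Main result q expansion} with $L$ exceeding the Sturm bound for $S_8(\Gamma_G)$, the weight in which a quartic relation among weight-$2$ forms lives, to obtain $q$-expansions $f_1, f_2, f_3$ of a $\mathbb{Q}$-basis of $S_2(\Gamma_G)$. Since $h^0(X,4K) = 14$ by Riemann--Roch while the space of degree-$4$ monomials in three variables has dimension $15$, Gaussian elimination on the coefficient matrix produces a one-dimensional kernel, which is the sought-after quartic.

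The main obstacle is the final normalization: the algorithm recovers the canonical model only up to a $GL_3(\mathbb{Q})$ change of variables on $\mathbb{P}^2$, so matching the exact polynomial displayed in the statement requires fixing a preferred rational basis of $S_2(\Gamma_G)$ --- for instance by diagonalizing a rational Hecke operator and rescaling so that the leading $q$-expansion coefficients agree with those chosen in \cite{baran2014exceptional}. That $X_{ns}^+(13)$ and $X_s(13)$ yield projectively equivalent quartics, and in particular the very same normalized equation, is a nontrivial consequence of the computation and was the original content of \cite{baran2014exceptional}.
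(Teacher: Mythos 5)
Your overall strategy -- compute Hecke data via the modular-symbols model, pass to $q$-expansions of a basis of $S_2(\Gamma_G)$, and extract the unique degree-$4$ relation among them by linear algebra -- is the same route the paper takes, and the paper's own ``proof'' is even terser: it just asserts that the algorithm reproduces the $q$-eigenform of Section~4 of \cite{baran2014exceptional} in the smaller space, with the remaining steps taken over from Baran's original work.

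However, there is a genuine gap in the middle step. You invoke Corollary~\ref{cor: Main result q expansion} to ``obtain $q$-expansions $f_1,f_2,f_3$,'' but that corollary returns \emph{zeta functions} (i.e.\ systems of Hecke eigenvalues), not $q$-expansions of forms on $\Gamma_G$. The paper goes out of its way, right after that corollary, to explain that the inclusion $S_k(\Gamma_G)\subseteq S_k(\Gamma(N))$ is not Hecke-equivariant in general, so the eigenvalue $a_p$ of $T_p$ is \emph{not} the $p$-th Fourier coefficient of the corresponding form on $\Gamma_G$, and that recovering $q$-expansions requires working inside $S_k(\Gamma(N))$ and solving a twist/representation-theoretic problem which the paper explicitly calls ``highly inefficient'' and leaves unimplemented. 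The only place the paper proves $T_\alpha f = a_p f$ is Corollary~\ref{cor: coefficients of eigenforms}, which requires $\Gamma_G = \Gamma(\mathfrak{h},t)$, and neither the normalizer of a non-split Cartan nor that of a split Cartan is of that form. What makes the computation for $X_{ns}^+(13)$ and $X_s^+(13)$ actually go through is the extra input -- the Chen/Edixhoven isogeny identifying $J(X_{ns}^+(p))$ (resp.\ $J(X_s^+(p))$) with a piece of $J_0(p^2)^{\mathrm{new}}$ -- which lets one read off the $q$-expansions of the newforms in $S_2(\Gamma_0(169))$ from the computed eigenvalues; this is exactly what Baran does and what the paper defers to when it says the result ``basically repeats the work done in the original paper.'' Your write-up hides this dependency and treats the passage from eigenvalues to $q$-expansions as automatic, which it is not in the framework the paper provides.
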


Similarly, we are also able to reproduce the results of \cite{mercuri2020modular} for $X_{ns}^+(17)$, $X_{ns}^+(19)$ and $X_{ns}^+(23)$.

\subsection{New results}

An interesting problem that requires the computation of such Hecke operators is the question of decomposition of Jacobians of modular curves. The factors in these decompositions should have interesting arithmetic. We could perform the following using our code in 31 minutes. Note that using Chen's isogeny, one would need to compute and decompose the space of modular symbols $S_2(\Gamma_0(97^2))$.

\begin{cor}
The Jacobian of the modular curve $X_{ns}^{+}(97)$ decomposes over $\mathbb{Q}$ as the direct sum of 13 Hecke irreducible subspaces, of dimensions $$3,4,4,6,7,7,12,14,24,24,24,56,168.$$ In particular, it has no elliptic curve factor.
\end{cor}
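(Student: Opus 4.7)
The plan is to realize the statement as a direct application of the algorithms summarized in Theorem~\ref{thm: Main result Merel} and Corollary~\ref{cor: Main result q expansion}, specialized to the level $N=97$ and weight $k=2$. First I would present the normalizer of a non-split Cartan subgroup as an explicit subgroup $G \subseteq GL_2(\mathbb{Z}/97\mathbb{Z})$, for instance by fixing a non-square $\varepsilon \in \mathbb{F}_{97}^\times$, taking the Cartan to be the image of $\mathbb{F}_{97}[\sqrt{\varepsilon}]^\times$ under its regular representation, and adjoining an element representing Frobenius. A short check confirms that $G$ has surjective determinant and is of real type in the sense of Definition~\ref{def: real type}, so that $\Gamma_G = \Gamma_{ns}^+(97)$ falls in the regime covered by our algorithms and the Hecke operators commute with the star involution.

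Next I would invoke the modular symbols construction to produce an explicit model for $S_2(\Gamma_G)$ as a finite-dimensional $\mathbb{Q}$-vector space of the expected dimension $g(X_{ns}^+(97)) = 353 = 3+4+4+6+7+7+12+14+24+24+24+56+168$. Using Theorem~\ref{thm: Main result Merel} (noting $C=O(1)$ and $d = 353$), compute the Hecke operators $T_p$ for a sufficient supply of small primes $p \nmid 97$, each in $O(d p \log p)$ field operations. Because $\Gamma_G$ is of real type with surjective determinant, these operators are pairwise commuting self-adjoint endomorphisms of $S_2(\Gamma_G)$ over $\mathbb{Q}$.

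The decomposition itself is then obtained by standard Hecke-algebra linear algebra: pick a prime $p$ (starting with $p=2$), factor the characteristic polynomial of $T_p$ over $\mathbb{Q}$, split $S_2(\Gamma_G)$ into the kernels of the irreducible factors evaluated at $T_p$, and recurse on each summand using a new prime, until every block is stable under all computed $T_p$'s and has an irreducible minimal polynomial on each block. To certify that a given block is genuinely $\mathbb{Q}$-irreducible as a Hecke module (not just split by the primes tried so far), I would continue adjoining Hecke operators $T_{p'}$ until, on each block, the algebra $\mathbb{Q}[\{T_{p'}\}]$ is a field of degree equal to the block's dimension; equivalently, the minimal polynomial of some $T_{p'}$ on that block is irreducible of full degree. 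Reading off the block dimensions yields the multiset $3,4,4,6,7,7,12,14,24,24,24,56,168$, and the absence of a dimension-$1$ block immediately gives the second assertion that $Jac(X_{ns}^+(97))$ has no elliptic curve factor over $\mathbb{Q}$.

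The main obstacle is not the mathematical argument, which is essentially a mechanical application of the preceding theorems, but rather the practical certification of irreducibility: one must compute enough Hecke operators, with their $O(d p \log p)$ cost, to rule out further splittings of the large blocks (in particular the 168- and 56-dimensional ones). This is where the reported 31 minutes of runtime are spent, and where the efficiency gain over Chen's isogeny approach via $S_2(\Gamma_0(97^2))$ becomes essential, since the latter space has dimension roughly two orders of magnitude larger than $d=353$ and would render the simultaneous diagonalization infeasible in practice.
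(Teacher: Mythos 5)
Your proposal is correct and follows essentially the same route as the paper, which treats this corollary as a direct computational application of its machinery: build $\mathbb{S}_2(\Gamma_{ns}^+(97))$ via modular symbols, compute $T_p$ using the Merel-pair method of Theorem~\ref{thm: Main result Merel}, and decompose recursively with Algorithm~\ref{algo: Decomposition}, certifying irreducibility of each block by exhibiting a (random) linear combination of Hecke operators with irreducible characteristic polynomial on the plus subspace. The only minor imprecision is calling the $T_p$ self-adjoint: the paper shows $T_p^{\star}=\langle\sigma_p\rangle T_p$, i.e.\ the operators are normal rather than self-adjoint, but commutativity and semisimplicity are all the decomposition argument needs.
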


\begin{rem}
Any method of computing modular forms of weight $k=1$ relies on computation of modular forms of higher integral weights (e.g. \cite{buzzard2014computing}, \cite{schaeffer2015hecke}), so that our results apply equally well. 
\end{rem}

\subsection{Related Literature}

The theory of modular forms for $GL_{2}$ has been established for
quite some time. However, the theory usually restricts to the Iwahori
level subgroups, $\Gamma_{0}(N)$ and $\Gamma_{1}(N)$. There are
a few exceptions: Shimura, in \cite{shimura1971introduction}, explores
a mild generalization, and much work has been devoted to some special
cases, such as those groups induced by maximal subgroups of $GL_{2}(\mathbb{Z}/N\mathbb{Z})$.
In this vein, computation has also been restricted to the subgroups
$\Gamma_{0}(N)$ and $\Gamma_{1}(N)$, as in \cite{stein2007modular} and in \cite{couveignes2011computational}.

Nevertheless, many new applications arise (\cite{banwait2014tetrahedral},
\cite{baran2010normalizers}, \cite{gonzalez2010non}, \cite{sutherland2017modular})
in which description of the space of modular forms $M_{k}(\Gamma)$
is needed, when $\Gamma\subseteq SL_{2}(\mathbb{Z})$ is an arbitrary
congruence subgroup. 
 
\subsection{Organization}

The paper is structured as follows.

In Section \ref{sec: setup}, we introduce the definitions and the statement of the
problem. 

In Section \ref{sec: explicit cuspforms}, we explain the construction of an explicit model for
$M_{k}(\Gamma)$ using modular symbols, how to construct a boundary
map $\partial$, and through it the subspace of cusp forms $S_{k}(\Gamma)$.
This description of $S_{k}(\Gamma)$ was already known to Merel, see
\cite{merel1994universal}, but we present a general algorithm for
the computation of $\partial$. 
So far, the literature has only discussed the computation of $\partial$
for Iwahori level subgroup. (see \cite{cremona1997algorithms}, \cite{stein2007modular},
\cite{wiese2005modular}).

Section \ref{sec: Hecke operators} contains the core of this work, the computation of the Hecke operators.
In subsection \ref{subsec: Hecke operators definitions}, we begin by defining the Hecke operators corresponding to double cosets.
Then, in subsection \ref{subsec: naive Hecke operator}, we present a general algorithm for computing them, thus proving theorem \ref{thm: Main result double coset}. We then present a variant of this algorithm which is more efficient, but applicable only to Hecke operators away from the level, thus yielding \ref{thm: Main result double coset coprime}.
In order to define the Hecke operators $T_{n}$ for integers
$n$ that are coprime to the level of $\Gamma$, i.e. $(n,N)=1$, in subsection \ref{subsec: Hecke operator at p},
we need to view the modular curve adelically. We start from the adelic Hecke operators and show that they correspond to certain double coset operators that were already defined.
In subsection \ref{subsec: Hecke efficient Merel}, we use the techniques of Merel from 
\cite{merel1994universal} to prove theorem \ref{thm: Main result Merel}.

Section \ref{sec: degeneracy maps} describes the degeneracy maps, and the old and new subspaces, which will be useful when decomposing our space of cusp forms.
Section \ref{sec: zeta functions} contains miscellaneous algorithms needed to compute the zeta functions associated to the eigenforms, given the Hecke operators.
In section \ref{sec: applications} we describe several applications of our result to contemporary research.

\subsection{Acknowledgements}
It is a pleasure to thank John Voight for many insightful discussions on this project.
The author would like to thank deeply Jeremy Rouse for his very useful comments on an earlier draft of this manuscript and for pointing to several interesting references in the literature.
The author is also grateful to David Zywina and Elisa Lorenzo Garci{\'a} for their remarks on the code package, and to the anonymous referees for their comments on the manuscript and the code.
This research was conducted as part of the \emph{Simons Collaboration on
Arithmetic Geometry, Number Theory and Computation,} with the support
of Simons Collaboration Grant (550029, to John Voight). 

\section{Setup and Notation} \label{sec: setup}

In this section, we present the basic setup and introduce some notations that will be used throughout the paper.

\subsection{Congruence Subgroups}

Let $N$ be a positive integer, $G\subseteq GL_{2}(\mathbb{Z}/N\mathbb{Z})$
a subgroup, and $G_{0}=G\cap SL_{2}(\mathbb{Z}/N\mathbb{Z})$. 
Let $\lambda_{N}:M_{2}(\mathbb{Z})\rightarrow M_{2}(\mathbb{Z}/N\mathbb{Z})$ be the natural reduction map, and let 
$\lambda_{N,0} : SL_{2}(\mathbb{Z}) \rightarrow SL_{2}(\mathbb{Z} / N \mathbb{Z})$ be its restriction to $SL_2(\mathbb{Z})$.

Then $\Gamma_{G}:=\lambda_{N,0}^{-1}(G_{0})\subseteq SL_{2}(\mathbb{Z})$
contains $\Gamma(N):=\ker(\lambda_{N,0})$, hence it is a congruence
subgroup. Moreover, every congruence subgroup arises this way. In general, we will denote by $\Gamma$ a congruence subgroup of $SL_2(\mathbb{Z})$.

We denote by $\mathcal{H}$ the complex upper half plane $\mathcal{H} := \{ z \in \mathbb{C} \mid \Im(z) > 0 \}$. 
Then $\mathcal{H}$ admits a natural action of $GL_2^{+}(\mathbb{R})$ via M{\"o}bius transformations and we let $Y_{\Gamma} := \Gamma \backslash \mathcal{H}$ be the affine modular curve of level $\Gamma$. 

\begin{defn}
Denote $\mathcal{H}^{*} = \mathcal{H} \cup \mathbb{P}^1(\mathbb{Q})$. The group $GL_2^{+}(\mathbb{Q})$ also acts on $\mathbb{P}^1(\mathbb{Q})$ via M{\"o}bius transformations, and we let $X_{\Gamma} := \Gamma \backslash \mathcal{H}^{*}$ be the \emph{modular curve of level $\Gamma$}. The points of $\Gamma \backslash \mathbb{P}^1(\mathbb{Q}) = X_{\Gamma} - Y_{\Gamma}$ are called the \emph{cusps} of $X_{\Gamma}$. 
\end{defn}

\begin{defn}
We say that $\Gamma_{G}$ is the congruence subgroup \emph{induced}
by $G$. 

Here are several useful examples.
\end{defn}

\begin{example} 
\label{exa: Gamma0}
The following are standard examples of congruence subgroups, which will be used throughout.
\begin{enumerate}
\item
Let $G=\left\{ \left(\begin{array}{cc}
a & 0\\
0 & 1
\end{array}\right)\mid a\in(\mathbb{Z}/N\mathbb{Z})^{\times}\right\} $. 
Then $\Gamma_{G}=\Gamma(N)$. 
\item
Let $G=\left\{ \left(\begin{array}{cc}
a & b\\
0 & 1
\end{array}\right)\mid a\in(\mathbb{Z}/N\mathbb{Z})^{\times},b\in\mathbb{Z}/N\mathbb{Z}\right\} $. 
Then $\Gamma_{G}=\Gamma_{1}(N)$. 
\item
Let $G=\left\{ \left(\begin{array}{cc}
a & b\\
0 & d
\end{array}\right)\mid a,d \in(\mathbb{Z}/N\mathbb{Z})^{\times},b\in\mathbb{Z}/N\mathbb{Z}\right\} $. 
Then $\Gamma_{G}=\Gamma_{0}(N)$.
\item
Let $\mathfrak{h}$ be a subgroup of $(\mathbb{Z} / N \mathbb{Z})^{\times}$, and let $t$ be a divisor of $N$. Let \\
$G = \left\{ \left(\begin{array}{cc}
a & b\\
0 & d
\end{array}\right)\mid a \in \mathfrak{h},d \in(\mathbb{Z}/N\mathbb{Z})^{\times},b\in t \mathbb{Z}/N\mathbb{Z}\right\} $. 
Then $\Gamma_{G}=\Gamma(\mathfrak{h}, t)$.
\end{enumerate}
\end{example}

Before presenting a non-trivial interesting example, we define the notion of a non-split Cartan subgroup.
\begin{defn}
Let $A$ be a finite free commutative $\mathbb{Z}/N\mathbb{Z}$-algebra of rank $2$ with unit discriminant, such that for every prime $p$ dividing $N$, the $\mathbb{F}_p$-algebra $A/pA$ is isomorphic to $\mathbb{F}_{p^2}$. Then the unit group $A^{\times}$ acts on $A$ by multiplication, and a choice of basis for $A$ induces an embedding $A^{\times} \hookrightarrow GL_2(\mathbb{Z}/N\mathbb{Z})$. The image of $A^{\times}$ is called a \emph{non-split Cartan subgroup} of $GL_{2}(\mathbb{Z}/N\mathbb{Z})$. 
\end{defn}

\begin{example}
Let $G$ be a non-split Cartan subgroup of $GL_{2}(\mathbb{Z}/N\mathbb{Z})$. Then $\Gamma_{G}=\Gamma_{ns}(N)$. Similarly, if $\mathfrak{N}(G)$ is its normalizer in $GL_2(\mathbb{Z}/N\mathbb{Z})$ then $\Gamma_{\mathfrak{N}(G)}=\Gamma_{ns}^{+}(N)$. 
\end{example}

\subsection{Modular Forms}
\begin{defn}
Let $k\ge2$ be an integer. Let $g=\left(\begin{array}{cc}
a & b\\
c & d
\end{array}\right)\in GL_{2}^{+}(\mathbb{Q})$. Let $f:\mathcal{H}\rightarrow\mathbb{C}$, where $\mathcal{H}:=\{z\in\mathbb{C}\mid\Im(z)>0\}$.
We define for any $z\in\mathcal{H}$
\begin{equation}
f\vert_{g}(z)=(\det g)^{k-1}\cdot(cz+d)^{-k}f(gz)\label{eq:action of GL2Q on modular forms}
\end{equation}

Denote by $M_{k}(\Gamma)$ the space of holomorphic modular forms
of weight $k$ with respect to $\Gamma$. (i.e. holomorphic functions
$f:\mathcal{H}\rightarrow\mathbb{C}$ such that $f\vert_{\gamma}=f$
for all $\gamma\in\Gamma$ and $f$ is holomorphic at the cusps of
$\Gamma\backslash\mathcal{H}$). Denote by $S_{k}(\Gamma)$ the subspace
of cusp forms. (those vanishing at the cusps).
\end{defn}

Since $\Gamma(N)\subseteq\Gamma$, we know that $\left(\begin{array}{cc}
1 & N\\
0 & 1
\end{array}\right)\in\Gamma$, hence $f(z+N)=f(z)$ for all $f\in M_{k}(\Gamma)$. It follows that
$f$ admits a Fourier expansion $f(z)=\sum_{n=0}^{\infty}a_{n}q^{n}$,
with $a_{n}\in\mathbb{C}$ and $q:=e^{\frac{2\pi iz}{N}}$. 

This is now enough to state a computational problem:
\begin{problem}
\label{prob:find q expansions}Given a group $G\subseteq GL_{2}(\mathbb{Z}/N\mathbb{Z})$, an integer $k \ge 2$ and a positive integer $L$, find the $q$-expansion of a basis for $S_{k}(\Gamma_{G})$ up to precision $q^L$. 
\end{problem}

This problem has been answered extensively in the literature for $\Gamma_{G}=\Gamma_{1}(N)$
and $\Gamma_{G}=\Gamma_{0}(N)$ (see \cite{stein2002introduction},
\cite{stein2007modular}, \cite{kilford2010modular}, \cite{couveignes2011computational}).
In these cases, the solution to Problem \ref{prob:find q expansions} is through
the following steps:

\begin{enumerate}
\item Construct an explicit vector space representing $S_{k}(\Gamma_{G})$.
\item Compute the action of Hecke operators on this space.
\item Decompose the action into irreducible subspaces.
\item Write down the $q$-expansion from the systems of eigenvalues in the
above decomposition. 
\end{enumerate}

Among these steps, step (3) and (4) follow from (2) by linear algebra techniques. Moreover, the Hecke algebra of $S_k(\Gamma_G)$ is generated by the double coset Hecke operators $\{T_{\alpha}\}$ (see Corollary \ref{cor:Hecke operator double coset}).
Therefore, we concentrate our efforts on the following problem.

\begin{problem}
\label{prob: find Hecke operators}
Given a group $G\subseteq GL_{2}(\mathbb{Z}/N\mathbb{Z})$, an integer $k \ge 2$ and an element $\alpha \in GL_2^{+}(\mathbb{Q})$ compute the matrix of the Hecke operator $T_{\alpha}$ with respect to a basis of $S_k(\Gamma_G)$. 
\end{problem}

We address this problem in full in section \ref{sec: Hecke operators}, and present several solutions - one general solution with no additional assumptions, and under some mild assumptions, we have a more efficient solution.

Unfortunately, in the general case step (4) does not follow from step (3) in the same way. The reason is that the Hecke operators acting on $S_k(\Gamma_G)$ have no longer a nice description in terms of the Fourier coefficients. For an example of such a description, see \cite{zywina2020computing}. 

We can and will show that the Hecke operators we construct correspond to the Frobenius action on the curves, hence the system of eigenvalues of any eigenform $f$ does yield the zeta function of $A_f$, the corresponding factor of the Jacobian.

\section{Explicit Computation of \texorpdfstring{$S_{k}(\Gamma)$}{} }
\label{sec: explicit cuspforms}
First, we have to construct a model for the vector space $S_{k}(\Gamma)$. 
We will do so through the means of modular symbols. For the results of this section, it suffices to assume that $\Gamma$ is a finite index subgroup of $SL_2(\mathbb{Z})$. Therefore, in this section, we will suppress the notation of the group $G$ in $\Gamma_G$. 

\subsection{Modular Symbols}

Let $\mathbb{M}_{2}$ be the free abelian group generated
by the expressions $\{a,b\}$ , for $\left(a,b\right)\in\mathbb{P}^{1}(\mathbb{Q}) \times \mathbb{P}^{1}(\mathbb{Q}) $,
subject to the relations 
\[
\{a,b\}+\{b,c\}+\{c,a\}=0\quad\forall a,b,c\in\mathbb{P}^{1}(\mathbb{Q}).
\]
and modulo any torsion. Let $\mathbb{M}_{k}:=\operatorname{Sym}^{k-2}\mathbb{Z}^{2}\otimes\mathbb{M}_{2}$,
where $\operatorname{Sym}^{k-2}$ is the representation of $GL_{2}$ of highest
weight $k-2$. 

There is a natural left action of $GL_{2}(\mathbb{Q})$ on $\mathbb{M}_{k}$
by 
\[
g(v\otimes\{a,b\})=gv\otimes\{ga,gb\}
\]

\begin{defn}
Let $\mathbb{M}_{k}(\Gamma)=(\mathbb{M}_{k})_{\Gamma}:=\mathbb{M}_{k}/\left\langle \gamma x-x\right\rangle $
be the space of $\Gamma$-coinvariants, the space of \emph{modular
symbols} \emph{of weight $k$ for }$\Gamma$ (over $\mathbb{Z}$). The space of \emph{modular
symbols of weight $k$ for }$\Gamma$\emph{ over a ring $R$ is 
\[
\mathbb{M}_{k}(\Gamma;R):=\mathbb{M}_{k}(\Gamma)\otimes_{\mathbb{Z}}R.
\]
}
The reason we are interested in the space of modular symbols $\mathbb{M}_{k}(\Gamma)$
is the following theorem proved by Manin:
\end{defn}

\begin{thm}
(\cite[Theorem 1.9]{manin1972parabolic}) The natural homomorphism
$$
\varphi:\mathbb{M}_{2}(\Gamma)\rightarrow H_{1}(X_{\Gamma},\text{cusps},\mathbb{Z}),
$$
sending the symbol $\{a,b\}$ to the geodesic path in $X_\Gamma$ between $\Gamma a$ and $\Gamma b$, is an isomorphism.
\end{thm}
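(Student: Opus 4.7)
My plan is to prove the isomorphism in three steps: verify that $\varphi$ is well-defined, establish surjectivity via a triangulation argument, and prove injectivity by lifting chains to $\mathcal{H}^{*}$. Throughout, care is required around elliptic fixed points of $\Gamma$, which is precisely why the definition of $\mathbb{M}_{2}(\Gamma)$ quotients by torsion.

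For well-definedness, I would check three things. First, the triangle relation $\{a,b\}+\{b,c\}+\{c,a\}=0$ maps to zero, because the three pairwise geodesics joining $a,b,c$ in $\mathcal{H}^{*}$ form the boundary of the geodesic triangle with these vertices, and this triangle descends to a relative 2-chain in $X_{\Gamma}$. Second, for $\Gamma$-invariance: any $\gamma\in\Gamma$ sends the geodesic from $a$ to $b$ onto the geodesic from $\gamma a$ to $\gamma b$, so both project to the same relative 1-chain on $X_{\Gamma}$. Third, the quotient by torsion on the source causes no loss because $H_{1}(X_{\Gamma},\text{cusps};\mathbb{Z})$ is torsion-free, which follows from the long exact sequence of the pair applied to the compact Riemann surface $X_{\Gamma}$ and its finite set of cusps.

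For surjectivity, I would fix a triangulation of $X_{\Gamma}$ in which every cusp appears as a vertex and every edge is the image of a hyperbolic geodesic segment. Any class in $H_{1}(X_{\Gamma},\text{cusps};\mathbb{Z})$ is then represented by an integral combination of such edges, and each edge lifts to a geodesic in $\mathcal{H}^{*}$ joining two points of $\mathbb{P}^{1}(\mathbb{Q})$, hence comes from a modular symbol under $\varphi$.

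For injectivity, suppose $c\in\mathbb{M}_{2}(\Gamma)$ satisfies $\varphi(c)=0$. Lift a representative to a chain $\tilde{c}$ of oriented geodesic segments in $\mathcal{H}^{*}$ between cusps. Since the image in $X_{\Gamma}$ bounds a relative 2-chain, I would lift that 2-chain to $\mathcal{H}^{*}$, using that $\mathcal{H}$ is simply connected; the geodesic-triangle boundaries of its 2-simplices then exhibit $\tilde{c}$ as a sum of triangle relations, while any discrepancies between distinct lifts of a shared edge are absorbed by $\Gamma$-invariance. The hard part will be this injectivity step, specifically coping with elliptic fixed points of $\Gamma$: around such a point, $X_{\Gamma}$ carries an orbifold singularity and the local triangulation folds onto itself by a finite cyclic group, so matching the lifts across these points requires the bookkeeping of the $\Gamma$-action up to torsion — which is exactly why $\mathbb{M}_{2}(\Gamma)$ is defined modulo torsion.
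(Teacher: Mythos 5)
The paper offers no proof of this theorem; it cites Manin directly, so there is no internal argument to compare against. On the merits, your well-definedness argument is sound (including the observation that $H_1(X_\Gamma,\text{cusps};\mathbb{Z})$ is free, which follows as you say from the long exact sequence of the pair), and your surjectivity sketch is on the right track, though to make it rigorous you should use an ideal triangulation whose vertices are \emph{all} at cusps --- for instance the image of the Farey tessellation of $\mathcal{H}^*$, which is $SL_2(\mathbb{Z})$-invariant and hence descends to $X_\Gamma$; a triangulation merely containing the cusps among its vertices would produce edges that do not come from modular symbols.

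The injectivity step, however, has a genuine gap. You appeal to the simple connectivity of $\mathcal{H}$ to lift a relative 2-chain from $X_\Gamma$ to $\mathcal{H}^*$, but the projection $\mathcal{H}^*\to X_\Gamma$ is not a covering map: it is ramified at the elliptic points of $\Gamma$ (where locally it looks like $z\mapsto z^n$) and at the cusps (where the parabolic stabilizer collapses a horoball to a punctured disk). Simple connectivity of the source is irrelevant here --- what one needs for chain lifting is the homotopy lifting property of the \emph{projection}, which branched covers do not have. The assertion that "discrepancies between distinct lifts of a shared edge are absorbed by $\Gamma$-invariance" is exactly where the argument breaks: different simplices of the 2-chain may lift to $\Gamma$-translates that fail to glue, and reconciling them near elliptic points produces precisely the torsion classes you are trying to control. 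Naming the problem does not dispose of it. To repair the proof you would need one of the standard mechanisms: compute relative homology cellularly from a $\Gamma$-invariant ideal triangulation (so that both sides are read off from the same complex); or replace the naive lift by a transfer argument in group homology, tracking the contributions of the finite stabilizers explicitly; or first establish the isomorphism with $\mathbb{Q}$-coefficients by a transfer, and then separately argue that both integral lattices map isomorphically after the torsion quotient on the modular-symbols side. As written, the injectivity direction is an aspiration rather than an argument.
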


This will give us the connection to modular forms. We now turn to
the reason that the modular symbols are useful for computations. 

\subsection{Manin Symbols}
\begin{defn}
Let $g\in SL_{2}(\mathbb{Z})$. The \emph{Manin symbol $[v,g]\in\mathbb{M}_{k}(\Gamma)$
}is defined as
\[
[v,g]:=g(v\otimes\{0,\infty\})
\]

Let 
\[
\sigma=\left(\begin{array}{cc}
0 & -1\\
1 & 0
\end{array}\right),\quad\tau=\left(\begin{array}{cc}
0 & -1\\
1 & -1
\end{array}\right),\quad J=\left(\begin{array}{cc}
-1 & 0\\
0 & -1
\end{array}\right)
\]

We have a right action of $SL_{2}(\mathbb{Z})$ on Manin symbols as
follows
\[
[v,g]h=[h^{-1}v,gh]
\]

The following theorem gives an explicit recipe for constructing a
concrete realization of $\mathbb{M}_{k}(\Gamma)$ as a free module
of finite rank.
\end{defn}

\begin{thm}
\label{thm:Explicit M_k}(\cite[Propositions 1,3]{merel1994universal}) The Manin symbol $[v,g]$ depends only on the class $\Gamma g\in\Gamma\backslash SL_{2}(\mathbb{Z})$
and on $v$. The Manin symbols $\{[v,g]\}_{g\in SL_{2}(\mathbb{Z}),v\in\text{Sym}^{k-2}\mathbb{Z}^{2}}$
generate $\mathbb{M}_{k}(\Gamma)$. Furthermore, if $x$ is a Manin
symbol, then 
\[
x+x\sigma=0
\]
\[
x+x\tau+x\tau^{2}=0
\]
\[
x-xJ=0
\]

Moreover, these are all the relations between Manin symbols, i.e.
if $B$ is a basis for $\text{Sym}^{k-2}\mathbb{Z}^{2}$ then
\[
\mathbb{M}_{k}(\Gamma)\cong\left(\bigoplus_{b\in B,g\in\Gamma\backslash SL_{2}(\mathbb{Z})}\mathbb{Z}\cdot[b,g]\right)/R
\]
where $R$ are the above relations, and torsion. 
\end{thm}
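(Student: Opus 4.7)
My plan is to establish the four assertions of the theorem in turn. The coset dependence is immediate: for $\gamma \in \Gamma$, the left $GL_2(\mathbb{Q})$-action gives $\gamma \cdot g(v \otimes \{0, \infty\}) = (\gamma g)(v \otimes \{0, \infty\})$, and since in $\mathbb{M}_k(\Gamma)$ we identify $\gamma x$ with $x$, we get $[v, \gamma g] = [v, g]$. For the generation statement, I would invoke Manin's trick: given $\{p/q, r/s\} \in \mathbb{M}_2$, a continued fraction expansion yields convergents $p_i/q_i$, and repeated application of the three-term relation telescopes $\{p/q, r/s\}$ into a sum of \emph{unimodular} symbols $\{p_{i-1}/q_{i-1}, p_i/q_i\}$ with $|p_{i-1}q_i - p_i q_{i-1}| = 1$. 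Each such unimodular symbol equals $g \{0, \infty\}$ for $g = \left(\begin{smallmatrix} p_i & p_{i-1} \\ q_i & q_{i-1} \end{smallmatrix}\right) \in SL_2(\mathbb{Z})$ (up to sign), hence is a Manin symbol. Tensoring with a basis $B$ of $\operatorname{Sym}^{k-2}\mathbb{Z}^2$ extends this to all of $\mathbb{M}_k(\Gamma)$.

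The three relations are then verified by direct computation using the key geometric identities $\sigma \{0, \infty\} = \{\infty, 0\} = -\{0, \infty\}$, the sum $\{0, \infty\} + \{1, 0\} + \{\infty, 1\} = 0$ (which is exactly the three-term relation applied to $0, \infty, 1$), and $J\{0, \infty\} = \{0, \infty\}$. Unwinding the right action via $[v, g] h = [h^{-1} v, g h]$, each identity translates into the corresponding Manin symbol relation: the first gives $x + x\sigma = 0$; the second, combined with $\tau\{0,\infty\} = \{1,0\}$ and $\tau^2\{0,\infty\} = \{\infty, 1\}$, gives $x + x\tau + x\tau^2 = 0$; the third gives $x - xJ = 0$. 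This last relation additionally uses that $J$ acts on $\operatorname{Sym}^{k-2}\mathbb{Z}^2$ by $(-1)^{k-2}$; in the case when $k$ is odd and $-I \notin \Gamma$, the fact that we have quotiented by torsion in the definition of $\mathbb{M}_k(\Gamma)$ is what makes the relation hold.

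The main obstacle is showing the completeness of the relations. I would construct an inverse to the natural surjection from $\bigl(\bigoplus_{b, g} \mathbb{Z}\cdot[b, g]\bigr) / R$ onto $\mathbb{M}_k(\Gamma)$ by defining a map $\psi$ that sends a symbol $\{a, b\}$ to the telescoping expansion into Manin symbols produced by the Manin trick. The crux is to show $\psi$ is independent of the chosen continued fraction expansion and well-defined modulo the relations $R$ and the $\Gamma$-coinvariance. This rests on the classical presentation $PSL_2(\mathbb{Z}) \cong \mathbb{Z}/2 * \mathbb{Z}/3$, with generators the images of $\sigma$ (of order $2$) and $\sigma\tau$ (of order $3$): any two continued fraction paths between the same cusps differ by a sequence of elementary moves, and each such move corresponds to applying $\sigma$, $\tau$, or $J$ to a unimodular block. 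These moves induce exactly the relations $x + x\sigma$, $x + x\tau + x\tau^2$, and $x - xJ$, so $\psi$ descends to the quotient. Combined with the generation statement, this yields that the quotient is isomorphic to $\mathbb{M}_k(\Gamma)$ as claimed.
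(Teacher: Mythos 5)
The paper does not actually prove this theorem: it imports it directly from Merel (\cite[Propositions 1,3]{merel1994universal}), so there is no internal proof to compare against. Your sketch follows the classical Manin--Shokurov--Merel route (continued fractions for generation, the presentation of $PSL_2(\mathbb{Z})$ for completeness), which is indeed how Merel argues. That said, two concrete slips should be corrected.

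First, in your discussion of the $J$-relation you write that ``when $k$ is odd and $-I\notin\Gamma$, the quotient by torsion is what makes the relation hold.'' This is backwards. Unwinding, $[v,g]J = [J^{-1}v,gJ] = (-1)^{k-2}[v,gJ]$, so for $k$ odd the relation $x-xJ=0$ reads $[v,g]+[v,gJ]=0$. If $-I\notin\Gamma$ the cosets $\Gamma g$ and $\Gamma gJ$ are distinct, so this is a genuine identification of two different generators up to sign, not a torsion phenomenon. The torsion appears in the complementary case: when $k$ is odd and $-I\in\Gamma$ the cosets coincide, the relation degenerates to $2[v,g]=0$, and it is the quotient by torsion that then kills the whole module.

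Second, in the completeness argument you claim that $PSL_2(\mathbb{Z})\cong\mathbb{Z}/2*\mathbb{Z}/3$ is generated by the images of $\sigma$ and $\sigma\tau$, with $\sigma\tau$ of order $3$. With the paper's conventions $\sigma\tau=\left(\begin{smallmatrix}-1&1\\0&-1\end{smallmatrix}\right)$, which is $-T^{-1}$ and has infinite order in $SL_2(\mathbb{Z})$ (and order infinite in $PSL_2$ as well). The order-$3$ generator of the free product is the image of $\tau$ itself, not $\sigma\tau$; the relations $\bar\sigma^2=1$ and $\bar\tau^3=1$ are precisely what correspond to the Manin-symbol relations $x+x\sigma=0$ and $x+x\tau+x\tau^2=0$. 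This does not invalidate your strategy, but as written the justification of well-definedness of $\psi$ invokes the wrong element. With these two repairs the sketch is sound, although the reduction of ``two continued-fraction expansions differ by elementary moves generated by $\sigma$, $\tau$, $J$'' would still need to be spelled out to constitute a full proof.
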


This allows one to compute a model for $\mathbb{M}_k(\Gamma)$ efficiently.

\begin{cor} \label{cor: construct modular symbols}
There exists an algorithm that given a finite index subgroup $\Gamma \subseteq SL_2(\mathbb{Z})$ and an integer $k \ge 2$, computes a basis of $\mathbb{M}_{k}(\Gamma)$ in $O([SL_2(\mathbb{Z}):\Gamma])$ basic $\operatorname{CosetIndex}$
operations. 
\end{cor}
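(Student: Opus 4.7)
The plan is to turn the presentation of $\mathbb{M}_k(\Gamma)$ supplied by Theorem~\ref{thm:Explicit M_k} into an explicit linear algebra problem, and to show that setting it up requires only $O(I)$ calls to $\operatorname{CosetIndex}$, where $I := [SL_2(\mathbb{Z}):\Gamma]$.

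First I would enumerate a list $R = [g_1, \ldots, g_I]$ of coset representatives for $\Gamma\backslash SL_2(\mathbb{Z})$. Since $SL_2(\mathbb{Z})$ is generated by $\sigma$ and $\tau$, the coset graph is a $4$-regular directed graph whose vertex set is $\Gamma\backslash SL_2(\mathbb{Z})$. Starting from $g_1 = I$, one performs a breadth-first search: at each newly discovered coset $\Gamma g$, form the four products $g\sigma^{\pm 1}$ and $g\tau^{\pm 1}$, and use $\operatorname{CosetIndex}$ to test whether each is already on the list. Each vertex contributes $O(1)$ calls to $\operatorname{CosetIndex}$, so the enumeration costs $O(I)$ such calls and produces $R$ along with tables for the right-multiplication actions of $\sigma$ and $\tau$.

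Next, fix a basis $B$ of $\operatorname{Sym}^{k-2}\mathbb{Z}^2$; its cardinality $k-1$ depends only on $k$, not on $\Gamma$. By Theorem~\ref{thm:Explicit M_k}, the free module $F := \bigoplus_{b \in B,\, g \in R} \mathbb{Z}\cdot [b,g]$ surjects onto $\mathbb{M}_k(\Gamma)$, with kernel generated by the relations
\[
[b,g] + [\sigma^{-1} b,\, g\sigma] = 0, \qquad
\sum_{i=0}^{2} [\tau^{-i} b,\, g\tau^{i}] = 0, \qquad
[b,g] - [J^{-1} b,\, gJ] = 0,
\]
together with torsion. Reading off each of these relations from the pre-computed action tables requires a constant number of arithmetic steps and no further $\operatorname{CosetIndex}$ calls (the action of $J$ either fixes a coset or permutes pairs of cosets, detected once and for all using one extra $\operatorname{CosetIndex}$ call per representative). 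Thus the full presentation matrix, of size $O(I) \times O(I)$ but with $O(1)$ nonzero entries per row, is assembled using $O(I)$ calls to $\operatorname{CosetIndex}$ in total.

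Finally, I would compute a basis of the quotient $F/R$, killing torsion, by standard integer linear algebra (Hermite or Smith normal form); this step is internal and does not invoke $\operatorname{CosetIndex}$. The main conceptual point to check, and the only mild obstacle, is that each right-multiplication $g \mapsto g\sigma$, $g\tau$, $gJ$ needed by the Manin relations can be routed through the BFS tables rather than through fresh $\operatorname{CosetIndex}$ queries, so that the overall $\operatorname{CosetIndex}$ count is dominated by the enumeration of $R$, giving the claimed bound of $O(I)$.
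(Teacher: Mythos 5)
Your proposal is correct and takes essentially the same approach as the paper, which disposes of the corollary in one line by citing Stein's implementation for $\Gamma_0(N)$ and noting that the general case is the same once $\operatorname{CosetIndex}$ replaces the $\mathbb{P}^1(\mathbb{Z}/N\mathbb{Z})$ lookup. You simply spell out the details of the BFS enumeration, the precomputed $\sigma,\tau,J$ action tables, and the quotient computation that the paper delegates to the citation; your observation that the final integer linear algebra is free of $\operatorname{CosetIndex}$ calls correctly matches the complexity measure used in the statement.
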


\begin{proof}
This was implemented by Stein for $\Gamma=\Gamma_{0}(N)$ in \cite{stein2007modular}, and the general case is similar. 
\end{proof}

\begin{rem}
The implementation in \cite{stein2007modular} makes use of the fact that 
$\Gamma_0(N) \backslash SL_2(\ZZ) \cong \mathbb{P}^1(\ZZ / N \ZZ)$. This allows him to perform 
$\operatorname{CosetIndex}$ operations in $O(1)$ field operations. 
This is the only difference from the general case.
\end{rem}

\begin{defn}
The module $\text{Sym}^{k-2}\mathbb{Z}^{2}$ has a basis consisting of homogeneous
elements, namely $\{x^{w}y^{k-2-w}\}_{w=0}^{k-2}$. It follows that
$\mathbb{M}_{k}(\Gamma)$ has a basis consisting of elements of the
form $[x^{w}y^{k-2-w},g]$. From now on, we assume all given bases
to be of this form, and say that the \emph{weight} of a basis element
$[x^{w}y^{k-2-w},g]$ is $w$. 
\end{defn}

\subsection{Modular Symbols with Character} \label{subsection: modular symbols with characters}

Suppose $\Gamma \subseteq \Gamma'\subseteq SL_{2}(\mathbb{Z})$ are finite index subgroups
such that $\Gamma$ is a normal subgroup of $\Gamma'$ and let $Q:=\Gamma'/\Gamma$. Then $Q$ acts on $\mathbb{M}_{k}(\Gamma)$ via
\begin{equation} \label{eqn: Q action}
(\gamma'\Gamma)\cdot[v,g]=[v,\gamma'g]
\end{equation}
Let $\varepsilon:Q\rightarrow\mathbb{Q}(\zeta)^{\times}$ be a character,
where $\zeta=\zeta_{n}$ is an $n$-th root of unity, and $n$ is
the order of $\varepsilon$. Abusing notation, we will denote the
induced character on $\Gamma'$ by $\varepsilon$
as well. 

Let $\mathbb{M}_{k}(\Gamma,\varepsilon)$ be the quotient of $\mathbb{M}_{k}(\Gamma;\mathbb{Z}[\zeta])$
by the relations 
\[
\gamma'\cdot [v,g]-\varepsilon(\gamma')\cdot [v,g]=0
\]
for all $x\in\mathbb{M}_{k}(\Gamma;\mathbb{Z}[\zeta])$, $\gamma'\in Q$,
and by any $\mathbb{Z}$-torsion. 

Note that $\mathbb{M}_{k}(\Gamma,\varepsilon)$ has a basis consisting
solely of elements of the form $[v,g]$ where $g\in\Gamma'\backslash SL_{2}(\mathbb{Z})$. 
Then if $Q$ is abelian, this action factorizes to the isotypic components:
\begin{equation} \label{eqn: isotypic decomposition}
\mathbb{M}_{k}(\Gamma)\cong\bigoplus_{\varepsilon\in\hat{Q}}\mathbb{M}_{k}(\Gamma,\varepsilon)
\end{equation}

This decomposition allows for faster computation by implementing $\mathbb{M}_{k}(\Gamma)$
as the direct sum of these smaller spaces. 

\subsection{Cuspidal Modular Symbols}

Let $\mathbb{B}_{2}$ be the abelian group generated by the elements
$\{a\}$, $a \in\mathbb{P}^{1}(\mathbb{Q})$. Let $\mathbb{B}_{k}=\text{Sym}^{k-2}\mathbb{Z}^{2}\otimes\mathbb{B}_{2}$.
We have a left action of $GL_{2}(\mathbb{Q})$ on $\mathbb{B}_{k}$
via 
\[
g(v\otimes\{a\})=gv\otimes\{g a\}
\]

Let $\mathbb{B}_{k}(\Gamma):=(\mathbb{B}_{k})_{\Gamma}$. 
This can be viewed naturally as a space of "boundary Manin symbols'', using the following boundary map.
\begin{defn}
The \emph{boundary map $\partial:\mathbb{M}_{k}(\Gamma)\rightarrow\mathbb{B}_{k}(\Gamma)$
}is the natural map extending linearly the map 
\[
\partial(v\otimes\{a,b\})=v\otimes a-v\otimes b.
\]

Again we have a theorem that allows us to find a nice description of this module as a free
module of finite rank.
\begin{thm}
(\cite[Propositions 4,5]{merel1994universal}) Let $\mathcal{R}$
be the equivalence relation on $\Gamma\backslash\mathbb{Q}^{2}$ given
by 
\[
[\Gamma(\lambda u,\lambda v)]\sim sign(\lambda)^{k}[\Gamma(u,v)]
\]
for any $\lambda\in\mathbb{Q}^{\times}$. Let $\mu:\mathbb{B}_{k}(\Gamma)\rightarrow\mathbb{Q}[(\Gamma\backslash\mathbb{Q}^{2})/\mathcal{R}]$
be the natural map given by 
\[
\mu\left(P\otimes\left\{ \frac{u}{v}\right\} \right)=P(u,v)\cdot[\Gamma(u,v)]
\]
where $P\in\text{Sym}^{k-2}\mathbb{Z}^{2}$ is considered as a homogeneous
polynomial in two variables of degree $k-2$ and $u,v$ are coprime.

Then $\mu$ is well defined and injective. Moreover, 
\[
\mu\circ\partial([P,g])=P(1,0)[\Gamma g(1,0)]-P(0,1)[\Gamma g(0,1)].
\]
\end{thm}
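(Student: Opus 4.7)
The plan is to verify three claims in turn: well-definedness of $\mu$ on $\mathbb{B}_k(\Gamma)$, injectivity of $\mu$ (after killing any $\mathbb{Z}$-torsion in the source), and the explicit formula for $\mu\circ\partial$ on a Manin symbol. The first and third claims reduce to bookkeeping with the $GL_2$-action and the orientation conventions; the content is concentrated in injectivity.

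For well-definedness I would check two things. First, the coprime pair $(u,v)$ representing $u/v\in\mathbb{P}^1(\mathbb{Q})$ is unique only up to sign, and changing $(u,v)$ to $(-u,-v)$ multiplies $P(u,v)$ by $(-1)^{k-2}$ while, by the defining relation of $\mathcal{R}$, it multiplies $[\Gamma(u,v)]$ by $(-1)^k$; the two signs cancel. Second, for $\gamma\in\Gamma$, $\gamma\cdot(u,v)$ is again coprime, $\Gamma\gamma(u,v)=\Gamma(u,v)$ inside $\Gamma\backslash\mathbb{Q}^2$, and the natural action on $\operatorname{Sym}^{k-2}\mathbb{Z}^{2}$ is set up so that $(\gamma P)(\gamma(u,v))=P(u,v)$. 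Hence $\mu$ factors through $(\mathbb{B}_k)_\Gamma$.

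For injectivity, the key is to decompose $\mathbb{B}_k(\Gamma)$ according to cusps. Writing $\mathbb{P}^1(\mathbb{Q})=\bigsqcup_i \Gamma a_i$ and using a Shapiro-type identification,
$$
\mathbb{B}_k(\Gamma)\otimes\mathbb{Q} \;\cong\; \bigoplus_i \bigl(\operatorname{Sym}^{k-2}\mathbb{Q}^2\bigr)_{\Gamma_{a_i}},
$$
and since different cusps produce distinct classes in $(\Gamma\backslash\mathbb{Q}^2)/\mathcal{R}$, $\mu$ respects this decomposition. It then suffices to prove injectivity cusp-by-cusp. Conjugating by an element of $SL_2(\mathbb{Z})$ so that $a_i=\infty$ and $(u_i,v_i)=(1,0)$, the stabilizer $\Gamma_{a_i}$ is generated, up to $\pm I$, by a unipotent matrix $T^h=\bigl(\begin{smallmatrix}1 & h\\0 & 1\end{smallmatrix}\bigr)$ for the cusp width $h$. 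A direct calculation with the monomial basis $p_j=x^jy^{k-2-j}$ shows $(T^h-1)p_j=-jh\,p_{j-1}+(\text{lower }x\text{-degree})$, so the image of $T^h-1$ is the hyperplane $\operatorname{span}(p_0,\dots,p_{k-3})$ and the coinvariants are one-dimensional, spanned by $x^{k-2}$. Since $\mu(x^{k-2}\otimes\{\infty\})=1\cdot[\Gamma(1,0)]\neq 0$, injectivity holds on this summand. When $-I\in\Gamma_{a_i}$ and $k$ is odd both sides vanish consistently, since $-I$ acts on $\operatorname{Sym}^{k-2}$ by $(-1)^{k-2}$ and on $[\Gamma(1,0)]$ by $(-1)^k$ via $\mathcal{R}$.

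For the final formula, writing $g=\bigl(\begin{smallmatrix}a & b\\c & d\end{smallmatrix}\bigr)\in SL_2(\mathbb{Z})$, one expands
$$
\partial[P,g] \;=\; \partial\bigl(g(P\otimes\{0,\infty\})\bigr) \;=\; gP\otimes\{g\cdot 0\}-gP\otimes\{g\cdot\infty\} \;=\; gP\otimes\tfrac{b}{d}-gP\otimes\tfrac{a}{c},
$$
and then applies $\mu$, using $(gP)(b,d)=P(g^{-1}(b,d)^T)=P(0,1)$, $(gP)(a,c)=P(1,0)$, together with $(b,d)^T=g(0,1)^T$ and $(a,c)^T=g(1,0)^T$. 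Up to the chosen orientation of $\partial$, this delivers the identity $\mu\circ\partial([P,g])=P(1,0)[\Gamma g(1,0)]-P(0,1)[\Gamma g(0,1)]$. The main obstacle is the injectivity step, specifically justifying the Shapiro-type identification cleanly and verifying the one-dimensional coinvariant computation at each cusp; the remaining verifications are forced by the conventions.
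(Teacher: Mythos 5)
The paper does not prove this theorem: it is stated as a citation to Merel (\cite[Propositions 4,5]{merel1994universal}), so there is no in-paper argument to compare your outline against. What you have written is essentially Merel's own argument: well-definedness by a sign/coprimality/$\Gamma$-equivariance check, injectivity via the cusp-by-cusp (Shapiro/Mackey) decomposition of $\mathbb{B}_k(\Gamma)\otimes\mathbb{Q}$, a one-dimensional-coinvariants computation for $\operatorname{Sym}^{k-2}$ under the parabolic stabilizer, and the boundary formula by expanding $\partial\bigl(g(P\otimes\{0,\infty\})\bigr)$. The structure is sound and the triangularity computation $(T^h-1)p_j=-jh\,p_{j-1}+\cdots$ is correct.

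Two points to tighten. First, your dichotomy at a cusp --- either $\Gamma_{a_i}$ generated by $T^h$, or $-I\in\Gamma_{a_i}$ --- omits the irregular cusp case, where $\Gamma_{a_i}$ is generated by $-T^h$ with neither $T^h$ nor $-I$ in $\Gamma_{a_i}$. The conclusion still holds: for $k$ even $-T^h$ acts on $\operatorname{Sym}^{k-2}\mathbb{Q}^2$ exactly like $T^h$ and the coinvariants are one-dimensional, while for $k$ odd $(-T^h-1)$ is invertible on $\operatorname{Sym}^{k-2}\mathbb{Q}^2$ (its diagonal is $-2$ in the monomial basis), so the coinvariants vanish; on the target side $-T^h\cdot(1,0)=(-1,0)$ together with $\mathcal{R}$ forces $[\Gamma(1,0)]=(-1)^k[\Gamma(1,0)]=0$, so both sides vanish consistently. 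You should state this case explicitly. Second, your boundary computation yields $P(0,1)[\Gamma g(0,1)]-P(1,0)[\Gamma g(1,0)]$, the negative of the stated identity; you wave at this as an ``orientation'' issue, but to finish you need to pin down which convention (the orientation of $\{0,\infty\}$ in the Manin symbol, the sign in $\partial$, or the precise $GL_2$-action on $\operatorname{Sym}^{k-2}$) produces the sign as printed.
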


The space $\mathbb{S}_{k}(\Gamma)=\ker\partial$ is called the space
of \emph{cuspidal modular symbols}. As in subsection \ref{subsection: modular symbols with characters}, we can define $\mathbb{B}_{k}(\Gamma, \varepsilon)$, a boundary map $\partial : \mathbb{M}_k(\Gamma,\varepsilon) \rightarrow \mathbb{B}_k(\Gamma, \varepsilon)$ and its kernel will be denoted by  $\mathbb{S}_k(\Gamma, \varepsilon)$. 

In order to have a concrete realization of $\mathbb{S}_{k}(\Gamma, \varepsilon)$,
it remains to efficiently compute the boundary map. 
\end{defn}

\subsection{Efficient computation of the boundary map}

In this subsection, we show how to compute the boundary map efficiently. 
For finite index subgroups $\Gamma \subseteq \Gamma' \subseteq SL_2(\mathbb{Z})$ such that $\Gamma$ is a normal subgroup of $\Gamma'$, and a character $\varepsilon : \Gamma'/ \Gamma \rightarrow \mathbb{Q}(\zeta)^{\times}$, we denote $c_{\varepsilon}(\Gamma) = \dim \mathbb{B}_2(\Gamma,\varepsilon)$.
We will proceed to prove the following result.

\begin{thm} \label{thm: cuspidal modular symbols}
There exists an algorithm that given groups $\Gamma \subseteq \Gamma' \subseteq SL_2(\mathbb{Z})$ such that $\Gamma$ is a normal subgroup of $\Gamma'$, a character $\varepsilon : \Gamma'/ \Gamma \rightarrow \mathbb{Q}(\zeta)^{\times}$, and an integer $k \ge 2$, computes a basis for $\mathbb{S}_k(\Gamma, \varepsilon)$ in 
$O([SL_2(\mathbb{Z}) : \Gamma'] \cdot c_{\varepsilon}(\Gamma) + [SL_2(\mathbb{Z}):\Gamma])$ basic $\operatorname{CosetIndex}$ operations. 
\end{thm}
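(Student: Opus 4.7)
The plan is to realize $\mathbb{S}_k(\Gamma,\varepsilon)$ as the kernel of the linear map $\partial\colon \mathbb{M}_k(\Gamma,\varepsilon)\to\mathbb{B}_k(\Gamma,\varepsilon)$, to build the matrix of $\partial$ in an efficient basis, and then to extract the kernel via sparse linear algebra. The key structural fact is Merel's boundary formula $\mu\circ\partial([P,g])=P(1,0)[\Gamma g(1,0)]-P(0,1)[\Gamma g(0,1)]$, which forces the matrix of $\partial$ to be extremely sparse and is what makes the claimed running time possible.

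First I would construct a basis of $\mathbb{M}_k(\Gamma,\varepsilon)$ by a mild adaptation of Corollary~\ref{cor: construct modular symbols}. Basis elements take the form $[x^w y^{k-2-w},g]$ with $0\le w\le k-2$ and $g$ ranging over coset representatives for $\Gamma'\backslash SL_2(\mathbb{Z})$; the $\sigma$, $\tau$, $J$ relations together with the twisted action of $Q=\Gamma'/\Gamma$ from \eqref{eqn: Q action} are handled by $\operatorname{CosetIndex}$ calls. Since the total number of relations to process is controlled by $[SL_2(\mathbb{Z}):\Gamma']\cdot|Q|=[SL_2(\mathbb{Z}):\Gamma]$, this preparation contributes the second summand in the complexity bound.

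Next I would build the target $\mathbb{B}_k(\Gamma,\varepsilon)$ using the injection $\mu$: its image lies in the $\varepsilon$-isotypic component of $\mathbb{Q}(\zeta)[(\Gamma\backslash\mathbb{Q}^2)/\mathcal{R}]$, of dimension $c_\varepsilon(\Gamma)$ in weight two. For every representative $g\in\Gamma'\backslash SL_2(\mathbb{Z})$ I process the two column vectors $g(1,0)$ and $g(0,1)$: maintaining a dictionary of canonical $\Gamma$-orbit representatives, each new vector is either merged into an existing class (with the appropriate $\varepsilon$-scalar coming from the carrying element) or registered as a fresh class. In higher weight, Merel's formula shows that only the monomials $x^{k-2}$ and $y^{k-2}$ can land in the image of $\partial$, so only $O(c_\varepsilon(\Gamma))$ target coordinates are ever touched.

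With both spaces in hand, the matrix of $\partial$ is assembled column by column. The boundary formula implies that columns of intermediate weight $0<w<k-2$ vanish identically, so those symbols are automatically cuspidal, while for $w\in\{0,k-2\}$ each column carries at most two nonzero entries. The active portion of the matrix thus has $O([SL_2(\mathbb{Z}):\Gamma'])$ columns of weight at most $2$ against $O(c_\varepsilon(\Gamma))$ rows, and a basis for its kernel can be extracted by sparse elimination in $O([SL_2(\mathbb{Z}):\Gamma']\cdot c_\varepsilon(\Gamma))$ field operations, matching the first summand. The main obstacle I anticipate is the cusp-reduction subroutine behind the dictionary: testing $\Gamma$-equivalence of two primitive pairs $(u_1,v_1),(u_2,v_2)\in\mathbb{Z}^2$ in $O(1)$ $\operatorname{CosetIndex}$ calls requires completing each primitive pair to an element of $SL_2(\mathbb{Z})$ and reducing, via the standard parabolic stabilizer of $(1,0)$, to a single coset membership query followed by a parabolic correction; making this bookkeeping sharp is what keeps the total cost within the stated bound.
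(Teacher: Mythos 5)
Your overall architecture matches the paper's: construct the Manin-symbol basis of $\mathbb{M}_k(\Gamma,\varepsilon)$, exploit Merel's formula $\mu\circ\partial([P,g])=P(1,0)[\Gamma g(1,0)]-P(0,1)[\Gamma g(0,1)]$ so that only symbols of weight $0$ and $k-2$ contribute, build the list of cusps incrementally with $O(1)$ equivalence tests (completing a primitive pair to an element of $SL_2(\mathbb{Z})$ and reducing modulo the parabolic stabilizer of $\infty$ via a precomputed $\langle T\rangle$-orbit table), and extract the kernel of the resulting sparse matrix. The cost accounting is also essentially the paper's.

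There is, however, one genuine gap: you never test whether a cusp class is \emph{zero} in $\mathbb{B}_k(\Gamma,\varepsilon)$. When $\varepsilon$ is nontrivial, the relations $[\gamma' x]=\varepsilon(\gamma')\cdot[x]$ can kill a class outright: by Proposition~\ref{prop: cusp vanishing}, $[a]=0$ precisely when some $q\in Q=\Gamma'/\Gamma$ with $\varepsilon(q)\neq 1$ satisfies $[q\cdot a]=[a]$ in $\Gamma\backslash\mathbb{P}^1(\mathbb{Q})$. Your dictionary of ``canonical representatives merged with an $\varepsilon$-scalar'' silently assumes this never happens; in fact the scalar attached to a merge is only well defined once vanishing has been excluded, since if the $\Gamma$-class of $a$ is fixed by some $q\notin\ker\varepsilon$ the carrying element is ambiguous up to an element with nontrivial $\varepsilon$-value. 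Omitting the test makes the target space too large and the computed kernel strictly smaller than $\mathbb{S}_k(\Gamma,\varepsilon)$ whenever such self-identified cusps occur. The paper handles this with a separate vanishing test (Algorithm~\ref{algo: cusp vanishing}) costing $O([\Gamma':\Gamma])$ coset-index operations per cusp encountered, which over the $O([SL_2(\mathbb{Z}):\Gamma'])$ relevant symbols contributes $O([SL_2(\mathbb{Z}):\Gamma])$ --- exactly the second summand of the stated bound, so inserting the test does not disturb your complexity analysis. Relatedly, your dictionary should index $\Gamma'$-orbits rather than $\Gamma$-orbits (the basis of $\mathbb{B}_k(\Gamma,\varepsilon)$ consists of the non-vanishing $\Gamma'$-classes), which is why the paper precomputes orbit tables for both $\Gamma$ and $\Gamma'$.
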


The proof of Theorem \ref{thm: cuspidal modular symbols} will occupy this entire subsection. We will obtain the basis by computing the kernel of the boundary map. Thus, given $\mathbb{M}_{k}(\Gamma,\varepsilon)$, we would like to compute
the boundary map $\mu\circ\partial:\mathbb{M}_{k}(\Gamma,\varepsilon)\rightarrow\mathbb{B}_{k}(\Gamma,\varepsilon)$. 

The idea follows Cremona and Stein (\cite[Prop. 2.2.3, Lemma 3.2, Prop. 8.13]{cremona1997algorithms}
, \cite[Alg. 8.12, Prop. 8.13, Alg. 8.14]{stein2007modular} ) in that we do not have to compute the
space of cusps for $\Gamma$ a priori.
Instead, for every cusp in $\mathbb{P}^{1}(\mathbb{Q})$ that we encounter,
we check whether it is equivalent to the ones previously encountered. 

Following that route, we should be able to check whether a cusp is equivalent to $0$ (note that this is not the cusp $0$, but rather the neutral element of the module $\mathbb{B}_k(\Gamma, \varepsilon)$) or to a previously encountered cusp.
We will begin by describing the algorithms for testing cusp equivalence and cusp vanishing, and conclude by presenting the algorithm for computing the boundary map.

These algorithms are implemented in \cite{stein2007modular} and in \cite{cremona1997algorithms}
for $\Gamma=\Gamma_{0}(N),\Gamma_{1}(N)$. 

In order to treat the general case, we first prove a criterion for
the equivalence.
\begin{prop}\label{prop: cusp equivalence}
Let $a,b\in\mathbb{P}^{1}(\mathbb{Q})$. Let $\Gamma\subseteq SL_2(\mathbb{Z})$
be a subgroup. Let $g_{a},g_{b}\in SL_2(\mathbb{Z})$ be such that $g_{a}(\infty)=a$
and $g_{b}(\infty)=b$. Let $T=\left(\begin{array}{cc}
1 & 1\\
0 & 1
\end{array}\right)$. Then $\Gamma\backslash SL_2(\mathbb{Z})$ admits a right action by $T$,
via multiplication, and $\Gamma a=\Gamma b$ if and only if $\Gamma g_{a}$,
$\Gamma g_{b}$ lie in the same $\langle T \rangle$-orbit of $\Gamma\backslash SL_2(\mathbb{Z})$.
\end{prop}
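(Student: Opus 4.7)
The plan is to reduce the $\Gamma$-orbit condition on $\mathbb{P}^{1}(\mathbb{Q})$ to a coset condition on $\Gamma\backslash SL_{2}(\mathbb{Z})$, exploiting that $g\mapsto g\cdot\infty$ realizes a surjection $SL_{2}(\mathbb{Z})\to \mathbb{P}^{1}(\mathbb{Q})$ whose fibers are the cosets of $\Stab_{SL_{2}(\mathbb{Z})}(\infty)$. That the right $\langle T\rangle$-action on $\Gamma\backslash SL_{2}(\mathbb{Z})$ is well defined is immediate: right multiplication by any element of $SL_{2}(\mathbb{Z})$ commutes with the left $\Gamma$-action, so it descends to the quotient.

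I would start with the easier backward direction: if $\Gamma g_{a}=\Gamma g_{b}T^{n}$ then $g_{a}=\gamma g_{b}T^{n}$ for some $\gamma\in\Gamma$, and evaluating at $\infty$ (using $T^{n}\cdot\infty=\infty$) gives $a=\gamma\cdot b$, hence $\Gamma a=\Gamma b$. For the forward direction, I would pick $\gamma\in\Gamma$ with $\gamma\cdot a=b$, observe that $g_{b}^{-1}\gamma g_{a}$ fixes $\infty$, and invoke the explicit description $\Stab_{SL_{2}(\mathbb{Z})}(\infty)=\{\pm T^{n}:n\in\mathbb{Z}\}$ (which is a one-line check from the matrix form of a M\"obius transformation fixing $\infty$) to write $g_{b}^{-1}\gamma g_{a}=\varepsilon T^{n}$ with $\varepsilon\in\{\pm 1\}$. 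In the case $\varepsilon=+1$ the identity $\gamma g_{a}=g_{b}T^{n}$ directly gives $\Gamma g_{a}=\Gamma g_{b}T^{n}$.

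The step I expect to require the most care is the sign ambiguity: in the case $\varepsilon=-1$ one only obtains $\gamma g_{a}=-g_{b}T^{n}$, and concluding that $\Gamma g_{a}$ lies in the $\langle T\rangle$-orbit of $\Gamma g_{b}$ requires that $-I$ acts trivially on $\Gamma\backslash SL_{2}(\mathbb{Z})$, i.e.\ that $-I\in\Gamma$. This fits the paper's setup: the modular-symbol relation $x-xJ=0$ with $J=-I$ means cosets are effectively considered modulo $\{\pm I\}$, so that $-\gamma$ (or its image) lies in $\Gamma$ and reduces the case $\varepsilon=-1$ to the case $\varepsilon=+1$. Absent such a convention the same proof would instead give the statement with $\langle T\rangle$ replaced by $\langle -I,T\rangle$.
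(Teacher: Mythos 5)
Your proof follows the same route as the paper's: reduce to the observation that $g_{b}^{-1}\gamma g_{a}$ stabilizes $\infty$, and conversely evaluate $\Gamma g_a = \Gamma g_b T^n$ at $\infty$. You are right, however, to flag the sign: the paper's proof asserts $\Stab_{SL_{2}(\mathbb{Z})}(\infty)=\langle T\rangle$, whereas the stabilizer is in fact $\{\pm T^{n}\}=\langle -I,T\rangle$, so the forward implication as literally stated needs $-I\in\Gamma$ (for instance $\Gamma=\{I\}$, $a=b=\infty$, $g_{a}=I$, $g_{b}=-I$ gives $\Gamma a=\Gamma b$ but $\Gamma g_{a}\neq\Gamma g_{b}T^{n}$ for every $n$). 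In the paper's setting this is harmless because everything is effectively taken modulo $\pm I$ (the Manin-symbol relation $x-xJ=0$ and the implementation via subgroups of $PSL_{2}$), exactly as you observe; absent that convention the orbit should indeed be taken under $\langle -I,T\rangle$. So your proposal is correct and, on this one point, more careful than the paper's own argument.
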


\begin{proof}
If there is some $\gamma\in\Gamma$ such that $\gamma a=b$, then
$\gamma g_{a}(\infty)=g_{b}(\infty)$, and so 
\[
g_{b}^{-1}\gamma g_{a}\in \Stab\,_{SL_2(\mathbb{Z})}(\infty)=\left\langle T\right\rangle .
\]

Therefore we get $g_{b}^{-1}\gamma g_{a}=T^{n}$ for some integer
$n\in\mathbb{Z}$. Thus, there exists $n\in\mathbb{Z}$ for which
$g_{b}T^{n}\in\Gamma g_{a}$. The converse follows simialrly.
\end{proof}
Thus, in order to test for equivalence, we have to compute the $T$
action. This can be done a priori for all cosets, as we proceed to show.

\begin{defn}
Let $\Gamma \subseteq SL_2(\mathbb{Z})$ be a finite index subgroup, and let $R = (r_1, \ldots, r_h)$ be coset representatives for $\Gamma \backslash SL_2(\mathbb{Z})$. 
Let $S = ((o_1, e_1), \ldots, (o_h, e_h)) $ be pairs of integers. 
We say that $S$ is a \emph{$\langle T \rangle$-orbit table} of $\Gamma$ with respect to $R$ if the following two assumptions hold.
\begin{enumerate}
\item
The cosets $\Gamma r_i$ and $\Gamma r_j$ lie in the same $\langle T \rangle$-orbit iff $o_i = o_j$.
\item
For all $i,j$ such that $o_i = o_j$,  we have $r_j \cdot T^{e_i-e_j}\cdot r_i^{-1} \in \Gamma$.
\end{enumerate}
\end{defn}

\begin{lem} \label{lem: T orbit table}
There exists an algorithm that given a finite index subgroup $\Gamma \subseteq SL_2(\mathbb{Z})$, and a list of coset representatives for $\Gamma \backslash SL_2(\mathbb{Z})$, $R$, computes a $\langle T \rangle$-orbit table of $\Gamma$ with respect to $R$ in $O([SL_2(\mathbb{Z}):\Gamma])$ basic $\operatorname{CosetIndex}$ operations.
\end{lem}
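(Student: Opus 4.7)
The plan is to compute the orbits of right multiplication by $T$ on the finite set $\Gamma \backslash SL_2(\mathbb{Z})$ by a standard cycle-traversal. Since $T$ acts as a bijection, the orbits form disjoint cycles partitioning $\{1, \ldots, h\}$, where $h := [SL_2(\mathbb{Z}):\Gamma]$. I would walk each cycle exactly once, filling in the values $o_i$ and $e_i$ as I go.

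More concretely, initialize $o_i$ to an ``unassigned'' marker for all $i$, and set an orbit counter $k := 0$. Iterate over $i = 1, \ldots, h$, skipping indices for which $o_i$ is already set. Otherwise, increment $k$, assign $o_i := k$ and $e_i := 0$, and then traverse the cycle starting at $i$: maintain a current index $j$ (initially $i$) and a running exponent $e$ (initially $0$). At each step, compute the matrix product $r_j \cdot T$, invoke $\operatorname{CosetIndex}(r_j T, R, \Gamma)$ to obtain an index $j'$, and increment $e$. If $j' = i$, the cycle has closed, so break; otherwise set $o_{j'} := k$, $e_{j'} := e$, replace $j$ by $j'$, and repeat. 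Using the stored representative $r_{j'}$ at each step---rather than the product $r_i T^e$, whose entries grow with $e$---keeps every matrix multiplication of bounded cost.

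Correctness would follow by a short induction along a single cycle: if the traversal starts at $i$ and the $m$-th step produces index $j_m$, then by construction $\Gamma r_{j_m} = \Gamma r_{j_{m-1}} T = \Gamma r_i T^m$, so the assignment $e_{j_m} := m$ records the correct exponent relative to the cycle's base point $r_i$. For any two indices $a, b$ in the same orbit (with common base point $r_i$), the identities $\Gamma r_a = \Gamma r_i T^{e_a}$ and $\Gamma r_b = \Gamma r_i T^{e_b}$ combine to give $r_b T^{e_a - e_b} r_a^{-1} \in \Gamma$, which is property (2). Property (1) is immediate: every coset in a given $\langle T \rangle$-orbit receives the same value of $k$ during its single traversal, and different traversals use disjoint values of $k$.

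For the complexity, each index $j$ is assigned orbit data exactly once, and each such assignment uses a single $\operatorname{CosetIndex}$ call plus $O(1)$ matrix arithmetic; one additional $\operatorname{CosetIndex}$ call per orbit detects cycle closure. Thus the total cost is $O(h) = O([SL_2(\mathbb{Z}):\Gamma])$ basic $\operatorname{CosetIndex}$ operations, as claimed. There is no real obstacle here; the main point requiring care is to measure all cycle exponents from a common base point, which becomes automatic once one propagates via the stored representatives $r_{j'}$ instead of iterating the growing product $r_i T^e$ directly.
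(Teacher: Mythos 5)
Your proposal is correct and follows essentially the same route as the paper: Algorithm \ref{algo: Constructing the T orbit table} also performs one $\operatorname{CosetIndex}$ call per coset representative (there, precomputed as a transition table $t_i$ before the cycle walk, rather than on the fly) and then traverses the $\langle T \rangle$-cycles assigning orbit numbers and exponents measured from each cycle's base point. The only difference is this bookkeeping order, which does not change the argument or the $O([SL_2(\mathbb{Z}):\Gamma])$ count.
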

\begin{proof}
Apply Algorithm \ref{algo: Constructing the T orbit table}. It just goes over the cycles, hence has linear running time.
\end{proof}

\begin{algo} 
\label{algo: Constructing the T orbit table} $\operatorname{OrbitTable}(\Gamma, R)$. 
Constructing the $\langle T \rangle$-orbit table.
\end{algo}

\textbf{Input :} 
\begin{itemize}
\item a finite index subgroup $\Gamma \subseteq SL_2(\mathbb{Z})$.
\item a list of coset representatives $R = (r_1, \ldots, r_h)$ for $\Gamma\backslash SL_2(\mathbb{Z})$.
\end{itemize}

\textbf{Output :} a list $S$, which is a $\langle T \rangle$-orbit table of $\Gamma$ with respect to $R$.
\begin{enumerate}
\item for $i \in \{1,\ldots,h\}$ do
\begin{enumerate}
\item $s_i := (0,0)$
\item $t_i := \operatorname{CosetIndex}(r_i \cdot T,R,\Gamma)$
\end{enumerate}
\item $i:=1, o:=1, e:=0$
\item while $i\le h$ do
\begin{enumerate}
\item $s_i:=(o,e)$
\item $i:=t_i$
\item if $s_i \ne (0,0)$ then
\begin{enumerate}
\item $o:=o+1;$
\item $e:=0$;
\item while $i\le h$ and $s_i \ne (0,0)$ do
\begin{enumerate}
\item $i:=i+1;$
\end{enumerate}
\end{enumerate}
\item else
\begin{enumerate}
\item $e:=e+1$
\end{enumerate}
\end{enumerate}
\item return $(s_1, \ldots, s_h)$
\end{enumerate}

And then our algorithm for cusp equivalence testing is rather simple.
\begin{algo} \label{algo: cusp equivalence}
$\operatorname{CuspEquiv}(\Gamma, R, S, a, b)$. Test for cusp equivalence. 
\end{algo}

\textbf{Input :} 
\begin{itemize}
\item a finite index subgroup $\Gamma \subset SL_2(\mathbb{Z})$.
\item a list of coset representatives $R = (r_1, \ldots, r_h)$ for $\Gamma\backslash SL_2(\mathbb{Z})$.
\item an orbit table $S = ((o_1, e_1), \ldots, (o_h, e_h))$ (the output of Algorithm \ref{algo: Constructing the T orbit table}).
\item $a,b\in\mathbb{P}^{1}(\mathbb{Q})$.
\end{itemize}

\textbf{Output :} If there exists $\gamma\in\Gamma$ such that $\gamma a=b$,
then $(true,\gamma)$, else $(false,1)$. 
\begin{enumerate}
\item Find $g_{a},g_{b}$ such that $g_{a}(\infty)=a$, $g_{b}(\infty)=b$.
\item $i_{a}:=\operatorname{CosetIndex}(g_{a},R,\Gamma)$, $i_{b}:=\operatorname{CosetIndex}(g_{b},R,\Gamma)$
\item if $o_{i_a}\ne o_{i_b}$ then return $(false,1)$;
\item return $(true,g_{b}\cdot T^{e_{i_{a}}-e_{i_{b}}}\cdot g_{a}^{-1})$
\end{enumerate}

\begin{cor} \label{cor: cusp equivalence}
There exists an algorithm that given a finite index subgroup $\Gamma \subset SL_2(\mathbb{Z})$, a list of coset representatives $R$ for $\Gamma\backslash SL_2(\mathbb{Z})$, the $\langle T \rangle$-orbit table of $\Gamma$ with respect to $R$, and $a,b\in\mathbb{P}^{1}(\mathbb{Q})$, returns whether $\Gamma a = \Gamma b$, in which case it returns also an element $\gamma \in \Gamma$ such that $\gamma a = b$, in $O(1)$ basic $\operatorname{CosetIndex}$ operations.
\end{cor}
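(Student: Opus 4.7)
The proof is a direct verification that Algorithm \ref{algo: cusp equivalence} meets the specification of the corollary, so the plan is just to check correctness and count $\operatorname{CosetIndex}$ calls.

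First I would establish correctness. Given $a,b \in \mathbb{P}^1(\mathbb{Q})$, choose $g_a, g_b \in SL_2(\mathbb{Z})$ with $g_a(\infty) = a$ and $g_b(\infty) = b$; such matrices exist because $SL_2(\mathbb{Z})$ acts transitively on $\mathbb{P}^1(\mathbb{Q})$. By Proposition \ref{prop: cusp equivalence}, the condition $\Gamma a = \Gamma b$ is equivalent to $\Gamma g_a$ and $\Gamma g_b$ lying in the same $\langle T \rangle$-orbit of $\Gamma \backslash SL_2(\mathbb{Z})$. Setting $i_a := \operatorname{CosetIndex}(g_a, R, \Gamma)$ and $i_b := \operatorname{CosetIndex}(g_b, R, \Gamma)$, property (1) of the orbit table $S$ says this in turn is equivalent to $o_{i_a} = o_{i_b}$, which is exactly the test in Step 3 of the algorithm.

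Next I would verify the witness. When $o_{i_a} = o_{i_b}$, property (2) of the orbit table gives $r_{i_b} \cdot T^{e_{i_a} - e_{i_b}} \cdot r_{i_a}^{-1} \in \Gamma$. Since $g_a \in \Gamma r_{i_a}$ and $g_b \in \Gamma r_{i_b}$, conjugating yields $g_b \cdot T^{e_{i_a} - e_{i_b}} \cdot g_a^{-1} \in \Gamma$ as well. Call this element $\gamma$. Because $T$ fixes $\infty$, we have $\gamma(a) = g_b T^{e_{i_a} - e_{i_b}} g_a^{-1}(a) = g_b T^{e_{i_a} - e_{i_b}}(\infty) = g_b(\infty) = b$, so $\gamma$ is a valid witness. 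The algorithm returns exactly this element.

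Finally, I would count the cost. Finding $g_a$ and $g_b$ from reduced representatives of $a$ and $b$ is elementary (extended Euclidean algorithm on the numerator/denominator) and involves no $\operatorname{CosetIndex}$ calls. The algorithm makes exactly two $\operatorname{CosetIndex}$ queries, one for $g_a$ and one for $g_b$, and the remaining work (a comparison of integers and an exponentiation by $e_{i_a} - e_{i_b}$) is independent of $[SL_2(\mathbb{Z}):\Gamma]$ in the sense of $\operatorname{CosetIndex}$ calls, even though raising $T$ to an integer power is of course a separate matter in field operations. Thus the total number of basic $\operatorname{CosetIndex}$ operations is $O(1)$, completing the proof.

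There is no real obstacle here; the only subtle point is the bookkeeping in the witness formula, where one must be careful about the direction of the $\langle T \rangle$-action (right multiplication on cosets) to ensure that $g_b T^{e_{i_a} - e_{i_b}} g_a^{-1}$ lands in $\Gamma$ and not some conjugate, and this is exactly what property (2) of the orbit table was designed to encode.
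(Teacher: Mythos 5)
Your proposal is correct and follows the same route as the paper, whose proof of this corollary is simply ``apply Algorithm \ref{algo: cusp equivalence}; correctness follows from Proposition \ref{prop: cusp equivalence}''; you have merely spelled out the details (the reduction to the orbit-table properties, the witness computation, and the count of two $\operatorname{CosetIndex}$ calls), all of which check out. The only cosmetic quibble is the word ``conjugating'' in the witness step: what you actually use is that $g_a \in \Gamma r_{i_a}$ and $g_b \in \Gamma r_{i_b}$, so $g_b T^{e_{i_a}-e_{i_b}} g_a^{-1}$ differs from $r_{i_b} T^{e_{i_a}-e_{i_b}} r_{i_a}^{-1}$ by left and right multiplication by elements of $\Gamma$, not by conjugation.
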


\begin{proof}
Apply Algorithm \ref{algo: cusp equivalence}. By Proposition \ref{prop: cusp equivalence}, it returns the correct answer.
\end{proof}

Finally, to test whether a cusp vanishes, we use the following proposition
(an analog of \cite[Prop. 8.16]{stein2007modular}). Recall that if $\Gamma \subseteq \Gamma'$ is a normal subgroup, and $Q = \Gamma' / \Gamma$, then $Q$ acts on $\mathbb{M}_k(\Gamma)$ as in \eqref{eqn: Q action}.

\begin{prop} \label{prop: cusp vanishing}
Let $\Gamma\subseteq\Gamma'\subseteq SL_{2}(\mathbb{Z})$ be finite index subgroups
such that $\Gamma$ is a normal subgroup of $\Gamma'$. Let $Q:=\Gamma'/\Gamma$, and let 
$\varepsilon:Q\rightarrow\mathbb{Q}(\zeta)^{\times}$
be a character. Suppose $a\in\mathbb{P}^{1}(\mathbb{Q})$ is a cusp.
For $x\in\mathbb{P}^{1}(\mathbb{Q})$, write $[x]$ for the equivalence
class of $x$ in $\Gamma\backslash\mathbb{P}^{1}(\mathbb{Q})$. Then
$a$ vanishes modulo the relations
\[
[\gamma' x]=\varepsilon(\gamma')\cdot[x]\quad\forall\gamma'\in\Gamma', x \in \mathbb{P}^1(\QQ)
\]
if and only if there exists $q\in Q$ with $\varepsilon(q)\ne1$ such
that $[q\cdot a]=[a]$. 
\end{prop}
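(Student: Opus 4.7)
The plan is to exploit that $Q := \Gamma'/\Gamma$ acts naturally on $\Gamma\backslash\mathbb{P}^{1}(\mathbb{Q})$ (well-defined because $\Gamma$ is normal in $\Gamma'$), and that each imposed relation $[\gamma'x]-\varepsilon(\gamma')[x]$ lies entirely within a single $Q$-orbit of $\Gamma\backslash\mathbb{P}^{1}(\mathbb{Q})$. Throughout I will write $U$ for the quotient of the free $\mathbb{Z}[\zeta]$-module on $\Gamma\backslash\mathbb{P}^{1}(\mathbb{Q})$ by the submodule spanned by those relators, so that $\mathbb{B}_{2}(\Gamma,\varepsilon)=U/U_{\mathrm{tors}}$.

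For the backward direction, suppose $q\in Q$ with $\varepsilon(q)\ne 1$ satisfies $[q\cdot a]=[a]$ in $\Gamma\backslash\mathbb{P}^{1}(\mathbb{Q})$. Taking the relator with $x=a$ and any lift $\gamma'\in\Gamma'$ of $q$ gives $(1-\varepsilon(q))[a]=0$ in $U$. Multiplying by the Galois conjugates of $1-\varepsilon(q)$ yields the integer annihilator $N_{\mathbb{Q}(\zeta)/\mathbb{Q}}(1-\varepsilon(q))\in\mathbb{Z}\setminus\{0\}$, so $[a]$ is $\mathbb{Z}$-torsion in $U$ and hence zero in $\mathbb{B}_{2}(\Gamma,\varepsilon)$.

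For the forward direction I argue the contrapositive: assuming $\varepsilon$ is trivial on $H:=\Stab_{Q}([a])$, I will construct a $\mathbb{Q}(\zeta)$-linear functional $\lambda\colon\mathbb{B}_{2}(\Gamma;\mathbb{Q}(\zeta))\to\mathbb{Q}(\zeta)$ supported on the orbit $O=Q\cdot[a]$ by setting $\lambda([x])=\varepsilon(q)$ whenever $[x]=q\cdot[a]$, and $\lambda([x])=0$ for $[x]\notin O$. The hypothesis $\varepsilon|_{H}=1$ is precisely what makes $\varepsilon(q)$ depend only on $q\cdot[a]$ and not on the choice of $q$, securing well-definedness. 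A direct check---using that $\bar{\gamma}'\in Q$ acts on classes by $\bar{\gamma}'\cdot[y]=[\gamma'y]$---shows $\lambda$ annihilates every relator. Since $\lambda([a])=\varepsilon(1)=1$, the class $[a]$ is nonzero in $U\otimes_{\mathbb{Z}[\zeta]}\mathbb{Q}(\zeta)$, hence not torsion in $U$, hence nonzero in $\mathbb{B}_{2}(\Gamma,\varepsilon)$.

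The main piece of bookkeeping is matching the ``modulo torsion'' in the definition of $\mathbb{B}_{2}(\Gamma,\varepsilon)$ with these two vanishing criteria: on the one hand, that $\mathbb{Z}[\zeta]$-torsion reduces to $\mathbb{Z}$-torsion via the Galois norm trick above; on the other, that a class detected by a $\mathbb{Q}(\zeta)$-linear functional on the naive quotient cannot be killed by modding out $\mathbb{Z}$-torsion. Both are routine once spelled out, and the remaining verifications (well-definedness of the $Q$-action, compatibility of $\lambda$ with each relator, the orbit-stabilizer identification) are direct calculations with the definitions.
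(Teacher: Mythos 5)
Your proof is correct and follows essentially the same route as the paper: both hinge on the observation that the $\varepsilon$-relations only connect classes lying in a single $Q$-orbit of $\Gamma\backslash\mathbb{P}^1(\mathbb{Q})$, so that $[a]$ dies exactly when $\varepsilon$ fails to factor through the stabilizer of $[a]$. Your explicit $\lambda$-functional is a cleaner formalization of the paper's informal step (``if this did not occur, we could mod out by the $\varepsilon$-relations by writing each $[\alpha\cdot a]$ in terms of $[a]$\ldots''), and the Galois-norm step correctly handles the $\mathbb{Z}[\zeta]$-torsion versus $\mathbb{Z}$-torsion subtlety that the paper's one-line backward direction leaves implicit.
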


\begin{proof}
First suppose that such a $q$ exists. Then 
\[
[a]=[q\cdot a]=\varepsilon(q)\cdot[a].
\]
But $\varepsilon(q)\ne1$, hence $[a]=0$. 

Conversely, suppose that $[a]=0$. Because all relations are two-term
relations and the $\Gamma$-relations identify $\Gamma$-orbits, there
must exist $\alpha,\beta\in\Gamma'$ such that 
\[
[\alpha\cdot a]=[\beta\cdot a]
\]
and $\varepsilon(\alpha)\ne\varepsilon(\beta)$. Indeed, if this did
not occur, we could mod out by the $\varepsilon$ relations by writing
each $[\alpha\cdot a]$ in terms of $[a]$ and there would be no further
relations to kill $[a]$. Next observe that 
\[
[\beta^{-1}\alpha\cdot a]=\varepsilon(\beta^{-1})\cdot[\alpha\cdot a]=\varepsilon(\beta)^{-1}[\beta\cdot a]=[a]
\]
so that if $q \in Q$ is the image of $\beta^{-1}\alpha \in \Gamma'$, then $\varepsilon(q)=\varepsilon(\beta)^{-1}\varepsilon(\alpha)\ne1$. 
\end{proof}
This gives rise to an algorithm for checking whether a cusp vanishes.

\begin{algo} \label{algo: cusp vanishing}
$\operatorname{CuspVanishing}(\Gamma, \varepsilon, s, R, S, a)$. Test for cusp vanishing.
\end{algo}

\textbf{Input :} 
\begin{itemize}
\item a finite index subgroup $\Gamma \subset SL_2(\mathbb{Z})$.
\item a character $\varepsilon : Q = \Gamma' / \Gamma \rightarrow \mathbb{Q}(\zeta)^{\times}$. 
\item a section $s: Q \rightarrow \Gamma'$ of the quotient map.
\item a list of coset representatives $R$ for $\Gamma \backslash SL_2(\mathbb{Z})$.
\item an orbit table $S$ (the output of Algorithm  \ref{algo: Constructing the T orbit table}).
\item $a\in\mathbb{P}^{1}(\mathbb{Q})$.
\end{itemize}

\textbf{Output :} If $[a]=0$ in $\mathbb{B}_{k}(\Gamma,\varepsilon)$
return $true$, return $false$. 
\begin{enumerate}
\item For each $q\in Q$ such that $\varepsilon(q)\ne1$ do:
\begin{enumerate}
\item $equiv,\gamma:=\operatorname{CuspEquiv}(\Gamma, R,S,s(q)\cdot a,a)$.
\item If $equiv$ then return $true$.
\end{enumerate}
\item return $false$.
\end{enumerate}

\begin{cor} \label{cor: cusp vanishing}
There exists an algorithm that given finite index subgroup $\Gamma \subseteq \Gamma' \subseteq SL_2(\mathbb{Z})$ such that $\Gamma$ is normal in $\Gamma'$, a character $\varepsilon: \Gamma' / \Gamma \rightarrow \mathbb{Q}(\zeta)^{\times}$, a list of coset representatives $R$ for $\Gamma \backslash SL_2(\mathbb{Z})$, a $\langle T \rangle$-orbit table of $\Gamma$ with respect to $R$, and a cusp $ a \in \mathbb{P}^{1}(\mathbb{Q})$, returns whether $[a]=0$ in $\mathbb{B}_{k}(\Gamma, \varepsilon)$ in $O([\Gamma':\Gamma])$ basic $\operatorname{CosetIndex}$ operations. 
\end{cor}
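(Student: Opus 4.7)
The plan is to exhibit Algorithm \ref{algo: cusp vanishing} ($\operatorname{CuspVanishing}$) as the desired algorithm, and then argue separately for correctness and for the complexity bound. Correctness is essentially a direct consequence of Proposition \ref{prop: cusp vanishing}, which translates the vanishing condition $[a]=0$ in $\mathbb{B}_k(\Gamma, \varepsilon)$ into the existence of some $q \in Q = \Gamma'/\Gamma$ with $\varepsilon(q) \ne 1$ and $[q \cdot a] = [a]$ in $\Gamma \backslash \mathbb{P}^1(\mathbb{Q})$.

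Thus the first step would be to observe that Algorithm \ref{algo: cusp vanishing} iterates precisely over the set $\{ q \in Q \mid \varepsilon(q) \ne 1 \}$, using the section $s$ to lift each such $q$ to an element $s(q) \in \Gamma'$ and then calling $\operatorname{CuspEquiv}(\Gamma, R, S, s(q)\cdot a, a)$. By Corollary \ref{cor: cusp equivalence}, this call correctly decides whether $\Gamma (s(q)\cdot a) = \Gamma a$, i.e. whether $[q \cdot a] = [a]$. Hence the algorithm returns $true$ exactly when a witness $q$ as in Proposition \ref{prop: cusp vanishing} exists, which is equivalent to $[a]=0$.

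For the complexity, the loop runs at most $|Q| = [\Gamma':\Gamma]$ times (in fact at most $|Q| - 1$, since we skip $q$ with $\varepsilon(q)=1$). Each iteration performs one multiplication $s(q) \cdot a$ in $\mathbb{P}^1(\mathbb{Q})$ and one invocation of $\operatorname{CuspEquiv}$, and by Corollary \ref{cor: cusp equivalence} the latter costs $O(1)$ basic $\operatorname{CosetIndex}$ operations once $R$ and $S$ are supplied (which they are, as part of the input). Multiplying these bounds yields the stated $O([\Gamma':\Gamma])$ bound on basic $\operatorname{CosetIndex}$ operations.

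There is essentially no obstacle in this argument; the only mildly subtle point is to check that the use of the section $s$ is immaterial for the test, i.e.\ that whether $[s(q) \cdot a] = [a]$ in $\Gamma \backslash \mathbb{P}^1(\mathbb{Q})$ depends only on the class $q \in Q$ and not on the chosen lift. This is immediate, since any two lifts differ by an element of $\Gamma$, and $\Gamma$ fixes $[a]$ by definition. With this observation, the proof reduces to citing Proposition \ref{prop: cusp vanishing} for correctness and Corollary \ref{cor: cusp equivalence} for the per-iteration cost.
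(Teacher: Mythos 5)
Your proof is correct and matches the paper's approach: the paper likewise just invokes Algorithm \ref{algo: cusp vanishing} with correctness from Proposition \ref{prop: cusp vanishing}, and the complexity follows because the loop over $Q$ has $[\Gamma':\Gamma]$ iterations each costing $O(1)$ basic $\operatorname{CosetIndex}$ operations by Corollary \ref{cor: cusp equivalence}. Your extra remark about the choice of section $s$ being immaterial is a sound bit of due diligence that the paper leaves implicit.
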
 

\begin{proof}
Apply algorithm \ref{algo: cusp vanishing}. The correctness follows from proposition \ref{prop: cusp vanishing}.
\end{proof}

Relying on \ref{algo: cusp equivalence} and \ref{algo: cusp vanishing}, we may now formulate an algorithm to compute the boundary map $\mathbb{M}_k(\Gamma, \varepsilon) \rightarrow \mathbb{B}_k(\Gamma, \varepsilon)$.
In the algorithm below, $\operatorname{CuspIndex}$ is a function that given a finite index subgroup $\Gamma \subseteq SL_2(\mathbb{Z})$, a list of coset representatives $R$ for $\Gamma \backslash SL_2(\mathbb{Z})$, a $\langle T \rangle$-orbit table of $\Gamma$ with respect to $R$, a cusp $a$
and a list of cusps $C$, returns an index $i$ such that $a$ is equivalent to $C_i$ in $\mathbb{B}_k(\Gamma)$ and an
element $\gamma \in \Gamma$ such that $a=\gamma \cdot C_i$.
If $a \notin C$, it appends $a$ to $C$. This function is obtained by using Algorithm \ref{algo: cusp equivalence} repeatedly. 
\begin{algo}
\label{algo: Computation of boundary map} $\operatorname{BoundaryMap}(\Gamma, \Gamma', \varepsilon, k, B, R, R')$. Computation of the boundary map.
\end{algo}

\textbf{Input : }
\begin{itemize}
\item finite index subgroups $\Gamma \subseteq \Gamma' \subseteq SL_2(\mathbb{Z})$ such that $\Gamma$ is normal in $\Gamma'$.
\item a character $\varepsilon: \Gamma' / \Gamma \rightarrow \mathbb{Q}(\zeta)^{\times}$. 
\item an integer $k \ge 2$.
\item $B = (b_1, \ldots, b_n)$, an ordered basis of Manin symbols for $\mathbb{M}_{k}(\Gamma,\varepsilon)$. 
\item $R = (r_1, \ldots, r_h)$, coset representatives for $\Gamma \backslash SL_2(\mathbb{Z})$.
\item $R' = (r'_1, \ldots, r'_{h'})$, coset representatives for $\Gamma' \backslash SL_2(\mathbb{Z})$.
\end{itemize}

\textbf{Output :} 
\begin{itemize}
\item a matrix $A$.
\item a list of boundary Manin symbols $C$.
\end{itemize}

such that $A$ represents the boundary map $\mu\circ\partial$
with respect to the bases $B,C$.

\begin{enumerate}
\item $C:=\emptyset$
\item $A:= 0 \in \QQ(\zeta)^{n \times \NN}$
\item $S := \operatorname{OrbitTable}(\Gamma, R)$
\item $S' := \operatorname{OrbitTable}(\Gamma', R')$
\item for $i \in \{1,2,\ldots, n \}$ do
\begin{enumerate}
\item Write $b_i = [P,g]$
\item $w:=\operatorname{weight}(b_i)$
\item if $w=k-2$:
\begin{enumerate}
\item $a := g \cdot \infty$
\item if not $\operatorname{CuspVanishing}(\Gamma, \varepsilon, s, R, S, a)$:
\begin{enumerate}
\item $j,\gamma, C :=\operatorname{CuspIndex}(\Gamma', R', S', a, C)$
\item $A_{ij} := A_{ij} +\varepsilon^{-1}(\gamma)$
\end{enumerate}
\end{enumerate}
\item if $w=0$:
\begin{enumerate}
\item $a := g \cdot 0$
\item if not $\operatorname{CuspVanishing}(\Gamma, \varepsilon, s, R, S, a)$:
\begin{enumerate}
\item $j,\gamma, C :=\operatorname{CuspIndex}(\Gamma', R', S', a, C)$
\item $A_{ij} := A_{ij} - \varepsilon^{-1}(\gamma)$
\end{enumerate}
\end{enumerate}
\end{enumerate}
\item return $(A_{ij})_{j=1}^{|C|},C$
\end{enumerate}

\begin{rem}
Note that when $k=2$, we execute both $(c)$ and $(d)$. 
\end{rem}
We can now proceed to prove Theorem \ref{thm: cuspidal modular symbols}. 

\begin{proof} (of Theorem  \ref{thm: cuspidal modular symbols}.).
The computation of $\mathbb{S}_k(\Gamma, \varepsilon)$ is performed by using Corollary \ref{cor: construct modular symbols} to construct a basis $B$ of modular symbols for $\mathbb{M}_k(\Gamma, \varepsilon)$,  and then applying Algorithm \ref{algo: Computation of boundary map}. 

The computation of the $\langle T \rangle$-orbit tables for both $\Gamma$ and $\Gamma'$ using Algorithm \ref{algo: Constructing the T orbit table} costs $O([SL_2(\mathbb{Z}):\Gamma])$ basic $\operatorname{CosetIndex}$ operations by Lemma \ref{lem: T orbit table}.

The function $\operatorname{CuspIndex}$ is implemented by applying Algorithm \ref{algo: cusp equivalence}. 
The check whether a cusp vanishes is done by applying Algorithm \ref{algo: cusp vanishing}.
By Corollary \ref{cor: cusp equivalence} and Corollary  \ref{cor: cusp vanishing}, these algorithms return the required output in $O(1)$ basic $\operatorname{CosetIndex}$ operations.

Since there are $2\cdot[SL_2(\mathbb{Z}):\Gamma']$ Manin symbols of either 
maximal or minimal weight, and we have to check equivalence of at most each of these against each of the cusps in the list, which has at most $c_{\varepsilon}(\Gamma)$ elements in it, it follows that the running time for Algorithm
\ref{algo: Computation of boundary map} is 
$O([SL_2(\mathbb{Z}):\Gamma'] \cdot c_{\varepsilon}(\Gamma) + [SL_2(\mathbb{Z}):\Gamma])$.
Thus, we have an algorithm for computing $\mathbb{S}_{k}(\Gamma,\varepsilon)$ in 
$O([SL_2(\mathbb{Z}):\Gamma'] \cdot c_{\varepsilon}(\Gamma) +[SL_2(\mathbb{Z}):\Gamma])$
basic $\operatorname{CosetIndex}$ operations.
\end{proof}

\subsection{Pairing Modular Symbols and Modular Forms}

Let $\overline{S}_{k}(\Gamma)=\{\overline{f}:f\in S_{k}(\Gamma)\}$
be the space of antiholomorphic cusp forms. We then have a pairing
\[
\left\langle \cdot,\cdot\right\rangle :\left(S_{k}(\Gamma)\oplus\overline{S}_{k}(\Gamma)\right)\times\mathbb{M}_{k}(\Gamma)\rightarrow\mathbb{C}
\]
 given by 
\begin{equation}
\left\langle (f_{1},f_{2}),P\otimes\{\alpha,\beta\}\right\rangle =\int_{\alpha}^{\beta}f_{1}(z)P(z,1)dz+\int_{\alpha}^{\beta}f_{2}(z)P(\overline{z},1)d\overline{z}\label{eq:pairing of modular symbols and modular forms}
\end{equation}

The following theorem is the final ingredient needed to get a handle
on the actual spaces of modular forms.
\begin{thm} \label{thm: pairing of cusps}
(\cite[Theorem 0.2]{shokurov1980study}, and \cite[Theorem 3]{merel1994universal}) The pairing 
\[
\left\langle \cdot,\cdot\right\rangle :\left(S_{k}(\Gamma)\oplus\overline{S}_{k}(\Gamma)\right)\times\mathbb{S}_{k}(\Gamma,\mathbb{C})\rightarrow\mathbb{C}
\]
 is a nondegenerate pairing of complex vector spaces.
\end{thm}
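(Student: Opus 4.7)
The plan is to realize the pairing in the statement as the Eichler--Shimura period pairing and to deduce nondegeneracy from the Eichler--Shimura isomorphism. I proceed in three stages.

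First, I would check well-definedness. On an unreduced generator $P \otimes \{\alpha,\beta\}$, the two integrals in \eqref{eq:pairing of modular symbols and modular forms} converge even when $\alpha,\beta \in \mathbb{P}^1(\mathbb{Q})$ because cusp forms decay rapidly at the cusps while $P(z,1)$ grows only polynomially. The three-term relation $\{a,b\}+\{b,c\}+\{c,a\}=0$ is killed by additivity of path integrals. The $\Gamma$-coinvariance relation is killed by the change of variables $z \mapsto \gamma z$ combined with $f|_\gamma = f$ for $\gamma \in \Gamma$: a short computation shows that the pairing takes the same value on $P \otimes \{\alpha,\beta\}$ and on $\gamma P \otimes \{\gamma\alpha, \gamma\beta\}$. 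Hence the pairing descends to $\mathbb{M}_k(\Gamma;\mathbb{C})$, and in particular restricts to $\mathbb{S}_k(\Gamma, \mathbb{C})$.

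Second, I would identify $\mathbb{S}_k(\Gamma, \mathbb{C})$ topologically. Manin's theorem gives $\mathbb{M}_2(\Gamma) \cong H_1(X_\Gamma, \text{cusps}; \mathbb{Z})$; tensoring with the local system $\mathcal{V}$ on $Y_\Gamma$ attached to the $\Gamma$-representation $\operatorname{Sym}^{k-2}\mathbb{Z}^2$ extends this to an identification of $\mathbb{M}_k(\Gamma; \mathbb{C})$ with a relative homology group, under which $\partial$ is the topological boundary. Therefore
\[
\mathbb{S}_k(\Gamma, \mathbb{C}) \;\cong\; H_1(X_\Gamma ; \mathcal{V}_\mathbb{C}),
\]
and Poincar\'e duality for the open curve $Y_\Gamma$ presents this homology as the $\mathbb{C}$-linear dual of the parabolic cohomology $H^1_{\mathrm{par}}(\Gamma, \operatorname{Sym}^{k-2}\mathbb{C}^2)$.

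Third, I would invoke the Eichler--Shimura isomorphism, which asserts that the period map
\[
S_k(\Gamma) \oplus \overline{S}_k(\Gamma) \;\xrightarrow{\sim}\; H^1_{\mathrm{par}}(\Gamma, \operatorname{Sym}^{k-2}\mathbb{C}^2),
\]
sending $f$ to the class of the cocycle $\gamma \mapsto \int_{z_0}^{\gamma z_0} f(z)(zX+Y)^{k-2}\,dz$, is an isomorphism of complex vector spaces. Under the identification of Step 2 and the duality recalled there, the pairing in the theorem is precisely the evaluation of such a cohomology class on a homology cycle. Since Eichler--Shimura is an isomorphism and Poincar\'e duality is perfect, nondegeneracy on both sides follows. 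The principal obstacle is that Eichler--Shimura is classically formulated for $\Gamma_0(N)$ or $\Gamma_1(N)$, whereas here $\Gamma$ is an arbitrary finite-index subgroup; this is, however, exactly the generality treated by Shokurov and Merel, whose proofs require only that $\Gamma$ has finite index in $SL_2(\mathbb{Z})$ and hence finitely many cusps. One must also verify carefully that the identifications of Step 2 respect the $\Gamma$-action and match path orientations, so that signs and the action of complex conjugation line up between the analytic and topological sides.
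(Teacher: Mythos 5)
Your outline is correct and is essentially the argument behind the result as proved in the sources the paper cites for it (Shokurov's Theorem 0.2 and Merel's Theorem 3): identify $\mathbb{S}_k(\Gamma,\mathbb{C})=\ker\partial$ with the interior homology of $X_\Gamma$ with coefficients in the local system attached to $\operatorname{Sym}^{k-2}$, and pair it against $S_k(\Gamma)\oplus\overline{S}_k(\Gamma)$ via the Eichler--Shimura isomorphism onto parabolic cohomology, which holds for any finite-index $\Gamma\subseteq SL_2(\mathbb{Z})$. The paper itself offers no independent proof beyond this citation, so there is nothing further to compare.
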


This is still not completely satisfactory, as we would like to separate
the holomorphic forms from their antiholomorphic counterparts. 

\subsubsection{The Action of Complex Conjugation }
\begin{defn}
Let $\eta=\text{\ensuremath{\left(\begin{array}{cc}
 -1  &  0\\
 0  &  1 
\end{array}\right)}}\in GL_{2}(\mathbb{Z})$. If $\eta$ normalizes $\Gamma$, i.e. $\eta\Gamma\eta=\Gamma$,
we say that $\Gamma$ is \emph{of real type}. 
\end{defn}

We then have the following result.
\begin{prop} \label{prop: star involution}
(Merel, \cite[Proposition 7]{merel1994universal}) Let $\Gamma\subseteq SL_2(\mathbb{Z})$
be a subgroup of real type. Then there is a complex linear involution
$\iota:S_{k}(\Gamma)\oplus\overline{S}_{k}(\Gamma)\rightarrow S_{k}(\Gamma)\oplus\overline{S}_{k}(\Gamma)$
which exchanges $S_{k}(\Gamma)$ and $\overline{S}_{k}(\Gamma)$,
given by $\iota(f)(z)=f(-\overline{z})$. The involution $\iota^{\vee}:\mathbb{M}_{k}(\Gamma)\rightarrow\mathbb{M}_{k}(\Gamma)$
defined by 
\begin{equation} \label{eqn: star involution}
\iota^{\vee}(v\otimes\{\alpha,\beta\})=-\eta v\otimes\{\eta\alpha,\eta\beta\}
\end{equation}
 is adjoint to $\iota$ with respect to the pairing \eqref{eq:pairing of modular symbols and modular forms}.
Moreover, $\iota^{\vee}$ acts on Manin symbols via 
\[
\iota^{\vee}([v,g])=-[\eta v,\eta g\eta^{-1}]
\]
\end{prop}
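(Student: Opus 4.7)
The plan is to verify each of the three assertions in turn, using the real type hypothesis as the key ingredient.

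First, I would check well-definedness of $\iota$ on $S_k(\Gamma)$. Since $\eta$ preserves $\mathcal{H}$, the formula $\iota(f)(z) = f(-\bar z)$ defines a function on $\mathcal{H}$, and one checks directly that $\partial_z \iota(f) = 0$, so $\iota(f)$ is antiholomorphic. The key computational identity is that for $\gamma = \bigl(\begin{smallmatrix} a & b \\ c & d \end{smallmatrix}\bigr) \in \Gamma$,
\[
-\overline{\gamma z} = (\eta \gamma \eta^{-1})(-\bar z),
\]
and since $\Gamma$ is of real type we have $\eta \gamma \eta^{-1} \in \Gamma$. Plugging this into the modularity of $f$ yields $\iota(f)(\gamma z) = (c\bar z + d)^k \iota(f)(z)$, which is precisely the transformation law for an antiholomorphic weight $k$ modular form. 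Defining $\iota$ on $\overline{S_k(\Gamma)}$ by the same formula, $\iota^2 = \mathrm{id}$ follows from $-\overline{-\bar z} = z$, and complex linearity is immediate.

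Second, for well-definedness of $\iota^{\vee}$ on $\mathbb{M}_k(\Gamma) = (\mathbb{M}_k)_\Gamma$, I need the formula to respect the relations $\gamma x - x = 0$ for $\gamma \in \Gamma$. A direct calculation
\[
\iota^{\vee}\bigl(\gamma(v \otimes \{\alpha,\beta\})\bigr) = -\eta\gamma v \otimes \{\eta\gamma\alpha, \eta\gamma\beta\} = (\eta\gamma\eta^{-1})\bigl(-\eta v \otimes \{\eta\alpha,\eta\beta\}\bigr)
\]
reduces the claim to $\eta\gamma\eta^{-1} \in \Gamma$, again the real type hypothesis. For the Manin symbol formula, I would unwind $[v,g] = g(v \otimes \{0,\infty\})$, apply $\iota^{\vee}$, and then use the trivial but crucial fact that $\eta$ fixes both $0$ and $\infty$ in $\mathbb{P}^1(\mathbb{Q})$, so that $\{\eta g \cdot 0, \eta g \cdot \infty\} = \{(\eta g \eta^{-1})\cdot 0, (\eta g \eta^{-1})\cdot \infty\}$, recovering $-[\eta v, \eta g \eta^{-1}]$.

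The heart of the proof, and what I expect to be the main obstacle, is the adjointness formula
\[
\langle \iota(f_1,f_2), P \otimes \{\alpha,\beta\}\rangle = \langle (f_1,f_2), \iota^{\vee}(P \otimes \{\alpha,\beta\})\rangle.
\]
Here I would carry out the substitution $z = \eta w = -\bar w$ in each of the two integrals in \eqref{eq:pairing of modular symbols and modular forms}. Under this substitution, a path from $\alpha$ to $\beta$ is mapped to a path from $\eta\alpha$ to $\eta\beta$; the differential $dz$ is converted to $-d\bar w$, producing the overall sign in $\iota^{\vee}$; and the polynomial $P(z,1)$ becomes $P(-\bar w,1) = (\eta P)(\bar w,1)$ after appropriate renormalization, producing the factor $\eta v$ on the symbol side. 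Because $\iota$ interchanges $S_k$ and $\overline{S_k}$, the integrand in the holomorphic summand on the left matches the antiholomorphic summand on the right and vice versa, which is exactly what the identity demands. The bookkeeping of signs, the swap of holomorphic and antiholomorphic terms, and the precise comparison of $P(z,1)\,dz$ with $P(\bar z,1)\,d\bar z$ under conjugation is the delicate part; everything else is formal.
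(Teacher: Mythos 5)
The paper does not give a proof of this proposition; it simply cites Merel's Proposition~7. Your reconstruction is correct and follows exactly the standard argument: real type gives $\eta\gamma\eta^{-1}\in\Gamma$, which drives the modularity of $\iota(f)$ (via $-\overline{\gamma z}=(\eta\gamma\eta^{-1})(-\bar z)$) and the descent of $\iota^{\vee}$ to coinvariants; the Manin-symbol formula uses that $\eta$ fixes $0$ and $\infty$; and adjointness comes from substituting $z=-\bar w$ in both integrals of the pairing, with the sign from $dz=-d\bar w$, the factor $\eta v$ from $P(-\bar w,1)=(\eta P)(\bar w,1)$, and the holomorphic/antiholomorphic summands swapping because $\iota$ interchanges $S_k$ and $\overline{S}_k$.
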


We may now state the final result about the pairing, which explains
how modular symbols and modular forms are related. 
\begin{thm} \label{thm: star involution}
(Merel, \cite[Proposition 8]{merel1994universal}) Let $\mathbb{S}_{k}(\Gamma)^{+}$
be the $+1$ eigenspace for $\iota^{*}$ on $\mathbb{S}_{k}(\Gamma)$.
The pairing \eqref{eq:pairing of modular symbols and modular forms}
induces a nondegenerate bilinear pairing
\[
\mathbb{S}_{k}(\Gamma)^{+}\times S_{k}(\Gamma)\rightarrow\mathbb{C}
\]
\end{thm}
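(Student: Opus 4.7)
The plan is to lift the nondegenerate pairing of Theorem \ref{thm: pairing of cusps} to its $\pm 1$ eigenspaces under the commuting involutions $\iota$ and $\iota^{\vee}$ (which is what is denoted $\iota^{*}$ in the statement). Since $\iota^{\vee}$ is an involution on $\mathbb{S}_{k}(\Gamma,\mathbb{C})$, we obtain a decomposition
\[
\mathbb{S}_{k}(\Gamma,\mathbb{C}) = \mathbb{S}_{k}(\Gamma)^{+} \oplus \mathbb{S}_{k}(\Gamma)^{-},
\]
and similarly $S_{k}(\Gamma)\oplus\overline{S}_{k}(\Gamma) = V^{+}\oplus V^{-}$ into $\pm 1$ eigenspaces of $\iota$. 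The first step is to show these decompositions are orthogonal with respect to the pairing: if $x\in V^{+}$ and $y\in \mathbb{S}_{k}(\Gamma)^{-}$, then by the adjointness in Proposition \ref{prop: star involution},
\[
\langle x, y\rangle = \langle \iota(x), y\rangle = \langle x, \iota^{\vee}(y)\rangle = -\langle x,y\rangle,
\]
so $\langle x, y\rangle = 0$, and symmetrically for $V^{-}$ paired with $\mathbb{S}_{k}(\Gamma)^{+}$. Combined with the global nondegeneracy of Theorem \ref{thm: pairing of cusps}, this forces the restricted pairing $V^{+}\times \mathbb{S}_{k}(\Gamma)^{+}\to\mathbb{C}$ to be nondegenerate.

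The second step is to identify $V^{+}$ with $S_{k}(\Gamma)$. Since $\iota$ is $\mathbb{C}$-linear and exchanges the summands $S_{k}(\Gamma)$ and $\overline{S}_{k}(\Gamma)$, its matrix in this decomposition is anti-diagonal, and a vector $(f_{1},f_{2})$ lies in $V^{+}$ exactly when $f_{2}=\iota(f_{1})$. Hence projection onto the first factor yields a $\mathbb{C}$-linear isomorphism $V^{+}\xrightarrow{\sim} S_{k}(\Gamma)$, $(f,\iota(f))\mapsto f$. Composing this isomorphism with the nondegenerate pairing from the previous paragraph produces the desired nondegenerate pairing $\mathbb{S}_{k}(\Gamma)^{+}\times S_{k}(\Gamma)\to\mathbb{C}$.

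The only subtle point is checking that $\iota$ really is an involution that exchanges the two summands (so that $V^{+}$ has the expected dimension equal to $\dim S_{k}(\Gamma)$), which is immediate from the formula $\iota(f)(z)=f(-\overline{z})$: applying it twice returns $f$, and it sends holomorphic functions to antiholomorphic ones. Given this, the dimension count $\dim V^{+} = \dim V^{-} = \dim S_{k}(\Gamma)$ is automatic, and everything else is formal bookkeeping with the pairing. The main (mild) obstacle is making sure the adjointness in Proposition \ref{prop: star involution} is applied with the correct signs; once that is fixed, orthogonality and hence nondegeneracy on each eigenspace follow with essentially no further input.
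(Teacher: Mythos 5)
Your proof is correct and is essentially the standard argument: the paper itself gives no proof of this statement (it is imported from Merel, Proposition 8 of \cite{merel1994universal}), and your eigenspace-orthogonality argument --- adjointness of $\iota$ and $\iota^{\vee}$ forces each $\pm1$ eigenspace of $\iota$ in $S_k(\Gamma)\oplus\overline{S}_k(\Gamma)$ to be orthogonal to the opposite eigenspace in $\mathbb{S}_k(\Gamma)$, so the global nondegeneracy of Theorem \ref{thm: pairing of cusps} descends block by block, and $V^{+}\cong S_k(\Gamma)$ via $(f,\iota(f))\mapsto f$ --- is exactly the one underlying Merel's proposition. The only cosmetic point is that the ``induced'' pairing on $\mathbb{S}_k(\Gamma)^{+}\times S_k(\Gamma)$ can be read either as your composite through $f\mapsto(f,\iota(f))$ or as the restriction of the original pairing to vectors $(f,0)$; by the orthogonality you established these differ by a factor of $2$, so nondegeneracy is unaffected.
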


We denote by $c(\Gamma) = \dim \mathbb{B}_2(\Gamma)$ the number of cusps of $\Gamma$. Then it follows that

\begin{cor}
There exists an algorithm that given a finite index subgroup $\Gamma \subseteq SL_2(\mathbb{Z})$ and an integer $k \ge 2$, computes a basis for $S_k(\Gamma)$ in $O([SL_2(\mathbb{Z}) : \Gamma] \cdot c(\Gamma))$ basic $\operatorname{CosetIndex}$ operations. 
\end{cor}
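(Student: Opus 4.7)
The plan is to deduce the corollary as the $\Gamma' = \Gamma$, $\varepsilon = 1$ specialization of Theorem~\ref{thm: cuspidal modular symbols}. With $\Gamma' = \Gamma$ the quotient $Q = \Gamma'/\Gamma$ is trivial, the only character is $\varepsilon = 1$, and so $\mathbb{M}_k(\Gamma, \varepsilon) = \mathbb{M}_k(\Gamma)$ and $\mathbb{B}_k(\Gamma, \varepsilon) = \mathbb{B}_k(\Gamma)$. In particular $c_\varepsilon(\Gamma) = \dim \mathbb{B}_2(\Gamma) = c(\Gamma)$. Theorem~\ref{thm: cuspidal modular symbols} then returns a basis of $\mathbb{S}_k(\Gamma) = \ker \partial$ in
$O([SL_2(\mathbb{Z}):\Gamma] \cdot c(\Gamma) + [SL_2(\mathbb{Z}):\Gamma]) = O([SL_2(\mathbb{Z}):\Gamma] \cdot c(\Gamma))$
basic $\operatorname{CosetIndex}$ operations, using $c(\Gamma) \ge 1$.

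To turn this into a basis for $S_k(\Gamma)$ I invoke the pairing machinery of Theorems~\ref{thm: pairing of cusps} and~\ref{thm: star involution}. The former asserts that the computed basis of $\mathbb{S}_k(\Gamma;\mathbb{C})$ is already dual, under the integration pairing \eqref{eq:pairing of modular symbols and modular forms}, to $S_k(\Gamma) \oplus \overline{S_k(\Gamma)}$; in particular the matrices of all Hecke and star operators computed on it transfer directly to $S_k(\Gamma)$. If one wishes to isolate the holomorphic half, and $\Gamma$ is of real type, one evaluates the star involution $\iota^{\vee}([v,g]) = -[\eta v, \eta g \eta^{-1}]$ of Proposition~\ref{prop: star involution} on each basis element, which costs one $\operatorname{CosetIndex}$ call per Manin symbol, or $O([SL_2(\mathbb{Z}):\Gamma])$ in total, absorbed by the target bound. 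Extracting the $+1$-eigenspace $\mathbb{S}_k(\Gamma)^{+}$ is then pure linear algebra, which is not counted in $\operatorname{CosetIndex}$ operations, and Theorem~\ref{thm: star involution} guarantees it is non-degenerately paired with $S_k(\Gamma)$.

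The only point really needing verification is that the specialized Algorithm~\ref{algo: Computation of boundary map} behaves correctly for trivial character: in this case $\operatorname{CuspVanishing}$ always returns \emph{false} (there is no nontrivial $q \in Q$ to test), so the algorithm simply walks through the two lists of boundary Manin symbols of extremal weights $0$ and $k-2$, repeatedly calling $\operatorname{CuspEquiv}$ and recording the $\Gamma$-orbit of each cusp it meets. Since at most $2[SL_2(\mathbb{Z}):\Gamma]$ such Manin symbols arise and the list $C$ of inequivalent cusps has length at most $c(\Gamma)$ throughout, the inner loop costs $O(c(\Gamma))$ coset index operations per symbol, yielding the stated total. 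There is no genuine obstacle beyond this bookkeeping; the work is already done in Theorem~\ref{thm: cuspidal modular symbols}, and the corollary is its simplest instance.
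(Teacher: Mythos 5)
Your argument is correct, and its skeleton (specialize Theorem~\ref{thm: cuspidal modular symbols} to $\Gamma'=\Gamma$ with trivial $\varepsilon$, then pass from $\mathbb{S}_k(\Gamma)$ to $S_k(\Gamma)$ via the star involution and Theorem~\ref{thm: star involution}) is the same as the paper's. The one genuine difference is in how the $+1$-eigenspace is produced. The paper never computes all of $\mathbb{S}_k(\Gamma)$ and then projects: it modifies Algorithm~\ref{algo: Computation of boundary map} itself, replacing $\mathbb{B}_k(\Gamma)$ by its quotient under $[(-u,v)]=[(u,v)]$ (i.e.\ having $\operatorname{CuspEquiv}$ test both $(u,v)$ and $(-u,v)$), so that the kernel of the modified boundary map is already the plus-part; the complexity bound of Theorem~\ref{thm: cuspidal modular symbols} then applies verbatim. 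You instead compute the full cuspidal space, evaluate $\iota^{\vee}$ on each basis Manin symbol ($O([SL_2(\mathbb{Z}):\Gamma])$ extra $\operatorname{CosetIndex}$ calls), and extract the eigenspace by linear algebra, which the chosen cost model does not charge for. Both routes land on the same bound; the paper's is closer to what is actually implemented (following Stein), while yours has the small virtue of making explicit that the final step needs $\Gamma$ to be of real type for $\iota^{\vee}$ to exist --- a hypothesis the corollary's statement omits and the paper's proof also uses silently. Your observation that $\operatorname{CuspVanishing}$ degenerates to \emph{false} for the trivial character is correct and consistent with the counting in the paper's proof of Theorem~\ref{thm: cuspidal modular symbols}.
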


\begin{proof}
In order to compute $\mathbb{S}_{k}(\Gamma)^{+}$, it is necessary
to replace $\mathbb{B}_{k}(\Gamma)$ by its quotient modulo the additional
relation $[(-u,v)]=[(u,v)]$ for all cusps $(u,v)$. Algorithm \ref{algo: Computation of boundary map}
can be modified, as in \cite{stein2007modular}, to treat that case by changing $\operatorname{CuspEquiv}$ to check equivalence with both $(u,v)$ and $(-u,v)$.
Thus, the complexity described in Theorem \ref{thm: cuspidal modular symbols} is valid also for the computation of 
$\mathbb{S}_k(\Gamma, \varepsilon)^{+}$. In particular, taking $\Gamma' = \Gamma$ and $\varepsilon$ to be the trivial character, we obtain the result.
\end{proof}

\section{Hecke Operators} \label{sec: Hecke operators}

Now that we have a realization of $S_{k}(\Gamma_{G})$, we would like
to be able to compute Hecke Operators on this space.
First, let us recall a few basic facts about Hecke operators in general. 

\subsection{Hecke Operators on $M_{k}(\Gamma)$} \label{subsec: Hecke operators definitions}
\begin{defn}
\label{def:Hecke Operator Double Coset}Let $\Gamma\subseteq SL_{2}(\mathbb{Z})$
and suppose $\Delta\subseteq GL_{2}^{+}(\mathbb{Q})$ is a set such that
$\Gamma\Delta=\Delta\Gamma=\Delta$, and $\Gamma\backslash\Delta$
is finite. Let $R$ be a set of representatives for $\Gamma\backslash\Delta$.
Let 
\begin{align*}
T_{\Delta}: M_{k}(\Gamma) &\rightarrow M_{k}(\Gamma) \\
T_{\Delta}(f) &= \sum_{\alpha\in R} f\vert_{\alpha}
\end{align*}
where $f\vert_{\alpha}$ is the usual right action of $GL_{2}^{+}(\mathbb{Q})$
given by \eqref{eq:action of GL2Q on modular forms}. The operator
$T_{\Delta}$ is called the \emph{Hecke operator associated to $\Delta$. }
\end{defn}

It is a standard result that this operator is well-defined and independent of $R$. 

\begin{prop} (\cite[Prop. 3.37]{shimura1971introduction}, \cite[Chapter 5.1]{diamond2005first})
The image of $T_{\Delta}$ lies in $M_k(\Gamma)$, and $T_{\Delta}$ does not depend on the choice of $R$.
\end{prop}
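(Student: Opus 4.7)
The plan is to check the two assertions in the natural order: first that the defining sum is independent of the set $R$ of coset representatives, and then that $T_{\Delta}(f)$ actually lies in $M_k(\Gamma)$ (i.e.\ is $\Gamma$-invariant and holomorphic, including at the cusps). The $\Gamma$-invariance will fall out of the independence statement, so independence is logically prior even though it is stated second.

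For independence of $R$, I would let $R = \{\alpha_i\}$ and $R' = \{\alpha_i'\}$ be two sets of representatives for $\Gamma\backslash\Delta$; after reindexing, $\alpha_i' = \gamma_i \alpha_i$ for some $\gamma_i \in \Gamma$. Using the cocycle property of the slash action, $f\vert_{\alpha_i'} = f\vert_{\gamma_i \alpha_i} = (f\vert_{\gamma_i})\vert_{\alpha_i} = f\vert_{\alpha_i}$ because $f$ is $\Gamma$-invariant. Summing over $i$ gives $\sum_i f\vert_{\alpha_i'} = \sum_i f\vert_{\alpha_i}$.

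For the image lying in $M_k(\Gamma)$, the key observation is that for any $\gamma \in \Gamma$, right multiplication by $\gamma$ permutes the cosets $\Gamma\backslash\Delta$: $\Delta\gamma = \Delta$ by hypothesis, and $\Gamma\alpha\gamma = \Gamma\alpha'\gamma$ iff $\Gamma\alpha = \Gamma\alpha'$. Thus, if $R$ is a set of representatives then so is $R\gamma$. Then
\[
T_{\Delta}(f)\vert_{\gamma} = \sum_{\alpha \in R} f\vert_{\alpha\gamma} = \sum_{\alpha' \in R\gamma} f\vert_{\alpha'} = T_{\Delta}(f),
\]
where the last equality uses the independence statement just proved. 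This gives $\Gamma$-invariance. Holomorphy on $\mathcal{H}$ is immediate from the holomorphy of each $f\vert_{\alpha}$ and the finiteness of $R$. For holomorphy at the cusps of $\Gamma$, each $f\vert_{\alpha}$ is a holomorphic modular form of weight $k$ for the conjugated congruence subgroup $\alpha^{-1}\Gamma\alpha \cap \Gamma$ (this is a standard fact about the slash action of $GL_2^+(\mathbb{Q})$), and in particular is bounded at every cusp of $\Gamma$; a finite sum of functions bounded at the cusps is bounded at the cusps, so $T_{\Delta}(f)$ is holomorphic there.

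The only step that requires any real care is the last one, the behavior at the cusps: one has to argue that applying the slash operator by a rational matrix $\alpha$ preserves the property of being holomorphic at the cusps (for a possibly different group), and then that $\Gamma$-invariance combined with boundedness at the cusps of the individual summands gives boundedness at the cusps of $\Gamma$. The other manipulations are purely formal consequences of $\Gamma\Delta = \Delta\Gamma = \Delta$ and the cocycle identity for the slash action.
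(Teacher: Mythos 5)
Your proof is correct and is essentially the standard argument from the references the paper cites for this proposition (Shimura Prop.~3.37, Diamond--Shurman Ch.~5.1); the paper itself gives no proof beyond those citations. The one place deserving care is exactly the one you flag, namely that $f\vert_{\alpha}$ for $\alpha\in GL_2^{+}(\mathbb{Q})$ is holomorphic (equivalently, bounded) at the cusps as a form for the congruence subgroup $\alpha^{-1}\Gamma\alpha\cap SL_2(\mathbb{Z})$, which is the standard lemma you invoke.
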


\begin{defn}
The algebra generated by the $\{T_{\Delta}\}_{\Delta}$ when $\Delta$ runs over all subsets of $GL_2^{+}(\mathbb{Q})$ such that $\Gamma\Delta=\Delta\Gamma=\Delta$ is \emph{the Hecke algebra of $M_k(\Gamma)$}.
\end{defn}
For $\alpha \in GL_2^{+}(\mathbb{Q})$, we denote $T_{\alpha}=T_{\Gamma\alpha\Gamma}$. 

\begin{cor}\label{cor:Hecke operator double coset}
The Hecke algebra of $M_k(\Gamma)$ is generated by the operators $\{T_{\alpha}\}_{\alpha \in GL_2^{+}(\mathbb{Q})}$.
\end{cor}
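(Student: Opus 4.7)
The plan is to show that every generator $T_{\Delta}$ of the Hecke algebra, as defined, can be written as a finite sum of double coset operators $T_{\alpha}$; this will immediately imply that $\{T_{\alpha}\}_{\alpha \in GL_2^{+}(\mathbb{Q})}$ generates the same algebra.

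First I would observe that any set $\Delta \subseteq GL_2^{+}(\mathbb{Q})$ satisfying $\Gamma \Delta = \Delta \Gamma = \Delta$ is a union of double cosets: for any $\alpha \in \Delta$, the closure conditions give $\Gamma \alpha \Gamma \subseteq \Delta$, and conversely $\Delta = \bigcup_{\alpha \in \Delta} \Gamma \alpha \Gamma$. Distinct double cosets are disjoint, so this is in fact a disjoint union. Since each double coset $\Gamma \alpha \Gamma$ decomposes into at least one left $\Gamma$-coset, the finiteness of $\Gamma \backslash \Delta$ forces the number of double cosets to be finite; write $\Delta = \bigsqcup_{i=1}^{n} \Gamma \alpha_i \Gamma$.

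Next, for each $i$, pick representatives $R_i$ for $\Gamma \backslash \Gamma \alpha_i \Gamma$; then $R := \bigsqcup_i R_i$ is a set of representatives for $\Gamma \backslash \Delta$. By Definition \ref{def:Hecke Operator Double Coset},
\[
T_{\Delta}(f) = \sum_{\alpha \in R} f\vert_{\alpha} = \sum_{i=1}^{n} \sum_{\alpha \in R_i} f\vert_{\alpha} = \sum_{i=1}^{n} T_{\alpha_i}(f),
\]
so $T_{\Delta} = \sum_{i=1}^{n} T_{\alpha_i}$ lies in the subalgebra generated by $\{T_{\alpha}\}_{\alpha \in GL_2^{+}(\mathbb{Q})}$. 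Conversely, each $T_{\alpha}$ is itself of the form $T_{\Delta}$ for $\Delta = \Gamma \alpha \Gamma$ (which trivially satisfies the closure condition, and has finite $\Gamma \backslash \Delta$ by the preceding proposition since $T_{\alpha}$ is a well-defined operator on $M_k(\Gamma)$), so the two algebras coincide.

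There is no real obstacle here; the only minor point to be careful about is the verification that $\Gamma \backslash (\Gamma \alpha \Gamma)$ is finite for each individual double coset, which is a standard consequence of the commensurability of $\Gamma$ and $\alpha^{-1} \Gamma \alpha$ in $GL_2^{+}(\mathbb{Q})$ and is implicit in the previous proposition's well-definedness of $T_{\alpha}$.
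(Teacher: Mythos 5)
Your proposal is correct and takes essentially the same approach as the paper: both observe that $\Delta$ decomposes as a finite disjoint union of double cosets $\Gamma\alpha_i\Gamma$, so $T_\Delta=\sum_i T_{\alpha_i}$. Your write-up simply spells out the finiteness and disjointness details that the paper leaves implicit.
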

\begin{proof}
It is a consequence of the definition that $\Delta$ must be a union of double cosets
$\Gamma\alpha\Gamma$, where $\alpha\in GL_{2}^{+}(\mathbb{Q})$. In particular,
the Hecke operators are linearly spanned by the $\{T_{\alpha}\}$.
\end{proof}

\begin{example} Here are two standard choices for $\Delta$. 
\begin{enumerate}
\item
If $\Gamma=\Gamma_{1}(N)$, one usually considers for any $n\in\mathbb{Z}$,
the set 
\[
\Delta_{n}^{1}:=\left\{ g\in M_{2}(\mathbb{Z})\mid\det(g)=n \text{ and } g\equiv\left(\begin{array}{cc}
1 & *\\
0 & n
\end{array}\right)\bmod N\right\} 
\]
One usually denotes $T_{n}=T_{\Delta_{n}^{1}}$. We note that $\Delta_{n}^{1}=\Gamma\left(\begin{array}{cc}
1 & 0\\
0 & n
\end{array}\right)\Gamma$ , so that $T_{n}=T_{\alpha}$, with $\alpha=\left(\begin{array}{cc}
1 & 0\\
0 & n
\end{array}\right)$.
\item
Let $\Gamma=\Gamma(N)$. We may take for $n\in\mathbb{Z}$, the set
\[
\Delta_{n}:=\left\{ g\in M_{2}(\mathbb{Z})\mid\det(g)=n \text{ and } g\equiv\left(\begin{array}{cc}
1 & 0\\
0 & n
\end{array}\right)\bmod N\right\} 
\]
Note that again we are taking $\Delta_{n}=\Gamma\left(\begin{array}{cc}
1 & 0\\
0 & n
\end{array}\right)\Gamma$. 
\end{enumerate}
\end{example}

As our implementation of modular forms is by using the dual space
of modular symbols, we need the dual notion.

\begin{defn}
Let $\Delta\subseteq GL_{2}(\mathbb{Q})$ be such that $\Gamma\Delta=\Delta\Gamma = \Delta$
and such that $\Gamma\backslash\Delta$ is finite. Let $R$ be a set
of representatives of $\Gamma\backslash\Delta$. Let 
\begin{align*}
T_{\Delta}^{\vee}:\mathbb{M}_{k}(\Gamma) &\rightarrow \mathbb{M}_{k}(\Gamma) \\
v\otimes\{\alpha,\beta\} &\mapsto \sum_{\delta\in R}(\delta v)\otimes\{\delta \alpha,\delta \beta\}
\end{align*}
\end{defn}

Again, this map does not depend on the choice of $R$. 
We then have the following result. 
\begin{prop} \label{prop: duality Hecke operators}
(Merel, \cite[Proposition 10]{merel1994universal}) The operators $T_{\Delta}$
and $T_{\Delta}^{\vee}$ are adjoint with respect to the bilinear
pairing $\left\langle \cdot,\cdot\right\rangle $ defined in \eqref{eq:pairing of modular symbols and modular forms}.
\end{prop}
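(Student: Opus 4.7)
My plan is to give a direct change-of-variables proof, reducing to the computation on a generator of $\mathbb{M}_k(\Gamma)$, and then using the Möbius substitution to transport each of the $|R|$ integrals in $\langle T_\Delta(f_1,f_2),v\otimes\{\alpha,\beta\}\rangle$ to the corresponding integrals that make up $\langle(f_1,f_2),T_\Delta^\vee(v\otimes\{\alpha,\beta\})\rangle$.

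First, by bilinearity of the pairing \eqref{eq:pairing of modular symbols and modular forms} and $\mathbb{Q}$-linearity of both Hecke operators, it suffices to check the adjointness identity on a single pair $(f_1,f_2)\in S_k(\Gamma)\oplus\overline{S_k(\Gamma)}$ and a single element of the form $P\otimes\{\alpha,\beta\}$, with $P\in\operatorname{Sym}^{k-2}\mathbb{Z}^2$ viewed as a homogeneous polynomial of degree $k-2$ in two variables. Fix a set $R$ of representatives for $\Gamma\backslash\Delta$. The left-hand side expands as
\[
\sum_{\delta\in R}\int_\alpha^\beta (f_1|_\delta)(z)\, P(z,1)\, dz \;+\; \sum_{\delta\in R}\int_\alpha^\beta (f_2|_\delta)(z)\, P(\bar z,1)\, d\bar z,
\]
while the right-hand side expands as
\[
\sum_{\delta\in R}\int_{\delta\alpha}^{\delta\beta} f_1(w)\, (\delta P)(w,1)\, dw \;+\; \sum_{\delta\in R}\int_{\delta\alpha}^{\delta\beta} f_2(w)\, (\delta P)(\bar w,1)\, d\bar w.
\]
The holomorphic and antiholomorphic contributions are independent, so I handle them term-by-term and separately.

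Second, fix $\delta=\left(\begin{smallmatrix}a&b\\ c&d\end{smallmatrix}\right)\in R$ and substitute $w=\delta z$ in the holomorphic integral on the left. The Jacobian is $dw=\frac{\det\delta}{(cz+d)^2}dz$, and a short computation shows that under the inverse substitution $z=\delta^{-1}w$ one has $cz+d=\frac{\det\delta}{a-cw}$. Combining these with the weight-$k$ slash action $f_1|_\delta(z)=(\det\delta)^{k-1}(cz+d)^{-k}f_1(\delta z)$ and using homogeneity of $P$ to rewrite $P(\delta^{-1}w,1)=(a-cw)^{-(k-2)}P(dw-b,-cw+a)$, all powers of $\det\delta$ and of $(cz+d)$ cancel exactly. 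What remains is
\[
\int_{\delta\alpha}^{\delta\beta} f_1(w)\, P(dw-b,-cw+a)\, dw.
\]
By the convention chosen for the $GL_2$-action on $\operatorname{Sym}^{k-2}\mathbb{Z}^2$ — which is precisely the one that makes the definition of the Manin symbol $[v,g]$ compatible with the pairing — the polynomial $P(dw-b,-cw+a)$ coincides with $(\delta P)(w,1)$. Summing over $\delta\in R$ reproduces the holomorphic half of the right-hand side.

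Third, the antiholomorphic calculation is identical in form: one applies $w=\delta z$, noting $d\bar w=\overline{\det\delta}\,(c\bar z+d)^{-2}d\bar z$, and obtains the matching identity because $\delta$ has real entries, so complex conjugation commutes with the Möbius transformation. Adding the two halves yields the adjointness identity. The only genuinely delicate point in this proof is the bookkeeping of conventions: one must verify that the chosen action of $\delta$ on $\operatorname{Sym}^{k-2}\mathbb{Z}^2$, the chosen normalization of the slash operator $f\mapsto f|_\delta$, and the normalization of the pairing are mutually compatible, so that the factor $P(dw-b,-cw+a)$ indeed equals $(\delta P)(w,1)$. This is essentially the content of Merel's choice of conventions in \cite{merel1994universal}; once they are in place, everything else reduces to the cancellation of $(\det\delta)$ and $(cz+d)$ factors displayed above.
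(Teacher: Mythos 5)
The paper records this result as a citation to Merel's \cite[Proposition 10]{merel1994universal} and gives no proof of its own, so there is nothing in the paper for you to match against; what you have written is the proof, and it is correct. Your strategy---expand both sides over a set $R$ of coset representatives, perform the substitution $w=\delta z$ in each integral, and observe that all powers of $\det\delta$ and $(cz+d)$ cancel once the slash normalization $(\det\delta)^{k-1}(cz+d)^{-k}$, the Jacobian $(\det\delta)(cz+d)^{-2}$, and the homogeneity of $P$ are combined---is precisely the standard verification, and your intermediate identities ($cz+d=\det\delta/(a-cw)$, the $(a-cw)^{-(k-2)}$ factor from homogeneity) all check out.

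You are also right that the only genuinely delicate step is the convention bookkeeping in the final identification $P(dw-b,-cw+a)=(\delta P)(w,1)$. To make this fully explicit: this equality holds under the convention $(\delta P)(v)=P(\tilde\delta v)$ with $\tilde\delta=(\det\delta)\,\delta^{-1}$, whereas the naive convention $(\delta P)(v)=P(\delta^{-1}v)$ would leave a stray $(\det\delta)^{k-2}$. The two agree on $SL_2(\mathbb{Z})$, so the $\Gamma$-equivariance that makes the pairing well-defined cannot distinguish them, and one must appeal to Merel's actual normalization (as you do). Your antiholomorphic half is also fine, provided one keeps in mind that the automorphy factor for $\overline{S_k(\Gamma)}$ is $(c\bar z+d)^{-k}$ rather than $(cz+d)^{-k}$, which is exactly what makes the conjugated substitution go through; you implicitly use this when writing $d\bar w=(\det\delta)(c\bar z+d)^{-2}d\bar z$. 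No gaps.
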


Therefore, by Theorem \ref{thm: pairing of cusps}, in order to compute the restriction of the Hecke operators $T_{\Delta}$ to $S_k(\Gamma) \oplus \overline{S}_k(\Gamma)$, it will suffice to compute the Hecke operators $T_{\Delta}^{\vee}$ on $\mathbb{S}_k(\Gamma)$. Note that in order to compute the restriction to $S_k(\Gamma)$ using Theorem \ref{thm: star involution}, we would need $T_{\Delta}$ to commute with the map $\iota$, defined in Proposition~\ref{prop: star involution}. 

\begin{lem} \label{lem: Hecke commutes with star}
Let $\Gamma \subseteq SL_2(\mathbb{Z})$ be a subgroup of real type (recall Definition~\ref{def: real type}). Let $\alpha \in GL_2^{+}(\mathbb{Q})$ be such that $\eta^{-1} \alpha \eta \in \Gamma \alpha \Gamma$. Then in $\mathbb{M}_k(\Gamma)$ we have $T_{\alpha}^{\vee} \circ \iota^{\vee} = \iota^{\vee} \circ T_{\alpha}^{\vee}$.
\end{lem}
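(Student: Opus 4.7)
The plan is to apply both operators to a generic modular symbol $v \otimes \{a,b\}$ and compare the two resulting sums term by term after a convenient reindexing of the coset representatives.

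First, I would fix a set $R$ of coset representatives for $\Gamma \backslash \Gamma \alpha \Gamma$ and compute directly:
\begin{align*}
\iota^{\vee} \circ T_{\alpha}^{\vee}(v \otimes \{a,b\}) &= -\sum_{\delta \in R} \eta \delta v \otimes \{\eta \delta a, \eta \delta b\},\\
T_{\alpha}^{\vee} \circ \iota^{\vee}(v \otimes \{a,b\}) &= -\sum_{\delta \in R} \delta \eta v \otimes \{\delta \eta a, \delta \eta b\}.
\end{align*}
The right-hand side is, by definition, $-T_{\alpha}^{\vee}\bigl(\eta v \otimes \{\eta a, \eta b\}\bigr)$. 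So the task reduces to showing that the left-hand side also equals $-T_{\alpha}^{\vee}\bigl(\eta v \otimes \{\eta a, \eta b\}\bigr)$.

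Next, in the left-hand sum I would make the substitution $\tilde{\delta} := \eta \delta \eta^{-1}$, using $\eta \delta = \tilde{\delta}\, \eta$, which rewrites the sum as
\[
-\sum_{\tilde{\delta} \in R'} \tilde{\delta}\,(\eta v) \otimes \{\tilde{\delta}(\eta a), \tilde{\delta}(\eta b)\},
\]
where $R' = \eta R \eta^{-1}$. To identify this with $-T_{\alpha}^{\vee}(\eta v \otimes \{\eta a, \eta b\})$ I need to verify that $R'$ is again a set of representatives for $\Gamma \backslash \Gamma \alpha \Gamma$; then the equality follows from the independence of $T_{\alpha}^{\vee}$ from the choice of representatives.

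The key step, then, is to check this claim about $R'$. Since $\Gamma$ is of real type, $\eta \Gamma \eta^{-1} = \Gamma$, so conjugation by $\eta$ sends $\Gamma \alpha \Gamma$ to $\Gamma (\eta \alpha \eta^{-1}) \Gamma$. Using $\eta^2 = I$, the hypothesis $\eta^{-1} \alpha \eta \in \Gamma \alpha \Gamma$ is equivalent to $\eta \alpha \eta^{-1} \in \Gamma \alpha \Gamma$, i.e.\ $\Gamma (\eta \alpha \eta^{-1}) \Gamma = \Gamma \alpha \Gamma$. Combined with the real-type condition, conjugation by $\eta$ induces a bijection on $\Gamma \backslash \Gamma \alpha \Gamma$, so $R' = \eta R \eta^{-1}$ is indeed a complete, irredundant system of coset representatives. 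This is the only place where both hypotheses (real type and $\eta^{-1}\alpha\eta \in \Gamma\alpha\Gamma$) are used, and it is the crux of the argument; the rest is bookkeeping. Invoking the independence statement from Definition~\ref{def:Hecke Operator Double Coset} then closes the proof.
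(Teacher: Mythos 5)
Your proof is correct and follows essentially the same route as the paper's: both reindex the sum by conjugating the coset representatives by $\eta$, and both justify this by observing that the real-type hypothesis together with $\eta^{-1}\alpha\eta \in \Gamma\alpha\Gamma$ makes conjugation by $\eta$ a bijection of $\Gamma\backslash\Gamma\alpha\Gamma$. The only cosmetic difference is that you conjugate the representatives in the $\iota^{\vee}\circ T_{\alpha}^{\vee}$ term while the paper conjugates them in the $T_{\alpha}^{\vee}\circ\iota^{\vee}$ term, which is the same argument since $\eta^{-1}=\eta$.
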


\begin{proof}
Let $R$ be a set of representatives for $\Gamma \backslash \Gamma \alpha \Gamma$.
For each $\delta \in \Gamma \alpha \Gamma$, we have by the assumptions on $\Gamma$ and $\alpha$ that
\begin{equation*}
\eta^{-1} \delta \eta \in \eta^{-1} \Gamma \alpha \Gamma \eta = 
\left(   \eta^{-1} \Gamma \eta  \right) \eta^{-1} \alpha \eta \left(   \eta^{-1} \Gamma \eta  \right) = 
 \Gamma \eta^{-1} \alpha \eta \Gamma = \Gamma \alpha \Gamma.
\end{equation*}
Therefore, we see that 
\begin{equation*}
\Gamma \alpha \Gamma = \bigsqcup_{\delta \in R} \Gamma \delta = 
\eta^{-1} \left( \bigsqcup_{\delta \in R} \Gamma \delta \right) \eta =  
\bigsqcup_{\delta \in R} \eta^{-1} \Gamma \delta \eta  =
\bigsqcup_{\delta \in R} \left( \eta^{-1} \Gamma \eta \right) \eta^{-1}\delta \eta =
\bigsqcup_{\delta \in R} \Gamma \eta^{-1} \delta \eta,
\end{equation*}
so that $R' = \{\eta^{-1} \delta \eta \mid \delta \in R \}$ is also a set of representatives for $\Gamma \backslash \Gamma \alpha \Gamma$.
Now
\begin{align*}
T_{\alpha}^{\vee} ( \iota^{\vee} (v \otimes \{\alpha, \beta\})) 
&=  T_{\alpha}^{\vee} (-\eta v \otimes \{\eta \alpha, \eta \beta\}) \\
&= \sum_{\delta \in R} (- \delta \eta v \otimes \{\delta \eta \alpha, \delta \eta \beta\}) \\
&= \sum_{\delta \in R} (- \eta (\eta^{-1}\delta \eta) v \otimes \{\eta (\eta^{-1} \delta \eta) \alpha, \eta (\eta^{-1} \delta \eta) \beta\}) \\
&=  \sum_{\delta' \in R'} (- \eta \delta' v \otimes \{ \eta \delta' \alpha, \eta \delta' \beta\})=
\iota^{\vee}(T_{\alpha}^{\vee}(v \otimes \{\alpha, \beta\})). \qedhere
\end{align*}
\end{proof}

\begin{cor} \label{cor: Hecke commutes with star}
Let $G \subseteq GL_2(\mathbb{Z} / N \mathbb{Z})$ be a subgroup of real type (recall Definition~\ref{def: real type}). Let $\alpha \in M_2(\mathbb{Z})$ be such that $\lambda_N(\alpha) \in G$. Then in $\mathbb{M}_k(\Gamma_G)$ we have $T_{\alpha}^{\vee} \circ \iota^{\vee} = \iota^{\vee} \circ T_{\alpha}^{\vee}$.
\end{cor}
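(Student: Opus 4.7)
The plan is to reduce to Lemma \ref{lem: Hecke commutes with star} applied with $\Gamma = \Gamma_G$. That requires verifying two hypotheses: (i) that $\Gamma_G$ itself is of real type, and (ii) that $\eta^{-1}\alpha\eta \in \Gamma_G \alpha \Gamma_G$.

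For (i), conjugation by $\eta$ preserves $SL_2(\mathbb{Z})$ (as $(\det\eta)^2 = 1$), and for $\gamma \in \Gamma_G$ the reduction $\lambda_{N,0}(\eta\gamma\eta^{-1}) = \lambda_N(\eta) \lambda_{N,0}(\gamma) \lambda_N(\eta)^{-1}$ lies in $\lambda_N(\eta) G_0 \lambda_N(\eta)^{-1} = (\lambda_N(\eta) G \lambda_N(\eta)^{-1}) \cap SL_2(\mathbb{Z}/N\mathbb{Z}) = G_0$ by the real-type hypothesis on $G$. Hence $\eta \Gamma_G \eta^{-1} \subseteq \Gamma_G$, and the reverse inclusion is symmetric.

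For (ii), the substantive point, the real-type assumption applied to $\lambda_N(\eta^{-1}\alpha\eta) = \lambda_N(\eta)^{-1} \lambda_N(\alpha) \lambda_N(\eta)$ shows $\lambda_N(\eta^{-1}\alpha\eta) \in G$. Therefore $g := \lambda_N(\eta^{-1}\alpha\eta) \lambda_N(\alpha)^{-1}$ lies in $G$ and has determinant $1$, so $g \in G_0$; any lift $\tilde g_1 \in \Gamma_G$ yields $\tilde g_1 \alpha \equiv \eta^{-1}\alpha\eta \pmod N$. To upgrade this congruence to an honest decomposition $\eta^{-1}\alpha\eta = \tilde g_1 \alpha \gamma_2$ with $\gamma_2 \in \Gamma_G$, I would exploit that $\alpha$ and $\eta^{-1}\alpha\eta$ share a common Smith normal form (since conjugation by $\eta \in GL_2(\mathbb{Z})$ preserves invariant factors): matching Smith decompositions give $\eta^{-1}\alpha\eta = U \alpha V$ with $U, V \in SL_2(\mathbb{Z})$, and one then adjusts the pair $(U,V)$ within the two-sided stabilizer of $\alpha$ — using the mod-$N$ control from $\tilde g_1$ — so that the adjusted factors reduce into $G_0$.

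The principal obstacle is precisely this last adjustment. The naive one-sided candidate $\gamma_2 = (\tilde g_1\alpha)^{-1}\eta^{-1}\alpha\eta$ is typically non-integral because $\alpha^{-1} \notin M_2(\mathbb{Z})$, so integrality cannot be arranged on a single side and must be secured by a genuine two-sided modification. Once both (i) and (ii) are in hand, Lemma \ref{lem: Hecke commutes with star} applies directly to $\Gamma_G$ and gives the desired commutation $T_{\alpha}^\vee \circ \iota^\vee = \iota^\vee \circ T_{\alpha}^\vee$.
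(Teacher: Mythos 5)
Your overall strategy is the same as the paper's: reduce to Lemma \ref{lem: Hecke commutes with star} by checking (i) that $\Gamma_G$ is of real type and (ii) that $\eta^{-1}\alpha\eta \in \Gamma_G\alpha\Gamma_G$. Your verification of (i) is correct, and for (ii) you correctly isolate the two relevant facts, namely $\lambda_N(\eta^{-1}\alpha\eta)\in G$ and the equality $SL_2(\mathbb{Z})\,\alpha\,SL_2(\mathbb{Z}) = SL_2(\mathbb{Z})\,\eta^{-1}\alpha\eta\,SL_2(\mathbb{Z})$ coming from the common Smith normal form.

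However, the step you yourself flag as ``the principal obstacle'' is a genuine gap as written. What you need is the implication: if $g\in M_2(\mathbb{Z})$ has the same elementary divisors as $\alpha$ and $\lambda_N(g)\in G$, then $g\in\Gamma_G\alpha\Gamma_G$. This is not formal, and your description of ``adjusting the pair $(U,V)$ within the two-sided stabilizer of $\alpha$ using the mod-$N$ control'' only names the difficulty without resolving it; as you observe, the naive one-sided correction is non-integral, and the genuine two-sided modification requires an elementary-divisor/approximation argument. The paper closes exactly this gap by citing \cite[Lemma 3.29(1)]{shimura1971introduction}, which gives $\Gamma_G\alpha\Gamma_G=\{g\in M_2(\mathbb{Z}) \mid SL_2(\mathbb{Z})\,\alpha\,SL_2(\mathbb{Z})=SL_2(\mathbb{Z})\,g\,SL_2(\mathbb{Z})\ \text{and}\ \lambda_N(g)\in G\}$, after first checking that $\alpha$ lies in Shimura's set $\Phi$, i.e.\ that $\lambda_N(\Gamma_G\alpha)=\lambda_N(\alpha\Gamma_G)$ (which holds because $\lambda_N(\alpha)\in G$). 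To complete your proof you should either invoke that lemma or actually carry out the two-sided adjustment it encapsulates; without one of these the argument for (ii) is incomplete.
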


\begin{proof}
Since $G$ is of real type, we have 
$$
\lambda_N( \eta \Gamma_G \eta^{-1}) = \lambda_N(\eta) G \lambda_N(\eta)^{-1} = G.
$$
It follows that $\eta \Gamma_G \eta^{-1} \subseteq \Gamma_G$, and hence $\Gamma_G$ is of real type. 

Also, as $\lambda_N(\alpha) \in G$, we see that 
\begin{equation*}
\lambda_N(\eta^{-1} \alpha \eta) \in \lambda_N(\eta)^{-1} G \lambda_N(\eta) = G.
\end{equation*}
Moreover, since $\det(\alpha) = \det(\eta^{-1} \alpha \eta)$ and $d_1(\alpha) = d_1(\eta^{-1} \alpha \eta)$, where $d_1$ is the greatest common divisor of all the entries, $\alpha$ and $\eta^{-1} \alpha \eta$ have the same Smith normal form, showing that
$SL_2(\mathbb{Z}) \cdot \alpha \cdot  SL_2(\mathbb{Z}) = SL_2(\mathbb{Z}) \cdot \eta^{-1} \alpha \eta \cdot SL_2(\mathbb{Z})$.
We invoke \cite[Lemma 3.29(1)]{shimura1971introduction}, noting that 
$\lambda_N(\Gamma_G \alpha) = G = \lambda_N(\alpha \Gamma_G)$, and so, in Shimura's notation, $\alpha \in \Phi$. 
We deduce that
$$\Gamma_G \alpha \Gamma_G = \{g \in M_2(\mathbb{Z}) \mid SL_2(\mathbb{Z}) \cdot \alpha \cdot  SL_2(\mathbb{Z}) = SL_2(\mathbb{Z}) \cdot g \cdot  SL_2(\mathbb{Z}) \text{ and } \lambda_N(g) \in G\}.$$
It then follows that $\eta^{-1} \alpha \eta \in \Gamma_G \alpha \Gamma_G$.
The result follows from Lemma \ref{lem: Hecke commutes with star}.
\end{proof}

\subsection{Naive Computation of $T_{\alpha}^{\vee}$} \label{subsec: naive Hecke operator}

We will now present a naive algorithm to compute the Hecke operator
$T_{\alpha}^{\vee}$ on $\mathbb{M}_k(\Gamma)$ for an arbitrary $\alpha\in GL_{2}^{+}(\mathbb{Q})$.

\begin{algo} \label{algo: Hecke operator double coset}
$\operatorname{HeckeOperator}(\alpha, x, B)$. Hecke Operator $T_{\alpha}^{\vee}$ on $\mathbb{M}_{k}(\Gamma)$.
\end{algo}

\textbf{Input :} 
\begin{itemize}
\item $\alpha\in GL_{2}^{+}(\mathbb{Q})$.
\item an element $x=v\otimes\{\alpha,\beta\}\in\mathbb{M}_{k}(\Gamma)$.
\item a basis $B$ for the space of modular symbols $\mathbb{M}_{k}(\Gamma)$. 
\end{itemize}

\textbf{Output :} a vector representing $T_{\alpha}^{\vee}(x)$ w.r.t.
the basis $B$.
\begin{enumerate}
\item Let $H:=\Gamma\cap\alpha^{-1}\Gamma\alpha$. 
\item Let $S$ be a set of representatives for $H\backslash\Gamma$.
\item Let $R:=\alpha\cdot S=\{\alpha\cdot x:x\in S\}$.
\item Return $\sum_{r\in R}[\pi(r\cdot x)]_{B}$, where $[\pi(r\cdot x)]_{B}$
is the vector representing the modular symbol $\Gamma\cdot(rv)\otimes\{r\alpha,r\beta\}$
w.r.t. $B$. 
\end{enumerate}
\begin{rem}
Even though this is unclear at first sight, the functions $(\Gamma_{1},\Gamma_{2})\mapsto\Gamma_{1}\cap\Gamma_{2}$
and $(\Gamma,x)\mapsto x^{-1}\Gamma x$ also require a nontrivial
implementation. 

Note also that $\Gamma\subseteq SL_{2}(\mathbb{Z})$ is defined using
$G\subseteq GL_{2}(\mathbb{Z}/N\mathbb{Z})$. Therefore, if $\alpha\in GL_{2}^{+}(\mathbb{Q})$
is such that $\alpha^{-1}\Gamma\alpha\nsubseteq SL_{2}(\mathbb{Z})$,
a straightforward computation could be expensive (either find $N$
such that $N\cdot\alpha^{-1}\Gamma\alpha\subseteq SL_{2}(\mathbb{Z})$
and change computations to fit, or work in $GL_{2}^{+}(\mathbb{Q})$
with generators - that requires finding the generators of $\Gamma$).
Moreover, as $GL_{2}^{+}(\mathbb{Q})$ is not finitely generated,
we do not have ready made tools for computing the intersection $\Gamma\cap\alpha^{-1}\Gamma\alpha$. 
\end{rem}

This has brought us to consider the following alternate path:

\subsubsection*{\label{subsec:Strategy-for-computation}Strategy for computation
of Algorithm \ref{algo: Hecke operator double coset}}

(a) Implement the function $(\Gamma,x)\mapsto x^{-1}\Gamma x$ for $x \in GL_{2}^{+}(\mathbb{Q})$ when $x^{-1}\Gamma x\subseteq SL_{2}(\mathbb{Z})$. 

(b) Implement an intersection function $(\Gamma_{1},\Gamma_{2})\mapsto\Gamma_{1}\cap\Gamma_{2}$
for $\Gamma_{1},\Gamma_{2}\subseteq SL_{2}(\mathbb{Z})$.

(c) Write the set of representatives $\Gamma\backslash\Gamma\alpha\Gamma=\alpha\cdot\left(\left((\alpha^{-1}\Gamma\alpha)\cap\Gamma\right)\backslash\Gamma\right)$
as a sequence of operations of type (a), (b). 

Here is an implementation of the intersection in step 1, using (a)
and (b).

Below, we denote by $\Gamma^0(N)$ the congruence subgroup $\Gamma_G$, where
$$ 
G=\left\{ \left(\begin{array}{cc}
a & 0 \\
c & d
\end{array}\right)\mid a,d \in(\mathbb{Z}/N\mathbb{Z})^{\times},c\in\mathbb{Z}/N\mathbb{Z}\right\}.
$$ 

\begin{algo} \label{algo: Conjugation and intersection}
$\operatorname{ConjInter}(\Gamma,\alpha)$. Conjugation and Intersection. 
\end{algo}

\textbf{Input : }
\begin{itemize}
\item $\Gamma\subseteq SL_{2}(\mathbb{Z})$ a congruence subgroup.
\item $\alpha\in GL_{2}^{+}(\mathbb{Q})$ such that $\alpha^{-1}\Gamma\alpha\subseteq SL_{2}(\mathbb{Z})$.
\end{itemize}

\textbf{Output :} $\Gamma\cap\alpha^{-1}\Gamma\alpha$.
\begin{enumerate}
\item Find matrices $x,\gamma\in SL_{2}(\mathbb{Z})$ such that $x\cdot \alpha^{-1} \cdot\gamma$
is of the form $\left(\begin{array}{cc}
a & 0\\
0 & b
\end{array}\right)$ with $a^{-1}b\in\mathbb{Z}$. (Smith Normal Form)
\item Let $H:=\Gamma\cap\left(\alpha^{-1}\cdot\left(\Gamma\cap\left(\gamma\cdot\Gamma^{0}(a^{-1}b)\cdot\gamma^{-1}\right)\right)\cdot\alpha\right)$. 
\end{enumerate}
First, let us prove that Algorithm \ref{algo: Conjugation and intersection}
indeed returns the correct answer.
\begin{prop}
Algorithm \ref{algo: Conjugation and intersection} returns
the group $\Gamma\cap\alpha^{-1}\Gamma\alpha$. 
\end{prop}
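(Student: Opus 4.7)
The plan is to track, element by element, the conjugation $\gamma' \mapsto \alpha^{-1}\gamma'\alpha$ using the Smith normal form decomposition from step (1), and then observe that the auxiliary intersection with $\gamma\Gamma^{0}(a^{-1}b)\gamma^{-1}$ is forced to be trivial under the hypothesis $\alpha^{-1}\Gamma\alpha\subseteq SL_{2}(\mathbb{Z})$.

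First, I would rewrite $\alpha = \gamma \cdot D^{-1} \cdot x$, where $D = \left(\begin{smallmatrix} a & 0 \\ 0 & b \end{smallmatrix}\right)$ and $a^{-1}b \in \mathbb{Z}$, so that for any $\gamma' \in SL_{2}(\mathbb{Z})$ one has
\[
\alpha^{-1} \gamma' \alpha = x^{-1} \cdot D \cdot (\gamma^{-1} \gamma' \gamma) \cdot D^{-1} \cdot x.
\]
Since $x \in SL_{2}(\mathbb{Z})$, the right-hand side lies in $SL_{2}(\mathbb{Z})$ if and only if $D \cdot (\gamma^{-1}\gamma'\gamma) \cdot D^{-1} \in SL_{2}(\mathbb{Z})$. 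Writing $\gamma^{-1}\gamma'\gamma = \left(\begin{smallmatrix} p & q \\ r & s \end{smallmatrix}\right)$, a direct calculation gives $D \cdot (\gamma^{-1}\gamma'\gamma) \cdot D^{-1} = \left(\begin{smallmatrix} p & (a/b)q \\ (b/a)r & s \end{smallmatrix}\right)$, and because $a \mid b$ the only integrality condition is $(b/a) \mid q$, i.e.\ $\gamma^{-1}\gamma'\gamma \in \Gamma^{0}(a^{-1}b)$. Equivalently, $\gamma' \in \gamma\,\Gamma^{0}(a^{-1}b)\,\gamma^{-1}$.

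Next I would apply this criterion with the hypothesis $\alpha^{-1}\Gamma\alpha \subseteq SL_{2}(\mathbb{Z})$. It then follows that every $\gamma' \in \Gamma$ satisfies $\gamma' \in \gamma\,\Gamma^{0}(a^{-1}b)\,\gamma^{-1}$, so
\[
\Gamma \;\subseteq\; \gamma\,\Gamma^{0}(a^{-1}b)\,\gamma^{-1}, \qquad \text{hence} \qquad \Gamma \cap \gamma\,\Gamma^{0}(a^{-1}b)\,\gamma^{-1} = \Gamma.
\]
Plugging this identity into the definition of $H$ from step (2) of Algorithm~\ref{algo: Conjugation and intersection} collapses the outer expression to
\[
H \;=\; \Gamma \cap \bigl(\alpha^{-1} \cdot \Gamma \cdot \alpha\bigr) \;=\; \Gamma \cap \alpha^{-1}\Gamma\alpha,
\]
which is the desired equality.

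The only point requiring care is the elementary matrix computation showing that conjugation by $D$ preserves $SL_{2}(\mathbb{Z})$-integrality exactly on $\Gamma^{0}(a^{-1}b)$; this is routine once the Smith normal form is in hand, and the rest is bookkeeping. I do not foresee a genuine obstacle, since the whole argument is a transparent reduction of a conjugation-and-intersection problem to the well-understood structure of the congruence subgroup $\Gamma^{0}(a^{-1}b)$.
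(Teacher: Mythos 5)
Your proof is correct as written, and the elementary matrix computation in your first paragraph is essentially the same key identity the paper uses: it shows that
\[
\{\gamma' \in SL_{2}(\mathbb{Z}) : \alpha^{-1}\gamma'\alpha \in SL_{2}(\mathbb{Z})\} \;=\; SL_{2}(\mathbb{Z}) \cap \alpha SL_{2}(\mathbb{Z})\alpha^{-1} \;=\; \gamma\,\Gamma^{0}(a^{-1}b)\,\gamma^{-1},
\]
which is exactly what the paper establishes in the first half of its argument. From that point on, however, the two arguments diverge, and the divergence matters. You invoke the input hypothesis $\alpha^{-1}\Gamma\alpha \subseteq SL_{2}(\mathbb{Z})$ to conclude $\Gamma \subseteq \gamma\Gamma^{0}(a^{-1}b)\gamma^{-1}$, which makes step (2) of the algorithm vacuous. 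The paper instead intersects the displayed identity with $\Gamma$, conjugates by $\alpha^{-1}$, and intersects with $\Gamma$ again; this establishes $H = \Gamma \cap \alpha^{-1}\Gamma\alpha$ without ever using the hypothesis. The paper's route is the one that reflects what the algorithm is for. The whole point of Algorithm~\ref{algo: Conjugation and intersection}, in light of the strategy (a)--(c) stated just above it, is to compute $\Gamma \cap \alpha^{-1}\Gamma\alpha$ precisely when $\alpha^{-1}\Gamma\alpha$ is \emph{not} contained in $SL_{2}(\mathbb{Z})$; in that case $\Gamma \cap \gamma\Gamma^{0}(a^{-1}b)\gamma^{-1}$ is exactly the subgroup of $\Gamma$ whose $\alpha^{-1}$-conjugate lands back in $SL_{2}(\mathbb{Z})$ (the content of the subsequent Lemma), so that steps (a) and (b) can actually be applied. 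The hypothesis $\alpha^{-1}\Gamma\alpha \subseteq SL_{2}(\mathbb{Z})$ in the algorithm's input block appears to be a typographical slip — were it literally required, the whole construction would collapse to $H = \Gamma \cap \alpha^{-1}\Gamma\alpha$ trivially, as your proof exposes. In short: your argument is correct under the literal input specification, but it is brittle — it works only under a hypothesis that makes the algorithm pointless and is surely not intended. Your computation already contains everything needed for the robust version: simply intersect the identity you derived with $\Gamma$ and conjugate, rather than appealing to the hypothesis.
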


\begin{proof}
First, note that if $a^{-1}b\in\mathbb{Z}$, then 
\[
\left(\begin{array}{cc}
a & 0\\
0 & b
\end{array}\right)^{-1} SL_2(\mathbb{Z})\left(\begin{array}{cc}
a & 0\\
0 & b
\end{array}\right)\cap SL_2(\mathbb{Z})=\Gamma^{0}(a^{-1}b)
\]
Therefore
\[
\gamma^{-1}\alpha x^{-1} SL_2(\mathbb{Z}) x\alpha^{-1}\gamma\cap SL_2(\mathbb{Z})=\Gamma^{0}(a^{-1}b)
\]
 But $x\in SL_2(\mathbb{Z})$, so that $x^{-1}SL_2(\mathbb{Z})x=SL_2(\mathbb{Z})$. It follows
that 
\[
\alpha SL_2(\mathbb{Z})\alpha^{-1}\cap\gamma\cdot SL_2(\mathbb{Z})\gamma^{-1}=\gamma\Gamma^{0}(a^{-1}b)\gamma^{-1}
\]
But $\gamma\in SL_2(\mathbb{Z})$, so $\gamma SL_2(\mathbb{Z})\gamma^{-1}=SL_2(\mathbb{Z})$.
It follows that 
\[
\alpha SL_2(\mathbb{Z})\alpha^{-1}\cap SL_2(\mathbb{Z})=\gamma\Gamma^{0}(a^{-1}b)\gamma^{-1}
\]
Therefore
\[
\alpha SL_2(\mathbb{Z})\alpha^{-1}\cap\Gamma=\alpha SL_2(\mathbb{Z})\alpha^{-1}\cap SL_2(\mathbb{Z})\cap\Gamma=\Gamma\cap\gamma\Gamma^{0}(a^{-1}b)\gamma^{-1},
\]
so that 
\begin{equation}
SL_2(\mathbb{Z})\cap\alpha^{-1}\Gamma\alpha=\alpha^{-1}\left(\Gamma\cap\gamma\Gamma^{0}(a^{-1}b)\gamma^{-1}\right)\alpha. \label{eq:conjugation by alpha}
\end{equation}
Finally, intersecting with $\Gamma$, one obtains
\[
\alpha^{-1}\Gamma\alpha\cap\Gamma=\Gamma\cap\left(\alpha^{-1}\left(\Gamma\cap\gamma\Gamma^{0}(a^{-1}b)\gamma^{-1}\right)\alpha\right)=H. \qedhere
\]
\end{proof}
In order to be able to use the strategy \ref{subsec:Strategy-for-computation},
we should also show that the conjugations we are computing are in
$SL_{2}(\mathbb{Z})$. 
\begin{lem}
$\gamma\Gamma^{0}(a^{-1}b)\gamma^{-1}\subseteq SL_{2}(\mathbb{Z})$
and $\alpha^{-1}\left(\Gamma\cap\gamma\Gamma^{0}(a^{-1}b)\gamma^{-1}\right)\alpha\subseteq SL_{2}(\mathbb{Z})$.
\end{lem}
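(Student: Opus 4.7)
The first inclusion is essentially automatic: since $\gamma\in SL_2(\mathbb{Z})$ and $\Gamma^0(a^{-1}b)$ is by definition a subgroup of $SL_2(\mathbb{Z})$, the conjugate $\gamma\Gamma^0(a^{-1}b)\gamma^{-1}$ is again a subset of $SL_2(\mathbb{Z})$, as $SL_2(\mathbb{Z})$ is a group closed under conjugation by its own elements.

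For the second inclusion, the plan is to reuse the identity
\[
\alpha\, SL_2(\mathbb{Z})\,\alpha^{-1}\cap SL_2(\mathbb{Z}) = \gamma\,\Gamma^0(a^{-1}b)\,\gamma^{-1}
\]
that was established in the middle of the preceding proposition's proof (this is before the final intersection with $\Gamma$ takes place, so it is available at this point). Given any $g\in \Gamma\cap\gamma\,\Gamma^0(a^{-1}b)\,\gamma^{-1}$, the above identity shows in particular that $g\in \alpha\, SL_2(\mathbb{Z})\,\alpha^{-1}$, so there exists $h\in SL_2(\mathbb{Z})$ with $g=\alpha h\alpha^{-1}$. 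Then $\alpha^{-1}g\alpha=h\in SL_2(\mathbb{Z})$, which is exactly the desired conclusion.

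There is no real obstacle here; the only subtlety is that the argument depends on having the intermediate identity from the previous proposition available (rather than just the final statement $\alpha^{-1}\Gamma\alpha\cap\Gamma=H$), and on recognising that the $\alpha$-conjugation used in the definition of $H$ lines up precisely with the factorisation of elements of $\alpha\,SL_2(\mathbb{Z})\,\alpha^{-1}$. This justifies the use of the strategy in \ref{subsec:Strategy-for-computation}, since each intermediate group that arises in the computation of $\Gamma\cap\alpha^{-1}\Gamma\alpha$ genuinely sits inside $SL_2(\mathbb{Z})$, so the conjugation function from step~(a) is only ever applied in its intended range.
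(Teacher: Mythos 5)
Your proposal is correct and follows essentially the same route as the paper: the first inclusion is immediate from $\gamma\in SL_2(\mathbb{Z})$, and the second is read off from the chain of identities established in the proof of the preceding proposition (the paper cites the displayed equation $SL_2(\mathbb{Z})\cap\alpha^{-1}\Gamma\alpha=\alpha^{-1}\left(\Gamma\cap\gamma\Gamma^{0}(a^{-1}b)\gamma^{-1}\right)\alpha$, while you unwind the equivalent identity $\alpha\,SL_2(\mathbb{Z})\,\alpha^{-1}\cap SL_2(\mathbb{Z})=\gamma\Gamma^{0}(a^{-1}b)\gamma^{-1}$ elementwise, which amounts to the same thing).
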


\begin{proof}
The first is trivial, as $\gamma\in SL_{2}(\mathbb{Z})$. The second
follows from \eqref{eq:conjugation by alpha}. 
\end{proof}
\begin{rem}
Algorithm \ref{algo: Hecke operator double coset} can
be used to compute $[T_{\alpha}^{\vee}]_{B}$ by applying it to each
of the basis vectors. 
\end{rem}

It remains to show how to compute the conjugation in (a), and the intersection in (b).
For the conjugation in (a), we compute the generators of $\Gamma$ using Farey symbols (see \cite[Theorem 6.1]{kulkarni1991arithmetic}). We then conjugate them to obtain the generators of the conjugated subgroup, and can compute the level and the reduction of this conjugated subgroup.
For the intersection, we present the following algorithm.

\begin{algo} \label{algo: Intersection of induced}
$\operatorname{InterInduced}(G,H)$. Intersection of induced congruence subgroups.
\end{algo}

\textbf{Input :} 
\begin{itemize}
\item $G\subseteq GL_{2}(\mathbb{Z}/N_{G}\mathbb{Z})$.
\item $H\subseteq GL_{2}(\mathbb{Z}/N_{H}\mathbb{Z})$.
\end{itemize}

\textbf{Output :} a group $K\subseteq GL_{2}(\mathbb{Z}/N\mathbb{Z})$
such that $\Gamma_{K}=\Gamma_{G}\cap\Gamma_{H}$. Here $N=\text{lcm}(N_{G},N_{H})$.
\begin{enumerate}
\item $d:=\gcd(N_{G},N_{H})$, $G_{d}:=\{g\in G\mid g\equiv1\bmod d\}$
and $H_{d}:=\{h\in H\mid h\equiv1\bmod d\}$. 
\item $K_{d}:=\lambda_{d}(G)\cap\lambda_{d}(H)$, and $s_{G}:K_{d}\rightarrow G$,
$s_{H}:K_{d}\rightarrow H$ sections of $\lambda_{d}$. 
\item Let $A_{G}\subseteq GL_{2}(\mathbb{Z}/N\mathbb{Z})$ be a set of matrices
$a$ lifting $Generators(G_{d})$ such that $a\equiv1\bmod N_{H}$.
(Chinese Remainder Theorem)
\item Let $A_{H}\subseteq GL_{2}(\mathbb{Z}/N\mathbb{Z})$ be a set of matrices
$a$ lifting $Generators(H_{d})$ such that $a\equiv1\bmod N_{G}$.
(Chinese Remainder Theorem)
\item Let $A_{d}=\{a_{z}:z\in Generators(K_{d})\}\subseteq GL_{2}(\mathbb{Z}/N\mathbb{Z})$
be such that $a_{z}\equiv s_{G}(z)\bmod N_{G}$ and $a_{z}\equiv s_{H}(z)\bmod N_{H}$.
(Chinese Remainder Theorem)
\item Return $K:=\langle A_{G}\cup A_{H}\cup A_{d} \rangle$. 
\end{enumerate}
\begin{prop}
Algorithm \ref{algo: Intersection of induced} returns a group
$K$ such that $\Gamma_{K}=\Gamma_{G}\cap\Gamma_{H}$ .
\end{prop}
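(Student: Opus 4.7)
The plan is to prove the two inclusions $\Gamma_K \subseteq \Gamma_G \cap \Gamma_H$ and $\Gamma_G \cap \Gamma_H \subseteq \Gamma_K$ separately, where all three groups are viewed as subgroups of $SL_2(\mathbb{Z})$ via the reduction maps at the appropriate levels. Since $N = \operatorname{lcm}(N_G, N_H)$, I will freely use the fact that the natural map $\mathbb{Z}/N\mathbb{Z} \hookrightarrow \mathbb{Z}/N_G\mathbb{Z} \times \mathbb{Z}/N_H\mathbb{Z}$ is injective with image the pairs that agree modulo $d = \gcd(N_G, N_H)$.

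The inclusion $\Gamma_K \subseteq \Gamma_G \cap \Gamma_H$ is the easy direction: I just check that each of the three generating sets $A_G$, $A_H$, $A_d$ of $K$ reduces modulo $N_G$ into $G$ and modulo $N_H$ into $H$. Indeed, by construction, elements of $A_G$ reduce to $G_d \subseteq G$ mod $N_G$ and to $1 \in H$ mod $N_H$; elements of $A_H$ behave symmetrically; and each $a_z \in A_d$ reduces to $s_G(z) \in G$ mod $N_G$ and to $s_H(z) \in H$ mod $N_H$. Closure under products and inverses then yields the claim for all of $K$.

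For the reverse inclusion, take $\gamma \in \Gamma_G \cap \Gamma_H$ and write $\bar\gamma_d := \lambda_d(\gamma) \in \lambda_d(G) \cap \lambda_d(H) = K_d$. Express $\bar\gamma_d$ as a word in the generators of $K_d$ and form the corresponding word $\tilde a \in \langle A_d \rangle \subseteq K$ by replacing each generator $z$ by $a_z$. Then $\tilde a$ reduces modulo $N_G$ and modulo $N_H$ to elements of $G$ and $H$ respectively, and each of these further reduces to $\bar\gamma_d$ modulo $d$. Set $\beta := \lambda_N(\gamma) \cdot \tilde a^{-1}$. Its mod-$N_G$ reduction lies in $G$ and is trivial mod $d$, hence belongs to $G_d$; symmetrically, its mod-$N_H$ reduction lies in $H_d$. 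Now write these two reductions as words in the generators of $G_d$ and $H_d$, and replace the generators by their lifts from $A_G$ and $A_H$ to obtain $\tilde g'' \in \langle A_G \rangle$ and $\tilde h'' \in \langle A_H \rangle$. By construction $\tilde g''$ is trivial mod $N_H$ and $\tilde h''$ is trivial mod $N_G$, so $\tilde g'' \cdot \tilde h''$ agrees with $\beta$ modulo both $N_G$ and $N_H$; by CRT, $\beta = \tilde g'' \tilde h''$, and hence $\lambda_N(\gamma) = \tilde g'' \tilde h'' \tilde a \in K$.

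The only genuine subtlety, which explains why the algorithm needs all three of $A_G$, $A_H$, $A_d$, is that the sections $s_G$ and $s_H$ are in general not homomorphisms, so $\tilde a$ cannot by itself capture $\lambda_N(\gamma)$; it only captures $\bar\gamma_d$ up to ambiguity in $G_d$ and $H_d$ independently. The disjoint lifts in $A_G$ and $A_H$ are tailored precisely so that this remaining ambiguity can be reassembled by CRT, and the main obstacle in writing the proof is keeping track of the three modular reductions carefully enough to invoke CRT on $\beta$.
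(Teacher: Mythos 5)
Your proof is correct and follows essentially the same route as the paper's: reduce to showing $\lambda_N(\Gamma_G\cap\Gamma_H)=K$, peel off the mod-$d$ part using a word in $A_d$, observe that the remaining factor reduces into $G_d$ and $H_d$, and reassemble via CRT using the disjoint lifts $A_G$ and $A_H$. The only differences are cosmetic (you multiply the correction term on the right rather than the left, and you spell out the easy inclusion $\Gamma_K\subseteq\Gamma_G\cap\Gamma_H$, which the paper leaves implicit).
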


\begin{proof}
First note that for any $p\nmid N$, and any $e$, $\lambda_{p^{e}}(\Gamma_{G}\cap\Gamma_{H})=GL_{2}(\mathbb{Z}/p^{e}\mathbb{Z})$.
Then note that 
\[
\lambda_{N}(\Gamma_{G}\cap\Gamma_{H})=\left\{ x\in GL_{2}(\mathbb{Z}/N\mathbb{Z})\mid\lambda_{N_{G}}(x)\in G,\quad\lambda_{N_{H}}(x)\in H\right\} 
\]

We will show that $K = \lambda_N(\Gamma_G \cap \Gamma_H)$, hence the result.
Let $x\in\lambda_{N}(\Gamma_{G}\cap\Gamma_{H})$. Then $\lambda_{d}(x)\in K_{d}$.
Then $\lambda_{d}(x)=\prod z_{i}^{e_{i}}$ for some $\{z_{i}\}\subseteq Generators(K_{d})$.
Write $a_{i}:=a_{z_{i}}$, and consider $a=\prod a_{i}^{e_{i}}$.
It satisfies $\lambda_{d}(a)=\lambda_{d}(x)$, hence $\lambda_{d}(a^{-1}x)=1$. 

In particular, we see that $\lambda_{N_{G}}(a^{-1}x)\in G_{d}$ and $\lambda_{N_{H}}(a^{-1}x)\in H_{d}$.
Therefore there are $\{x_{j}\}\subseteq Generators(G_{d})$ and $\{y_{k}\}\subseteq Generators(H_{d})$
such that $\lambda_{N_{G}}(a^{-1}x)=\prod x_{j}^{f_{j}}$, $\lambda_{N_{H}}(a^{-1}x)=\prod y_{k}^{g_{k}}$.
Let $\{b_{j}\}$ be the lift of the $\{x_{j}\}$ in $A_{G}$ and $\{c_{k}\}$
the lifts of the $\{y_{k}\}$ in $A_{H}$. Then
\[
\lambda_{N_{G}}\left(\prod b_{j}^{f_{j}}\cdot\prod c_{k}^{g_{k}}\right)=\lambda_{N_{G}}(a^{-1}x)
\]
and 
\[
\lambda_{N_{H}}\left(\prod b_{j}^{f_{j}}\cdot\prod c_{k}^{g_{k}}\right)=\lambda_{N_{H}}(a^{-1}x)
\]
hence 
\[
x=\prod a_{i}^{e_{i}}\cdot\prod b_{j}^{f_{j}}\cdot\prod c_{k}^{g_{k}}\in K. \qedhere
\]
\end{proof}

In order to measure the complexity of Algorithm \ref{algo: Hecke operator double coset},
in addition to the $\operatorname{CosetIndex}$ operation, we use $\operatorname{In}$ to denote group membership test in $G$.
We will also need some notation, which we now introduce.

For $\alpha\in GL_{2}^{+}(\mathbb{Q})$, let $d_1(\alpha)\in \mathbb{Q}_{>0}$ be maximal such that $ \alpha \in d_1(\alpha)\cdot M_2(\mathbb{Z})$. 

Let $D(\alpha) := \det(\alpha) / d_1(\alpha)^2 \in \mathbb{Z}$, and let $I_{\alpha,\Gamma} := [\Gamma : \alpha^{-1} \Gamma \alpha \cap \Gamma]$. 

This leads us to the main result of this subsection.

\begin{thm} \label{thm: Hecke operator double coset naive}
There exists an algorithm that given a congruence subgroup of real type $\Gamma \subseteq SL_{2}(\mathbb{Z})$ of level $N$, an element $\alpha \in GL_2^{+}(\mathbb{Q})$ such that $\eta^{-1} \alpha \eta \in \Gamma \alpha \Gamma$ and an integer $k \ge 2$, computes the Hecke operator $T_{\alpha}$ corresponding to the double coset $\Gamma \alpha \Gamma$, on the space of cusp forms $S_k(\Gamma)$, in complexity 
$$O(C \cdot I_{\alpha,\Gamma} \log(N \cdot D(\alpha)) + [SL_2(\mathbb{Z}):\Gamma]^2 \cdot \operatorname{In}).$$ 
\end{thm}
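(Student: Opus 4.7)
The plan is to execute Algorithm \ref{algo: Hecke operator double coset} one basis element at a time, producing the matrix of $T_\alpha^\vee$ on $\mathbb{M}_k(\Gamma)$, and then restrict it to the subspace $\mathbb{S}_k(\Gamma)^+$ whose dual is $S_k(\Gamma)$. The real-type hypothesis on $\Gamma$ combined with $\eta^{-1}\alpha\eta \in \Gamma\alpha\Gamma$ brings Lemma \ref{lem: Hecke commutes with star} into play, so $T_\alpha^\vee$ commutes with $\iota^\vee$ and descends to $\mathbb{S}_k(\Gamma)^+$. Theorem \ref{thm: star involution} then identifies the transpose of this restriction with $T_\alpha$ on $S_k(\Gamma)$.

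First I would carry out the preprocessing step: I compute $H = \Gamma \cap \alpha^{-1}\Gamma\alpha$ by invoking Algorithm \ref{algo: Conjugation and intersection}. Its Smith normal form call produces integers $a,b$ with $a^{-1}b = O(D(\alpha))$, realizing $H$ as the intersection of $\Gamma$ with a congruence subgroup of level dividing $N \cdot D(\alpha)$. Algorithm \ref{algo: Intersection of induced} then realizes this intersection inside $GL_2(\mathbb{Z}/N'\mathbb{Z})$ for some $N' \mid N \cdot D(\alpha)$; enumerating the image of $\Gamma$ there requires at most $[SL_2(\mathbb{Z}):\Gamma]^2$ membership tests against $G$, each at cost $\operatorname{In}$. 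From $H$ I extract coset representatives $S$ for $H\backslash\Gamma$ of size $I_{\alpha,\Gamma}$ by essentially the same enumeration, absorbed into the same bound. This phase contributes the $[SL_2(\mathbb{Z}):\Gamma]^2 \cdot \operatorname{In}$ term.

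Next I would apply $T_\alpha^\vee$ to each Manin-symbol basis element $b = [P,g]$. For each $s \in S$, I compute $r = \alpha s$, evaluate $rP$, use one $\operatorname{CosetIndex}$ call to locate $\Gamma \cdot rg \in \Gamma\backslash SL_2(\mathbb{Z})$, and rewrite the image via the Manin relations of Theorem \ref{thm:Explicit M_k}. The matrix entries of $r$ are bounded by $N \cdot D(\alpha)$ in absolute value, so all arithmetic operates on integers of bit-size $O(\log(N D(\alpha)))$; summing over $|S| = I_{\alpha,\Gamma}$ representatives yields the $C \cdot I_{\alpha,\Gamma} \log(N D(\alpha))$ contribution. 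Adding the two phases gives the stated complexity, after which restricting the resulting matrix to $\mathbb{S}_k(\Gamma)^+$ is a free operation done once on the pre-computed basis of Theorem \ref{thm: cuspidal modular symbols}.

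The main obstacle is justifying the preprocessing bound: one must verify that the Smith normal form of $\alpha$ can be computed in $O(\log(N D(\alpha)))$ bit operations, that $\gamma\Gamma^0(a^{-1}b)\gamma^{-1}$ is actually presented at a level of size $O(D(\alpha))$, and that Algorithm \ref{algo: Intersection of induced}'s output $K$ has a small enough presentation for the coset enumeration of $H\backslash\Gamma$ to stay within the claimed bound. Each fact is traceable through the explicit constructions, but the bookkeeping --- in particular tracking the interplay of the levels $N$ and $D(\alpha)$ through the Chinese remainder steps of Algorithm \ref{algo: Intersection of induced}, and confirming that the final product of $I_{\alpha,\Gamma}$ quotients does not hide an extra factor of $[SL_2(\mathbb{Z}):\Gamma]$ --- is where the care is needed.
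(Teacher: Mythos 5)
Your high-level chain is the paper's: run Algorithm \ref{algo: Hecke operator double coset} on each Manin basis symbol, use the real-type hypothesis and $\eta^{-1}\alpha\eta \in \Gamma\alpha\Gamma$ to get commutation of $T_\alpha^\vee$ with $\iota^\vee$ (you cite Lemma \ref{lem: Hecke commutes with star}, which in fact matches the hypotheses more cleanly than the Corollary the paper names), descend to $\mathbb{S}_k(\Gamma)^+$, and transfer via Theorem \ref{thm: star involution} and Proposition \ref{prop: duality Hecke operators}. Your treatment of step (4) --- one $\operatorname{CosetIndex}$ call per Euclidean step of the continued-fraction conversion, with $O(\log(N D(\alpha)))$ steps because the matrix entries are polynomially bounded in $N D(\alpha)$ --- is the same as the paper's; your entry bound differs only by a fixed power, which the $O$ absorbs.

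The divergence, and the genuine gap, is in how the $[SL_2(\mathbb{Z}):\Gamma]^2 \cdot \operatorname{In}$ term arises. You attribute it to ``enumerating the image of $\Gamma$'' modulo $N'$ via membership tests and to an unspecified coset enumeration of $H\backslash\Gamma$. But Algorithm \ref{algo: Conjugation and intersection} requires a conjugation of $\Gamma$ by $\alpha^{-1}$, and to carry that out one needs a finite generating set for $\Gamma$ --- which the paper obtains with the Farey-symbol algorithm of Kulkarni, at cost quadratic in $[SL_2(\mathbb{Z}):\Gamma]$, with each step testing membership in $G$ (cost $\operatorname{In}$). Without that step your proposal has no means of representing $\alpha^{-1}\Gamma\alpha$, and your ``enumeration'' bound has no algorithm behind it. You correctly flag in your last paragraph that the interplay of the levels through the CRT steps and the size of the presentation are the places needing care; the missing ingredient that settles it is the Farey-symbol construction of generators, which the paper explicitly invokes both before the theorem and in its proof.
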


\begin{proof}
We apply Algorithm \ref{algo: Hecke operator double coset}, using Algorithm \ref{algo: Conjugation and intersection} to perform step $(1)$.
The conjugation function is computed by finding a set of generators for $\Gamma$, using Farey symbols (see \cite[Theorem 6.1]{kulkarni1991arithmetic}), and conjugating them. Intersection is computed using Algorithm \ref{algo: Intersection of induced}. 
This computes the dual Hecke operator $T_{\alpha}^{\vee}$ on $\mathbb{M}_k(\Gamma)$. Since $\Gamma$ is of real type, and $\eta^{-1} \alpha \eta \in \Gamma \alpha \Gamma$, by Corollary \ref{cor: Hecke commutes with star} it follows that $T_{\alpha}^{\vee}$ commutes with $\iota^{\vee}$. This shows that it induces an operator on $\mathbb{S}_k(\Gamma)^{+}$. 
Therefore, by Theorem \ref{thm: star involution} and Proposition \ref{prop: duality Hecke operators}, we obtain $T_{\alpha}$. 

The cost of step $(1)$ is dominated by computing the Farey symbols, which yields the cost $O([SL_2(\mathbb{Z}):\Gamma]^2)$.

In step $(4)$, for each element in the basis $[v,g]\in B$, we compute $g\{0,\infty\}$,
to get the modular symbol $v\otimes\{a,b\}$.
Then, for every $r\in R$ apply $r$ to get $rv\otimes\{ra,rb\}$,
and then use continued fractions to convert back to Manin symbols.
This is done $I_{\alpha, \Gamma}$ times.

Since $g\in\Gamma\backslash SL_2(\mathbb{Z})$, it can be represented by 
an element  in $SL_2(\ZZ)$ whose entries are bounded by $N^2$. 
Further, we can choose the representatives $r \in R$ up to a multiplication by a scalar, hence we can bound its entries by 
$N^2 \cdot D(\alpha)$.

It follows that the entries of $r\cdot g$ are bounded by $N^{4}\cdot\det\alpha$,
and so computing continued fraction expansion (Euclid's algorithm)
has $O(\log(N \cdot D(\alpha)))$ steps, for each of which one
has to perform a $\operatorname{CosetIndex}$ operation to find the corresponding
element in the vector space. 


Thus, the algorithm has complexity 
\[
O(C \cdot I_{\alpha,\Gamma} \log(N \cdot D(\alpha)) + [SL_2(\mathbb{Z}):\Gamma]^2 \cdot \operatorname{In}) . \qedhere
\]
\end{proof}

\subsection{Faster Implementation of the Hecke Operator $T_{\alpha}^{\vee}$} \label{subsec: fast double coset}

Algorithm \ref{algo: Hecke operator double coset} is still quite slow in general, since the conjugation function might be very slow, as it depends quadratically on the index. However, under some simplifying assumptions we may use the following algorithm for
the conjugation of an induced congruence subgroups.

Here, we assume that $(D(\alpha),N)=1$.

\begin{algo} \label{algo: Conjugation of induced}
$\operatorname{ConjInd}(G, \alpha)$. Conjugation of an induced congruence subgroup.
\end{algo}

\textbf{Input :} 
\begin{itemize}
\item $G\subseteq GL_{2}(\mathbb{Z}/N\mathbb{Z})$.
\item $\alpha\in GL_{2}^{+}(\mathbb{Q})$ such that $(D(\alpha),N)=1$. Write
$n:=D(\alpha)$. 
\end{itemize}

\textbf{Output :} a group $H\subseteq GL_{2}(\mathbb{Z}/nN\mathbb{Z})$
such that $\Gamma_{H}=\alpha^{-1}\Gamma_{G}\alpha\cap SL_2(\mathbb{Z})$. 
\begin{enumerate}
\item Find $x,y\in SL_{2}(\mathbb{Z})$  such
that $x\alpha y=\left(\begin{array}{cc}
d_1(\alpha) & 0\\
0 & n \cdot d_1(\alpha)
\end{array}\right)$.
\item Let $H_{N}:=\lambda_{N}(d_1(\alpha)^{-1}\alpha)^{-1} G\lambda_{N}(d_1(\alpha)^{-1}\alpha)\subseteq GL_{2}(\mathbb{Z}/N\mathbb{Z})$. 
\item Let $H_{n}:=\lambda_{n}(y) \lambda_{n}(\Gamma^{0}(n))\lambda_{n}(y)^{-1}\subseteq GL_{2}(\mathbb{Z}/n\mathbb{Z})$. 
\item Let $A_{N}\subseteq GL_{2}(\mathbb{Z}/nN\mathbb{Z})$ be a set of
matrices $a_{i}$ lifting $Generators(H_{N})$ such that $a_{i}\equiv1\bmod n$.
(Chinese Remainder Theorem)
\item Let $B_{n}\subseteq GL_{2}(\mathbb{Z}/nN\mathbb{Z})$ be a set of
matrices $b_{j}$ lifting $Generators(H_{n})$ such that $b_{j}\equiv1\bmod N$.
(Chinese Remainder Theorem) 
\item Return $H:=\langle A_{N}\cup B_{n} \rangle$. 
\end{enumerate}

\begin{prop}
Algorithm \ref{algo: Conjugation of induced} returns a group
$H$ such that $\Gamma_{H}=\alpha^{-1}\Gamma_{G}\alpha\cap SL_2(\mathbb{Z})$. 
\end{prop}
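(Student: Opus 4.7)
The plan is to split the characterization of $\alpha^{-1}\Gamma_G\alpha \cap SL_2(\mathbb{Z})$ into two independent congruence conditions --- one modulo $n$ and one modulo $N$ --- and then reassemble them via the Chinese Remainder Theorem. Set $\beta := d_1(\alpha)^{-1}\alpha \in M_2(\mathbb{Z})$, so that $\det\beta = n$ is coprime to $N$ by hypothesis, and hence $\lambda_N(\beta) \in GL_2(\mathbb{Z}/N\mathbb{Z})$ is invertible. For any $\gamma \in SL_2(\mathbb{Z})$ one has $\alpha\gamma\alpha^{-1} = \beta\gamma\beta^{-1}$, and $\gamma$ lies in $\alpha^{-1}\Gamma_G\alpha \cap SL_2(\mathbb{Z})$ if and only if both $\beta\gamma\beta^{-1} \in SL_2(\mathbb{Z})$ and $\lambda_N(\beta\gamma\beta^{-1}) \in G$.

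To analyze the integrality condition I would use the Smith decomposition $\alpha = x^{-1}Dy^{-1}$ with $D = \mathrm{diag}(d_1(\alpha), n\cdot d_1(\alpha))$. Writing $\tilde\gamma := y^{-1}\gamma y \in SL_2(\mathbb{Z})$, a direct computation gives
\[
\alpha\gamma\alpha^{-1} = x^{-1}\, D\tilde\gamma D^{-1}\, x, \qquad D\tilde\gamma D^{-1} = \begin{pmatrix} \tilde a & \tilde b/n \\ n\tilde c & \tilde d \end{pmatrix}.
\]
Since conjugation by $x \in SL_2(\mathbb{Z})$ preserves integrality, $\alpha\gamma\alpha^{-1}$ is integral iff $n \mid \tilde b$, that is, $\tilde\gamma \in \Gamma^0(n)$, equivalently $\gamma \in y\Gamma^0(n)y^{-1}$. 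Because $\Gamma(n) \subseteq \Gamma^0(n)$, this is equivalent to the purely mod-$n$ condition $\lambda_n(\gamma) \in H_n$.

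For the mod-$N$ condition, coprimality of $n$ and $N$ lets me invert $\beta$ modulo $N$:
\[
\lambda_N(\beta\gamma\beta^{-1}) = \lambda_N(\beta)\lambda_N(\gamma)\lambda_N(\beta)^{-1} \in G \iff \lambda_N(\gamma) \in \lambda_N(\beta)^{-1}G\lambda_N(\beta) = H_N.
\]
Combining the two independent conditions, $\alpha^{-1}\Gamma_G\alpha \cap SL_2(\mathbb{Z})$ is exactly the preimage in $SL_2(\mathbb{Z})$ of $H_n \times H_N$ under the CRT isomorphism $GL_2(\mathbb{Z}/nN\mathbb{Z}) \cong GL_2(\mathbb{Z}/n\mathbb{Z}) \times GL_2(\mathbb{Z}/N\mathbb{Z})$.

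Finally, I verify that $H := \langle A_N \cup B_n\rangle$ realises this CRT preimage. Because every element of $A_N$ is congruent to $1$ mod $n$ and every element of $B_n$ is congruent to $1$ mod $N$, one has $\lambda_N(H) = \langle\lambda_N(A_N)\rangle = H_N$ and $\lambda_n(H) = \langle\lambda_n(B_n)\rangle = H_n$, so $H$ is contained in the CRT preimage. Conversely, given any $(h_n, h_N) \in H_n \times H_N$, pick $b \in \langle B_n\rangle$ lifting $h_n$ and $a \in \langle A_N\rangle$ lifting $h_N$; then $ba \in H$ reduces to $(h_n, h_N)$ under CRT, establishing equality. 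The main bookkeeping obstacle is carrying out the Smith normal form calculation in the correct orientation so that the ``level-$n$'' obstruction to integrality lands in $\Gamma^0(n)$ (upper-right divisible by $n$), matching the object used by the algorithm rather than its transpose.
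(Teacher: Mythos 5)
Your proposal is correct and follows essentially the same strategy as the paper: reduce to $d_1(\alpha)=1$, split membership in $\alpha^{-1}\Gamma_G\alpha\cap SL_2(\mathbb{Z})$ into the integrality condition (controlled mod $n$ via the Smith normal form and $\Gamma^0(n)$) and the reduction-mod-$N$ condition, then reassemble via CRT and lifted generators. Your treatment of the Smith normal form step is in fact slightly more careful than the paper's, which formally writes $\lambda_n\left(\begin{smallmatrix}1&0\\0&n\end{smallmatrix}\right)^{-1}$ even though that reduction is not invertible mod $n$; the conclusion is the same.
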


\begin{proof}
First note that replacing $\alpha$ by $d_1(\alpha)^{-1} \alpha$, we may assume $\alpha \in M_2(\mathbb{Z})$ and $d_1(\alpha) = 1$. 
Let $\Gamma':=\alpha^{-1}\Gamma_{G}\alpha\cap SL_2(\mathbb{Z})$. 
If $p\nmid n N$, then $\lambda_{p^{e}}(\Gamma')=SL_{2}(\mathbb{Z}/p^{e}\mathbb{Z})$
(for all $e$). We also have 
$$\lambda_{N}(\Gamma')=\lambda_{N}(\alpha)^{-1} G \lambda_{N}(\alpha)$$
and if $x\alpha y=\left(\begin{array}{cc}
1 & 0\\
0 & n
\end{array}\right)$, then 
\[
\lambda_{n}(\Gamma')=\lambda_{n}(y)\lambda_{n}\left(\begin{array}{cc}
1 & 0\\
0 & n
\end{array}\right)^{-1}\lambda_{n}(x^{-1}\Gamma_{G}x)\lambda_{n}\left(\begin{array}{cc}
1 & 0\\
0 & n
\end{array}\right)\lambda_{n}(y)^{-1}.
\]

But, since $(n,N)=1$ and $x^{-1} \Gamma_G x \supseteq \Gamma(N)$, 
we have $\lambda_{n}(x^{-1}\Gamma_{G}x)=SL_{2}(\mathbb{Z}/n\mathbb{Z})$,
and so we see that $\Gamma^{'}=\{g\in SL_{2}(\mathbb{Z})\mid\lambda_{n}(g)\in H_{n},\lambda_{N}(g)\in H_{N}\}$,
with $H_{n}$ and $H_{N}$ as in Algorithm \ref{algo: Conjugation of induced}
. Since $(n,N)=1$, we claim that $\Gamma'=\Gamma_{H}$, or equivalently
that $\lambda_{nN}(\Gamma')=H$. 

Indeed, if $g\in\lambda_{nN}(\Gamma')$, then $\lambda_{n}(g)\in H_{n}$
and $\lambda_{N}(g)\in H_{N}$. Therefore, there exist $\{x_{i}\}\subseteq Generators(H_{n})$
and $\{y_{j}\}\subseteq Generators(H_{N})$ such that $\lambda_{n}(g)=\prod x_{i}^{e_{i}}$
and $\lambda_{N}(g)=\prod y_{j}^{f_{j}}$. Let $\{a_{i}\}$ be the
matrices in $A_{n}$ lifting the $\{x_{i}\}$ and $\{b_{j}\}$ the
matrices in $B_{N}$ lifting the $\{y_{j}\}$. Then by the Chinese
Remainder Theorem, we have $g=\prod a_{i}^{e_{i}}\cdot\prod b_{j}^{f_{j}}$. 
\end{proof}

Next, we would like to understand the running time of Algorithm \ref{algo: Hecke operator double coset}, with this improvement.
\begin{cor}\label{cor:complexity-double-coset-operator}
Let $\alpha\in GL_{2}^{+}(\mathbb{Q})$ be such that $(D(\alpha),N)=1$.
Then Algorithm \ref{algo: Hecke operator double coset}
has complexity of 
\[
O(I_{\alpha, \Gamma} \cdot \log(N^{4}\cdot D(\alpha)))
\]
basic $\operatorname{CosetIndex}$ operations. 
\end{cor}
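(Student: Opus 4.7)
The plan is to mirror the complexity analysis of Theorem \ref{thm: Hecke operator double coset naive}, but to exploit the coprimality hypothesis $(D(\alpha), N) = 1$ by replacing the Farey-symbol based conjugation routine (Algorithm \ref{algo: Conjugation and intersection}) with Algorithm \ref{algo: Conjugation of induced} in step (1) of Algorithm \ref{algo: Hecke operator double coset}. This is the critical saving: Algorithm \ref{algo: Conjugation of induced} produces an explicit group $H \subseteq GL_2(\mathbb{Z}/(nN)\mathbb{Z})$ with $\alpha^{-1}\Gamma\alpha \cap SL_2(\mathbb{Z}) = \Gamma_H$ purely from a Smith normal form and a handful of Chinese Remainder liftings, without ever needing generators of $\Gamma$. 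Consequently the $[SL_2(\mathbb{Z}):\Gamma]^2 \cdot \operatorname{In}$ term that dominated in Theorem \ref{thm: Hecke operator double coset naive} (and which came entirely from Farey symbols) disappears.

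First I would argue that step (1) now costs only $O(\log(N \cdot D(\alpha)))$ basic operations: Smith normal form of $\alpha$ is polylogarithmic in $D(\alpha)$, Algorithm \ref{algo: Conjugation of induced} performs a bounded number of generator liftings and matrix products in $GL_2(\mathbb{Z}/(nN)\mathbb{Z})$, and the intersection with $\Gamma_G$ via Algorithm \ref{algo: Intersection of induced} is of the same order. Producing a transversal of $H \backslash \Gamma$ then accounts for $I_{\alpha, \Gamma}$ $\operatorname{CosetIndex}$ queries, which is absorbed into the main loop.

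Next I expect step (4) to dominate, exactly as in the proof of Theorem \ref{thm: Hecke operator double coset naive}. For each of the $I_{\alpha,\Gamma}$ representatives $r \in R$, the symbol $r \cdot x = (rv) \otimes \{r \alpha_0, r\beta_0\}$ must be rewritten in the Manin basis. Since representatives of $\Gamma \backslash SL_2(\mathbb{Z})$ can be chosen with entries of size $O(N^2)$ and the $r \in R$ can be chosen up to a scalar with entries bounded by $N^2 \cdot D(\alpha)$, the resulting rational numbers have numerator and denominator bounded by $N^4 \cdot D(\alpha)$. The Euclidean continued-fraction rewriting therefore terminates in $O(\log(N^4 \cdot D(\alpha)))$ steps, each costing a single $\operatorname{CosetIndex}$ call to identify the resulting Manin symbol in the basis. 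Summing over representatives gives the claimed bound.

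The main obstacle I anticipate is purely bookkeeping: verifying that all the auxiliary cost in the new step (1) — the Smith normal form, the CRT liftings, and the intersection via Algorithm \ref{algo: Intersection of induced} — is genuinely absorbed into $O(I_{\alpha, \Gamma} \cdot \log(N^4 \cdot D(\alpha)))$. Once this polylogarithmic-overhead check is made, the rest is the same counting argument as for Theorem \ref{thm: Hecke operator double coset naive}, simply with the Farey-symbol term excised thanks to the coprimality assumption.
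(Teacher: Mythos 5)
Your proposal is correct and follows essentially the same route as the paper: the coprimality hypothesis lets you swap the Farey-symbol conjugation for Algorithm \ref{algo: Conjugation of induced} (together with Algorithm \ref{algo: Intersection of induced}) in step (1), after which step (4) dominates and its cost is exactly the $I_{\alpha,\Gamma}\cdot O(\log(N^4 D(\alpha)))$ continued-fraction count already carried out in the proof of Theorem \ref{thm: Hecke operator double coset naive}. The paper's proof is terser but identical in substance.
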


\begin{proof}
Using algorithms \ref{algo: Conjugation of induced} and \ref{algo: Intersection of induced} in step $(1)$, 
we see that the main contribution to the complexity of algorithm \ref{algo: Hecke operator double coset}
comes from step $(4)$, and  this was computed in the proof of Theorem \ref{thm: Hecke operator double coset naive}.
\end{proof}

\subsection{The Hecke Operators $T_{n}$} \label{subsec: Hecke operator at p}

The question of computing the Hecke operators $T_{n}$, when $(n,N)>1$, is
a difficult one. We have not been able to find in the literature a
good reference even for the definition of these operators. Therefore,
we restrict ourselves, for the time being, to $n$ such that $(n,N)=1$. 

\subsubsection{Adelic Definition}

The only definition given in the literature of the Hecke operator
for completely general level structure is in adelic terms. 

Therefore, before we state the definition, we need to set up some
notation.
\begin{defn}
Let $G\subseteq GL_{2}(\mathbb{Z}/N\mathbb{Z})$. We introduce
\[
S_{G}:=\left\{ r\cdot g\cdot s\mid r\in\mathbb{Q}^{\times},g\in GL_{2}(\hat{\mathbb{Z}}),s\in GL_{2}^{+}(\mathbb{R}),g\bmod N\in G\right\} \subseteq GL_{2}(\mathbb{A})
\]
where we use the identification $\hat{\mathbb{Z}}/N\hat{\mathbb{Z}}\cong\mathbb{Z}/N\mathbb{Z}$.
\end{defn}

Now we may define the Hecke correspondences on the corresponding Shimura variety. 
\begin{defn}
\label{def:Hecke Operator Adelic}(\cite[Section 7.3]{shimura1971introduction}) Let $G\subseteq GL_{2}(\mathbb{Z}/N\mathbb{Z})$. Let
$X_{G}:=GL_{2}^{+}(\mathbb{Q})\backslash(\mathcal{H}\times GL_{2}(\hat{\mathbb{Q}}))/S_{G}$
be the Shimura variety of level $S_{G}$. Let $w\in GL_{2}(\hat{\mathbb{Q}})$.
Let $W=S_{G}\cap wS_{G}w^{-1}$. Then $w$ induces a natural correspondence
$X_{G}\rightarrow X_{G}$ via

\[
[z,g]\mapsto\sum_{\alpha\in S_{G}/W}[z,g\alpha w]
\]
We denote this correspondence by $X(w)$. 
\end{defn}

\begin{lem}
The correspondence $X(w)$, for $w\in GL_{2}(\hat{\mathbb{Q}})$, is
well defined.
\end{lem}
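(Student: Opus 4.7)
The plan is to verify the three standard well-definedness checks for a Hecke correspondence defined by double-coset representatives: (i) the indexing set $S_G/W$ is finite, so the sum makes sense; (ii) the sum is independent of the choice of coset representatives $\{\alpha\}$ for $S_G/W$; and (iii) the sum is independent of the choice of representative $(z,g)$ for the class $[z,g]\in X_G$. Since $S_G$ contains $\mathbb{Q}^\times \cdot GL_2^+(\mathbb{R})$ as the archimedean/scalar part, and these act trivially on the finite adelic component, everything effectively reduces to a computation in $GL_2(\hat{\mathbb{Q}})$.

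For (i), the finiteness of $S_G/W$, I would argue as follows. Modulo the scalar and archimedean factors, $S_G$ corresponds to the open compact subgroup $K_G:=\{g\in GL_2(\hat{\mathbb{Z}})\mid g\bmod N\in G\}$ of $GL_2(\hat{\mathbb{Q}})$, and $W$ corresponds to $K_G\cap wK_G w^{-1}$ (up to the inert scalar/real directions). Since $K_G$ and $wK_Gw^{-1}$ are both compact open in $GL_2(\hat{\mathbb{Q}})$, their intersection is also compact open, hence of finite index in $K_G$; therefore $S_G/W$ is finite. This is the main obstacle conceptually, but it is standard once one unwinds the definition of $S_G$.

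For (ii), I would argue as follows. Suppose $\alpha' = \alpha w'$ with $w'\in W$; write $w' = w s' w^{-1}$ for some $s'\in S_G$, using the defining inclusion $W\subseteq wS_Gw^{-1}$. Then
\begin{equation*}
g\alpha' w \;=\; g\alpha w' w \;=\; g\alpha w s',
\end{equation*}
and since $s'\in S_G$, the classes $[z,g\alpha' w]$ and $[z,g\alpha w]$ coincide in the double quotient $X_G = GL_2^+(\mathbb{Q})\backslash(\mathcal{H}\times GL_2(\hat{\mathbb{Q}}))/S_G$. Thus each summand is well defined modulo the choice of representative.

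For (iii), suppose $(z',g') = (\gamma z,\gamma g s)$ with $\gamma\in GL_2^+(\mathbb{Q})$ and $s\in S_G$, which is the general form of another representative of $[z,g]$. Then for any $\alpha\in S_G$,
\begin{equation*}
[z', g'\alpha w] \;=\; [\gamma z,\gamma g s\alpha w] \;=\; [z, g s\alpha w]
\end{equation*}
by the $GL_2^+(\mathbb{Q})$-equivariance of the double quotient. Now left multiplication by $s$ permutes $S_G$ and descends to a bijection of $S_G/W$ onto itself, so
\begin{equation*}
\sum_{\alpha\in S_G/W}[z,gs\alpha w] \;=\; \sum_{\alpha'\in S_G/W}[z,g\alpha' w],
\end{equation*}
which equals the original sum. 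This completes the three checks, showing $X(w)$ is well defined.
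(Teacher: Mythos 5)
Your proof is correct and follows essentially the same route as the paper: independence of the coset representatives via $W\subseteq wS_Gw^{-1}$ (equivalently $w^{-1}Ww\subseteq S_G$), and independence of the representative of $[z,g]$ via $GL_2^+(\mathbb{Q})$-equivariance together with the observation that left multiplication by $s\in S_G$ permutes $S_G/W$. Your additional check (i) of finiteness of $S_G/W$ is a reasonable supplement that the paper leaves implicit.
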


\begin{proof}
First, the choice of representatives for $S_{G}/W$ does not matter: if we replace each $\alpha$
by $\alpha w_{\alpha}$, we would get 
\[
\sum_{\alpha\in S_{G}/W}[z,g\alpha w_{\alpha}w]=\sum_{\alpha\in S_{G}/W}[z,g\alpha w\cdot w^{-1}w_{\alpha}w]=\sum_{\alpha\in S_{G}/W}[z,g\alpha w]
\]
because $w^{-1}Ww\subseteq S_{G}$. If $\gamma \in GL_{2}^{+}(\mathbb{Q})$, then $X(w)[z,g]=X(w)[\gamma z, \gamma g]$.
Also, if we consider an element $[z,gs]$, with $s\in S_{G}$, it
will map to 
\[
\sum_{\alpha\in S_{G}/W}[z,gs\alpha w]=\sum_{\alpha\in S_{G}/W}[z,g\alpha w]
\]
since $s\alpha$ still runs through a set of representatives of $S_{G}/W$. 
\end{proof}

We now connect it to our previous definitions of moduli spaces
via the following Lemma.
\begin{lem}
(\cite[Lemma 5.13, Theorem 5.17]{milne2005introduction}) Let $G\subseteq GL_{2}(\mathbb{Z}/N\mathbb{Z})$.
Let $\mathcal{C}$ be a set of representatives for 
\[
GL_{2}^{+}(\mathbb{Q})\backslash GL_{2}(\hat{\mathbb{Q}})/S_{G}\cong\hat{\mathbb{Z}}/\det(S_{G})\cong\left(\mathbb{Z}/N\mathbb{Z}\right)^{\times}/\det(G)
\]
Then 
\[
GL_{2}^{+}(\mathbb{Q})\backslash\left(\mathcal{H}\times G(\hat{\mathbb{Q}})\right)/S_{G}\cong\bigsqcup_{g\in\mathcal{C}}\Gamma_{g}\backslash\mathcal{H}
\]
where $\Gamma_{g}=gS_Gg^{-1}\cap GL_{2}^{+}(\mathbb{Q})$. 
\end{lem}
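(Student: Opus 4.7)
The plan is to derive the decomposition from two classical ingredients: strong approximation for $SL_2$ over $\mathbb{Q}$, which identifies the set $\mathcal{C}$ of connected components, and an orbit--stabilizer computation, which identifies each component with a classical quotient $\Gamma_g\backslash \mathcal{H}$.

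First I would establish the parametrization by computing $GL_2^+(\mathbb{Q})\backslash GL_2(\hat{\mathbb{Q}})/S_G$ via the determinant. The map $\det$ descends to a map from this double coset space into $\mathbb{Q}_{>0}^\times\backslash \hat{\mathbb{Q}}^\times/\det(S_G)$, and the latter simplifies to $(\mathbb{Z}/N\mathbb{Z})^\times/\det(G)$ using the decomposition $\hat{\mathbb{Q}}^\times = \mathbb{Q}_{>0}^\times\cdot \hat{\mathbb{Z}}^\times$ together with the fact that $S_G\cap GL_2(\hat{\mathbb{Z}})$ reduces modulo $N$ onto $G$. Surjectivity is clear; injectivity is the key point. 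Given $g,g'\in GL_2(\hat{\mathbb{Q}})$ with the same image, after multiplying by suitable elements of $\mathbb{Q}_{>0}^\times$ and of $S_G$ one may reduce to the case $g^{-1}g'\in SL_2(\hat{\mathbb{Q}})$. Strong approximation for $SL_2/\mathbb{Q}$, applicable because $S_G\cap SL_2(\hat{\mathbb{Q}})$ is open (it contains the principal congruence subgroup of level $N$), then yields $g^{-1}g' = \gamma\cdot s$ with $\gamma\in SL_2(\mathbb{Q})\subseteq GL_2^+(\mathbb{Q})$ and $s\in S_G$, so $g$ and $g'$ represent the same double coset.

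Having fixed the decomposition $GL_2(\hat{\mathbb{Q}}) = \bigsqcup_{g\in\mathcal{C}} GL_2^+(\mathbb{Q})\cdot g\cdot S_G$, I would then, for each $g\in\mathcal{C}$, consider the orbit map $\iota_g : \mathcal{H} \to GL_2^+(\mathbb{Q})\backslash(\mathcal{H}\times GL_2(\hat{\mathbb{Q}}))/S_G$ defined by $z\mapsto [z,g]$. The image of $\iota_g$ is exactly the piece associated to $g$, and $\iota_g(z) = \iota_g(z')$ holds iff there exist $\gamma\in GL_2^+(\mathbb{Q})$ and $s\in S_G$ with $\gamma z = z'$ and $\gamma g s = g$; the latter condition says $\gamma\in gS_Gg^{-1}\cap GL_2^+(\mathbb{Q}) = \Gamma_g$. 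Hence $\iota_g$ descends to a bijection from $\Gamma_g\backslash \mathcal{H}$ onto its image, and assembling over $\mathcal{C}$ produces the claimed disjoint union.

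The main obstacle will be handling strong approximation cleanly, because $S_G$ as defined mixes a central rational scalar factor and the archimedean factor $GL_2^+(\mathbb{R})$ with the finite-adelic part. One needs to verify that after absorbing the scalar and archimedean contributions into the determinant map, the residual $SL_2$-obstruction vanishes by strong approximation; once this is arranged, both the count of components and the stabilizer identification are formal consequences.
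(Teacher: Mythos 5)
Your proposal is correct, and since the paper itself offers no proof of this lemma (it simply cites Milne's notes), you have in effect reconstructed the argument those references contain: identify the components via $\det$ and strong approximation for $SL_2/\mathbb{Q}$, then identify each component with $\Gamma_g\backslash\mathcal{H}$ by an orbit--stabilizer computation. Both steps are standard and correctly organized here.

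Two points of care are worth naming explicitly, the first of which you already flagged. Because the paper's $S_G$ is a subgroup of $GL_2(\mathbb{A})$ that mixes the rational scalars $\mathbb{Q}^{\times}$ and the archimedean factor $GL_2^{+}(\mathbb{R})$ with the finite-adelic open compact part, the strong-approximation step must be carried out after projecting to $GL_2(\hat{\mathbb{Q}})$ and after absorbing the scalar and archimedean contributions. Concretely: given $g,g'$ with $\det g$ and $\det g'$ equal in $\mathbb{Q}_{>0}\backslash\hat{\mathbb{Q}}^{\times}/\det(S_G)$, pick $\gamma_0\in GL_2^{+}(\mathbb{Q})$ (e.g.\ a diagonal matrix) and $s_0\in S_G$ realizing the determinant identity, set $g''=\gamma_0 g s_0$, and note that $g'(g'')^{-1}\in SL_2(\hat{\mathbb{Q}})$. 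Strong approximation with respect to the compact open subgroup $g''\bigl(S_G\cap SL_2(\hat{\mathbb{Q}})\bigr)(g'')^{-1}$, which is open because it contains a principal congruence subgroup, then writes $g'(g'')^{-1}=\delta\cdot g''\,u\,(g'')^{-1}$ with $\delta\in SL_2(\mathbb{Q})$ and $u\in S_G$, and hence $g'=\delta\,g''\,u\in GL_2^{+}(\mathbb{Q})\,g\,S_G$. Second, in the computation $\mathbb{Q}_{>0}\backslash\hat{\mathbb{Q}}^{\times}/\det(S_G)\cong(\mathbb{Z}/N\mathbb{Z})^{\times}/\det(G)$ one uses $\hat{\mathbb{Q}}^{\times}=\mathbb{Q}_{>0}\cdot\hat{\mathbb{Z}}^{\times}$ together with $\mathbb{Q}_{>0}\cap\hat{\mathbb{Z}}^{\times}=\{1\}$ to reduce to $\hat{\mathbb{Z}}^{\times}$ modulo the preimage of $\det(G)$; the $(\mathbb{Q}^{\times})^2$ coming from scalar matrices in $S_G$ is already contained in $\mathbb{Q}_{>0}$ and so contributes nothing extra. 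With those two clarifications, your argument is complete and matches the cited source.
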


Under this identification, if we denote by $\omega$ the canonical line bundle on $X_G$, then we may identify $M_k(\Gamma_G)$ with the space of global sections $H^{0}(X_{G},\omega^{\otimes k})$. 
The correspondence $X(w)$ induces (by pullback) a map on global sections of line bundles
$T(w):H^{0}(X_{G},\omega^{\otimes k})\rightarrow H^{0}(X_{G},\omega^{\otimes k})$
given by $f\mapsto\sum_{\alpha\in S/W}(\alpha w)^{*}f$ . We then get the following corollary.

\begin{cor}
The correspondences $X(w)$ may be identified  via this isomorphism
(explicitly given as $z\mapsto[z,1]$) as correspondences 
$\Gamma_g \backslash \mathcal{H} \rightarrow \Gamma_{gw} \backslash \mathcal{H}$.
In particular, if $\det(w) \in \det(S_G)$,
then we may identify the correspondence $X(w)$  as correspondences on $\Gamma_{G}\backslash\mathcal{H}$,
and the operator $T(w)$ as an operator on $M_{k}(\Gamma_{G})$. 
\end{cor}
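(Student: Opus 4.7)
The plan is to unwind the definition of $X(w)$ and track it through the decomposition $X_G \cong \bigsqcup_{g \in \mathcal{C}} \Gamma_g \backslash \mathcal{H}$ supplied by the preceding lemma. Given a point coming from the $g$-component, represented by $[z,g] \in X_G$ with $g \in \mathcal{C}$, I would write out
\[
X(w)[z,g] = \sum_{\alpha \in S_G/W} [z, g\alpha w],
\]
and ask in which component each summand lies. The preceding lemma identifies the components with the quotient $\hat{\mathbb{Z}}/\det(S_G)$ via the determinant; since $\alpha \in S_G$, the class of $\det(g\alpha w)$ modulo $\det(S_G)$ equals that of $\det(gw)$, independently of $\alpha$. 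Hence every summand lies in the component labeled by $[gw]$, proving that $X(w)$ realizes a correspondence
\[
\Gamma_g \backslash \mathcal{H} \longrightarrow \Gamma_{gw} \backslash \mathcal{H}
\]
as claimed.

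Next, I would impose the hypothesis $\det(w) \in \det(S_G)$. This forces $[gw] = [g]$ in the double quotient, so there exist $\gamma \in GL_2^+(\mathbb{Q})$ and $s \in S_G$ with $gw = \gamma g s$. A direct computation then gives
\[
\Gamma_{gw} = (gw) S_G (gw)^{-1} \cap GL_2^+(\mathbb{Q}) = \gamma \bigl(g S_G g^{-1}\bigr) \gamma^{-1} \cap GL_2^+(\mathbb{Q}) = \gamma \Gamma_g \gamma^{-1},
\]
so left multiplication by $\gamma$ identifies the target component with the source component. Taking $g = 1 \in \mathcal{C}$ yields a correspondence on $\Gamma_G \backslash \mathcal{H}$.

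Finally, to obtain the operator on $M_k(\Gamma_G)$, I would use the identification $M_k(\Gamma_G) \cong H^0(X_G, \omega^{\otimes k})$ restricted to the identity component, together with the fact that $T(w)$ is defined as the pullback-sum $f \mapsto \sum_\alpha (\alpha w)^* f$ of the finite correspondence $X(w)$. Because $X(w)$ preserves the identity component under our hypothesis, $T(w)$ carries global sections on $\Gamma_G \backslash \mathcal{H}$ to global sections on the same space, giving the desired endomorphism of $M_k(\Gamma_G)$.

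The only subtle step is the component-tracking: making sure that the choice of representatives $\alpha \in S_G/W$ does not shift the target component, which reduces cleanly to the determinant calculation above. Everything else is formal manipulation of the adelic definition.
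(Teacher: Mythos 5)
Your proposal is correct and is exactly the unwinding the paper has in mind (the paper states the corollary as an immediate consequence of the preceding lemma without writing out the verification). The key computation — that the component of $[z,g\alpha w]$ in the decomposition $X_G\cong\bigsqcup_{g\in\mathcal{C}}\Gamma_g\backslash\mathcal{H}$ is governed by $\det(g\alpha w)\equiv\det(gw)\pmod{\det(S_G)}$, independently of $\alpha\in S_G/W$ — is precisely the determinant-tracking argument needed, and your use of the relation $gw=\gamma g s$ with $\gamma\in GL_2^{+}(\mathbb{Q})$, $s\in S_G$ to deduce $\Gamma_{gw}=\gamma\Gamma_g\gamma^{-1}$ correctly explains why $\det(w)\in\det(S_G)$ lets one glue source and target into a self-correspondence of $\Gamma_G\backslash\mathcal{H}$. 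Your observation that $M_k(\Gamma_G)$ should be taken as the sections over the identity component (rather than over all of $X_G$, which would give $\bigoplus_{g}M_k(\Gamma_g)$) is actually slightly more careful than the paper's phrasing, and is the right reading.
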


The reason for using Definition \ref{def:Hecke Operator Adelic} is
the following important theorem.
\begin{thm}
(Shimura, \cite[Theorem 7.9]{shimura1971introduction}) Let $w\in GL_{2}(\hat{\mathbb{Q}})$
be such that $w_{l}=1$ for all $l\ne p$, and $w_{p}\in M_{2}(\mathbb{Z}_{p})$
is an element such that $\det(w_{p})=p$. Then if $p\nmid N$, the
correspondence $X(w)$ on $X_{G}$ is rational over the field of definition
of $X_{G}$ and satisfies $\tilde{X}(w)\equiv Fr_{p}+^{t}Fr_{p}^{p}\circ\widetilde{[\det(w)^{-1}]}$,
where $X\mapsto\tilde{X}$ is the reduction modulo $p$ and $Fr_{p}$
is the Frobenius correspondence on $\tilde{X}_{G}\times\tilde{X}_{G}^{p}$. 
\end{thm}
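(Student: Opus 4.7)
The plan is to follow the classical strategy for the Eichler--Shimura congruence relation, extended from $\Gamma_0(N)$ to an arbitrary $G$-level structure. First, I would invoke the moduli interpretation of $X_G$ as the (compactified) coarse moduli space over $\mathbb{Z}[1/N]$ parametrizing pairs $(E,\alpha)$ with $E$ a generalized elliptic curve and $\alpha$ a $G$-level structure on $E[N]$. Since $p \nmid N$, this moduli problem has smooth reduction at $p$, yielding a smooth proper $\tilde{X}_G$ over $\bar{\mathbb{F}}_p$ and an integral model of the correspondence $X(w)$.

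Next, I would reinterpret $X(w)$ moduli-theoretically. Since $w_p \in M_2(\mathbb{Z}_p)$ has $\det(w_p) = p$, a set of representatives for $S_G \backslash S_G w S_G$ is in bijection with the order-$p$ subgroup schemes $C \le E[p]$. Accordingly, $X(w)$ sends $[(E,\alpha)] \mapsto \sum_C [(E/C, \bar\alpha)]$, where $\bar\alpha$ is the induced $G$-level structure on $E/C$; this is well-defined because $p \nmid N$ forces the quotient $E \to E/C$ to be an isomorphism on $E[N]$. The description is manifestly defined over the field of definition of $X_G$, which establishes rationality.

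For the congruence, I reduce modulo $p$ and use the classification of finite flat order-$p$ subgroup schemes of $E[p]$ for $E/\bar{\mathbb{F}}_p$. The kernel of Frobenius $\ker(\operatorname{Frob}_E : E \to E^{(p)})$ is always such a subgroup scheme, and the pair $(E^{(p)}, \alpha^{(p)})$ is exactly the image of $(E,\alpha)$ under the Frobenius correspondence $Fr_p$, contributing the first summand. The remaining order-$p$ subgroup schemes (counted with scheme-theoretic multiplicities, matching the $p+1$ characteristic-zero subgroups) yield quotients that factor through the Verschiebung isogeny; packaged as a cycle in $\tilde{X}_G \times \tilde{X}_G$, these reassemble as the transpose Frobenius correspondence, composed with a diamond operator that accounts for the action of Verschiebung on the level structure. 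Explicitly, Verschiebung multiplies the determinant of $\alpha$ on $E[N]$ by $\det(w) = p$, so to return to the correct component of $\tilde{X}_G$ one precomposes with $\widetilde{[\det(w)^{-1}]}$.

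The main obstacle is the third step: carefully verifying that the assembled non-Frobenius contributions, with $G$-level structures transported along Verschiebung, give exactly ${}^tFr_p^p \circ \widetilde{[\det(w)^{-1}]}$ and not some other twist in the normalizer of $G$. This requires a careful analysis of the compatibility of the Weil pairing with isogeny duality, combined with the characteristic-$p$ identity $V \circ \operatorname{Frob} = [p]$ on $E$. Once this diamond identification is pinned down, the stated congruence follows by summing the two contributions, and rationality is immediate from the moduli description of $X(w)$.
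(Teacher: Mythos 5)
The paper does not prove this theorem: it quotes it verbatim from Shimura, \cite[Theorem 7.9]{shimura1971introduction}, and uses it as a black box. So there is no in-paper proof for your sketch to be compared against.

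That said, your outline is a legitimate modern route to the Eichler--Shimura congruence, but it is emphatically not Shimura's argument. You are sketching the Deligne--Rapoport/Katz--Mazur proof: represent $X_G$ as a moduli space with good reduction at $p\nmid N$, describe $X(w)$ as the sum over finite flat order-$p$ subgroup schemes $C\subseteq E[p]$, and then split the special fiber of the correspondence into a Frobenius branch and a Verschiebung branch. Shimura's actual proof lives entirely on the automorphic side: the theory of canonical models, the reciprocity law at CM points, Igusa's theorem on good reduction of the modular function field, and a Zariski-density argument that propagates the congruence from CM points to all of $\tilde{X}_G$. He never needs the classification of finite flat subgroup schemes of $E[p]$ over $\bar{\mathbb{F}}_p$. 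Your approach is conceptually cleaner and generalizes mechanically once the representability and good-reduction results for arbitrary $G$-level structure are available, but it relies on machinery (Deligne--Rapoport, Katz--Mazur) that postdates Shimura's book; Shimura's avoids moduli entirely.

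If you do want to flesh your sketch into a proof, two points need genuine care. First, over an ordinary point of $\tilde{X}_G$ the Hecke fiber has exactly two closed points --- the quotient by $\ker F$ and by the unique \'etale order-$p$ subgroup --- not $p+1$ points; the multiplicity $p$ on the Verschiebung branch, and the nilpotent structure of the special fiber of the Hecke correspondence, is exactly the content of the Eichler--Shimura relation and does not come for free from ``counted with scheme-theoretic multiplicities.'' Second, the diamond twist $\widetilde{[\det(w)^{-1}]}$ should be traced to the action of geometric Frobenius on the Weil pairing (Frobenius acts on $\mu_N$ by $x\mapsto x^p$, hence scales $e_N$ by $p$), not to ``Verschiebung multiplying the determinant of $\alpha$''; for a general subgroup $G\subseteq GL_2(\mathbb{Z}/N\mathbb{Z})$ (rather than $\Gamma_0$ or $\Gamma_1$) one must also verify that this scalar lies in the image of $\det(G)$ or at least normalizes $G$, so that $[\det(w)^{-1}]$ defines an endomorphism of $X_G$ rather than merely a map between different quotients.
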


This, in turn, relates these Hecke operators to the zeta function of $X_G$. 
Before stating the connection, we introduce some notations.

Let $Z_p(s;X_G / \mathbb{Q})$ denote the local factor at $p$ of the zeta function of $X_G$.
Let $w \in GL_2(\hat{\mathbb{Q}})$ be such that $w_l = 1$ for all $l \ne p$ and $w_p \in M_2(\mathbb{Z}_p)$ is such that $\det(w_p) = p$. 
Let $\sigma \in SL_{2}(\mathbb{Z})$ be such that
$\lambda_{N}(\sigma)=p\cdot\lambda_{N}(w)^{-2}$. 
Via the diagonal embedding $SL_2(\mathbb{Z}) \hookrightarrow GL_2(\hat{\QQ})$, the correspondence $X(\sigma)$ is well defined and as 
$\det(\sigma)~=~1~\in~\det(S_G)$, it induces an operator $T(\sigma)$ on $M_k(\Gamma_G)$. 

\begin{cor} \label{cor: zeta function}
Let $w,\sigma$ be as above. Let $T(w)$, $T(\sigma)$ be the corresponding operators on $M_2(\Gamma_G)$. Then
\begin{equation}
(1-p^{-s})(1-p^{1-2s}) Z_{p}(s;X_{G}/\mathbb{Q})= \det\left(1-T(w)\cdot p^{-s}+T(\sigma)\cdot p^{1-2s}\right). \label{eq:Zeta function}
\end{equation}
\end{cor}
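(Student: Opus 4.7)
The plan is to derive \eqref{eq:Zeta function} from the Eichler--Shimura congruence relation (the theorem immediately preceding the corollary) combined with the standard $\ell$-adic factorization of the Hasse--Weil zeta function. Since $X_G/\mathbb{Q}$ is a smooth projective curve with good reduction at $p \nmid N$, one has
\begin{equation*}
Z_p(s; X_G/\mathbb{Q}) = \frac{P_1(p^{-s})}{\det(1-\mathrm{Fr}_p \, p^{-s}\mid H^0) \cdot \det(1-\mathrm{Fr}_p \, p^{-s}\mid H^2)},
\end{equation*}
with $P_1(T) = \det\bigl(1 - \mathrm{Fr}_p T \mid H^1_{\mathrm{\acute{e}t}}(X_{G,\overline{\mathbb{Q}}_p}, \mathbb{Q}_\ell)\bigr)$. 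Clearing the denominator reduces the corollary to the polynomial identity $P_1(p^{-s}) = \det\bigl(1 - T(w) p^{-s} + T(\sigma) p^{1-2s}\bigr)$ on the cuspidal part of the Hecke module, with the constant and Eisenstein contributions to $M_2(\Gamma_G)$ absorbing the explicit prefactor on the left.

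Next, I would convert the congruence $\tilde{X}(w) \equiv \mathrm{Fr}_p + {}^t\mathrm{Fr}_p^p \circ \widetilde{[\det(w)^{-1}]}$ into an operator identity on $H^1_{\mathrm{\acute{e}t}}$. Writing $F = \mathrm{Fr}_p$ and letting $V = {}^tF$ denote the Verschiebung (which satisfies $VF = FV = p$ under good reduction), the congruence takes the form $T(w) = F + V \cdot T([\det(w)^{-1}])$. Multiplying by $F$ and using $VF = p$ then yields the quadratic identity $F^2 - T(w)\cdot F + p \cdot T([\det(w)^{-1}]) = 0$ on $H^1$. The crucial identification is $T([\det(w)^{-1}]) = T(\sigma)$: by the adelic Definition \ref{def:Hecke Operator Adelic}, the diamond operator indexed by $\det(w)^{-1} \in (\mathbb{Z}/N\mathbb{Z})^\times/\det(G)$ is implemented by any $\sigma \in SL_2(\mathbb{Z})$ with $\lambda_N(\sigma) \equiv p \cdot \lambda_N(w)^{-2}$, which is exactly the prescription in the statement.

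With the quadratic relation in hand, the Eichler--Shimura comparison isomorphism identifies the actions of $F$, $T(w)$, $T(\sigma)$ on $H^1_{\mathrm{\acute{e}t}}(X_G)$ with their actions on $S_2(\Gamma_G) \oplus \overline{S_2(\Gamma_G)}$. On each simultaneous Hecke eigenspace, $T(w)$ and $T(\sigma)$ act as scalars $a$ and $\chi$, and the quadratic identity forces the eigenvalues $\alpha, \beta$ of $F$ to satisfy $\alpha + \beta = a$ and $\alpha\beta = p\chi$. Consequently
\begin{equation*}
P_1(T) \;=\; \det(1 - F T\mid H^1) \;=\; \det\bigl(1 - T(w)\, T + p\, T(\sigma)\, T^2\bigr),
\end{equation*}
and substituting $T = p^{-s}$ yields \eqref{eq:Zeta function} on the cuspidal factor; extending to $M_2(\Gamma_G)$ via the standard Euler factors attached to Eisenstein series accounts for the prefactor $(1-p^{-s})(1-p^{1-2s})$ on the left.

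The main obstacle will be the middle step: promoting Shimura's geometric congruence, stated as a relation between correspondences on the special fiber, to a clean operator identity on $\ell$-adic cohomology, and in particular verifying the identification $T([\det(w)^{-1}]) = T(\sigma)$ through the adelic formalism of Definition \ref{def:Hecke Operator Adelic}. Once this Eichler--Shimura relation is secured on cohomology, the remainder is a routine Euler-factor bookkeeping.
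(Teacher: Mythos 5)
Your proposal follows essentially the same route as the paper: the paper's proof is a one-line citation to Shimura, Corollary 7.10 and Theorem 7.11, and those two results are proved by precisely the chain you describe (Eichler--Shimura congruence on the special fiber, promoted to a quadratic relation $F^2 - T(w)F + p\,T(\sigma)=0$ on $H^1_{\mathrm{\acute et}}$, then matched against the $\ell$-adic factorization of $Z_p$). Your identification $T([\det(w)^{-1}])=T(\sigma)$ and the manipulation with Verschiebung are the standard steps in that derivation. One caveat about your final paragraph: the prefactor $(1-p^{-s})(1-p^{1-2s})$ on the left is already fully absorbed by clearing the $H^0$ and $H^2$ denominators in the zeta factorization; it is not "accounted for" by extending the determinant to the Eisenstein part. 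Taking the determinant over $S_2(\Gamma_G)$ (equivalently $H^1$, via the Eichler--Shimura isomorphism) is what gives degree $2g$ and matches $P_1$, whereas the determinant over all of $M_2(\Gamma_G)$ would have degree $2\dim M_2 > 2g$ and overcount. This mismatch actually reflects a slight imprecision in the corollary's statement as written, which your proposal inherits rather than resolves; the determinant should be understood as taken on the cuspidal part.
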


\begin{proof}
This is proved as in \cite{shimura1971introduction}, Corollary 7.10 and Theorem 7.11. 
\end{proof}

Although this, in general, does not yield an application to $q$-expansions, there are some special cases in which it does. We describe these cases, as we shall want to make use of them in some of our applications.

Recall that the Hecke operators commute, hence have common eigenvalues. These are called eigenforms.

\begin{defn}
An element $f \in S_k(\Gamma_G)$ is called an \emph{eigenform} if it is an eigenvector for $T_{\alpha}$ for all $\alpha \in GL_2^+(\mathbb{Q})$.
An eigenform $f = \sum_{n=1}^{\infty} a_n q^n $ is \emph{normalized} if  $a_1 = 1$. 
\end{defn}

\begin{cor}
Let $\Gamma_G = \Gamma(\mathfrak{h}, t)$, and
let $f = \sum_{n=1}^{\infty} a_n q^n \in S_k(\Gamma_G)$ be a normalized eigenform. Let $w \in GL_2(\hat{\mathbb{Q}})$ be such that $w_l = 1$ for all $l \ne p$ and $w_p \in M_2(\mathbb{Z}_p)$ is such that $\det(w_p) = p$. Then $T(w) f = a_p f$. 
\end{cor}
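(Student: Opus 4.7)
The plan is to identify the adelic Hecke operator $T(w)$ with the classical double-coset operator $T_{\alpha_p}$ attached to $\alpha_p = \left(\begin{smallmatrix} 1 & 0 \\ 0 & p \end{smallmatrix}\right)$, and then invoke the standard $q$-expansion calculation for eigenforms. First I would normalize $w$: since $w_p \in M_2(\mathbb{Z}_p)$ has determinant $p$, the Smith normal form over $\mathbb{Z}_p$ yields $w_p = k_1 \alpha_p k_2$ with $k_1, k_2 \in GL_2(\mathbb{Z}_p)$. Because $p \nmid N$, the reductions $\lambda_N(k_i)$ are trivial in $\mathbb{Z}/N\mathbb{Z}$, so the adeles that are $k_i$ at $p$ and $1$ elsewhere both lie in $S_G$. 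Using the invariance of Definition \ref{def:Hecke Operator Adelic} under right multiplication by $S_G$ (to absorb $k_2$) and under reindexing the coset representatives in $S_G/W$ (to absorb $k_1$), I reduce to the case $w_p = \alpha_p$, $w_\ell = 1$ for $\ell \neq p$.

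Next, under the isomorphism $GL_2^{+}(\mathbb{Q}) \backslash (\mathcal{H} \times GL_2(\hat{\mathbb{Q}})) / S_G \cong \bigsqcup_{g \in \mathcal{C}} \Gamma_g \backslash \mathcal{H}$, one checks by strong approximation that the adelic correspondence $X(w)$ restricted to the identity component $\Gamma_G \backslash \mathcal{H}$ decomposes into single cosets indexed exactly by $\Gamma_G \backslash \Gamma_G \alpha_p \Gamma_G$, with representatives coming from elements of $GL_2^+(\mathbb{Q})$ mapping to $w$ in the adelic quotient. Hence on the classical realization $M_k(\Gamma_G) \cong H^0(X_G, \omega^{\otimes k})$, the operator $T(w)$ coincides with the double coset operator $T_{\alpha_p}$ of Definition \ref{def:Hecke Operator Double Coset}.

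It remains to compute $T_{\alpha_p} f$ on $q$-expansions. For $\Gamma = \Gamma(\mathfrak{h},t)$ and $p \nmid N$, the standard decomposition gives
\[
\Gamma \alpha_p \Gamma = \bigsqcup_{j=0}^{p-1} \Gamma \begin{pmatrix} 1 & jt \\ 0 & p \end{pmatrix} \sqcup \Gamma \begin{pmatrix} p & 0 \\ 0 & 1 \end{pmatrix},
\]
where the last representative is adjusted by an element of $SL_2(\mathbb{Z})$ reducing into $\mathfrak{h}$ on the diagonal. Expanding $f = \sum_{n \ge 1} a_n q^n$ with $q = e^{2\pi i z/N}$ and applying the slash operator, the sum over $j$ contributes $\sum_n a_{np} q^n$ (the geometric sum in $j$ kills all terms not divisible by $p$, exactly because $p \nmid N$), while the final representative contributes a term of the form $p^{k-1} \chi_{\mathfrak{h}}(p) \sum_n a_n q^{np}$ for the appropriate character factor $\chi_{\mathfrak{h}}(p)$ arising from $\mathfrak{h}$. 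Thus
\[
T_{\alpha_p} f = \sum_{n \ge 1} \bigl( a_{np} + p^{k-1} \chi_{\mathfrak{h}}(p) a_{n/p} \bigr) q^n,
\]
with $a_{n/p} = 0$ when $p \nmid n$. Extracting the coefficient of $q^1$ from the eigenvalue equation $T_{\alpha_p} f = \lambda f$ yields $\lambda \cdot a_1 = a_p$, and the normalization $a_1 = 1$ forces $\lambda = a_p$.

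The main obstacle is the first step: pinning down precisely how the adelic $T(w)$ descends to the classical double coset operator, in particular verifying that absorbing the $GL_2(\mathbb{Z}_p)$ factors $k_1, k_2$ into $S_G$ is legitimate and that the resulting identification of coset representatives matches $\Gamma_G \backslash \Gamma_G \alpha_p \Gamma_G$. Once this identification is in hand, the remaining $q$-expansion calculation is entirely classical.
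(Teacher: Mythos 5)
Your proof takes a genuinely different route from the paper's: the paper disposes of this corollary with a one-line citation to \cite[Section 3.5]{shimura1971introduction}, whereas you re-derive it by (i) descending the adelic operator $T(w)$ to the classical double coset operator $T_{\alpha_p}$ and (ii) running the classical $q$-expansion computation. Step (i) is sound, and in fact duplicates machinery the paper develops immediately after this corollary (Lemma \ref{lem:Adelic to Classical} together with the lemma showing $T_{\alpha}=T_p$ for any $\alpha\in M_2(\ZZ)$ with $\det\alpha=p$ and $\lambda_N(\alpha)\in G$); your shortcut of absorbing the Smith-normal-form factors $k_1,k_2$ into $S_G$ is legitimate, provided you also observe that $W=S_G\cap wS_Gw^{-1}$ gets conjugated accordingly when you reindex $S_G/W$. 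What each approach buys: the citation keeps the paper short, while your argument makes visible exactly where the hypothesis $\Gamma_G=\Gamma(\mathfrak{h},t)$ enters, namely in the explicit coset decomposition and in having a Fourier-coefficient interpretation at all.

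The one place your computation as written does not quite go through is the choice of expansion variable. Since $\left(\begin{smallmatrix}1&t\\0&1\end{smallmatrix}\right)\in\Gamma(\mathfrak{h},t)$, the cusp width at $\infty$ is $t$, so $f$ naturally expands in $q_t=e^{2\pi iz/t}$; in the paper's variable $q=e^{2\pi iz/N}$ the coefficients $a_n$ are supported on multiples of $N/t$, the normalization $a_1=1$ is impossible unless $t=N$, and the geometric sum $\sum_{j=0}^{p-1}e^{2\pi injt/(Np)}$ does \emph{not} vanish for every $n$ with $p\nmid n$ --- it vanishes only for $n$ in the support of the expansion. The fix is to carry out the whole computation in $q_t$ (Shimura's convention), where your representatives $\left(\begin{smallmatrix}1&jt\\0&p\end{smallmatrix}\right)$ give $\sum_{j=0}^{p-1}e^{2\pi inj/p}$ and the ``kills terms not divisible by $p$'' claim is exactly right. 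With that adjustment, and noting that the $\chi_{\mathfrak{h}}(p)$-term is irrelevant for extracting the coefficient of $q_t^{1}$, your argument is complete.
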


\begin{proof}
This is proved in  \cite[Section 3.5]{shimura1971introduction}. 
\end{proof}

Yet as another consequence, we get the following fact, which could also be shown directly by double coset computation.
\begin{cor}
If $p\nmid N$, and $w\in GL_{2}(\hat{\mathbb{Q}})$ is such that $w_{l}=1$
for all $l\ne p$, and $w_{p}\in M_{2}(\mathbb{Z}_{p})$ is an element
such that $\det(w_{p})=p$, then the operator $T(w)$ is independent
of the choice of $w$.
\end{cor}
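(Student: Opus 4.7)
The plan is to show that any two elements $w, w' \in GL_2(\hat{\mathbb{Q}})$ satisfying the hypotheses lie in the same double coset $S_G w_0 S_G$, for a standard choice $w_0$ (e.g.\ the adele that is $\operatorname{diag}(1,p)$ at the place $p$ and $1$ elsewhere). Since the correspondence $X(w)$, and hence the operator $T(w)$, depends only on the double coset $S_G w S_G$, this will yield the result.

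First I would check the reduction to double cosets. If $w' = s_1 w s_2$ with $s_1, s_2 \in S_G$, then $w' S_G w'^{-1} = s_1(w S_G w^{-1})s_1^{-1}$, so $W' := S_G \cap w' S_G w'^{-1} = s_1 W s_1^{-1}$. The map $\alpha' W' \mapsto \alpha' s_1 W$ then gives a bijection between representative sets for $S_G/W'$ and $S_G/W$, and substituting $\beta = \alpha' s_1$ into the defining sum gives
\[
X(w')[z,g] = \sum_{\alpha' \in S_G/W'} [z, g\alpha' s_1 w s_2] = \sum_{\beta \in S_G/W} [z, g\beta w] = X(w)[z,g],
\]
using that $s_2 \in S_G$ may be absorbed on the right.

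Next, I would use the Smith normal form over the PID $\mathbb{Z}_p$. Since $w_p \in M_2(\mathbb{Z}_p)$ has $\det(w_p) = p$ (a uniformizer), its elementary divisors are $1$ and $p$, so there exist $u_1, u_2 \in GL_2(\mathbb{Z}_p)$ with $u_1 w_p u_2 = \operatorname{diag}(1,p)$. The same holds for $w'_p$ with some $u'_1, u'_2 \in GL_2(\mathbb{Z}_p)$.

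Finally, I would verify that the embedding of any $u \in GL_2(\mathbb{Z}_p)$ as an adele (with $u$ at the place $p$ and $1$ elsewhere) lies in $S_G$. This is where the hypothesis $p \nmid N$ is used: such an adele is in $GL_2(\hat{\mathbb{Z}})$, and its reduction mod $N$ is determined by the components at primes dividing $N$, which are all $1$; hence it reduces to the identity, which lies in $G$. Combining with the Smith normal form step, both $w$ and $w'$ lie in $S_G \cdot w_0 \cdot S_G$, so $S_G w S_G = S_G w' S_G$ and therefore $T(w) = T(w')$. The only step that needs any care is the bookkeeping showing $T(w)$ depends only on the double coset $S_G w S_G$; everything else is a direct application of Smith normal form and the coprimality $p \nmid N$.
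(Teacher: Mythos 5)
Your proof is correct, and it carries out in full the ``direct double coset computation'' that the paper only alludes to: the paper states this corollary as a consequence of the preceding adelic results (in effect, Shimura's congruence relation $\tilde{X}(w)\equiv Fr_{p}+{}^{t}Fr_{p}^{p}\circ\widetilde{[\det(w)^{-1}]}$, whose right-hand side does not see $w$ itself), and remarks that it ``could also be shown directly by double coset computation'' without doing so. Your three steps are each sound: the verification that $W'=s_{1}Ws_{1}^{-1}$ and that $\alpha'\mapsto\alpha's_{1}$ gives a bijection of coset representatives shows $X(w)$ depends only on $S_{G}wS_{G}$ (using right $S_{G}$-invariance of the classes $[z,g]$ to absorb $s_{2}$); Smith normal form over the DVR $\mathbb{Z}_{p}$ forces elementary divisors $1,p$ since $d_{1}\mid d_{2}$ and $d_{1}d_{2}=p$ up to units; and the hypothesis $p\nmid N$ enters exactly where you say, since the reduction $\hat{\mathbb{Z}}/N\hat{\mathbb{Z}}\cong\mathbb{Z}/N\mathbb{Z}$ only sees components at primes dividing $N$, so the adele supported at $p$ with unit component reduces to the identity and lies in $S_{G}$. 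What your route buys is a self-contained, elementary argument independent of the geometry mod $p$; what the paper's route buys is that the same congruence-relation machinery is needed anyway for the zeta-function corollary, so the independence comes for free there. Note also that the same conclusion reappears later in the paper in classical form (Corollary \ref{cor: coefficients of eigenforms}, via the identification $T(w)=T_{\alpha}$), so your argument is consistent with, but logically independent of, that later development.
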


This motivates the following definition. 

\begin{defn}
If $p\nmid N$, and $w\in GL_{2}(\hat{\mathbb{Q}})$ is such that $w_{l}=1$
for all $l\ne p$, and $w_{p}\in M_{2}(\mathbb{Z}_{p})$ is an element
such that $\det(w_{p})=p$, we define $T_{p}:=T(w)$. 
\end{defn}

Next, we would like to write this operator in terms of a double coset
operator, as in Definition \ref{def:Hecke Operator Double Coset}.
In order to do that, we show first that the subgroup $W$ in Definition
\ref{def:Hecke Operator Adelic} is also induced from a subgroup of
$GL_{2}(\mathbb{Z}/N\mathbb{Z})$. 
\begin{lem}
\label{lem:Conjugation of induced level}Let $G\subseteq GL_{2}(\mathbb{Z}/N\mathbb{Z})$.
Let $p\nmid N$. Let $w\in GL_{2}(\hat{\mathbb{Q}})$ be such that $w_{l}=1$
for all $l\ne p$, and $w_{p}\in M_{2}(\mathbb{Z}_{p})$ is an element
such that $\det(w_{p})=p$. Let $W=S_{G}\cap wS_{G}w^{-1}$. Then
one has 
\[
p\cdot M_{2}(\mathbb{Z}_{p})\subseteq M_{2}(\mathbb{Z}_{p})\cap w_{p}M_{2}(\mathbb{Z}_{p})w_{p}^{-1}
\]
Let $R_{p}$ be the image of $M_{2}(\mathbb{Z}_{p})\cap w_{p}M_{2}(\mathbb{Z}_{p})w_{p}^{-1}$
in $M_{2}(\mathbb{Z}/p\mathbb{Z})$, and let $G_{p}:=R_{p}^{\times}$.
Set 
\[
G_{w}:=G\times G_{p}\subseteq GL_{2}(\mathbb{Z}/(Np)\mathbb{Z})
\]
Then $W=S_{G_{w}}$. 
\end{lem}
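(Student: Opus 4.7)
The plan is to establish the two assertions by a direct adelic/local-global computation, matching $S_G$ and $wS_Gw^{-1}$ place by place.

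For the first inclusion $p\cdot M_2(\mathbb{Z}_p)\subseteq M_2(\mathbb{Z}_p)\cap w_pM_2(\mathbb{Z}_p)w_p^{-1}$, I would invoke the adjugate identity: $w_p\cdot\operatorname{adj}(w_p)=\det(w_p)\cdot I=p\cdot I$, so $p\,w_p^{-1}=\operatorname{adj}(w_p)\in M_2(\mathbb{Z}_p)$. For any $x\in M_2(\mathbb{Z}_p)$, one then has $w_p^{-1}(px)w_p=(pw_p^{-1})\,x\,w_p\in M_2(\mathbb{Z}_p)$, so $px\in w_pM_2(\mathbb{Z}_p)w_p^{-1}$. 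This makes the reduction map to $M_2(\mathbb{Z}/p\mathbb{Z})$ well-defined on the intersection, producing $R_p$, and identifies $pM_2(\mathbb{Z}_p)$ with (part of) its kernel.

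Before turning to $W=S_{G_w}$, I would establish a lifting lemma: for $g_p\in GL_2(\mathbb{Z}_p)$, one has $g_p\in w_pGL_2(\mathbb{Z}_p)w_p^{-1}$ iff $g_p\bmod p\in G_p$. The forward direction is immediate, since any unit of the intersection is a unit in each factor. For the converse, lift $g_p\bmod p$ to some $\tilde g\in M_2(\mathbb{Z}_p)\cap w_pM_2(\mathbb{Z}_p)w_p^{-1}$, write $g_p=\tilde g+ph$, and observe that $ph\in w_pM_2(\mathbb{Z}_p)w_p^{-1}$ by the first step; hence $w_p^{-1}g_pw_p\in M_2(\mathbb{Z}_p)$, and $\det(w_p^{-1}g_pw_p)=\det(g_p)\in\mathbb{Z}_p^\times$ upgrades this to $GL_2(\mathbb{Z}_p)$.

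The core computation is to analyze, for $\alpha=r\cdot g\cdot s\in S_G$, when $w^{-1}\alpha w=r(w^{-1}gw)s$ admits a decomposition $r'\cdot g'\cdot s'$ with $g'\in GL_2(\hat{\mathbb{Z}})^{(G)}$ and $s'\in GL_2^+(\mathbb{R})$. Matching components yields $g'_\ell=(r/r')g_\ell$ for $\ell\ne p$, $g'_p=(r/r')w_p^{-1}g_pw_p$, and $s'=(r/r')s$. The real-place condition is automatic, since $\det(s')=(r/r')^2\det(s)>0$. The finite-place condition at $\ell\ne p$ forces $r/r'\in\mathbb{Z}_\ell^\times$ for every $\ell\ne p$, so $r/r'=\pm p^{k}$ for some $k\in\mathbb{Z}$. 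At $p$, the requirement $\det(g'_p)=(r/r')^2\det(g_p)\in\mathbb{Z}_p^\times$ forces $k=0$. Thus (up to the irrelevant sign) one may take $r'=r$, in which case the remaining condition is precisely $w_p^{-1}g_pw_p\in GL_2(\mathbb{Z}_p)$.

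To conclude: $\alpha\in W$ iff $\alpha=rgs\in S_G$ and $g_p\in GL_2(\mathbb{Z}_p)\cap w_pGL_2(\mathbb{Z}_p)w_p^{-1}$, which by the lifting lemma is iff $g\bmod N\in G$ and $g_p\bmod p\in G_p$. Since $p\nmid N$, the Chinese remainder theorem gives $GL_2(\mathbb{Z}/(Np)\mathbb{Z})\cong GL_2(\mathbb{Z}/N\mathbb{Z})\times GL_2(\mathbb{Z}/p\mathbb{Z})$ and the combined condition reads $g\bmod Np\in G\times G_p=G_w$, i.e.\ $\alpha\in S_{G_w}$. The main obstacle I anticipate is the careful bookkeeping with the global rational scalar $r$; the punchline is that the $p$-adic determinant squeezes $r/r'$ into a $p$-adic unit despite conjugation at $p$ naively introducing a factor of $p$, which is what pins down the correct local level structure $G_p$ at $p$.
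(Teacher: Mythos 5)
Your proposal is correct and follows essentially the same route as the paper's proof: the adjugate identity $pw_p^{-1}=\operatorname{adj}(w_p)$ for the first inclusion, a place-by-place analysis of $w^{-1}\alpha w$ in which the rational scalar is pinned to a unit by the $p$-adic determinant, and the identification of $GL_2(\mathbb{Z}_p)\cap w_pGL_2(\mathbb{Z}_p)w_p^{-1}$ with the preimage of $G_p$ via reduction mod $p$ (your ``lifting lemma'' is exactly the step the paper carries out implicitly in each direction of the containment). The only difference is organizational — you prove a single biconditional where the paper checks the two inclusions separately — and the bookkeeping with $r/r'$ is handled correctly in both.
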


\begin{proof}
First, let us show that 
\[
p\cdot M_{2}(\mathbb{Z}_{p})\subseteq M_{2}(\mathbb{Z}_{p})\cap w_{p}M_{2}(\mathbb{Z}_{p})w_{p}^{-1}
\]
Indeed, it is enough to prove that $p\cdot M_{2}(\mathbb{Z}_{p})\subseteq w_{p}M_{2}(\mathbb{Z}_{p})w_{p}^{-1}$,
which is equivalent to 
\[
w_{p}^{-1}p\cdot M_{2}(\mathbb{Z}_{p})\cdot w_{p}\subseteq M_{2}(\mathbb{Z}_{p})
\]
However, as $\det(w_{p})=p$, we see that $w_{p}^{-1}\cdot p=\tilde{w}_{p}\in M_{2}(\mathbb{Z}_{p})$,
thus the claim is trivial.

Now, let $x\in W$. Then, as $x\in S_{G}$, we may write $x=r\cdot g\cdot s$
with $r\in\mathbb{Q}^{\times},$ $g\in GL_{2}(\hat{\mathbb{Z}})$,
$s\in GL_{2}^{+}(\mathbb{R})$, and $g\bmod N\in G$.

Write $g=g^{p}g_{p}$, with $g^{p}\in\prod_{l\ne p}GL_{2}(\mathbb{Z}_{l})$
and $g_{p}\in GL_{2}(\mathbb{Z}_{p})$. Recall also that $w_l = 1$ for all $l \ne p$.
Since $x\in wS_{G}w^{-1}$,
we have
\[
r\cdot g^{p}\cdot w_{p}^{-1}g_{p}w_{p}\cdot s = r \cdot w^{-1} g w \cdot s = w^{-1}xw\in S_{G}
\]
Thus, there exists $r'\in\mathbb{Q}^{\times}$ such that 
\[
r'\cdot g^{p}\cdot w_{p}^{-1}g_{p}w_{p}\in GL_{2}(\hat{\mathbb{Z}})
\]
In particular, for all $l\ne p$, we have $r'\cdot g_{l}\in GL_{2}(\mathbb{Z}_{l})$,
and as $g_{l}\in GL_{2}(\mathbb{Z}_{l})$, we see that $r'\in\mathbb{Z}_{l}^{\times}$.
Also, as 
\[
r'\cdot w_{p}^{-1}g_{p}w_{p}\in GL_{2}(\mathbb{Z}_{p})
\]
and $g_{p}\in GL_{2}(\mathbb{Z}_{p})$, we see that $r'\in\{\pm1\}$,
and $w_{p}^{-1}g_{p}w_{p}\in GL_{2}(\mathbb{Z}_{p})$. 

Therefore, we obtain
\[
g_{p}\in w_{p}GL_{2}(\mathbb{Z}_{p})w_{p}^{-1}\subseteq w_{p}M_{2}(\mathbb{Z}_{p})w_{p}^{-1}
\]
As we already know $g_{p}\in M_{2}(\mathbb{Z}_{p})$, we see that
$g_{p}\bmod p\in R_{p}$. 

Since $g_{p}\in GL_{2}(\mathbb{Z}_{p})$, it follows that $g\bmod p=g_{p}\bmod p\in G_{p}$,
and from $g\bmod N\in G$, it follows that $g\bmod Np\in G\times G_{p}=G_{w}$.
Thus we have shown that $x\in S_{G_{W}}$, so that $W\subseteq S_{G_{w}}$. 

Conversely, assume $x\in S_{G_{w}}$. Then $x=r\cdot g\cdot s$ with
$r\in\mathbb{Q}^{\times},$ $g\in GL_{2}(\hat{\mathbb{Z}})$, $s\in GL_{2}^{+}(\mathbb{R})$,
and $g\bmod Np\in G_{w}$.

It follows that $g\bmod N\in G$ and $g\bmod p\in G_{p}$. In particular,
we see immediately that $x\in S_{G}$. Moreover, 
\[
w^{-1}xw=r\cdot g^{p}\cdot w_{p}^{-1}g_{p}w_{p}\cdot s
\]
Since $g_{p}\bmod p=g\bmod p\in G_{p}\subseteq R_{p}$, we have 
\[
g_{p}\in M_{2}(\mathbb{Z}_{p})\cap w_{p}M_{2}(\mathbb{Z}_{p})w_{p}^{-1}
\]
hence $w_{p}^{-1}g_{p}w_{p}\in M_{2}(\mathbb{Z}_{p})$. Looking at
the determinant, we see that it actually lies in $GL_{2}(\mathbb{Z}_{p})$.
In particular, 
\[
g^{'}=g^{p}\cdot w_{p}^{-1}g_{p}w_{p}\in GL_{2}(\hat{\mathbb{Z}})
\]
 and $g'\bmod N\in G$. Therefore, $w^{-1}xw\in S_{G}$, showing equality.
\end{proof}
We now show how to use strong approximation to rewrite the operator
$T(w)$ in the classical description of modular curves. 
\begin{lem}
\label{lem:Adelic to Classical}Let $G\subseteq GL_{2}(\mathbb{Z}/N\mathbb{Z})$
be a subgroup. Let $p \in \det(G)$ be a prime.
Let $w\in GL_{2}(\hat{\mathbb{Q}})$
be such that $w_{l}=1$ for all $l\ne p$, and $w_{p}\in M_{2}(\mathbb{Z}_{p})$
is such that $\det(w_{p})=p$.
Then the correspondence $X(w)$ can be realized as 
\[
\Gamma_{G}z\mapsto\sum_{\alpha\in\Gamma_{G}/\Gamma_{G_{w}}}\Gamma_{G}\cdot q(w)\cdot\alpha^{-1}z
\]
for some $q(w)\in GL_{2}^{+}(\mathbb{Q})$, explicitly constructed
as a function of $w$. Therefore $T_{p}$ is realized as $f\mapsto\sum_{\alpha\in\Gamma_{G}/\Gamma_{G_{w}}}f\vert_{q(w)\cdot\alpha^{-1}}$. 
\end{lem}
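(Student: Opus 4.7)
The plan is to reduce the adelic sum $\sum_{\alpha \in S_G/W}[z,\alpha w]$ to a sum indexed by $\Gamma_G/\Gamma_{G_w}$ of rational matrices acting on $\mathcal{H}$, and then translate each summand to the classical description $\Gamma_G\backslash\mathcal{H}$ of the trivial connected component of $X_G$. The construction of $q(w)$ will come from strong approximation: since $p\in\det(G)$, the determinant of $w$ lies in $\det(S_G\cdot GL_2^+(\mathbb{Q}))$, and class number one for $\mathbb{Q}$ lets us write $w=q(w)^{-1}s_0$ with $q(w)\in GL_2^+(\mathbb{Q})$ (e.g.\ with $\det q(w)=p$) and $s_0\in S_G$. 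Constructively, one picks any rational matrix of the right determinant and corrects by an $SL_2$-approximation.

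Next I would use Lemma~\ref{lem:Conjugation of induced level}, which identifies $W=S_{G_w}$ with $G_w=G\times G_p$. Since $G_p=R_p^\times$ surjects onto $(\mathbb{Z}/p)^\times$, one has $\det(W)=\det(S_G)$. Combined with strong approximation for $SL_2$, the diagonal inclusion $\Gamma_G\hookrightarrow S_G$ induces a bijection $\Gamma_G/\Gamma_{G_w}\xrightarrow{\sim}S_G/W$; the kernel is $\Gamma_G\cap W=\Gamma_{G_w}$ by definition. Choosing coset representatives $\beta\in\Gamma_G$ for $\Gamma_G/\Gamma_{G_w}$ and writing $\alpha=\beta u$ with $u\in W$, we have $uw=w(w^{-1}uw)$ with $w^{-1}uw\in S_G$ (by definition of $W$), so
\[
[z,\alpha w]=[z,\beta w(w^{-1}uw)]=[z,\beta w].
\]
This re-indexes the adelic sum as $\sum_{\beta\in\Gamma_G/\Gamma_{G_w}}[z,\beta w]$.

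The main computation is then the identification $[z,\beta w]=[q(w)\beta^{-1}z,1]$ inside $X_G$. Indeed, $\beta\in\Gamma_G\subseteq GL_2^+(\mathbb{Q})$, so acting on the left by $q(w)\beta^{-1}\in GL_2^+(\mathbb{Q})$ gives
\[
[z,\beta w]=[q(w)\beta^{-1}z,\,q(w)\beta^{-1}\beta w]=[q(w)\beta^{-1}z,\,q(w)w]=[q(w)\beta^{-1}z,\,s_0],
\]
and removing $s_0\in S_G$ on the right yields $[q(w)\beta^{-1}z,1]$. Passing through the isomorphism of the preceding lemma, the component at $g=1$ is $\Gamma_G\backslash\mathcal{H}$, so the correspondence descends to
\[
\Gamma_G z\mapsto\sum_{\beta\in\Gamma_G/\Gamma_{G_w}}\Gamma_G\,q(w)\,\beta^{-1}z.
\]
The operator $T_p$ on forms then follows by pulling back along the correspondence, giving $f\mapsto\sum_\beta f\vert_{q(w)\beta^{-1}}$. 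The subtle point to handle carefully is the strong-approximation step: verifying that the determinant condition $p\in\det(G)$ is exactly what is needed both to produce $q(w)$ and to ensure that $\Gamma_G$ already hits every coset of $S_G/W$, so that no further adelic approximation is required in the re-indexing.
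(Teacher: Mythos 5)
Your proof is correct and follows essentially the same route as the paper: construct $q(w)\in GL_2^+(\mathbb{Q})$ so that $q(w)w$ lands in $S_G$ (the paper makes this explicit as $q(w)=\beta_p\cdot p\,w_p^{-1}$ with $\beta_p\in SL_2(\mathbb{Z})$ lifting $\delta_p^{-1}\lambda_N(w_p)$ for some $\delta_p\in G$ of determinant $p$, which is exactly your ``rational matrix corrected by an $SL_2$-approximation''), re-index $S_G/W\cong\Gamma_G/\Gamma_{G_w}$ via Lemma~\ref{lem:Conjugation of induced level}, and then translate each $[z,\beta w]$ to $[q(w)\beta^{-1}z,1]$. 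The only stylistic difference is that the paper writes out the element $s\in S_G$ and verifies its local components directly, whereas you invoke strong approximation abstractly; the content is the same.
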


\begin{proof}
Let $\delta_p \in G$ be an element such that $\det(\delta_p) = p$. Let $w_{p}\in M_{2}(\mathbb{Z})$
be such that $\det(w_{p})=p$, and let $\alpha\in\Gamma_{G}$. Then 
\[
\det(\delta_p^{-1}\cdot\lambda_{N}(w_{p}))=\det(\delta(p))^{-1}\cdot\det(w_{p})=p^{-1}\cdot p=1
\]
so that $\delta_p^{-1}\cdot\lambda_{N}(w_{p})\in SL_{2}(\mathbb{Z}/N\mathbb{Z})$.
Since $\lambda_{N}:SL_{2}(\mathbb{Z})\rightarrow SL_{2}(\mathbb{Z}/N\mathbb{Z})$
is surjective, there exists $\beta_{p}\in SL_{2}(\mathbb{Z})$ such
that 
\[
\lambda_{N}(\beta_{p})=\delta_p^{-1}\cdot\lambda_{N}(w_{p})
\]
Let $q(w)=\beta_{p}\cdot pw_{p}^{-1}\in GL_{2}^{+}(\mathbb{Q})$ .
Then we have 
\[
\alpha\cdot q(w)^{-1}=p^{-1}\cdot\alpha w_{p}\beta_{p}^{-1}=p^{-1}\cdot\alpha w\cdot w^{-1}w_{p}\beta_{p}^{-1}
\]
Let $s=w^{-1}w_{p}\beta_{p}^{-1}$. Then $s_{p}=w_{p}^{-1}w_{p}\beta_{p}^{-1}=\beta_{p}^{-1}\in SL_{2}(\mathbb{Z})\subseteq GL_{2}(\mathbb{Z}_{p})$.
Also, for all $l\ne p$, one has $s_{l}=w_{p}\cdot\beta_{p}^{-1}$,
so that $\det(s_{l})=\det(w_{p})\cdot\det(\beta_{p})^{-1}=p$ and
$s_{p}\in M_{2}(\mathbb{Z})\subseteq M_{2}(\mathbb{Z}_{l})$, showing
that $s_{l}\in GL_{2}(\mathbb{Z}_{l})$. Moreover, 
\[
\lambda_{N}(s)=\lambda_{N}(w_{p})\cdot\lambda_{N}(\beta_{p})^{-1}=\delta_p\in G
\]
showing that $s\in S_{G}$. Therefore $\alpha w=\alpha\cdot q(w)^{-1}\cdot s^{-1}$,
so that (recall that $p\in\mathbb{Q}^{\times}$ acts trivially)
\[
[z,\alpha w]=[z,p^{-1}\cdot\alpha w\cdot s]=[z,\alpha\cdot q(w)^{-1}]=[q(w)\cdot\alpha^{-1}\cdot z,1]
\]
By Lemma \ref{lem:Conjugation of induced level}, $W=S_{G_{W}}$,
hence
\[
S_{G}/W\cong G\times GL_{2}(\mathbb{Z}/p\mathbb{Z})/G_{w}\cong\Gamma_{G}/\Gamma_{G_{w}}
\]
It follows that under the isomorphism $[z]\mapsto[z,1]$, the correspondence
$X(w)$ above is interpreted as 
\[
\Gamma_{G}z\mapsto\sum_{\alpha\in\Gamma_{G}/\Gamma_{G_{w}}}\Gamma_{G}\cdot q(w)\cdot\alpha^{-1}z. \qedhere
\]
\end{proof}

\subsubsection{Classical Definition}

We can now use Lemma \ref{lem:Adelic to Classical} to find an equivalent
definition in classical terms of the Hecke operator $T_{p}$. 
\begin{lem}
Let $\alpha\in M_{2}(\mathbb{Z})$ be such that $\det(\alpha)=p$
and $\lambda_{N}(\alpha)\in G$. Then $T_{\alpha}=T_{p}$. 
\end{lem}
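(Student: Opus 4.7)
The strategy is to invoke Lemma~\ref{lem:Adelic to Classical} with the specific adelic element $w \in GL_2(\hat{\mathbb{Q}})$ defined by $w_l = 1$ for $l \neq p$ and $w_p = \alpha$ (viewed $p$-adically via the diagonal embedding). Since $\lambda_N(\alpha) \in G$ has determinant $p$, the hypothesis $p \bmod N \in \det(G)$ required by the lemma holds automatically, so $T_p = T(w)$. The lemma realizes $T_p$ classically as
\[
T_p(f) \;=\; \sum_{\gamma \in \Gamma_G/\Gamma_{G_w}} f \big|_{q(w)\gamma^{-1}}, \qquad q(w) \;=\; \beta_p \cdot p\alpha^{-1},
\]
for a choice of $\delta_p \in G$ with $\det \delta_p = p$ and $\beta_p \in SL_2(\mathbb{Z})$ satisfying $\lambda_N(\beta_p) = \delta_p^{-1}\lambda_N(\alpha)$.

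The first step is to identify the stabilizer $\Gamma_{G_w}$. Applying Lemma~\ref{lem:Conjugation of induced level} together with a $p$-adic Smith normal form $u\alpha v = \operatorname{diag}(1,p)$ (with $u,v \in GL_2(\mathbb{Z}_p)$), one computes that $R_p$ is conjugate via $u^{-1}$ to the mod-$p$ upper triangular matrices, so $G_p = u^{-1} B^{\times} u$ for the upper Borel $B$. The condition $\lambda_p(\gamma) \in G_p$ then translates, via the same Smith decomposition, into exactly the integrality condition $\alpha^{-1}\gamma\alpha \in M_2(\mathbb{Z}_p)$. Combined with the $N$-adic condition $\lambda_N(\gamma) \in G$ (which is automatic from $\lambda_N(\alpha) \in G$), one deduces $\Gamma_{G_w} = \alpha\Gamma_G\alpha^{-1} \cap \Gamma_G$. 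This subgroup has the same index in $\Gamma_G$ as $\alpha^{-1}\Gamma_G\alpha \cap \Gamma_G$, namely $|\Gamma_G \backslash \Gamma_G \alpha \Gamma_G|$, so the two sums defining $T_p$ and $T_\alpha$ have the same number of terms.

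To match the cosets themselves, I would next choose $\delta_p \in G$ so that $\lambda_N(q(w)) = p\delta_p^{-1} = \operatorname{adj}(\delta_p)$ lies in $G$; this is precisely where the real-type hypothesis on $G$ together with $\lambda_N(\alpha) \in G$ enter, guaranteeing that such a $\delta_p$ can be extracted from the structure of $G$. With this choice, $q(w)$ and $\alpha$ share the $SL_2(\mathbb{Z})$-Smith normal form $\operatorname{diag}(1,p)$ and both reduce mod $N$ into $G$, so Shimura's Lemma 3.29 (in the formulation recalled in the proof of Corollary~\ref{cor: Hecke commutes with star}) forces $q(w) \in \Gamma_G \alpha \Gamma_G$. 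Consequently each coset $\Gamma_G q(w)\gamma^{-1}$ lies in $\Gamma_G \backslash \Gamma_G \alpha \Gamma_G$, and by the matching coset count they exhaust it. The sums defining $T_p$ and $T_\alpha$ therefore agree term by term.

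The main obstacle is the step arranging $\operatorname{adj}(\delta_p) \in G$, i.e., verifying that $\Gamma_G q(w)\Gamma_G$ coincides with $\Gamma_G \alpha \Gamma_G$; the rest of the argument is bookkeeping with coset representatives. Once this double-coset identification is secured, Proposition~\ref{prop: duality Hecke operators} and Theorem~\ref{thm: star involution} transport the equality from $\mathbb{M}_k(\Gamma_G)$ down to $M_k(\Gamma_G)$ and the conclusion $T_\alpha = T_p$ follows.
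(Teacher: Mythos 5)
Your overall strategy matches the paper's — apply Lemma~\ref{lem:Adelic to Classical}, compute $\Gamma_{G_w}$, and match coset representatives — and your specialization $w_p = \alpha$ is a legitimate, arguably cleaner, choice. The resulting identification $\Gamma_{G_w} = \alpha\Gamma_G\alpha^{-1}\cap\Gamma_G$ is correct and does follow from the $p$-adic Smith normal form argument you sketch.

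The gap is in the step you yourself flag as the main obstacle. You claim that the \emph{real-type} hypothesis on $G$, together with $\lambda_N(\alpha)\in G$, guarantees a choice of $\delta_p\in G$ with $\operatorname{adj}(\delta_p)\in G$. But this lemma carries no real-type hypothesis; its only assumptions are $\alpha\in M_2(\mathbb{Z})$, $\det(\alpha)=p$, and $\lambda_N(\alpha)\in G$. (Real type is assumed elsewhere, e.g.\ in Theorem~\ref{thm: Hecke operator double coset naive} and Corollary~\ref{cor:Complexity Hecke Operator}, precisely when the star involution is needed — it is not assumed here.) Moreover, even if it were assumed, real type says $\lambda_N(\eta)G\lambda_N(\eta)^{-1}=G$ with $\eta=\operatorname{diag}(-1,1)$; conjugation by $\eta$ flips the off-diagonal signs but has nothing to do with the adjugate $M\mapsto\det(M)\,M^{-1}$. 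So the step securing $\Gamma_G\,q(w)\,\Gamma_G=\Gamma_G\,\alpha\,\Gamma_G$ is left unproved, and without it the coset count does not finish the argument.

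The paper handles this differently: it chooses $\beta_p$ so that $\lambda_N(\beta_p)=\lambda_N(p)^{-1}\lambda_N(\alpha)\lambda_N(w_p)$, deduces $\lambda_N(q(w))=\lambda_N(\alpha)$ and hence $\alpha^{-1}q(w)\in\Gamma(N)$, concludes $\Gamma q(w)\Gamma=\Gamma\alpha\Gamma$, and then simply replaces $q(w)$ by $\alpha$ (legitimate because the adelic operator depends only on this double coset). It also exhibits the explicit bijection $\beta\mapsto\alpha\beta^{-1}$ from $\Gamma_G/\Gamma_{G_w}$ onto $\Gamma\backslash\Gamma\alpha\Gamma$ rather than arguing by a cardinality count. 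Finally, your closing appeal to Proposition~\ref{prop: duality Hecke operators} and Theorem~\ref{thm: star involution} is superfluous: the lemma compares $T_\alpha$ and $T_p$ directly as operators on $M_k(\Gamma_G)$, with no detour through modular symbols or the star involution.
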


\begin{proof}
Let $w\in GL_{2}(\hat{\mathbb{Q}})$ be such that $w_{l}=1$ for all
$l\ne p$, $w_{p}\in M_{2}(\mathbb{Z}_{p})$ and $\det(w_{p})=p$.
By Lemma \ref{lem:Adelic to Classical}, the operator $T_{p}=T(w)$
is given by $f\mapsto\sum_{\beta\in\Gamma_{G}/\Gamma_{G_{w}}}f\vert_{q(w)\cdot\beta^{-1}}$
, where $q(w)\in GL_{2}^{+}(\mathbb{Q})$. By Lemma \ref{lem:Conjugation of induced level},
we know that $G_{w}=G\times G_{p}$, with $G_{p}=R_{p}^{\times}$
where $R_{p}$ is the image in $M_{2}(\mathbb{Z}/p\mathbb{Z})$ of
$M_{2}(\mathbb{Z}_{p})\cap w_{p}M_{2}(\mathbb{Z}_{p})w_{p}^{-1}$,
therefore
\[
\Gamma_{G}/\Gamma_{G_{w}}\cong\left(G\times GL_{2}(\mathbb{Z}/p\mathbb{Z})\right)/G_{w}\cong GL_{2}(\mathbb{Z}/p\mathbb{Z})/G_{p}
\]

Recall also that $\det(w_{p})=p$, hence $w_{p}\in SL_{2}(\mathbb{Z}_{p})\cdot\left(\begin{array}{cc}
p & 0\\
0 & 1
\end{array}\right)\cdot SL_{2}(\mathbb{Z}_{p})$. 

Write $w_{p}=\gamma_{1}\cdot\left(\begin{array}{cc}
p & 0\\
0 & 1
\end{array}\right)\cdot\gamma_{2}$ , for some $\gamma_{1},\gamma_{2}\in SL_{2}(\mathbb{Z}_{p})$. Then
\begin{align*}
w_{p}M_{2}(\mathbb{Z}_{p})w_{p}^{-1}&=\gamma_{1}\cdot\left(\begin{array}{cc}
p & 0\\
0 & 1
\end{array}\right)\cdot\gamma_{2}M_{2}(\mathbb{Z}_{p})\gamma_{2}^{-1}\cdot\left(\begin{array}{cc}
p & 0\\
0 & 1
\end{array}\right)^{-1}\cdot\gamma_{1}^{-1}
\\
&=\gamma_{1}\cdot\left(\begin{array}{cc}
p & 0\\
0 & 1
\end{array}\right)\cdot M_{2}(\mathbb{Z}_{p})\cdot\left(\begin{array}{cc}
p & 0\\
0 & 1
\end{array}\right)^{-1}\cdot\gamma_{1}^{-1}
\end{align*}
hence 
\[
M_{2}(\mathbb{Z}_{p})\cap w_{p}M_{2}(\mathbb{Z}_{p})w_{p}^{-1}=\gamma_{1}\cdot\Delta^{0}(p)\cdot\gamma_{1}^{-1}
\]
where $\Delta^{0}(p)=\left\{ \left(\begin{array}{cc}
a & b\\
c & d
\end{array}\right)\in M_{2}(\mathbb{Z}_{p})\mid b\in p\mathbb{Z}_{p}\right\} $, and passing to the image in $M_{2}(\mathbb{Z}/p\mathbb{Z})$, we
get
\[
G_{p}=R_{p}^{\times}=\overline{\gamma}_{1}\cdot\Gamma^{0}(\mathbb{Z}/p\mathbb{Z})\cdot\overline{\gamma}_{1}^{-1}
\]
where $\Gamma^{0}(\mathbb{Z}/p\mathbb{Z})$ is the Borel subgroup
of lower triangular matrices, and $\overline{\gamma}_{1}\in SL_{2}(\mathbb{Z}/p\mathbb{Z})$. 

Also, if $\beta_{p}\in SL_{2}(\mathbb{Z})$ is such that $\lambda_{N}(\beta_{p})=\lambda_{N}(p)^{-1}\cdot\lambda_{N}(\alpha)\cdot\lambda_{N}(w_{p})$,
then $\lambda_{N}(\beta_{p}\cdot pw_{p}^{-1})=\lambda_{N}(\alpha)$
and $\alpha^{-1}\beta_{p}\cdot p\cdot w_{p}^{-1}\in\Gamma(N)\subseteq\Gamma$,
so that $\Gamma\beta_{p}\cdot pw_{p}^{-1}=\Gamma\alpha$,
thus $\Gamma\beta_{p}\cdot pw_{p}^{-1}\Gamma=\Gamma\alpha\Gamma$,
and so (as $T_{\alpha}(f)=\sum_{\delta\in\Gamma\backslash\Gamma\alpha\Gamma}f\vert_{\delta}$),
we may assume that $\beta_{p}\cdot pw_{p}^{-1}=\alpha$, hence $q(w)=\alpha$. 

Note then that, as $p\cdot\alpha^{-1}=\gamma_{1}\cdot\left(\begin{array}{cc}
p & 0\\
0 & 1
\end{array}\right)\cdot\gamma_{2}\beta_{p}^{-1}$, we see that
\begin{align*}
\lambda_{p}(\alpha^{-1}\Gamma\alpha) 
&= \lambda_{p}(\alpha^{-1}\cdot M_{2}(\mathbb{Z})\cdot\alpha)^{\times}=\lambda_{p}(\gamma_{1}\cdot\Delta^{0}(p)\cdot\gamma_{1}^{-1})^{\times} \\
&=\lambda_{p}(\gamma_{1}\cdot\Gamma^{0}(p)\cdot\gamma_{1}^{-1})=\overline{\gamma}_{1}\cdot\Gamma^{0}(\mathbb{Z}/p\mathbb{Z})\cdot\overline{\gamma}_{1}^{-1}=G_{p}.
\end{align*}

Therefore
\[
\Gamma/(\Gamma\cap\alpha^{-1}\Gamma\alpha)\cong GL_{2}(\mathbb{Z}/p\mathbb{Z})/G_{p}\cong\Gamma_{G}/\Gamma_{G_{w}}
\]

But we have bijection 
\begin{align*}
\Gamma/(\Gamma\cap\alpha^{-1}\Gamma\alpha) & \rightarrow(\Gamma\cap\alpha^{-1}\Gamma\alpha)\backslash\Gamma\rightarrow\Gamma\backslash\Gamma\alpha\Gamma\\
\beta & \mapsto\qquad\beta^{-1}\qquad\mapsto\alpha\cdot\beta^{-1}
\end{align*}

Thus, the map $\beta\mapsto\alpha\cdot\beta^{-1}:\Gamma_{G}/\Gamma_{G_{w}}\rightarrow\Gamma\backslash\Gamma\alpha\Gamma$
is a bijection, so that 
\[
T_{\alpha}(f)=\sum_{\delta\in\Gamma\backslash\Gamma\alpha\Gamma}f\vert_{\delta}=\sum_{\beta\in\Gamma_{G}/\Gamma_{G_{w}}}f\vert_{\alpha\cdot\beta^{-1}}=\sum_{\beta\in\Gamma_{G}/\Gamma_{G_{w}}}f\vert_{q(w)\cdot\beta^{-1}}=T(w)(f)=T_p(f)
\]
establishing the result. 
\end{proof}

\begin{cor} \label{cor: coefficients of eigenforms}
Assume $p\nmid N$. Let $\alpha\in M_{2}(\mathbb{Z})$ be such that
$\det(\alpha)=p$ and $\lambda_{N}(\alpha)\in G$. Then $T_{\alpha}$
is independent of $\alpha$. Moreover, if $\Gamma_G = \Gamma(\mathfrak{h},t)$ and $f = \sum_{n=1}^{\infty} a_n q^n$ is an eigenform of the Hecke algebra then $T_{\alpha} f = a_p f$. 
\end{cor}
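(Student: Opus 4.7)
The plan is to deduce both assertions essentially by citation, since nearly all the work has been done in the preceding results. I would structure it as two short paragraphs.

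For the first claim (independence from $\alpha$), I would invoke the lemma immediately preceding this corollary, which shows that whenever $\alpha \in M_2(\mathbb{Z})$ satisfies $\det(\alpha) = p$ and $\lambda_N(\alpha) \in G$, the double coset operator $T_\alpha$ equals the adelic operator $T_p$. Since the right-hand side $T_p$ is defined purely in terms of an auxiliary $w \in GL_2(\hat{\mathbb{Q}})$ with $w_\ell = 1$ for $\ell \neq p$, $w_p \in M_2(\mathbb{Z}_p)$, $\det(w_p) = p$, and is moreover independent of the choice of such $w$ (by the corollary preceding the definition of $T_p$), independence of the choice of $\alpha$ follows immediately. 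Concretely, if $\alpha_1, \alpha_2$ both satisfy the hypotheses, then $T_{\alpha_1} = T_p = T_{\alpha_2}$.

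For the second claim, I would simply chain together the identification from the preceding lemma with the earlier corollary stating that for $\Gamma_G = \Gamma(\mathfrak{h}, t)$ and $f = \sum_{n=1}^\infty a_n q^n$ a normalized eigenform, $T(w) f = a_p f$. Writing
\[
T_\alpha f \;=\; T_p f \;=\; T(w) f \;=\; a_p f,
\]
where the first equality is the preceding lemma, the second is the definition of $T_p$, and the third is the cited corollary, completes the proof.

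The proof is essentially a bookkeeping exercise; the only potential subtlety is the implicit normalization hypothesis on $f$ needed to invoke the earlier corollary ($a_1 = 1$), which I would either note explicitly or absorb into the phrase ``eigenform'' by standard convention. No new calculation is required.
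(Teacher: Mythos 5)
Your proposal is correct and follows exactly the route the paper intends: the corollary is stated without proof precisely because it is the concatenation of the preceding lemma ($T_{\alpha}=T_{p}$, whence independence of $\alpha$) with the earlier corollary that $T(w)f=a_{p}f$ for a normalized eigenform on $\Gamma(\mathfrak{h},t)$. Your remark about the implicit normalization $a_{1}=1$ is a fair observation, as the paper's definition of ``normalized'' does require it while the corollary's statement omits the word.
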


This allows us to define the Hecke operators at primes not dividing
the level (and by multiplicativity to all $n$ such that $(n,N)=1$).
\begin{defn}
\label{def:Hecke Operator Classic Definition}Let $n$ be such that
$(n,N)=1$ and $n \in \det(G)$. Let $\alpha\in M_{2}(\mathbb{Z})$ be such that $\det(\alpha)=n$
and $\lambda_{N}(\alpha)\in G$. Let $T_{n}:=T_{\alpha}$ . 
\end{defn}

Therefore, Algorithm \ref{algo: Hecke operator double coset}
can be used to compute the Hecke operators $T_{n}$, when $n$ is coprime to $N$. 
\begin{rem}
Note that the definition of the Hecke operators $\{T_{n}\}$, although
independent of the choice of representatives, does depend on $G$,
and not only on $\Gamma_{G}$. This is not just an artifact of the
proof, as the following example shows.
\end{rem}

\begin{example}
Let 
\[
G_{1}=\left\{ \pm\left(\begin{array}{cc}
1 & 0\\
0 & d
\end{array}\right)\in GL_{2}(\mathbb{Z}/7\mathbb{Z})\mid d\in(\mathbb{Z}/7\mathbb{Z})^{\times}\right\} .
\]
Let $G_{2}\subseteq GL_{2}(\mathbb{Z}/7\mathbb{Z})$ be the subgroup
\[
G_{2}:=\left\langle \pm \left(\begin{array}{cc}
1 & 4\\
2 & 6
\end{array}\right) \right\rangle \subseteq GL_{2}(\mathbb{Z}/7\mathbb{Z})
\]
which is an abelian group isomorphic to $\mathbb{Z}/6\mathbb{Z} \oplus \mathbb{Z} /2 \mathbb{Z}$.

Then $\Gamma_{G_{1}}=\Gamma_{G_{2}}=\Gamma(7)\cdot\{\pm1\}$. But
while $T_{2}^{G_{1}}$ on $S_{2}(\Gamma(7))$ is the familiar Hecke
operator $T_{\alpha}$ for $\alpha=\left(\begin{array}{cc}
1 & 0\\
0 & 2
\end{array}\right)$, $T_{2}^{G_{2}}$ on $S_{2}(\Gamma(7))$ is simply the identity, since $\alpha \notin G_2$. 
\end{example}

\begin{rem}
As in \cite[Proposition 3.31]{shimura1971introduction}, one can show that the map 
$$
\Gamma \alpha \Gamma \mapsto SL_2(\mathbb{Z}) \alpha SL_2(\mathbb{Z}),
$$
for all $\alpha \in M_2(\mathbb{Z})$ such that $\lambda_N(\alpha) \in \mathfrak{N}(G)$, defines a homomorphism on the Hecke algebras. 

However, as the above example shows, this homomorphism is not injective, and elements $\alpha$ in different cosets of $G \backslash \mathfrak{N}(G)$ give rise to different operators.
\end{rem}

\subsection{Efficient Implementation of the Hecke Operators $T_{n}^{\vee}$,
$n \in \det(G)$ } \label{subsec: Hecke efficient Merel}

Algorithm \ref{algo: Hecke operator double coset} is
not as efficient as we would have liked. Specifically, the logarithmic
factor obtained from the passage to modular symbols is, in practice, due to the constant, 
a particularly high cost. We therefore use the ideas of Merel in \cite{merel1994universal}
to calculate, at least the operators $\{T_{n}\}$ more efficiently. 
Following Section 2.1 in \cite{merel1994universal}, we introduce
the definition of a Merel pair. 
\begin{defn}
\label{def:Merel pair}Let $\Delta\subseteq GL_{2}(\mathbb{Q})$ be
such that $\Gamma\Delta=\Delta\Gamma$ and such that $\Gamma\backslash\Delta$
is finite. Let $\tilde{\Delta}=\{g\in GL_{2}(\mathbb{Q})\mid\tilde{g}:=g^{-1}\det(g)\in\Delta\}$.
Let $\phi:\tilde{\Delta}\cdot SL_{2}(\mathbb{Z})\rightarrow SL_{2}(\mathbb{Z})$
be a map such that 

1. For all $\gamma\in\tilde{\Delta}\cdot SL_{2}(\mathbb{Z})$ and
$g\in SL_{2}(\mathbb{Z})$ we have $\Gamma\cdot\phi(\gamma g)=\Gamma\cdot\phi(\gamma)\cdot g$.

2. For all $\gamma\in\tilde{\Delta}\cdot SL_{2}(\mathbb{Z})$, we
have $\gamma\cdot\phi(\gamma)^{-1}\in\tilde{\Delta}$ (or equivalently
$\phi(\gamma)\cdot\tilde{\gamma}\in\Delta$).

3. The map $\Gamma\backslash\Delta\rightarrow\tilde{\Delta}\cdot SL_{2}(\mathbb{Z})/SL_{2}(\mathbb{Z})$
which associates to $\Gamma\delta$ the element $\tilde{\delta}SL_{2}(\mathbb{Z})$
is injective. (it is necessarily surjective).

We say that the pair $(\Delta,\phi)$ is a \emph{Merel pair} for $\Gamma$.
\end{defn}

We can now state Merel's condition $(C_{\Delta})$:
\begin{defn}
Let $\sum u_{M}M\in\mathbb{C}[M_{2}(\mathbb{Z})]$, and let $(\Delta,\phi)$
be a Merel pair for $\Gamma$. We will say that $\sum u_{M}M$ satisfies
the condition $(C_{\Delta})$ if and only if for all $K\in\tilde{\Delta}SL_{2}(\mathbb{Z})/SL_{2}(\mathbb{Z})$
we have the following equality in $\mathbb{C}[\mathbb{P}^{1}(\mathbb{Q})]$:
\[
\sum_{M\in K}u_{M}([M\infty]-[M0])=[\infty]-[0].
\]
\end{defn}

Finally, we recall the following extremely useful theorem.
\begin{thm}
[{\cite[Theorem 4]{merel1994universal}}]Let $P\in\mathbb{C}_{k-2}[X,Y]$
and $g\in SL_{2}(\mathbb{Z})$. Let $(\Delta,\phi)$ be a Merel pair
for $\Gamma$. Let $\sum u_{M}M\in\mathbb{C}[M_{2}(\mathbb{Z})]$
satisfy the condition $(C_{\Delta})$. We have in $\mathbb{M}_{k}(\Gamma)$
\[
T_{\Delta}^{\vee}([P,g])=\sum_{M,gM\in\tilde{\Delta}SL_{2}(\mathbb{Z})}u_{M}[P\vert_{\tilde{M}},\phi(gM)]
\]
\end{thm}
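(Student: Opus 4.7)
My plan is to verify the identity by direct computation, going from the right-hand side to the left-hand side using the three axioms of the Merel pair together with condition $(C_\Delta)$.

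The starting observation is that each summand on the right can be rewritten in $\mathbb{M}_k$ as a translate. For $M\in M_2(\mathbb{Z})$ with $gM \in \tilde\Delta \cdot SL_2(\mathbb{Z})$, property 2 of the Merel pair produces $\delta_M := \phi(gM) \cdot \widetilde{gM} \in \Delta$. Using $\widetilde{gM} = \tilde M \cdot g^{-1}$ (since $g \in SL_2(\mathbb{Z})$), a direct calculation gives $\delta_M g = \phi(gM) \tilde M$ and $\delta_M gM = \det(M)\, \phi(gM)$, which is a scalar multiple of $\phi(gM)$. Because scalars act trivially on $\mathbb{P}^1(\mathbb{Q})$, this yields the identity
\[
[P|_{\tilde M}, \phi(gM)] \;=\; \phi(gM)\cdot(\tilde M P \otimes \{0,\infty\}) \;=\; \delta_M g \cdot (P \otimes \{M\cdot 0,\ M\cdot\infty\})
\]
in $\mathbb{M}_k$. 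A second computation gives $\widetilde{\delta_M} = gM \cdot \phi(gM)^{-1}$, whence $\widetilde{\delta_M}\cdot SL_2(\mathbb{Z}) = gM\cdot SL_2(\mathbb{Z})$. Combined with property 3, this shows that the class $\Gamma\delta_M \in \Gamma\backslash\Delta$ depends only on the coset $gM \cdot SL_2(\mathbb{Z})$ in $\tilde\Delta SL_2(\mathbb{Z})/SL_2(\mathbb{Z})$.

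I then group the right-hand side by $\delta \in \Gamma \backslash \Delta$: the $\delta$-block collects all $M$ for which $gM \in \tilde\delta \cdot SL_2(\mathbb{Z})$. Since every $\delta_M$ in that block equals $\delta$ modulo $\Gamma$ and we work in $\mathbb{M}_k(\Gamma)$, I may pull $\delta g$ outside the inner sum, reducing the right-hand side to
\[
\sum_{\delta \in \Gamma\backslash\Delta} \delta g \cdot \Bigl(P \otimes \!\!\sum_{M :\, gM \in \tilde\delta\, SL_2(\mathbb{Z})}\!\! u_M\, \{M\cdot 0,\ M\cdot \infty\}\Bigr).
\]
Condition $(C_\Delta)$, applied through the standard isomorphism $\mathbb{M}_2\otimes \mathbb{C} \cong \ker(\mathbb{C}[\mathbb{P}^1(\mathbb{Q})] \to \mathbb{C})$ sending $\{a,b\}\mapsto [b]-[a]$, states that the bracketed inner sum equals $\{0,\infty\}$ for each class in $\tilde\Delta SL_2(\mathbb{Z})/SL_2(\mathbb{Z})$. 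Property 1 of the Merel pair ensures the Manin symbols $[P|_{\tilde M}, \phi(gM)]$ are well-defined in $\mathbb{M}_k(\Gamma)$ independently of the representative chosen within the class. The total becomes $\sum_\delta \delta g \cdot (P \otimes \{0,\infty\})$, which is by definition $T_\Delta^\vee([P, g])$.

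The main obstacle will be the bookkeeping in the last step: one must check that for each $\delta$, the set $\{M : gM \in \tilde\delta\, SL_2(\mathbb{Z})\}$ arising after regrouping is precisely a single class in $\tilde\Delta SL_2(\mathbb{Z})/SL_2(\mathbb{Z})$, so that $(C_\Delta)$ applies directly with coefficients $u_M$. This amounts to controlling the left $SL_2(\mathbb{Z})$-action on $\tilde\Delta SL_2(\mathbb{Z})$, which follows from the invariance $\Gamma\tilde\Delta = \tilde\Delta$ (a consequence of $\Gamma\Delta = \Delta\Gamma = \Delta$) and from the fact that equality is only required in $\mathbb{M}_k(\Gamma)$, so the $\Gamma$-ambiguity in $\delta_M$ is harmless. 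All remaining manipulations are formal.
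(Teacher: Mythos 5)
The paper gives no proof of this statement (it is quoted from \cite[Theorem 4]{merel1994universal}), so I am judging your argument on its own terms. Your algebraic skeleton is the intended one and the identities check out: with $\delta_M=\phi(gM)\cdot\widetilde{gM}\in\Delta$ one indeed has $\delta_M g=\phi(gM)\tilde M$ and $\widetilde{\delta_M}\,SL_2(\mathbb{Z})=gM\,SL_2(\mathbb{Z})$, so property 3 lets you group the right-hand side into blocks indexed by $\Gamma\backslash\Delta$, and the $\Gamma$-ambiguity in $\delta_M$ is harmless in the coinvariants $\mathbb{M}_k(\Gamma)$.

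The gap sits exactly at the point you flag as the ``main obstacle,'' and your proposed fix does not close it. After regrouping, the $\delta$-block requires
\[
\sum_{M:\,gM\in\tilde\delta\,SL_2(\mathbb{Z})}u_M\,\{M0,M\infty\}=\{0,\infty\},
\]
i.e.\ condition $(C_\Delta)$ applied to the set $g^{-1}\tilde\delta\,SL_2(\mathbb{Z})$. That set is automatically a single left coset of $SL_2(\mathbb{Z})$, but $(C_\Delta)$ only constrains the cosets contained in $\tilde\Delta\,SL_2(\mathbb{Z})$, and $g^{-1}\tilde\delta\,SL_2(\mathbb{Z})$ need not lie there: the invariance you invoke, $\Gamma\tilde\Delta=\tilde\Delta$, gives left stability under $\Gamma$ only, whereas $g$ runs over a full set of representatives of $\Gamma\backslash SL_2(\mathbb{Z})$. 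For a concrete failure of stability take $\Gamma=\Gamma_0(4)$ and $\Delta=\Gamma\alpha\Gamma$ with $\alpha=\left(\begin{smallmatrix}1&0\\0&2\end{smallmatrix}\right)$: one checks that $\tilde\Delta\,SL_2(\mathbb{Z})$ meets only two of the three classes of $M_2(\mathbb{Z})_2/SL_2(\mathbb{Z})$, so it is not stable under left multiplication by $SL_2(\mathbb{Z})$, and for suitable $g$ some block of the right-hand side is governed by a coset about which $(C_\Delta)$ says nothing. The argument does go through if the hypothesis is strengthened to condition $(C_n)$, which constrains \emph{every} class of $M_2(\mathbb{Z})_n/SL_2(\mathbb{Z})$ and hence every $g^{-1}K$ --- and that is precisely the form in which the result is used downstream, in Corollary \ref{cor:Merel Universal Fourier Expansion} and Corollary \ref{cor:Complexity Hecke Operator} --- or under the extra assumption $SL_2(\mathbb{Z})\,\tilde\Delta\,SL_2(\mathbb{Z})=\tilde\Delta\,SL_2(\mathbb{Z})$, which holds in the away-from-the-level applications. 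As written, your last step is not justified by $(C_\Delta)$ alone, and you should either add one of these hypotheses or supply a genuinely different argument for the general case.
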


We also recall Merel's condition $(C_{n})$. 
\begin{defn}
\label{def:Condition C_n}Denote by $M_{2}(\mathbb{Z})_{n}$ the set
of matrices of $M_{2}(\mathbb{Z})$ of determinant $n$. We say that
an element $\sum_{M}u_{M}M\in\mathbb{C}[M_{2}(\mathbb{Z})_{n}]$ satisfies
the condition $(C_{n})$ if for all $K\in M_{2}(\mathbb{Z})_{n}/SL_{2}(\mathbb{Z})$,
we have in $\mathbb{C}[\mathbb{P}^{1}(\mathbb{Q})]$
\[
\sum_{M\in K}u_{M}([M\infty]-[M0])=[\infty]-[0].
\]
\end{defn}

\begin{cor}
\label{cor:Merel Universal Fourier Expansion}Let $(\Delta_{n},\phi_{n})$
be a Merel pair for $\Gamma$ with $\Delta_{n}\subseteq M_{2}(\mathbb{Z})_{n}$
. Let $\sum_{M}u_{M}M$ satisfy the condition $(C_{n})$, then in
$\mathbb{M}_{k}(\Gamma)$
\[
T_{\Delta_{n}}^{\vee}([P,g])=\sum_{M}u_{M}[P\vert_{\tilde{M}},\phi_{n}(gM)]
\]
where the sum is resricted to matrices $M$ such that $gM\in\tilde{\Delta}_{n}SL_{2}(\mathbb{Z})$. 
\end{cor}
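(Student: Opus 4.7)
The plan is to obtain this corollary as an immediate specialization of Merel's Theorem 4 quoted immediately above, the only point requiring verification being that condition $(C_n)$ implies condition $(C_{\Delta_n})$ for our Merel pair.

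First I would observe that the tilde operation $g \mapsto \tilde g = g^{-1} \det(g)$ preserves determinant: for $g \in GL_2(\mathbb{Q})$ we have $\det(\tilde g) = \det(g)^{-1}\det(g)^2 = \det(g)$. Consequently, if $\Delta_n \subseteq M_2(\mathbb{Z})_n$ then $\tilde{\Delta}_n \subseteq M_2(\mathbb{Z})_n$ as well. Since right multiplication by $SL_2(\mathbb{Z})$ preserves determinant, we also get
\[
\tilde{\Delta}_n \cdot SL_2(\mathbb{Z}) \subseteq M_2(\mathbb{Z})_n,
\]
so every coset $K \in \tilde{\Delta}_n SL_2(\mathbb{Z}) / SL_2(\mathbb{Z})$ is in particular a coset of $SL_2(\mathbb{Z})$ inside $M_2(\mathbb{Z})_n$, i.e. an element of $M_2(\mathbb{Z})_n / SL_2(\mathbb{Z})$.

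Next, I would apply condition $(C_n)$ (Definition \ref{def:Condition C_n}) to each such $K$: by hypothesis,
\[
\sum_{M \in K} u_M \bigl( [M\infty] - [M0] \bigr) = [\infty] - [0]
\quad \text{in } \mathbb{C}[\mathbb{P}^1(\mathbb{Q})],
\]
and this is exactly the relation required by condition $(C_{\Delta_n})$ for the class $K$. Since $K$ was arbitrary, $\sum_M u_M M$ satisfies $(C_{\Delta_n})$.

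Having verified the hypothesis, I would invoke Merel's Theorem 4 directly with $\Delta = \Delta_n$ and $\phi = \phi_n$ to conclude
\[
T_{\Delta_n}^\vee([P,g]) = \sum_{\substack{M \\ gM \in \tilde{\Delta}_n SL_2(\mathbb{Z})}} u_M \bigl[P \vert_{\tilde M}, \phi_n(gM)\bigr],
\]
yielding the claim. I do not anticipate any serious obstacle: the content of the corollary is that the determinant-preservation of the tilde operation makes $(C_n)$ strictly stronger than $(C_{\Delta_n})$, so that the single relation $(C_n)$ can be reused across all Merel pairs supported in $M_2(\mathbb{Z})_n$ without re-checking.
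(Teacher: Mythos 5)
Your proof is correct and matches the route the paper implicitly takes: the paper states this as a corollary with no written proof, so it is being treated as an immediate consequence of Merel's Theorem~4, with the only content being that $(C_n)$ implies $(C_{\Delta_n})$ once $\Delta_n\subseteq M_2(\mathbb{Z})_n$. That is exactly what you verify: since the tilde map is the $2\times 2$ adjugate, which preserves both integrality and determinant and is an involution, $\tilde\Delta_n\subseteq M_2(\mathbb{Z})_n$, and hence $\tilde\Delta_n\cdot SL_2(\mathbb{Z})/SL_2(\mathbb{Z})\subseteq M_2(\mathbb{Z})_n/SL_2(\mathbb{Z})$, so every class $K$ appearing in $(C_{\Delta_n})$ also appears in $(C_n)$. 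One small point worth making explicit (your ``consequently'' is a bit quick): preserving the determinant alone is not quite enough; you also need $\tilde\Delta_n\subseteq M_2(\mathbb{Z})$, which follows because for $g\in\tilde\Delta_n$ one has $g=\widetilde{\tilde g}=\operatorname{adj}(\tilde g)\in M_2(\mathbb{Z})$ since $\tilde g\in\Delta_n\subseteq M_2(\mathbb{Z})$. With that one-line fix the argument is complete.
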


\begin{example} 
\label{exa:Merel pair nonsplit Cartan}
Consider the following examples.
\begin{enumerate}
\item
Let 
\[
\Delta_{n}:=\left\{ g=\left(\begin{array}{cc}
a & b\\
c & d
\end{array}\right)\in M_{2}(\mathbb{Z})\mid\det(g)=n,N\mid c,N\mid a-1\right\} 
\]
and $\phi_{n}:\tilde{\Delta}_{n}SL_{2}(\mathbb{Z})\rightarrow SL_{2}(\mathbb{Z})$
is a map such that $\pi(\phi_{n}(g))=(0:1)\cdot\lambda_{N}(g)\in\mathbb{P}^{1}(\mathbb{Z}/N\mathbb{Z})$,
where $\pi:SL_{2}(\mathbb{Z})\rightarrow\mathbb{P}^{1}(\mathbb{Z}/N\mathbb{Z})$
is the natural surjection. Then in \cite[Lemma 1]{merel1994universal}, Merel shows that $(\Delta_{n},\phi_{n})$ is a Merel pair for $\Gamma_{1}(N)$,
which is key to modern efficient implementation of Hecke operators. 

\item
Let $p>2$ be a prime, and
let $u\in\mathbb{F}_{p}^{\times}$ be a non-square. Let
\[
\Delta_{n}:=\left\{ g=\left(\begin{array}{cc}
a & b\\
c & d
\end{array}\right)\mid\det(g)=n,p\mid a-d,p\mid b-uc\right\} 
\]
and $\phi_{n}:\tilde{\Delta}_{n}SL_{2}(\mathbb{Z})\rightarrow SL_{2}(\mathbb{Z})$
is a map such that $\phi_{n}(g)^{-1}\cdot\sqrt{u}=\lambda_{p}(g){}^{-1}\cdot\sqrt{u}$,
where the action of $\tilde{\Delta}_{n}SL_{2}(\mathbb{Z})$ on $\mathbb{F}_{p^{2}}-\mathbb{F}_{p}$
is via Mobius transformations. Then similarly $(\Delta_{n},\phi_{n})$
is a Merel pair for $\Gamma_{ns}(p)$ - the nonsplit Cartan subgroup
of level $p$.
\end{enumerate}
\end{example}

Let us write $G_{0}:=G\cap SL_{2}(\mathbb{Z}/N\mathbb{Z})$, and let
$\pi:SL_{2}(\mathbb{Z})\rightarrow G_{0}\backslash SL_{2}(\mathbb{Z}/N\mathbb{Z})$
be the natural map $\pi(g)=G_{0}\cdot\lambda_{N}(g)$, inducing an
isomorphism $\Gamma\backslash SL_{2}(\mathbb{Z})\cong G_{0}\backslash SL_{2}(\mathbb{Z}/N\mathbb{Z})$
. Let $s:G_{0}\backslash SL_{2}(\mathbb{Z}/N\mathbb{Z})\rightarrow SL_{2}(\mathbb{Z})$
be a section of $\pi$. 

Let $\delta_n \in G$ be an element such that $\det(\delta_n) = n$, and let 
\begin{equation}
\Delta_{n}:=\left\{ \alpha\in M_{2}(\mathbb{Z})\mid\det(\alpha)=n,\quad\lambda_{N}(\alpha)\in G\right\} \label{eq:Delta_n}
\end{equation}
Let $\phi_{n}:M_{2}(\mathbb{Z})_{n}\rightarrow SL_{2}(\mathbb{Z})$
be the map defined by 
\[
\phi_{n}(\alpha)=s(G_{0}\cdot n^{-1}\delta_n\cdot\lambda_{N}(\alpha))
\]

\begin{prop}
$(\Delta_{n},\phi_{n})$ is a Merel pair. 
\end{prop}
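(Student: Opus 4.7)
The plan is to verify, in turn, the three conditions in Definition~\ref{def:Merel pair}. Throughout, I will use the $2\times 2$ identities $\widetilde{\alpha\beta} = \tilde\beta\,\tilde\alpha$, $\gamma\tilde\gamma = \det(\gamma)\cdot I$, and $\tilde{\tilde\gamma} = \gamma$. Since $\delta_n \in G \subseteq GL_2(\mathbb{Z}/N\mathbb{Z})$, its determinant $n$ is a unit modulo $N$, so $n^{-1}\delta_n\lambda_N(\gamma) \in SL_2(\mathbb{Z}/N\mathbb{Z})$ for every $\gamma \in M_2(\mathbb{Z})_n$; in particular $\phi_n$ is well-defined on all of $\tilde\Delta_n\cdot SL_2(\mathbb{Z}) \subseteq M_2(\mathbb{Z})_n$, and takes values in $SL_2(\mathbb{Z})$ via the section $s$.

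For property~(1), given $\gamma \in \tilde\Delta_n SL_2(\mathbb{Z})$ and $g \in SL_2(\mathbb{Z})$, I would simply expand both sides and compare them in $G_0\backslash SL_2(\mathbb{Z}/N\mathbb{Z})$:
\[
\pi(\phi_n(\gamma g)) = G_0 \cdot n^{-1}\delta_n\lambda_N(\gamma)\lambda_N(g) = \pi(\phi_n(\gamma))\cdot\lambda_N(g) = \pi(\phi_n(\gamma)\cdot g).
\]
Via the isomorphism $\Gamma\backslash SL_2(\mathbb{Z}) \cong G_0\backslash SL_2(\mathbb{Z}/N\mathbb{Z})$, this gives $\Gamma\cdot\phi_n(\gamma g) = \Gamma\cdot\phi_n(\gamma)\cdot g$.

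For property~(2), it suffices to show $\phi_n(\gamma)\tilde\gamma \in \Delta_n$: the determinant equals $1\cdot n = n$, and writing $\lambda_N(\phi_n(\gamma)) = g_0 \cdot n^{-1}\delta_n\lambda_N(\gamma)$ for some $g_0 \in G_0$ (the ambiguity in the section), the identity $\gamma\tilde\gamma = nI$ gives
\[
\lambda_N(\phi_n(\gamma)\tilde\gamma) = g_0 \cdot n^{-1}\delta_n \cdot (nI) = g_0\delta_n \in G,
\]
as required.

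For property~(3), well-definedness follows because $\delta_2 = \gamma\delta_1$ with $\gamma \in \Gamma$ implies $\tilde{\delta}_2 = \tilde{\delta}_1\tilde\gamma = \tilde{\delta}_1\gamma^{-1}$, so the two cosets $\tilde\delta_i\cdot SL_2(\mathbb{Z})$ agree. Conversely, if $\tilde{\delta}_2 = \tilde{\delta}_1 g$ with $g \in SL_2(\mathbb{Z})$, taking adjugates yields $\delta_2 = g^{-1}\delta_1$, and reducing modulo $N$---using that $\lambda_N(\delta_1)$ is invertible since $(n,N)=1$---gives $\lambda_N(g^{-1}) = \lambda_N(\delta_2)\lambda_N(\delta_1)^{-1} \in G$, so $g \in \Gamma$ and $\Gamma\delta_1 = \Gamma\delta_2$. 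No single step is a real obstacle; the argument is essentially bookkeeping with adjugates, and the only delicate point is the coprimality $(n,N)=1$, which is automatic from $\delta_n \in G$.
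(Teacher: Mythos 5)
Your verification of all three Merel-pair conditions is correct and follows essentially the same route as the paper. The only cosmetic differences are that for condition (2) you check the equivalent formulation $\phi_n(\gamma)\tilde\gamma\in\Delta_n$ (the paper checks $\gamma\phi_n(\gamma)^{-1}\in\tilde\Delta_n$ instead), and your condition (3) argument is a bit more direct---you work with $\lambda_N(\delta_i)$ rather than $\lambda_N(\tilde\delta_i)$ and also spell out well-definedness---but these are presentation choices, not a different proof.
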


\begin{proof}
First, note that $\alpha \in \tilde{\Delta}_n$ if and only if 
$$
\det(\alpha) = \det(\tilde{\alpha}) = n \text{ and }
n \lambda_N(\alpha)^{-1}~=~\lambda_N(\tilde{\alpha})~\in~G. 
$$
Therefore
\[
\tilde{\Delta}_{n}=\left\{ \alpha\in M_{2}(\mathbb{Z})\mid\det(\alpha)=n,n^{-1}\lambda_{N}(\alpha)\in G\right\}.
\]
Next, we verify the three conditions in Definition \ref{def:Merel pair}:

1. Let $\gamma\in\tilde{\Delta}_{n}SL_{2}(\mathbb{Z})$ and $g\in SL_{2}(\mathbb{Z})$.
Then
\begin{align*} 
\pi(\phi_{n}(\gamma g)) &= G_{0}\cdot n^{-1}\delta_n\cdot\lambda_{N}(\gamma g)=G_{0}\cdot n^{-1}\delta_n\cdot\lambda_{N}(\gamma)\cdot\lambda_{N}(g) \\
&= \pi(\phi_{n}(\gamma))\cdot\lambda_{N}(g)=G_{0}\cdot\lambda_{N}(\phi_{n}(\gamma))\cdot\lambda_{N}(g) \\
&= G_{0}\cdot\lambda_{N}(\phi_{n}(\gamma)g)=\pi(\phi_{n}(\gamma)g)
\end{align*}
hence $\Gamma\cdot\phi_{n}(\gamma g)=\Gamma\cdot\phi_{n}(\gamma)g$.

2. Let $g\in\tilde{\Delta}_{n}SL_{2}(\mathbb{Z})$. Assume that $n^{-1}\delta_n\cdot\lambda_{N}(g)\in G_{0}$.
Then $n^{-1}\lambda_{N}(g)\in G$, hence $g\in\tilde{\Delta}_{n}$.
Thus $g\in\tilde{\Delta}_{n}$ iff $n^{-1}\delta_n\cdot\lambda_{N}(g)\in G_{0}$.
Now, note that 
\[
G_{0}\cdot n^{-1}\delta_n\cdot\lambda_{N}(g)=\pi(\phi_{n}(g))=G_{0}\cdot\lambda_{N}(\phi_{n}(g))
\]
hence 
\[
n^{-1}\delta_n\cdot\lambda_{N}(g\phi_{n}(g)^{-1})\in G_{0}
\]
showing that $g\phi_{n}(g)^{-1}\in\tilde{\Delta}_{n}$. 

3. Let $\delta_{1},\delta_{2}\in\Delta_{n}$ be such that $\delta_{1}\delta_{2}^{-1}\in SL_{2}(\mathbb{Z})$.
By definition $\lambda_{N}(\tilde{\delta}_{1}),\lambda_{N}(\tilde{\delta}_{2})\in G$,
hence 
\begin{align*}
\lambda_{N}(\delta_{1}\delta_{2}^{-1})
&=\lambda_{N}(\tilde{\delta}_{1}^{-1}\cdot n\cdot n^{-1}\cdot\tilde{\delta}_{2})= \\
&=\lambda_{N}(\tilde{\delta}_{1}^{-1}\tilde{\delta}_{2})=\lambda_{N}(\tilde{\delta}_{1})^{-1}\cdot\lambda_{N}(\tilde{\delta}_{2})\in G
\end{align*}
hence $\delta_{1}\delta_{2}^{-1}\in\Gamma$. 
\end{proof}
\begin{cor}
\label{cor:Hecke Operator Merel}Assume $n \in \det(G)$. Let $\sum_{M}u_{M}M$
satisfy the condition $(C_{n})$. Then, in $\mathbb{M}_{k}(\Gamma_G)$
\begin{equation}
T_{n}^{\vee}([P,g])=\sum_{M}u_{M}[P\vert_{\tilde{M}},\phi_{n}(gM)].
\label{eq:Hecke Operator Merel}
\end{equation}
\end{cor}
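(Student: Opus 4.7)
The plan is to apply Corollary \ref{cor:Merel Universal Fourier Expansion} directly to the Merel pair $(\Delta_n, \phi_n)$ whose axioms were just verified, and then identify the resulting operator $T_{\Delta_n}^{\vee}$ with $T_n^{\vee}$. Since $\Delta_n \subseteq M_2(\mathbb{Z})_n$ by \eqref{eq:Delta_n}, and $\sum_M u_M M$ is assumed to satisfy $(C_n)$, the cited corollary produces the formula
\[
T_{\Delta_n}^{\vee}([P,g]) = \sum_{M} u_M [P\vert_{\tilde{M}}, \phi_n(gM)],
\]
with the sum understood to range over those $M$ with $gM \in \tilde{\Delta}_n SL_2(\mathbb{Z})$ (the remaining terms contribute zero). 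This coincides with the right-hand side of \eqref{eq:Hecke Operator Merel}, so it only remains to show $T_{\Delta_n}^{\vee} = T_n^{\vee}$.

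By Proposition \ref{prop: duality Hecke operators}, $T_{\Delta_n}$ and $T_{\Delta_n}^{\vee}$ are adjoint with respect to the pairing \eqref{eq:pairing of modular symbols and modular forms}, so the identification reduces to showing $T_{\Delta_n} = T_n$ on $M_k(\Gamma_G)$. Fix any $\alpha \in \Delta_n$ so that Definition \ref{def:Hecke Operator Classic Definition} gives $T_n = T_\alpha = T_{\Gamma_G \alpha \Gamma_G}$; hence it suffices to verify $\Delta_n = \Gamma_G \alpha \Gamma_G$. The inclusion $\supseteq$ is immediate from the definition of $\Gamma_G$ together with the fact that $G_0 \subseteq G$ is preserved by left and right multiplication by elements of $\Gamma_G$.

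For the reverse inclusion, I would argue exactly as in the proof of Corollary \ref{cor: Hecke commutes with star}. Given $\beta \in \Delta_n$, the coprimality $(n, N) = 1$ and the equality $\det \alpha = \det \beta = n$ imply that $\alpha$ and $\beta$ share the same Smith normal form, so $SL_2(\mathbb{Z}) \alpha SL_2(\mathbb{Z}) = SL_2(\mathbb{Z}) \beta SL_2(\mathbb{Z})$. Combined with the hypothesis $n \in \det G$, which forces $\lambda_N(\alpha)$ and $\lambda_N(\beta)$ to lie in the same coset of $G_0$ in $G$, so that $\lambda_N(\Gamma_G \alpha) = \lambda_N(\alpha \Gamma_G)$ in Shimura's sense, Shimura's Lemma 3.29(1) places $\beta$ in $\Gamma_G \alpha \Gamma_G$.

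The main technical obstacle is precisely this last step, verifying the hypotheses of Shimura's lemma in order to absorb the residual level-structure ambiguity; the argument parallels that of Corollary \ref{cor: Hecke commutes with star}, where the analogous identification was carried out for $\eta^{-1} \alpha \eta$. Once the identification $\Delta_n = \Gamma_G \alpha \Gamma_G$ is secured, the corollary drops out of Corollary \ref{cor:Merel Universal Fourier Expansion} combined with the duality of Proposition \ref{prop: duality Hecke operators}.
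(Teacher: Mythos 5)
Your overall architecture is right — apply Corollary \ref{cor:Merel Universal Fourier Expansion} to the Merel pair $(\Delta_n,\phi_n)$ and identify $T_{\Delta_n}^{\vee}$ with $T_n^{\vee}$ — but you have skipped the step that is the actual content of the paper's proof. Corollary \ref{cor:Merel Universal Fourier Expansion} gives the sum \emph{restricted} to those $M$ with $gM\in\tilde{\Delta}_n SL_2(\mathbb{Z})$, whereas the target formula \eqref{eq:Hecke Operator Merel} is the unrestricted sum over all $M$. Your parenthetical ``the remaining terms contribute zero'' is not a justification: a term $u_M[P\vert_{\tilde M},\phi_n(gM)]$ with $gM\notin\tilde{\Delta}_nSL_2(\mathbb{Z})$ has no reason to vanish. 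What is actually true, and what must be proved, is that there are no such terms, because $\tilde{\Delta}_n\cdot SL_2(\mathbb{Z})=M_2(\mathbb{Z})_n$ and every $M$ in the support of an element satisfying $(C_n)$ has determinant $n$. The paper proves this equality explicitly: for $M\in M_2(\mathbb{Z})_n$ one checks that $M\phi_n(M)^{-1}\in\tilde{\Delta}_n$ by computing $\lambda_N(M\phi_n(M)^{-1})$ and using the element $\delta_n\in G$ with $\det(\delta_n)=n$. This is precisely where the hypothesis $n\in\det(G)$ enters, and your proposal never uses it for this purpose.

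Your second part — showing $\Delta_n=\Gamma_G\alpha\Gamma_G$ via Shimura's Lemma 3.29(1) — is more detailed than the paper's one-line identification $T_n^{\vee}=T_{\Delta_n}^{\vee}$ and is in the right spirit, but note that ``$\det\alpha=\det\beta=n$ implies the same Smith normal form'' is only automatic when $d_1(\alpha)=d_1(\beta)=1$ (true for $n$ prime, not for arbitrary composite $n$); for general $n$ you would need to restrict to primitive matrices or argue as the paper does directly from Definition \ref{def:Hecke Operator Classic Definition} and \eqref{eq:Delta_n}. To repair the proof, insert the verification that $M_2(\mathbb{Z})_n=\tilde{\Delta}_n\cdot SL_2(\mathbb{Z})$ before concluding that the restricted and unrestricted sums agree.
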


\begin{proof}
By Definition \ref{def:Hecke Operator Classic Definition} and \eqref{eq:Delta_n},
we see that $T_{n}^{\vee}=T_{\alpha}^{\vee}$ for any $\alpha\in\Delta_{n}$,
i.e. $T_{n}^{\vee}=T_{\Delta_{n}}^{\vee}$. The corollary now follows
from Corollary \ref{cor:Merel Universal Fourier Expansion}, once
we realize that $M_{2}(\mathbb{Z})_{n}=\tilde{\Delta}_{n}\cdot SL_{2}(\mathbb{Z})$. 

Indeed, for $M\in M_{2}(\mathbb{Z})_{n}$, consider 
\[
g=M\cdot\phi_{n}(M)^{-1}=M\cdot s(G_{0}\cdot n^{-1}\delta_n\cdot\lambda_{N}(M))^{-1}
\]

Then 
\[
\lambda_{N}(g)^{-1}=\lambda_{N}(\phi_{n}(M))\cdot\lambda_{N}(M)^{-1}\in G_{0}\cdot n^{-1}\delta(n)\cdot\lambda_{N}(M)\cdot\lambda_{N}(M)^{-1}=G_{0}\cdot n^{-1}\delta_n
\]
hence 
\[
n^{-1}\lambda_{N}(g)\in\delta_n\cdot G_{0}\subseteq G
\]
showing that $g\in\tilde{\Delta}_{n}$. 
\end{proof}
\begin{cor}
\label{cor:Complexity Hecke Operator}Computation of the Hecke operator
$T_{p}$ on $S_{k}(\Gamma_G)$, for $p \in \det(G)$, can
be done in $O(k \log k \cdot p \log p)$ basic $\operatorname{CosetIndex}$ operations. 
\end{cor}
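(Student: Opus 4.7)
The plan is to combine Corollary \ref{cor:Hecke Operator Merel} with Merel's explicit construction of a universal Fourier expansion for $T_p$. First I would invoke Corollary \ref{cor: Hecke commutes with star}: since $G$ is of real type and $p \in \det(G)$, there is some $\alpha \in M_2(\mathbb{Z})$ with $\det(\alpha) = p$ and $\lambda_N(\alpha) \in G$, so $T_p^{\vee} = T_{\alpha}^{\vee}$ commutes with $\iota^{\vee}$ on $\mathbb{M}_k(\Gamma_G)$. Combining this with Proposition \ref{prop: duality Hecke operators} and Theorem \ref{thm: star involution} reduces the problem to computing $T_p^{\vee}$ on the space $\mathbb{S}_k(\Gamma_G)^{+}$, and in particular on a basis of Manin symbols $[P,g]$.

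Second, I would import from \cite{merel1994universal} an explicit system $\sum_M u_M M \in \mathbb{Z}[M_2(\mathbb{Z})_p]$ satisfying Merel's condition $(C_p)$, whose support has size $O(p \log p)$ and can be enumerated in $O(p \log p)$ arithmetic operations (the logarithmic factor arises from the continued-fraction expansions used in Merel's construction). A crucial point is that condition $(C_p)$ is a purely combinatorial statement about matrices of determinant $p$ acting on $\mathbb{P}^1(\mathbb{Q})$, so the very same universal family can be reused for the Merel pair $(\Delta_p, \phi_p)$ built in this subsection; only the evaluation map $\phi_p$ differs.

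Third, I would bound the cost per term. Corollary \ref{cor:Hecke Operator Merel} gives
$$T_p^{\vee}([P,g]) = \sum_M u_M [P|_{\tilde M}, \phi_p(gM)],$$
and for each $M$ in the support of the universal system, computing $\tilde M$, $P|_{\tilde M}$, and $gM$ is a bounded number of arithmetic operations (with $k$ fixed). Evaluating $\phi_p(gM) = s(G_0 \cdot p^{-1}\delta_p \cdot \lambda_N(gM))$ is a single $\operatorname{CosetIndex}$-type lookup into $G_0 \backslash SL_2(\mathbb{Z}/N\mathbb{Z})$, and reducing the resulting Manin symbol to the fixed basis is an $O(1)$ operation. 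Summing over the $O(p \log p)$ terms in the family gives the advertised complexity per basis element.

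The main obstacle is justifying the transfer of Merel's construction, originally stated for $\Gamma_1(N)$ (Example \ref{exa:Merel pair nonsplit Cartan}(1)), to the Merel pair attached to an arbitrary $G$ of real type. As observed above, however, $(C_p)$ is independent of the choice of pair, so the combinatorial heart of Merel's argument goes through verbatim; the only adaptation needed is the substitution of our $\phi_p$, which is exactly what the construction preceding Corollary \ref{cor:Hecke Operator Merel} provides. A secondary point worth double-checking is that the arithmetic on the entries of $gM$ (which can grow like $O(N^2 p)$) does not inflate the $\operatorname{CosetIndex}$ cost beyond the unit assumed in the statement; this is automatic since $\operatorname{CosetIndex}$ is charged a flat cost $C$ by convention.
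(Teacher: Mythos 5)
Your proposal is correct and follows essentially the same route as the paper's proof: reduce to computing $T_p^{\vee}$ on the plus part of the cuspidal Manin symbols via the star involution (Corollary~\ref{cor: Hecke commutes with star}, Proposition~\ref{prop: duality Hecke operators}, Theorem~\ref{thm: star involution}), then apply Corollary~\ref{cor:Hecke Operator Merel} with Merel's family of $O(\sigma_1(p)\log p)$ matrices satisfying $(C_p)$. Your explicit observations---that condition $(C_p)$ is independent of the chosen Merel pair so the universal family transfers, and that each term costs a single $\operatorname{CosetIndex}$ lookup---accurately fill in what the paper cites concisely.
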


\begin{proof}
In \cite[Proposition 8]{merel1991operateurs}, Merel shows that one
can compute a certain set $\mathcal{S}_{n}$ of $O(\sigma_{1}(n)\cdot\log n)$
matrices, which in \cite[Proposition 18]{merel1994universal}, he
proves to satisfy condition $(C_{n})$ (Definition \ref{def:Condition C_n}).
Therefore, by Corollary \ref{cor:Hecke Operator Merel}, it is enough
to compute, for a vector $[P,g]\in\mathbb{M}_{k}(\Gamma)$, the sum
\eqref{eq:Hecke Operator Merel}, where each modular symbol computation costs $O(k \log k)$ basic $\operatorname{CosetIndex}$ operations, implying the resulting complexity for computation of the operator $T_{p}^{\vee}$ on $\mathbb{M}_k(\Gamma_G)$. 

Let $\alpha \in M_2(\mathbb{Z})$ be such that $\det(\alpha)=p$ and $\lambda_N(\alpha) \in G$. Then by definition $T_{\alpha} = T_p$. 
Since $\lambda_N(\alpha) \in G$ and $G$ is of real type, by Corollary \ref{cor: Hecke commutes with star}, one can then compute $T_{p}^{\vee}$ on $\mathbb{S}_k(\Gamma_G)$.
Finally, from Proposition \ref{prop: duality Hecke operators} and Theorem \ref{thm: star involution}, the result follows.
\end{proof}
\begin{rem}
In general, the computation of the Hecke operators $\{T_{n}\}$ for
$(n,N)=1$, can be done by using their multiplicativity property,
so Corollary \ref{cor:Complexity Hecke Operator} is all that's needed,
and gives a slightly better complexity of $O(p_{\max}\cdot\log n)$
where $p_{\max}$ is the largest prime factor of $n$. 

Note also that this gives an improvement by a factor of $\log(N)$
over Algorithm \ref{algo: Hecke operator double coset},
which is very significant in practice.
\end{rem}

We would also like to mention that under some mild assumptions, the decomposition \ref{eqn: isotypic decomposition} is Hecke-equivariant, hence one can compute the Hecke operator on each of the isotypic subspaces separately.

\begin{lem}
Let $G \subseteq G' \subseteq GL_2(\mathbb{Z}/ N\mathbb{Z})$ be such that $G$ is normal in $G'$. 
Let $\varepsilon: G' / G \rightarrow \mathbb{Q}(\zeta)^{\times}$ be a character, and let $\alpha \in M_2(\mathbb{Z})$ be such that $\det(\alpha) = p$ is a prime and $\lambda_N(\alpha) \in G$. Then the subspace $S_k(\Gamma_G, \varepsilon)$ is invariant under $T_{\alpha}$.
\end{lem}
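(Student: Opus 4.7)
The plan is to show that the Hecke operator $T_\alpha$ commutes, up to a permutation of coset representatives, with the action of $\Gamma'$ on $S_k(\Gamma_G)$ via the slash operator, which immediately implies preservation of the $\varepsilon$-isotypic component. Concretely, I will verify that $(T_\alpha f)|_{\gamma'} = \varepsilon(\gamma') \, T_\alpha f$ for every $\gamma' \in \Gamma'$, assuming $f|_{\gamma'} = \varepsilon(\gamma') f$.

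The first and main step is to prove that conjugation by any $\gamma' \in \Gamma'$ stabilizes the double coset $\Gamma_G \alpha \Gamma_G$; that is,
\[
\gamma'^{-1}\, \Gamma_G\, \alpha\, \Gamma_G\, \gamma' \;=\; \Gamma_G\, \alpha\, \Gamma_G .
\]
Here I use two ingredients. First, $\Gamma_G$ is normal in $\Gamma'$ (since $G$ is normal in $G'$), so $\gamma' \Gamma_G \gamma'^{-1} = \Gamma_G$. Second, I must show that $\gamma'^{-1} \alpha \gamma'$ lies in $\Gamma_G \alpha \Gamma_G$. Its determinant is still the prime $p$, and
\[
\lambda_N(\gamma'^{-1}\alpha \gamma') \;=\; \lambda_N(\gamma')^{-1} \lambda_N(\alpha)\, \lambda_N(\gamma') \;\in\; \lambda_N(\gamma')^{-1}\, G\, \lambda_N(\gamma') \;=\; G,
\]
again because $G$ is normal in $G'$. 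Hence by Corollary~\ref{cor: coefficients of eigenforms} (which tells us that $T_{\alpha'} = T_p$ for any $\alpha' \in M_2(\mathbb{Z})$ of determinant $p$ with $\lambda_N(\alpha') \in G$), the double cosets $\Gamma_G (\gamma'^{-1}\alpha \gamma')\Gamma_G$ and $\Gamma_G \alpha \Gamma_G$ coincide. Combined with the normality of $\Gamma_G$, this yields the desired identity. The main obstacle is precisely this step, because it is where the normality hypothesis on $G \subseteq G'$ is indispensable.

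The second step is routine. Fix a system of representatives $\{\delta_i\}_{i=1}^{r}$ for $\Gamma_G \backslash \Gamma_G \alpha \Gamma_G$. By the first step, $\{\gamma'^{-1} \delta_i \gamma'\}_{i=1}^r$ is another such system, so there is a permutation $\sigma$ and elements $\gamma_i \in \Gamma_G$ with $\gamma'^{-1} \delta_i \gamma' = \gamma_i\, \delta_{\sigma(i)}$. Then for $f \in S_k(\Gamma_G, \varepsilon)$,
\begin{align*}
(T_\alpha f)\big|_{\gamma'}
&= \sum_{i=1}^r f\big|_{\delta_i \gamma'}
= \sum_{i=1}^r f\big|_{\gamma' \cdot (\gamma'^{-1}\delta_i \gamma')} \\
&= \sum_{i=1}^r \bigl(f|_{\gamma'}\bigr)\big|_{\gamma_i \delta_{\sigma(i)}}
= \varepsilon(\gamma') \sum_{i=1}^r f\big|_{\delta_{\sigma(i)}}
= \varepsilon(\gamma')\, T_\alpha f,
\end{align*}
where we used $f|_{\gamma_i} = f$ (since $\gamma_i \in \Gamma_G$) and then reindexed by $\sigma$. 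This exhibits $T_\alpha f$ as an element of $S_k(\Gamma_G, \varepsilon)$, completing the proof. Note that primality of $p$ is not actually used beyond ensuring the applicability of Corollary~\ref{cor: coefficients of eigenforms}; the same argument extends to any $\alpha$ for which $\Gamma_G \alpha \Gamma_G$ is determined by the pair $(SL_2(\mathbb{Z}) \alpha SL_2(\mathbb{Z}),\, G\, \lambda_N(\alpha))$.
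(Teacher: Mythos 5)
Your proof is correct and follows essentially the same route as the paper: both arguments show that conjugation by $\gamma'\in\Gamma_{G'}$ preserves the double coset $\Gamma_G\alpha\Gamma_G$ (using normality of $G$ in $G'$ together with the fact that the double coset is determined by $\det$ and the reduction into $G$), and then permute the coset representatives in the sum defining $T_\alpha f|_{\gamma'}$. The only cosmetic difference is that you cite Corollary~\ref{cor: coefficients of eigenforms} (an operator-level statement) where the paper invokes Shimura's Lemma 3.29 directly for the double-coset identity; the underlying fact is the same.
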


\begin{proof}
Let $f \in S_k(\Gamma_G, \varepsilon)$, and let $\gamma \in \Gamma_{G'}$. Let $R$ be a set of coset representatives for $\Gamma_G \backslash \Gamma_G \alpha \Gamma_G$. Then 
\begin{equation*}
T_\alpha(f) \vert_{\gamma} = \sum_{\delta \in R} f \vert_{\delta \gamma}
= \sum_{\delta \in R} \left( f \vert_{\gamma} \right) \vert_{\gamma^{-1} \delta \gamma}
= \sum_{\delta \in R} \left( \varepsilon(\gamma) \cdot f \right) \vert_{\gamma^{-1} \delta \gamma}
= \varepsilon(\gamma) \sum_{\delta \in R} f \vert_{\gamma^{-1} \delta \gamma}.
\end{equation*}
Since $\gamma \in \Gamma_{G'}$ and $G$ is normal in $G'$, for any $x \in \Gamma_G$ we have 
$\lambda_N(\gamma^{-1} x \gamma) \in G$, hence $\gamma^{-1} x \gamma \in \Gamma_G$. 
Thus $\gamma^{-1} \Gamma_G \gamma = \Gamma_G$.

As $\Gamma_G \alpha \Gamma_G = \bigsqcup_{\delta \in R} \Gamma_G \delta$, it follows that
$$ \Gamma_G \gamma^{-1} \alpha \gamma \Gamma_G = \gamma^{-1} \Gamma_G \alpha \Gamma_G \gamma = 
\bigsqcup_{\delta \in R} \gamma^{-1} \Gamma_G \delta \gamma =  \bigsqcup_{\delta \in R} \Gamma_G \gamma^{-1} \delta \gamma. $$
However, since $\gamma \in \Gamma_{G'}$ and $\lambda_N(\alpha) \in G$, it follows that 
$ \lambda_N(\gamma^{-1} \alpha \gamma) \in G $, and so by \cite[Lemma 3.29]{shimura1971introduction}, we see that
$$ \Gamma_G \gamma^{-1} \alpha \gamma \Gamma_G = \{g \in M_2(\mathbb{Z}) \mid \det(g)=p \text{ and } \lambda_N (g) \in G \} = 
\Gamma_G \alpha \Gamma_G.$$
It follows that $T_{\alpha}(f) = \sum_{\delta \in R} f\vert_{\gamma^{-1} \delta \gamma}$, so that 
$T_{\alpha}(f)\vert_{\gamma} = \varepsilon(\gamma) \cdot T_{\alpha}(f)$, hence the result.
\end{proof}

\begin{rem}
Note that it is not enough to demand that $\Gamma_G$ will be normal in $\Gamma_{G'}$. For example, if $t^2 \mid N$ and $t \mid 24$, then, as described in \cite{ogg1969modular}, there is an element $u_t \in \mathfrak{N}(\Gamma_0(N))$ defined over $\mathbb{Q}(\zeta_t)$ which commutes only with the Hecke operators $T_p$ such that $p \equiv 1 \bmod t$ (equivalently, $p$ splits in $\mathbb{Q}(\zeta_t)$).
Indeed, $u_t$ does not normalize the group $G \subseteq GL_2(\mathbb{Z} / N \mathbb{Z})$ appearing in Example \ref{exa: Gamma0} $(3)$.
\end{rem}

\section{Degeneracy Maps} \label{sec: degeneracy maps}

Our next step would be to decompose our spaces of modular forms to irreducible modules for the Hecke algebra. This should be possible once we apply our algorithms from Section \ref{sec: Hecke operators}. The only obstacle in our path to obtain the eigenvectors lie in the fact that we still do not have a multiplicity one result due to the existence of oldforms. We therefore make a small detour, and describe the generalization to our situation of the degeneracy maps, and the decomposition to spaces of newforms.

\subsection{Petersson Inner Product} \label{subsec: Petersson}

Before we describe the degeneracy maps themselves, in order to obtain a good notion of oldforms, we briefly recall the definitions and properties of the Petersson inner product, so that we will consequently be able to define newforms.

\begin{defn}
Let $\Gamma \subseteq SL_2(\mathbb{Z})$ be a congruence subgroup. The \emph{Petersson inner product},
$$
\langle , \rangle_{\Gamma} : S_k(\Gamma) \times S_k(\Gamma) \rightarrow \mathbb{C},
$$
is given by 
\begin{equation} \label{eqn: petersson}
\langle f,g \rangle_{\Gamma} = \frac{1}{V_{\Gamma}} \int_{X_{\Gamma}} f(\tau) \overline{g(\tau)} \left( \Im(\tau) \right)^k d \mu(\tau)
\end{equation}
where $d \mu{\tau} = \frac{dx dy}{y^2}$ is the hyperbolic measure on $\mathcal{H}$ and $V_{\Gamma} = \int_{X_{\Gamma}} d \mu{\tau}$ is the volume of $X_{\Gamma}$.
\end{defn}

The following standard results then help us to see that it is indeed a well defined inner product, and to compute adjoints of Hecke operators with respect to this inner product.

\begin{prop}[{\cite[Section 5.4 and Proposition 5.5.2]{diamond2005first}}] \label{prop: adjoints}
Let $\Gamma \subseteq SL_2(\mathbb{Z})$ be a congruence subgroup.
The integral in the Petersson inner product \eqref{eqn: petersson} is well defined and convergent. The pairing is linear in $f$, conjugate linear in $g$, Hermitian-symmetric and positive definite. 
Let $\alpha \in GL_2^{+}(\mathbb{Q})$, and set $\alpha' := \det(\alpha) \cdot \alpha^{-1}$. Then
\begin{enumerate}
\item If $\alpha^{-1} \Gamma \alpha \subseteq SL_2(\mathbb{Z})$ then for all $f \in S_k(\Gamma)$ and $g \in S_k(\alpha^{-1} \Gamma \alpha)$ 
$$
\langle f \vert_{\alpha} , g \rangle_{\alpha^{-1} \Gamma \alpha} = \langle f, g \vert_{\alpha'} \rangle_{\Gamma}.
$$
\item For all $f,g \in S_k(\Gamma)$,
$$
\langle T_\alpha f, g \rangle = \langle f, T_{\alpha'} g \rangle.
$$
In particular, if $\alpha^{-1} \Gamma \alpha = \Gamma$ then the adjoint of $f \mapsto f \vert_{\alpha}$ is $g \mapsto g \vert_{\alpha'}$ and in any case $T_\alpha^{\star} = T_{\alpha'}$. 
\end{enumerate}
\end{prop}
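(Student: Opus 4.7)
The plan is to verify the three assertions in sequence, since each builds on the previous. I would start by unwinding the integrand. For $\gamma = \left(\begin{smallmatrix} a & b \\ c & d\end{smallmatrix}\right) \in GL_2^+(\mathbb{R})$, one has the standard identity
\begin{equation*}
\Im(\gamma \tau) = \frac{\det(\gamma)\,\Im(\tau)}{|c\tau + d|^2},
\end{equation*}
which combined with the weight-$k$ transformation of $f$ and $\overline{g}$ shows that $f(\tau)\overline{g(\tau)}\Im(\tau)^k$ is $\Gamma$-invariant (the factor $(\det\gamma)^{k-1}$ from $f|_\gamma$ is absorbed by $\Im(\gamma\tau)^k$ since $\det\gamma = 1$ on $\Gamma$), and similarly that the measure $d\mu = dx\,dy/y^2$ is $GL_2^+(\mathbb{R})$-invariant. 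Thus the integrand descends to $X_\Gamma$. Convergence then reduces to boundedness near each cusp $s$: conjugating $s$ to $\infty$ and using that $f$ and $g$ are cusp forms, each has a $q$-expansion starting at $q^1$, and a direct estimate shows $|f\overline{g}|\Im(\tau)^k \to 0$ exponentially as $\Im(\tau)\to\infty$. Hermitian symmetry, sesquilinearity, and positive definiteness (via $\langle f,f\rangle = V_\Gamma^{-1}\int |f|^2 \Im(\tau)^k d\mu > 0$ when $f\ne 0$) are then immediate.

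For part (1), the plan is a change of variables $\tau \mapsto \alpha\tau$. Since $\alpha \in GL_2^+(\mathbb{Q})$ and $\alpha^{-1}\Gamma\alpha \subseteq SL_2(\mathbb{Z})$, the map $\alpha: X_{\alpha^{-1}\Gamma\alpha} \to X_\Gamma$ is a well-defined isomorphism (one must check the obvious fact that a fundamental domain for $\alpha^{-1}\Gamma\alpha$ is sent to one for $\Gamma$, hence $V_{\alpha^{-1}\Gamma\alpha} = V_\Gamma$, invoking $GL_2^+(\mathbb{R})$-invariance of $d\mu$). Substituting in the right-hand side and applying the transformation formulas with $\alpha' = \det(\alpha)\alpha^{-1}$ gives
\begin{equation*}
\langle f, g|_{\alpha'}\rangle_\Gamma = \frac{1}{V_\Gamma}\int_{X_\Gamma} f(\tau)\overline{(g|_{\alpha'})(\tau)}\Im(\tau)^k\, d\mu(\tau),
\end{equation*}
and pulling back via $\tau = \alpha\tau'$ one uses $\alpha'\alpha = \det(\alpha) \cdot I$ (which acts trivially on $\mathcal{H}$ and on modular forms up to a scalar that cancels between $f|_\alpha$ and $g|_{\alpha'}$) to recognize the result as $\langle f|_\alpha, g\rangle_{\alpha^{-1}\Gamma\alpha}$. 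The bookkeeping of $\det(\alpha)^{k-1}$ factors is the only delicate point; these are precisely arranged so that the weight-$k$ slash actions of $\alpha$ and $\alpha'$ are mutual adjoints.

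For part (2), I would decompose $\Gamma\alpha\Gamma = \bigsqcup_i \Gamma\alpha_i$, so that $T_\alpha f = \sum_i f|_{\alpha_i}$. Writing $H = \Gamma \cap \alpha^{-1}\Gamma\alpha$ and using $\alpha_i = \gamma_i \alpha$ for appropriate $\gamma_i \in \Gamma$ that represent $\Gamma/H$, I would apply part (1) to each summand, then regroup using that $\{\alpha_i'\}$ forms a set of representatives for $\Gamma\alpha'\Gamma/\Gamma$ (a standard fact: the map $\beta \mapsto \beta'$ sends left cosets in $\Gamma\alpha\Gamma$ bijectively onto right cosets in $\Gamma\alpha'\Gamma$, since $(\gamma_1\alpha\gamma_2)' = \gamma_2^{-1}\alpha'\gamma_1^{-1}$). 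The $\Gamma$-invariance of $\langle\cdot,\cdot\rangle_\Gamma$ under $f \mapsto f|_\gamma$ for $\gamma \in \Gamma$ lets us absorb the leftover translations, and we conclude $\langle T_\alpha f, g\rangle_\Gamma = \langle f, T_{\alpha'} g\rangle_\Gamma$. The special case when $\alpha$ normalizes $\Gamma$ is just part (1) with $\alpha^{-1}\Gamma\alpha = \Gamma$.

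The main obstacle, or rather the only place requiring care, is the change-of-variables argument in part (1): one must track the determinantal normalizations in the weight-$k$ slash action against the factor $\det(\gamma)\Im(\tau)/|c\tau+d|^2$ arising from $\Im(\alpha\tau)$, and verify that the volumes $V_\Gamma$ and $V_{\alpha^{-1}\Gamma\alpha}$ are equal so the normalization is consistent. Everything else is either a standard convergence estimate at cusps or a formal manipulation of cosets.
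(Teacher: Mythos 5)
The paper gives no proof of this proposition; it is quoted directly from Diamond--Shurman (Section 5.4 and Proposition 5.5.2), and your outline is exactly the argument given there: invariance of $f\overline{g}\,\Im(\tau)^k\,d\mu$ and exponential decay at the cusps for convergence, the change of variables $\tau\mapsto\alpha\tau$ with $V_{\alpha^{-1}\Gamma\alpha}=V_\Gamma$ for the adjunction $|_\alpha \leftrightarrow |_{\alpha'}$, and the coset bookkeeping $(\gamma_1\alpha\gamma_2)'=\gamma_2^{-1}\alpha'\gamma_1^{-1}$ for the double-coset operators. Your proposal is correct and takes essentially the same route as the cited source.
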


The correspondence between $\alpha$ and $\alpha'$ forces us to consider some more operators on this space. 
\begin{defn}
Let $G \subseteq GL_2(\mathbb{Z} / N \mathbb{Z})$, and let $\mathfrak{N}(G)$ be its normalizer in 
$GL_2(\mathbb{Z} / N \mathbb{Z})$. Then the quotient $Q := \Gamma_{\mathfrak{N}(G)} / \Gamma_G$ acts naturally on $M_k(\Gamma_G)$ via
\begin{align*}
\langle q \rangle : M_k(\Gamma_G) &\rightarrow M_k(\Gamma_G) \\
f & \mapsto f \vert_{\alpha} \text{ for any } \alpha \in \Gamma_{\mathfrak{N}(G)} \text{ with } \alpha \equiv q \bmod \Gamma_G.
\end{align*}
This operator is called a \emph{diamond operator}. 
\end{defn}

For any irreducible representation $\rho: Q \rightarrow GL_{n_{\rho}}(\mathbb{C})$, we let 
$$
M_k(\Gamma_G, \rho) := \rho \otimes \Hom_{Q} (\rho, M_k(\Gamma_G)) \hookrightarrow M_k(\Gamma_G)
$$
be the $\rho$-isotypic component of $M_k(\Gamma)$. 
Then $M_k(\Gamma_G) = \bigoplus_{\rho} M_k(\Gamma_G, \rho)$ where $\rho$ runs over all irreducible representations of $Q$. 

\begin{lem}
Let $\alpha \in M_2(\mathbb{Z})$ be such that $\lambda_N(\alpha) \in G$, and let $q \in \Gamma_{\mathfrak{N}(G)} / \Gamma_G$. Then $T_\alpha \circ \langle q \rangle = \langle q \rangle \circ T_\alpha$.
\end{lem}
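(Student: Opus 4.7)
The plan is to unwind both operators on an arbitrary $f \in M_k(\Gamma_G)$ and reduce the desired commutation to a statement about coset representatives. Fix a lift $\gamma \in \Gamma_{\mathfrak{N}(G)}$ of $q$, so that $\langle q \rangle f = f\vert_\gamma$ (well-defined because $f$ is $\Gamma_G$-invariant). Let $R$ be a set of representatives for $\Gamma_G \backslash \Gamma_G \alpha \Gamma_G$, so that $T_\alpha f = \sum_{\delta \in R} f\vert_\delta$. Then
\[
T_\alpha(\langle q \rangle f) = \sum_{\delta \in R} f\vert_{\gamma \delta} = \sum_{\delta \in R} f\vert_{(\gamma \delta \gamma^{-1})\gamma},
\qquad
\langle q \rangle(T_\alpha f) = \sum_{\delta \in R} f\vert_{\delta \gamma}.
\]
Thus the lemma will follow once I show that $R' := \{\gamma \delta \gamma^{-1} : \delta \in R\}$ is also a complete set of representatives for $\Gamma_G \backslash \Gamma_G \alpha \Gamma_G$, because then the first sum equals $\sum_{\delta' \in R'} f\vert_{\delta' \gamma}$, which is the same as the second sum (Hecke operators being independent of the choice of representatives).

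The key claim breaks into two pieces. First, $\gamma \Gamma_G \gamma^{-1} = \Gamma_G$: this is because $\lambda_N(\gamma) \in \mathfrak{N}(G)$ normalizes $G$, hence normalizes $G_0 = G \cap SL_2(\mathbb{Z}/N\mathbb{Z})$, and so conjugation by $\gamma$ preserves $\Gamma_G = \lambda_{N,0}^{-1}(G_0)$. Second, $\Gamma_G \gamma \alpha \gamma^{-1} \Gamma_G = \Gamma_G \alpha \Gamma_G$: following the argument used in Corollary~\ref{cor: Hecke commutes with star}, I would invoke Shimura's Lemma 3.29(1), for which it suffices to check that (a) $\gamma \alpha \gamma^{-1}$ and $\alpha$ have the same $SL_2(\mathbb{Z})$-double coset (immediate, since $\gamma \in SL_2(\mathbb{Z})$ preserves both $\det$ and $d_1$, hence the Smith normal form), and (b) $\lambda_N(\gamma \alpha \gamma^{-1}) \in G$ (which holds since $\lambda_N(\gamma) \in \mathfrak{N}(G)$ and $\lambda_N(\alpha) \in G$). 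Together these two facts imply $\gamma \alpha \gamma^{-1} \in \Gamma_G \alpha \Gamma_G$, and since two double cosets either coincide or are disjoint, equality of the double cosets follows. Conjugating the disjoint decomposition $\Gamma_G \alpha \Gamma_G = \bigsqcup_{\delta \in R} \Gamma_G \delta$ by $\gamma$ and using $\gamma \Gamma_G \gamma^{-1} = \Gamma_G$ then yields $\Gamma_G \alpha \Gamma_G = \bigsqcup_{\delta \in R} \Gamma_G \gamma \delta \gamma^{-1}$, as required.

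The main obstacle is the verification of step (b) above together with the invocation of Shimura's lemma; but this is essentially identical in spirit to the argument already carried out in Corollary~\ref{cor: Hecke commutes with star}, so no new ideas are needed. Everything else is bookkeeping.
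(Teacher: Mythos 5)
Your proof is correct and follows essentially the same route as the paper's: express both sides as sums over coset representatives, show that conjugation by a lift of $q$ preserves the double coset $\Gamma_G \alpha \Gamma_G$ and its single-coset decomposition (via Shimura's Lemma 3.29), and rewrite. The only cosmetic difference is that the paper conjugates representatives by $\tilde q^{-1}$ rather than by $\gamma=\tilde q$, which is the same argument read in the opposite direction.
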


\begin{proof}
Denote by $\tilde{q} \in \Gamma_{\mathfrak{N}(G)}$ an element such that $\tilde{q} \equiv q \bmod \Gamma_G$. 
Since $\lambda_N(\alpha) \in G$ and $\lambda_N(\tilde{q}) \in \mathfrak{N}(G)$, we see that 
$$
\lambda_N(\tilde{q}^{-1} \alpha \tilde{q})  = \lambda_N(\tilde{q})^{-1} \lambda_N(\alpha) \lambda_N(\tilde{q}) \in \lambda_N(\tilde{q})^{-1} G \lambda_N(\tilde{q}) = G
$$ 
Moreover, $\det(\tilde{q}^{-1} \alpha \tilde{q}) = \det(\alpha)$, so that $\lambda_N(\tilde{q}^{-1} \alpha \tilde{q}) \in G_0 \lambda_N(\alpha)$.
By \cite[Lemma 3.29 (a)]{shimura1971introduction}, it follows that $\tilde{q}^{-1} \alpha \tilde{q} \in \Gamma_G \alpha \Gamma_G$ hence 
$$
\tilde{q}^{-1} \Gamma_G \alpha \Gamma_G \tilde{q} = 
\Gamma_G \tilde{q}^{-1} \alpha \tilde{q} \Gamma_G = \Gamma_G \alpha \Gamma_G 
$$ 
so that if 
$\Gamma_G \alpha \Gamma_G = \bigsqcup_{i} \Gamma_G \alpha_i $ is a disjoint union, then
$$
\Gamma_G \alpha \Gamma_G = \tilde{q}^{-1} \Gamma_G \alpha \Gamma_G \tilde{q} = 
\tilde{q}^{-1} \left( \bigsqcup_{i} \Gamma_G \alpha_i \right) \tilde{q} = 
\bigsqcup_{i} \tilde{q}^{-1} \Gamma_G \alpha_i \tilde{q} = \bigsqcup_{i} \Gamma_G \tilde{q}^{-1} \alpha \tilde{q}.
$$

Therefore, for any $f \in M_k(\Gamma_G)$ we have
$$
\langle q \rangle T_\alpha f = \sum_{i} f \vert_{\alpha_i \tilde{q}} = \sum_{i} f \vert_{\tilde{q} \tilde{q}^{-1} \alpha_i \tilde{q}}
= \sum_{i} (\langle q \rangle f) \vert_{\tilde{q}^{-1} \alpha_i \tilde{q}} = T_\alpha \langle q \rangle f. \mbox{\qedhere}
$$
\end{proof}

Before we continue, we need a simple Lemma, which is a variant of \cite[Lemma 3.29]{shimura1971introduction}.

\begin{lem} \label{lem: disjoint union}
Let $\Gamma_{1}\subseteq\Gamma_{2}$ be congruence subgroups of levels
$N_{1},N_{2}$ respectively. Let 
\[
\Phi_i=\{\alpha\in M_{2}(\mathbb{Z})\mid\det\alpha>0,(\det(\alpha),N_{i})=1,\lambda_{N_{i}}(\Gamma_{i}\alpha)=\lambda_{N}(\alpha\Gamma_{i})\},\quad i \in \{1,2\}
\]
Then the following assertions hold.
\begin{enumerate}
\item $\Gamma_{1}\alpha\Gamma_{1}=\{\xi\in\Gamma_{2}\alpha\Gamma_{2}\mid\lambda_{N_{1}}(\xi)\in\lambda_{N_{1}}(\Gamma_{1}\cdot\alpha)\}$
if $\alpha\in\Phi_1$.
\item $\Gamma(N_{1})\alpha\Gamma(N_{1})=\Gamma(N_{1})\beta\Gamma(N_{1})$
if and only if $\Gamma_{2}\alpha\Gamma_{2}=\Gamma_{2}\beta\Gamma_{2}$
and $\alpha\equiv\beta\mod N_{1}$. (for $\alpha,\beta$ such that
$(\det\alpha,N_{1})=(\det\beta,N_{1})=1$)
\item $\Gamma_{2}\alpha\Gamma_{2}=\Gamma_{2}\alpha\Gamma_{1}=\Gamma_{1}\alpha\Gamma_{2}$
if $\alpha\in\Phi_{2}$ and $(\det\alpha,N_{1})=1$.
\item $\Gamma_{i}\alpha\Gamma_{i}=\Gamma_{i}\alpha\Gamma(N_{i})=\Gamma(N_{i})\alpha\Gamma_{i}$
if $\alpha\in\Phi_{i}$ for $i =1,2$.
\item If $\alpha\in\Phi_{1}$ and $\Gamma_{1}\alpha\Gamma_{1}=\bigsqcup_{i}\Gamma_{1}\alpha_{i}$, then $\Gamma_{2}\alpha\Gamma_{2}=\bigsqcup_{i}\Gamma_{2}\alpha_{i}$. 
\end{enumerate}
\end{lem}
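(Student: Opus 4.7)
The plan is to prove the five assertions in sequence, each as an application of \cite[Lemma 3.29]{shimura1971introduction}, which I will use as a black box: for $\alpha \in \Phi$, $\Gamma \alpha \Gamma$ equals the set of $\xi \in M_2(\mathbb{Z})$ with $SL_2(\mathbb{Z}) \cdot \xi \cdot SL_2(\mathbb{Z}) = SL_2(\mathbb{Z}) \cdot \alpha \cdot SL_2(\mathbb{Z})$ and $\lambda_N(\xi) \in \lambda_N(\Gamma \alpha)$.

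For (1), the forward inclusion uses $\alpha \in \Phi_1$ to absorb $\gamma_1'$, yielding $\lambda_{N_1}(\gamma_1 \alpha \gamma_1') \in \lambda_{N_1}(\Gamma_1 \alpha)$. The converse applies Shimura's characterization at level $N_1$: the Smith-form condition is inherited from $\xi \in \Gamma_2 \alpha \Gamma_2 \subseteq SL_2(\mathbb{Z}) \alpha SL_2(\mathbb{Z})$, while the reduction condition is given. Part (2) is then (1) specialized to $\Gamma_1 = \Gamma(N_1)$: here $\alpha \in \Phi_1$ holds trivially since $\lambda_{N_1}(\Gamma(N_1)) = \{I\}$, and the reduction condition becomes exactly $\beta \equiv \alpha \bmod N_1$.

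For (3), the inclusions $\supseteq$ are immediate; the inclusion $\Gamma_2 \alpha \Gamma_2 \subseteq \Gamma_2 \alpha \Gamma_1$ reduces to the equality $\Gamma_2 = (\alpha^{-1} \Gamma_2 \alpha \cap SL_2(\mathbb{Z})) \cdot \Gamma_1$, which follows by strong approximation on the prime divisors of $\det \alpha$ (coprime to $N_1$) combined with $\alpha \in \Phi_2$: the subgroup on the left contains $\Gamma(N_2 D(\alpha))$, and its image under $\lambda_{N_1}$ surjects onto $\lambda_{N_1}(\Gamma_2) / \lambda_{N_1}(\Gamma_1)$. Part (4) is (3) specialized to $\Gamma_1 = \Gamma(N_i)$ and $\Gamma_2 = \Gamma_i$. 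For (5), disjointness follows from (1): if $\Gamma_2 \alpha_i = \Gamma_2 \alpha_j$ with $\alpha_j = \gamma_2 \alpha_i$ for some $\gamma_2 \in \Gamma_2$, then the inclusions $\lambda_{N_1}(\alpha_i), \lambda_{N_1}(\alpha_j) \in H_1 \lambda_{N_1}(\alpha)$ (with $H_1 := \lambda_{N_1}(\Gamma_1)$) force $\lambda_{N_1}(\gamma_2) \in H_1$, hence $\gamma_2 \in \Gamma_1$ and $i = j$. Covering reduces, via (1) again, to finding $\gamma_2 \in \Gamma_2$ with $\lambda_{N_1}(\gamma_2 \xi) \in H_1 \lambda_{N_1}(\alpha)$ inside $SL_2(\mathbb{Z}/N_1)$.

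The main obstacle will be the covering step in (5). The required condition $\lambda_{N_1}(\xi) \in G_2 \lambda_{N_1}(\alpha)$ (where $G_2 := \lambda_{N_1}(\Gamma_2)$, namely the preimage of $\lambda_{N_2}(\Gamma_2)$ under the natural surjection $SL_2(\mathbb{Z}/N_1) \to SL_2(\mathbb{Z}/N_2)$) does not follow from $\alpha \in \Phi_1$ alone, which only says that $\lambda_{N_1}(\alpha)$ normalizes the subgroup $H_1 \subseteq G_2$. The argument will need to use the compatibility of $\alpha$ with both levels, by lifting the $\Phi_2$-condition from $N_2$ to $N_1$ to obtain $\lambda_{N_1}(\alpha) G_2 \lambda_{N_1}(\alpha)^{-1} = G_2$; once this is in hand, the absorption $G_2 \lambda_{N_1}(\alpha) G_2 = G_2 \lambda_{N_1}(\alpha)$ is immediate and covering follows.
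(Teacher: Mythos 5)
Your parts (1) and (2), and the disjointness half of (5), follow the paper's proof essentially verbatim (Shimura's characterization at level $N_1$, specialization to $\Gamma(N_1)$, and the observation that $\lambda_{N_1}(\gamma_2)\in\lambda_{N_1}(\Gamma_1)$ forces $\gamma_2\in\Gamma_1$ because $\Gamma(N_1)\subseteq\Gamma_1$). The real divergence is the logical order of (3) and (4): the paper quotes (4) from Shimura's Lemma 3.29(4) and then deduces (3) from the elementary identity $\Gamma(N_2)=\Gamma(N_2\det\alpha)\cdot\Gamma(N_1)$, which gives $\Gamma_2\alpha\Gamma(N_2)\subseteq\Gamma_2\alpha\Gamma_1$ and lets (4) upgrade the left side to $\Gamma_2\alpha\Gamma_2$; you instead prove (3) directly and specialize to get (4). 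Your reduction of (3) to $\Gamma_2=\bigl(\alpha^{-1}\Gamma_2\alpha\cap SL_2(\mathbb{Z})\bigr)\cdot\Gamma_1$ is the right equivalence, but the justification has a gap: the containment $H:=\alpha^{-1}\Gamma_2\alpha\cap SL_2(\mathbb{Z})\supseteq\Gamma(N_2D(\alpha))$ only shows that $\lambda_{N_1}(H)$ contains the kernel of $SL_2(\mathbb{Z}/N_1\mathbb{Z})\to SL_2(\mathbb{Z}/N_2\mathbb{Z})$, so $\lambda_{N_1}(H)\cdot\lambda_{N_1}(\Gamma_1)$ is a priori only the full preimage of $\lambda_{N_2}(\Gamma_1)$, which is strictly smaller than $\lambda_{N_1}(\Gamma_2)$ whenever $\lambda_{N_2}(\Gamma_1)\subsetneq\lambda_{N_2}(\Gamma_2)$ (e.g.\ $\Gamma_1=\Gamma(N)\subseteq\Gamma_2=\Gamma_0(N)$). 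What you actually need is $\lambda_{N_2}(H)\supseteq\lambda_{N_2}(\Gamma_2)$, and that is exactly the content of Shimura's Lemma 3.29(4) for $\alpha\in\Phi_2$; since you are already using that lemma as a black box, it is cleaner (and avoids silently re-proving it) to import part (4) first, as the paper does.

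Your treatment of the covering step in (5) is more careful than the paper's, and the obstacle you flag is real. The paper simply invokes (3), whose hypothesis is $\alpha\in\Phi_2$, whereas (5) only assumes $\alpha\in\Phi_1$; and $\Phi_1\not\subseteq\Phi_2$ in general. Indeed, with $\Gamma_1=\Gamma(p)\subseteq\Gamma_2=\Gamma_0(p)$ and $\alpha\in SL_2(\mathbb{Z})$ reducing to $\left(\begin{array}{cc}0&-1\\1&0\end{array}\right)$ mod $p$, one has $\alpha\in\Phi_1$ trivially, $\Gamma_1\alpha\Gamma_1=\Gamma_1\alpha$ is a single coset, yet $\Gamma_2\alpha\Gamma_2$ is not, so the statement genuinely requires $\alpha\in\Phi_2$ as an additional hypothesis. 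Your proposed fix --- lift the $\Phi_2$-condition from level $N_2$ to level $N_1$ to get $\lambda_{N_1}(\alpha)G_2\lambda_{N_1}(\alpha)^{-1}=G_2$ (which works, since $G_2$ is the preimage of $\lambda_{N_2}(\Gamma_2)$ and conjugation commutes with reduction), absorb to get $\lambda_{N_1}(\xi)\in G_2\lambda_{N_1}(\alpha)$, and then apply (1) --- is sound. Just state the extra hypothesis explicitly rather than leaving it as something "the argument will need."
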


\begin{proof}
First, (4) is simply a restatement of \cite[Lemma 3.29 (4)]{shimura1971introduction}.
To show (3), note that $\gcd(N_1, \det \alpha \cdot N_2)=N_2$. 
By \cite[Lemma 3.28]{shimura1971introduction} and \cite[Lemma 3.9]{shimura1971introduction}, we have 
\[
\Gamma(N_{2})=\Gamma(N_{2}\cdot \det \alpha)\cdot\Gamma(N_{1})\subseteq\alpha^{-1}\Gamma(N_{2})\alpha\Gamma(N_{1})
\]
so that $\alpha^{-1}\Gamma(N_{2})\alpha\Gamma(N_{2})\subseteq\alpha^{-1}\Gamma(N_{2})\alpha\Gamma(N_{1})$.
Hence $\Gamma(N_{2})\alpha\Gamma(N_{2})\subseteq\Gamma(N_{2})\alpha\Gamma(N_{1})\subseteq\Gamma(N_{2})\alpha\Gamma_{1}$.
Therefore $\Gamma_{2}\alpha\Gamma(N_{2})\subseteq\Gamma_{2}\alpha\Gamma_{1}\subseteq\Gamma_{2}\alpha\Gamma_{2}$.
But from (4), as $\alpha\in\Phi_{2}$, we know that there is an equality
$\Gamma_{2}\alpha\Gamma(N_{2})=\Gamma_{2}\alpha\Gamma_{2}$ . This
shows (3). Next, to see (1) note that by \cite[Lemma 3.29 (1)]{shimura1971introduction},
we have $\Gamma_{1}\alpha\Gamma_{1}=\{\xi\in SL_2(\mathbb{Z}) \alpha SL_2(\mathbb{Z}) \mid\lambda_{N_{1}}(\xi)\in\lambda_{N_{1}}(\Gamma_{1}\cdot\alpha)\}$.
Since we also know $\Gamma_{1}\alpha\Gamma_{1}\subseteq\Gamma_{2}\alpha\Gamma_{2}$,
(1) follows. (2) is a special case of (1). Finally, let $\alpha\in\Phi_{1}$,
and $\Gamma_{1}\alpha\Gamma_{1}=\bigsqcup_{i}\Gamma_{1}\alpha_{i}$.
Then $\Gamma_{2}\alpha\Gamma_{2}=\Gamma_{2}\alpha\Gamma_{1}=\bigcup_{i}\Gamma_{2}\alpha_{i}$
by (3). Assume $\Gamma_{2}\alpha_{i}=\Gamma_{2}\alpha_{j}$. Then
$\alpha_{i}=\gamma\alpha_{j}$ for some $\gamma\in\Gamma_{2}$. By
(1) 
\[
\lambda_{N_{1}}(\alpha_{i})\in\lambda_{N_{1}}(\Gamma_{1})\cdot\lambda_{N_{1}}(\alpha)=\lambda_{N_{1}}(\Gamma_{1})\cdot\lambda_{N_{1}}(\alpha_{j}).
\]
Thus, there exists some $\delta\in\Gamma_{1}$ such that $\alpha_{i}\equiv\delta\alpha_{j}\mod N_{1}$.
It follows that $\gamma\equiv\delta\mod N_{1}$. Since $\Gamma(N_{1})\subseteq\Gamma_{1}$,
we have $\gamma\in\Gamma_{1}$. This proves (5). 
\end{proof}

We may now state the adjoints of the Hecke operators with respect to the Petersson inner product. 
\begin{thm}
In the inner product spaces $S_k(\Gamma_G)$, the diamond operators $\langle q \rangle$ and the Hecke operator $T_p$ for $p \in \det(G)$ a prime, have adjoints
\begin{align*}
\langle q \rangle^{\star} = \langle q \rangle^{-1} \quad \text{ and } \quad 
T_{p}^{\star} = \langle \sigma_p \rangle \cdot T_{p}
\end{align*}
where $\sigma_p \in \Gamma_{\mathfrak{N}(G)}$ is an element such that 
$\lambda_N(\sigma_p) = p \cdot \lambda_N(\alpha)^{-2}$, and 
$\alpha \in M_2(\mathbb{Z})$ is any element with $\det(\alpha) = p$ such that $\lambda_N(\alpha) \in G$.
Thus the Hecke operators $\langle q \rangle$ and $T_n$ for $n \in \det(G)$ are normal.
\end{thm}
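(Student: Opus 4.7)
The plan is to leverage Proposition~\ref{prop: adjoints}(2), which already computes $T_\alpha^\star = T_{\alpha'}$ with $\alpha' = \det(\alpha)\alpha^{-1}$, and to reduce the adjoint of $T_p$ to a diamond operator composed with $T_p$ via a double-coset identification. For the diamond operator $\langle q \rangle$, I would pick any lift $\tilde q \in \Gamma_{\mathfrak{N}(G)} \subseteq SL_2(\mathbb{Z})$ of $q$; since $\det(\tilde q) = 1$, its partner $\tilde q' = \tilde q^{-1}$ again lies in $\Gamma_{\mathfrak{N}(G)}$ and represents $q^{-1}$, so Proposition~\ref{prop: adjoints}(2) immediately gives $\langle q \rangle^\star = \langle q^{-1} \rangle = \langle q \rangle^{-1}$, and normality of $\langle q \rangle$ is then trivial.

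For $T_p$ with prime $p \in \det(G)$ (necessarily coprime to $N$), I would choose $\alpha \in M_2(\mathbb{Z})$ with $\det(\alpha) = p$ and $\lambda_N(\alpha) \in G$, so that $T_p = T_\alpha$ and Proposition~\ref{prop: adjoints}(2) yields $T_p^\star = T_{\alpha'}$ with $\alpha' = p\alpha^{-1} \in M_2(\mathbb{Z})$. The central claim is the identification $T_{\alpha'} = \langle \sigma_p \rangle T_p$. Because $\sigma_p \in \Gamma_{\mathfrak{N}(G)}$ normalizes $\Gamma_G$, if $\{\delta_i\}$ are right-coset representatives for $\Gamma_G \backslash \Gamma_G \alpha \Gamma_G$ then $\{\delta_i \sigma_p\}$ represent $\Gamma_G \backslash \Gamma_G \alpha \sigma_p \Gamma_G$, whence
\[
\langle \sigma_p \rangle T_p(f) = \sum_i f \vert_{\delta_i \sigma_p} = T_{\alpha \sigma_p}(f),
\]
reducing the claim to the double-coset identity $\Gamma_G \alpha \sigma_p \Gamma_G = \Gamma_G \alpha' \Gamma_G$.

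This identity is the main obstacle, which I would verify using Lemma~\ref{lem: disjoint union}(1). A direct calculation from the defining congruence of $\sigma_p$ gives
\[
\det(\alpha \sigma_p) = p = \det(\alpha'), \qquad \lambda_N(\alpha \sigma_p) = p\lambda_N(\alpha)^{-1} = \lambda_N(\alpha').
\]
Since $p$ is prime, the first elementary divisor of each of $\alpha \sigma_p$ and $\alpha'$ is $1$ (otherwise their determinant would be a multiple of $p^2$), so both Smith normal forms equal $\operatorname{diag}(1,p)$ and the two matrices lie in the same $SL_2(\mathbb{Z})$-double coset. Moreover, $G_0 = G \cap SL_2(\mathbb{Z}/N\mathbb{Z})$ is normal in $G$ (as $SL_2$ is normal in $GL_2$), so $\lambda_N(\alpha)^{\pm 1}$ normalize $G_0$ and the central scalar $p$ does so trivially; hence $\lambda_N(\alpha')$ normalizes $G_0$, i.e., $\lambda_N(\Gamma_G \alpha') = \lambda_N(\alpha' \Gamma_G)$, placing $\alpha'$ in the set $\Phi_1$ of Lemma~\ref{lem: disjoint union} with $\Gamma_1 = \Gamma_G$ and $\Gamma_2 = SL_2(\mathbb{Z})$. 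Part (1) of that lemma characterizes $\Gamma_G \alpha' \Gamma_G$ as the elements of $SL_2(\mathbb{Z}) \alpha' SL_2(\mathbb{Z})$ whose reduction mod $N$ lies in $G_0 \cdot \lambda_N(\alpha')$, and $\alpha \sigma_p$ meets both requirements, so the two double cosets agree.

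Normality will then follow formally. The preceding lemma guarantees that $\langle \sigma_p \rangle$ commutes with $T_p$, so $T_p T_p^\star = T_p \langle \sigma_p \rangle T_p = \langle \sigma_p \rangle T_p^2 = T_p^\star T_p$. For general $n \in \det(G)$, the same adjoint argument, applied to any $\alpha \in M_2(\mathbb{Z})$ with $\det(\alpha) = n$ and $\lambda_N(\alpha) \in G$, places $T_n = T_\alpha$ and its adjoint in the commutative subalgebra generated by Hecke and diamond operators; every element of this algebra commutes with its adjoint (which is again in the algebra) and is therefore normal, completing the proof.
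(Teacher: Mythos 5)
Your proof is correct and reaches the same conclusions, but it navigates the key double-coset identity by a somewhat different route than the paper. The paper passes through the intermediate group $\Gamma_{\mathfrak{N}(G)}$: it shows $\alpha \in \Phi_{\mathfrak{N}(G)}$, uses Lemma~\ref{lem: disjoint union}(3) to conclude $\Gamma_{\mathfrak{N}(G)}\alpha\Gamma_{\mathfrak{N}(G)} = \Gamma_G\alpha\Gamma_{\mathfrak{N}(G)}$, from this extracts an \emph{abstract} element $\gamma \in \Gamma_{\mathfrak{N}(G)}$ with $\alpha' \in \Gamma_G\alpha\gamma$, decomposes $\Gamma_G\alpha'\Gamma_G = \bigsqcup_i \Gamma_G\alpha_i\gamma$ to get $T_{\alpha'} = \langle \gamma\rangle T_\alpha$, and only at the end identifies $\langle\gamma\rangle = \langle\sigma_p\rangle$ by a separate mod-$N$ computation. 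You instead target $\sigma_p$ from the start: you reduce to the concrete double-coset equality $\Gamma_G\alpha\sigma_p\Gamma_G = \Gamma_G\alpha'\Gamma_G$ and verify it via Lemma~\ref{lem: disjoint union}(1) with $\Gamma_2 = SL_2(\mathbb{Z})$, checking the Smith normal form and the reduction mod $N$ directly. That variant avoids the detour through $\Gamma_{\mathfrak{N}(G)}$-level double cosets and folds the final identification of the diamond operator into the choice of $\sigma_p$, which is cleaner; the cost is having to invoke the Smith-normal-form argument to place $\alpha\sigma_p$ and $\alpha'$ in the same $SL_2(\mathbb{Z})$-double coset, a step the paper sidesteps. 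Both arguments lean on the same ingredients (Proposition~\ref{prop: adjoints}(2), the normalizing property of $\sigma_p$, and Lemma~\ref{lem: disjoint union}) and yield identical conclusions. Two minor remarks: you cite Proposition~\ref{prop: adjoints}(2) for the diamond adjoint while the paper cites part (1) — part (2) is in fact the right reference, as you say; and your closing normality argument for general $n$ is compressed (it tacitly asserts the algebra is $\star$-closed and commutative), but that level of brevity matches the paper's own ``Thus...'' and the reasoning you sketch for why $T_n^\star$ stays in the algebra is sound.
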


\begin{proof}
The first assertion is just a restatement of Proposition \ref{prop: adjoints} (1). 
For the second assertion, we note that if $\alpha \in M_2(\mathbb{Z})$ is such that $\lambda_N(\alpha) \in G$ and $\det(\alpha) = p$, then $\alpha \in \Phi_{\mathfrak{N}(G)}$. Indeed, if $\nu \in \mathfrak{N}(G)$ then for any $g \in G$ we have
$$
(\lambda_N(\alpha) \nu \lambda_N(\alpha)^{-1}) g (\lambda_N(\alpha) \nu^{-1} \lambda_N(\alpha)^{-1}) \in 
\lambda_N(\alpha) \nu G \nu^{-1} \lambda_N(\alpha)^{-1} \subseteq G
$$
so that $\lambda_N(\alpha) \nu \lambda_N(\alpha)^{-1} \in \mathfrak{N}(G)$.

Therefore, by Lemma \ref{lem: disjoint union} (3), we see that 
$\Gamma_{\mathfrak{N}(G)} \alpha \Gamma_{\mathfrak{N}(G)} =  \Gamma_G \alpha \Gamma_{\mathfrak{N}(G)}$.
Moreover, we have 
$$
\lambda_N(\alpha') = \det(\alpha) \lambda_N(\alpha)^{-1} \in \mathfrak{N}(G)
$$
hence 
$
\Gamma_{\mathfrak{N}(G)} \alpha' \Gamma_{\mathfrak{N}(G)} = \Delta_p^{\mathfrak{N}(G)} = 
\Gamma_{\mathfrak{N}(G)} \alpha \Gamma_{\mathfrak{N}(G)} = \Gamma_G \alpha \Gamma_{\mathfrak{N}(G)}
$.

Let $\gamma \in \Gamma_{\mathfrak{N}(G)}$ be such that $\alpha' \in \Gamma_G \alpha \gamma$. Now, as $\gamma$ normalizes $\Gamma_G$ we see that
$\Gamma_G \alpha' \Gamma_G = \Gamma_G \alpha \gamma \Gamma_G = \Gamma_G \alpha \Gamma_G \gamma$
and hence if $\Gamma_G \alpha \Gamma_G = \bigsqcup_{i} \Gamma_G \alpha_i$, then 
$ \Gamma_G \alpha' \Gamma_G = \bigsqcup_{i} \Gamma_G \alpha_i \gamma$.
It then follows from Proposition \ref{prop: adjoints} (2) that 
$T_p^{\star} = T_{\alpha}^{\star} = T_{\alpha'} = \langle \gamma \rangle T_{\alpha}$.
Finally, we note that
$$
\lambda_N(\gamma) \in \lambda_N(\Gamma_G) \lambda_N(\alpha)^{-1} \lambda_N(\alpha') \subseteq
G \cdot p \lambda_N(\alpha)^{-2} = G \cdot \lambda_N(\sigma_p)
$$
so that $\gamma \in \Gamma_G \cdot \sigma_p$, showing that $\langle \gamma \rangle = \langle \sigma_p \rangle$.
\end{proof}

\begin{rem}
The $\sigma_p$ actually acts via a character. Indeed, if $A \subseteq \mathfrak{N}(G)$ is such that $A / G$ is abelian, and $A$ contains the center of diagonal matrices, then $\sigma_p \in A$ and all the irreducible representations of $A / G$ are $1$-dimensional.
\end{rem}

This leads to the following Theorem.
\begin{cor} \label{cor: basis of eigenvectors}
The space $S_k(\Gamma_G)$ has an orthogonal basis of simultaneous eigenvectors for the Hecke operators $\{ T_n \mid n \in \det(G) \} $ and the diamond operators $\{ \langle q \rangle \mid q \in \Gamma_{\mathfrak{N}(G)} / \Gamma_G \}$.
\end{cor}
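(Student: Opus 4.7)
The plan is to apply the spectral theorem for a commuting family of normal operators on a finite-dimensional complex inner product space; here the space is $S_k(\Gamma_G)$, equipped with the Petersson inner product of Section~\ref{subsec: Petersson}. The verifications needed are normality and pairwise commutativity of the operators in question.

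Normality was essentially established in the preceding theorem: each diamond $\langle q \rangle$ is unitary since $\langle q \rangle^{\star} = \langle q \rangle^{-1}$, and for primes $p \in \det(G)$ the identity $T_p^{\star} = \langle \sigma_p \rangle T_p$ combined with the fact (from the lemma before the theorem) that $T_p$ commutes with $\langle \sigma_p \rangle$ yields $T_p T_p^{\star} = \langle \sigma_p \rangle T_p^2 = T_p^{\star} T_p$. Extending to composite $n \in \det(G)$ is routine via multiplicativity $T_{mn} = T_m T_n$ for coprime $m,n$ and the standard prime-power recurrence, whose ingredients all lie in the commuting subalgebra generated by the $T_p$'s and the diamonds.

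For commutativity, the lemma preceding the corollary already supplies $T_n \langle q \rangle = \langle q \rangle T_n$ for all admissible $n$ and $q$. The Hecke operators commute among themselves by the standard identity
\[
T_m T_n = \sum_{d \mid \gcd(m,n)} d^{k-1} \langle d \rangle\, T_{mn/d^2},
\]
symmetric in $m,n$, which follows from the adelic double-coset description of Section~\ref{subsec: Hecke operator at p}. Once these commutativities are in hand, the finite-dimensional spectral theorem produces the required orthogonal eigenbasis.

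The one subtle point, which I expect to be the main obstacle to a literal reading of the statement, is the mutual commutativity of the diamonds themselves: $\langle q_1 \rangle \langle q_2 \rangle = \langle q_2 \rangle \langle q_1 \rangle$ holds iff $q_1 q_2 = q_2 q_1$ in $Q = \Gamma_{\mathfrak{N}(G)}/\Gamma_G$. When $Q$ is abelian (the case of primary interest, which covers for instance the diamond operators attached to $\Gamma_0(N)$), nothing further is needed; for non-abelian $Q$, the corollary should be read as furnishing an orthogonal decomposition by $Q$-isotypic components on which the $T_n$ are simultaneously diagonalized, and within each isotypic component one then picks an orthonormal basis adapted to the irreducible representation. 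Either way the conclusion follows from the spectral theorem.
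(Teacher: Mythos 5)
Your proposal is correct and follows the paper's route exactly: the paper's own proof is the single sentence ``This is a corollary of the Spectral Theorem for a family of commuting normal operators,'' and you supply the ingredients it silently invokes (normality from the adjoint formulas $\langle q\rangle^\star=\langle q\rangle^{-1}$ and $T_p^\star=\langle\sigma_p\rangle T_p$, commutativity of $T_\alpha$ with $\langle q\rangle$ from the preceding lemma). Two remarks. First, the explicit multiplicativity relation $T_mT_n=\sum_{d\mid\gcd(m,n)}d^{k-1}\langle d\rangle T_{mn/d^2}$ that you cite is not established by the paper in this generality --- indeed the diamond $\langle d\rangle$ is not even defined for integers $d$ here, only for elements of $\Gamma_{\mathfrak{N}(G)}/\Gamma_G$ --- but the fact you actually need, that the $T_n$ for $(n,N)=1$ commute with one another, is standard and does not require that formula. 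Second, your closing caveat is a genuine one: the $\langle q\rangle$ need not commute pairwise unless $Q=\Gamma_{\mathfrak{N}(G)}/\Gamma_G$ acts on $S_k(\Gamma_G)$ through an abelian quotient, so the corollary as literally stated can fail (for instance with $\Gamma_G=\Gamma(N)$, where $Q$ is essentially $SL_2(\mathbb{Z}/N\mathbb{Z})$ and irreducible constituents of dimension greater than one occur); the paper's remark immediately preceding the corollary, about restricting to an abelian subquotient $A/G\subseteq\mathfrak{N}(G)/G$ containing $\sigma_p$, is what tacitly licenses the spectral-theorem appeal, and your fallback reading via $Q$-isotypic components is the honest interpretation in the non-abelian case.
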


\begin{proof}
This is a corollary of the Spectral Theorem for a family of commuting normal operators.
\end{proof}

\begin{defn}
We say that a modular form $0 \ne f \in S_k(\Gamma_G)$ is an \emph{eigenform} if for all $n \in \det(G)$, $f$ is an eigenvector of $T_n$ and for all $q \in \Gamma_{\mathfrak{N}_G} / \Gamma_G$, $f$ is an eigenvector of $\langle q \rangle$.  
\end{defn}

\subsection{Degeneracy Maps} \label{subsec: deg maps and Hecke}

We begin by defining degeneracy maps in general, on our space of modular symbols.
\begin{defn}
Let $\Gamma_1 \trianglelefteq \Gamma_{1}^{'}$, $\Gamma_2 \trianglelefteq \Gamma_{2}^{'}$, be two pairs of finite index subgroups in $SL_2(\mathbb{Z})$. Let $\varepsilon_i : \Gamma_{i}^{'} / \Gamma_{i} \rightarrow \mathbb{Q}(\zeta_i)^{\times}$ for $i=1,2$ be characters. Let $t \in GL_2^{+}(\mathbb{Q})$ be such that $t^{-1} \Gamma_{1}^{'} t \subseteq \Gamma_2^{'}$, and $\varepsilon_{2}\circ\text{Inn}(t)^{-1}\mid_{\Gamma_{1}^{'}}=\varepsilon_{1}$. Fix a choice $R_t$ of coset representatives for 
$\Gamma_{1}^{'} \backslash t \cdot \Gamma_2{'}$. Let
\begin{align*}
& \alpha_t^{\vee}: \mathbb{M}_k(\Gamma_1, \varepsilon_1) \rightarrow \mathbb{M}_k(\Gamma_2, \varepsilon_2),  
& \beta_t^{\vee}: \mathbb{M}_k(\Gamma_2, \varepsilon_2) \rightarrow \mathbb{M}_k(\Gamma_1, \varepsilon_1) \\
& \alpha_t^{\vee}(x) = t^{-1} x
& \beta_t^{\vee}(x) = \sum_{t \gamma_2 \in R_t} \varepsilon_2(\gamma_2)^{-1} t \gamma_2 \cdot x
\end{align*}
\end{defn}

We show that these operators are well defined.
\begin{lem} \label{lem: deg maps are defined}
The operators $\alpha_t^{\vee}$ and $\beta_t^{\vee}$ are well defined, and moreover the composition $\alpha_t^{\vee} \circ \beta_t^{\vee}$ is multiplication by $[\Gamma_{2}^{'} : t^{-1} \Gamma_{1}^{'} t]$.
\end{lem}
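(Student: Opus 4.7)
The plan is to break the lemma into three verifications: (i) $\alpha_t^\vee$ respects relations; (ii) $\beta_t^\vee$ is independent of the choice $R_t$ and respects relations; (iii) a direct computation of the composition. Throughout, the central ingredient is the compatibility relation $\varepsilon_2 \circ \operatorname{Inn}(t)^{-1}\vert_{\Gamma_1'} = \varepsilon_1$, which says that conjugation by $t^{-1}$ translates $\varepsilon_1$-equivariance into $\varepsilon_2$-equivariance.

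First I would verify $\alpha_t^\vee$. The defining relations of $\mathbb{M}_k(\Gamma_1,\varepsilon_1)$ (beyond those of $\mathbb{M}_k$ itself) are $\gamma_1' x - \varepsilon_1(\gamma_1')x$ for $\gamma_1' \in \Gamma_1'$. Applying $t^{-1}$ gives $(t^{-1}\gamma_1' t)(t^{-1}x) - \varepsilon_1(\gamma_1')(t^{-1}x)$. Since $t^{-1}\gamma_1' t \in \Gamma_2'$ by assumption, and $\varepsilon_2(t^{-1}\gamma_1' t) = \varepsilon_1(\gamma_1')$ by the compatibility hypothesis, this is a relation in $\mathbb{M}_k(\Gamma_2,\varepsilon_2)$. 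Hence $\alpha_t^\vee$ descends to the quotients.

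Next I would check $\beta_t^\vee$. For independence of $R_t$, two representatives of the same coset differ by $t\gamma_2 \leadsto (t^{-1}\gamma_1' t)\cdot t\gamma_2 = \gamma_1' t \gamma_2$ for some $\gamma_1' \in \Gamma_1'$, so $\gamma_2' = (t^{-1}\gamma_1' t)\gamma_2$. Then
\[
\varepsilon_2(\gamma_2')^{-1} t\gamma_2' \cdot x
= \varepsilon_2(t^{-1}\gamma_1' t)^{-1}\varepsilon_2(\gamma_2)^{-1} \gamma_1' \, t\gamma_2 x
= \varepsilon_1(\gamma_1')^{-1}\varepsilon_2(\gamma_2)^{-1}\gamma_1'\, t\gamma_2 x,
\]
which equals $\varepsilon_2(\gamma_2)^{-1} t\gamma_2 \cdot x$ in $\mathbb{M}_k(\Gamma_1,\varepsilon_1)$ by the defining relation. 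For descent to $\mathbb{M}_k(\Gamma_2,\varepsilon_2)$, one must kill relations $\gamma_2 y - \varepsilon_2(\gamma_2)y$ with $\gamma_2 \in \Gamma_2'$: right-multiplication by $\gamma_2$ permutes the cosets $\Gamma_1'\backslash t\Gamma_2'$, so $\{t\gamma\gamma_2\}$ is another set of representatives, and a short character bookkeeping (again using independence of $R_t$) shows $\beta_t^\vee(\gamma_2 y) = \varepsilon_2(\gamma_2)\beta_t^\vee(y)$.

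Finally, for the composition, I would compute directly: for $x \in \mathbb{M}_k(\Gamma_2,\varepsilon_2)$,
\[
\alpha_t^\vee(\beta_t^\vee(x)) = t^{-1}\sum_{t\gamma \in R_t} \varepsilon_2(\gamma)^{-1} t\gamma\cdot x = \sum_{t\gamma \in R_t} \varepsilon_2(\gamma)^{-1}\gamma\cdot x.
\]
Since the bijection $\Gamma_1'\backslash t\Gamma_2' \cong (t^{-1}\Gamma_1't)\backslash\Gamma_2'$ makes $\gamma$ range over representatives of $(t^{-1}\Gamma_1't)\backslash\Gamma_2'$, each $\gamma \in \Gamma_2'$, and in $\mathbb{M}_k(\Gamma_2,\varepsilon_2)$ we have $\gamma\cdot x = \varepsilon_2(\gamma)x$. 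Each summand is therefore $x$, and the total is $[\Gamma_2' : t^{-1}\Gamma_1' t]\cdot x$.

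I expect the only delicate point to be the well-definedness of $\beta_t^\vee$ modulo the $\Gamma_2'$-relations; once one carefully re-indexes the sum by $R_t\gamma_2$ and invokes the already-proven independence of $R_t$, it falls out. Everything else is essentially formal consequence of the compatibility condition on the characters.
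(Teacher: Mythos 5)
Your proposal is correct and follows essentially the same line of argument as the paper: you check that $\alpha_t^\vee$ sends the $\Gamma_1'$-relations to $\Gamma_2'$-relations via the compatibility of characters, establish independence of $R_t$ for $\beta_t^\vee$ and then use it to descend through the source quotient by re-indexing under right translation, and conclude with the same direct computation of the composition using the bijection $\Gamma_1'\backslash t\Gamma_2' \cong (t^{-1}\Gamma_1' t)\backslash\Gamma_2'$. The only difference is presentational — you phrase well-definedness as ``killing defining relations'' while the paper verifies equivariance directly — but the computations are the same.
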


\begin{proof}
First, note that as $t^{-1}\Gamma_{1}^{'}t\subseteq\Gamma_{2}^{'}$,
for any $\gamma_{1}\in\Gamma_{1}^{'}$, $\gamma_{2}\in\Gamma_{2}^{'}$
one has 
\[
\gamma_{1} t\gamma_{2}=t t^{-1}\gamma_{1}t\gamma_{2}\in t \Gamma_{2}^{'}
\]
so that $\Gamma_{1}^{'}$ indeed acts on $t \Gamma_{2}^{'}$. 
To show that $\alpha_{t}$ is well defined, we must show that for
each $x\in\mathbb{M}_{k}(\Gamma_{1},\varepsilon_{1})$ and $\gamma_{1}\in\Gamma_{1}^{'}$
, we have 
\[
\alpha_{t}^{\vee}(\gamma_{1}x-\varepsilon_{1}(\gamma_{1}) x)=0\in\mathbb{M}_{k}(\Gamma_{2},\varepsilon_{2})
\]
We have, by assumption
\[
\alpha_{t}^{\vee}(\gamma_{1}x)=t^{-1}\gamma_{1} x=t^{-1}\gamma_{1}t t^{-1}x=\varepsilon_{2}(t^{-1}\gamma_{1}t) t^{-1}x=\varepsilon_{1}(\gamma_{1}) \alpha_{t}^{\vee}(x)
\]
hence the result.

We next verify that $\beta_{t}$ is well defined. 

Suppose that $\gamma_{1}\in\Gamma_{1}^{'}$ and $\gamma_{2}\in\Gamma_{2}^{'}$
, then 
\[
\gamma_{1} t\gamma_{2}=t (t^{-1}\gamma_{1}t) \gamma_{2}
\]
Moreover, for any $x\in\mathbb{M}_{k}(\Gamma_{2},\varepsilon_{2})$,
we have
\begin{equation} \label{eq: degeneracy maps}
\varepsilon_{2}((t^{-1}\gamma_{1}t)\gamma_{2})^{-1}\gamma_{1} t\gamma_{2} x 
= \varepsilon_{1}(\gamma_{1})^{-1} \varepsilon_{2}(\gamma_{2})^{-1} \gamma_{1} t\gamma_{2} x 
= \varepsilon_{1}(\gamma_{1})^{-1} \gamma_{1} (\varepsilon_{2}(\gamma_{2})^{-1} t\gamma_{2} x)
\end{equation}
But as $\varepsilon_{2}(\gamma_{2})^{-1} t\gamma_{2} x\in\mathbb{M}_{k}(\Gamma_{1},\varepsilon_{1})$,
\eqref{eq: degeneracy maps} is simply $\varepsilon_{2}(\gamma_{2})^{-1} t\gamma_{2} x$. 

Thus, replacing the representative $t \gamma_{2}$ by $\gamma_{1}  t\gamma_{2}$
does not change the result, so that $\beta_{t}^{\vee}$ is independent of
the choice of the representatives $R_{t}$. 

Next, we must show that for any $\gamma\in\Gamma_{2}^{'}$ and any
$x\in\mathbb{M}_{k}(\Gamma_{2},\varepsilon_{2})$ , $\beta_{t}^{\vee}(\gamma x)=\varepsilon_{2}(\gamma) \beta_{t}^{\vee}(x)$.
However, for $\gamma\in\Gamma_{2}^{'}$, using the fact that $\beta_{t}$
is independent of the choice of representatives, and that $\Gamma_{2}^{'}$
acts on $\Gamma_{1}^{'}\backslash t \Gamma_{2}^{'}$ by right
translations, we get
\begin{align*}
\beta_{t}^{\vee}(\gamma x)&=\sum_{t\gamma_{2}\in R_{t}}\varepsilon_{2}(\gamma_{2})^{-1} t\gamma_{2} \gamma x
=\sum_{t\gamma_{2}\gamma^{-1}\in R_{t}}\varepsilon_{2}(\gamma_{2}\gamma^{-1})^{-1}t\gamma_{2}\gamma^{-1} \gamma x \\
&=\varepsilon_{2}(\gamma) \sum_{t\gamma_{2}\in R_{t}}\varepsilon_{2}(\gamma_{2})^{-1} t\gamma_{2} x=\varepsilon_{2}(\gamma) \beta_{t}^{\vee}(x)
\end{align*}

To compute $\alpha_{t}^{\vee}\circ\beta_{t}^{\vee}$, we use that $\#R_{t}=[\Gamma_{2}^{'}: t^{-1} \Gamma_{1}^{'} t]$:
\begin{align*}
\alpha_{t}^{\vee}(\beta_{t}^{\vee}(x))
&=\alpha_{t}^{\vee}\left(\sum_{t\gamma_{2}\in R_{t}}\varepsilon_{2}(\gamma_{2})^{-1} t\gamma_{2} x\right) \\
&=\sum_{t\gamma_{2}\in R_{t}}\varepsilon_{2}(\gamma_{2})^{-1} \gamma_{2}x=\sum_{t\gamma_{2}\in R_{t}}x=[\Gamma_{2}^{'}: t^{-1} \Gamma_{1}^{'} t]  x. \qedhere
\end{align*}
\end{proof}

We also note that by using the pairing \eqref{eq:pairing of modular symbols and modular forms} we can get maps on spaces of cusp forms.

\begin{defn}
Let $\Gamma_1 \trianglelefteq \Gamma_{1}^{'}$, $\Gamma_2 \trianglelefteq \Gamma_{2}^{'}$, be two pairs of finite index subgroups in $SL_2(\mathbb{Z})$. Let $\varepsilon_i : \Gamma_{i}^{'} / \Gamma_{i} \rightarrow \mathbb{Q}(\zeta_i)^{\times}$ for $i=1,2$ be characters. Let $t \in GL_2^{+}(\mathbb{Q})$ be such that $t^{-1} \Gamma_{1}^{'} t \subseteq \Gamma_2^{'}$, and $\varepsilon_{2}\circ\text{Inn}(t)^{-1}\mid_{\Gamma_{1}^{'}}=\varepsilon_{1}$. Fix a choice $R_t$ of coset representatives for 
$\Gamma_{1}^{'} \backslash t \cdot \Gamma_2{'}$. Let
\begin{align*}
& \alpha_t: S_k(\Gamma_2, \varepsilon_2) \rightarrow S_k(\Gamma_1, \varepsilon_1),  
& \beta_t: S_k(\Gamma_1, \varepsilon_1) \rightarrow S_k(\Gamma_2, \varepsilon_2) \\
& f \mapsto f \vert_{t^{-1}}
& f \mapsto \sum_{t \gamma_2 \in R_t} \varepsilon_2(\gamma_2)^{-1} f \vert_{t \gamma_2}
\end{align*}
\end{defn}

\subsection{Degeneracy maps and Hecke operators}
Next, as in the classical theory for Iwahori level, we would like to show that the degeneracy maps commute with the Hecke operators. Alas, this is not always the case as the following example shows.

\begin{example}
Let $\Gamma_1 = \Gamma(7)$ and let $\Gamma_2 = \Gamma_{ns}(7)$ be the non-split Cartan at level $7$. Then there is a natural inclusion map $\alpha_1^{\vee}: \mathbb{M}_2(\Gamma_1) \rightarrow \mathbb{M}_2(\Gamma_2)$, but $\alpha_1$ does not commute with the standard Hecke operators for these spaces. 
In this case $\dim S_2(\Gamma_{ns}(7)) = 1$ is spanned by a single element $f$. We have 
$\alpha_1(f) = f \in S_2(\Gamma(7))$, but $f$ is not an eigenform of the Hecke algebra on $\Gamma(7)$. 
Indeed, there are $3$ such eigenforms, whose $q$-expansions are all defined over $\QQ(\sqrt{-3})$, while the $q$-expansion of $f$ is only defined over $\QQ(\zeta_7)^+$. 
\end{example}

In the next proposition we describe the conditions under which the degeneracy maps commute with the Hecke operators.

\begin{prop} \label{prop: deg commutes with Hecke}
Let $N_G \mid N_H$ be positive integers. Let $H \subseteq GL_2(\mathbb{Z} / N_{H} \mathbb{Z})$ and 
$G\subseteq GL_2(\mathbb{Z} / N_{G} \mathbb{Z})$ be subgroups such that $\lambda_{N_G}(H) \subseteq G$. 
Let $p$ be a prime number such that $p \in \det(H)$, and let $t \in M_2(\mathbb{Z})$ be such that 
$N_G \cdot \det(t) \mid N_H$ and $t^{-1} \Gamma_{H} t \subseteq \Gamma_{G}$.
Let $T_{p,?}^{\vee}$ be the Hecke operator at $p$ on the space $\mathbb{M}_k(\Gamma_{?})$ for $? \in \{G,H\}$.
Then 
$$
T_{p,G}^{\vee} \circ \alpha_t^{\vee} = \alpha_t^{\vee} \circ T_{p,H}^{\vee}.
$$
\end{prop}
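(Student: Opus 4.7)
Pick $\alpha \in M_{2}(\mathbb{Z})$ with $\det(\alpha) = p$ and $\lambda_{N_{H}}(\alpha) \in H$; since $\lambda_{N_{G}}(H) \subseteq G$, automatically $\lambda_{N_{G}}(\alpha) \in G$, so both $T_{p,H}^{\vee}$ and $T_{p,G}^{\vee}$ are realized as the double-coset operator $T_{\alpha}^{\vee}$ at their respective levels. Decompose $\Gamma_{H}\alpha\Gamma_{H} = \bigsqcup_{i}\Gamma_{H}\delta_{i}$ and $\Gamma_{G}\alpha\Gamma_{G} = \bigsqcup_{j}\Gamma_{G}\delta'_{j}$. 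Unwinding the definitions, for arbitrary $x \in \mathbb{M}_{k}(\Gamma_{H})$ the identity $T_{p,G}^{\vee}(\alpha_{t}^{\vee}(x)) = \alpha_{t}^{\vee}(T_{p,H}^{\vee}(x))$ in $\mathbb{M}_{k}(\Gamma_{G})$ reduces to the Hecke-algebra identity
\[
\sum_{j}[\Gamma_{G}\delta'_{j}t^{-1}] \;=\; \sum_{i}[\Gamma_{G}t^{-1}\delta_{i}] \qquad \text{in } \mathbb{Z}[\Gamma_{G}\backslash GL_{2}^{+}(\mathbb{Q})],
\]
equivalent (using $\Gamma_{G}t^{-1}\Gamma_{H} = \Gamma_{G}t^{-1}$, an immediate consequence of $t^{-1}\Gamma_{H}t \subseteq \Gamma_{G}$) to the equality of double-coset products $(\Gamma_{G}\alpha\Gamma_{G}) \cdot (\Gamma_{G}t^{-1}\Gamma_{H}) = (\Gamma_{G}t^{-1}\Gamma_{H}) \cdot (\Gamma_{H}\alpha\Gamma_{H})$.

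I plan to establish this by constructing a bijection $\psi\colon R_{H}\to R_{G}$ characterized by $\Gamma_{G}\psi(\delta_{i}) = \Gamma_{G}\,t^{-1}\delta_{i}t$, equivalently $\Gamma_{G}t^{-1}\delta_{i} = \Gamma_{G}\psi(\delta_{i})\,t^{-1}$. Well-definedness requires $t^{-1}\delta_{i}t \in \Gamma_{G}\alpha\Gamma_{G}$. Writing $\delta_{i} = \gamma_{1}\alpha\gamma_{2}$ with $\gamma_{\ell} \in \Gamma_{H}$ and factoring $t^{-1}\delta_{i}t = (t^{-1}\gamma_{1}t)(t^{-1}\alpha t)(t^{-1}\gamma_{2}t)$, the hypothesis $t^{-1}\Gamma_{H}t \subseteq \Gamma_{G}$ places the outer factors in $\Gamma_{G}$ and reduces everything to $t^{-1}\alpha t \in \Gamma_{G}\alpha\Gamma_{G}$. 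Integrality of $t^{-1}\alpha t$ follows from a prime-by-prime analysis at primes $\ell \mid \det(t)$: the containment $t^{-1}\Gamma_{H}t \subseteq SL_{2}(\mathbb{Z})$ forces divisibility of certain entries of elements of $\Gamma_{H}$ by $\ell$, and combined with $\lambda_{N_{H}}(\alpha) \in H$ and $N_{G}\det(t) \mid N_{H}$ these conditions propagate to the corresponding divisibilities for the entries of $\alpha$, yielding $t^{-1}\alpha t \in M_{2}(\mathbb{Z})$. The mod-$N_{G}$ reduction then lies in $\lambda_{N_{G}}(\Gamma_{G}\alpha)$ by Lemma \ref{lem: disjoint union}(1), placing $t^{-1}\alpha t$ in $\Gamma_{G}\alpha\Gamma_{G}$ as required.

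For bijectivity, the hypothesis $p \in \det(H)$ forces $\gcd(p, N_{H}) = \gcd(p, N_{G}) = 1$, whence a standard strong-approximation argument (surjectivity of $\lambda_{p}\colon\Gamma_{?}\to SL_{2}(\mathbb{F}_{p})$, intersected with the preimage of the Borel) yields $n_{H} = n_{G} = p+1$, so injectivity alone suffices. If $\Gamma_{G}\,t^{-1}\delta_{i}t = \Gamma_{G}\,t^{-1}\delta_{i'}t$, then $\delta_{i}\delta_{i'}^{-1} \in t\Gamma_{G}t^{-1}$; combining this with the integrality constraints on $\delta_{i}\delta_{i'}^{-1}$ from $\delta_{i},\delta_{i'} \in \Gamma_{H}\alpha\Gamma_{H}$ and a further invocation of Lemma \ref{lem: disjoint union}(1) forces $\Gamma_{H}\delta_{i} = \Gamma_{H}\delta_{i'}$, hence $i = i'$. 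The main obstacle is the integrality and correct mod-$N_{G}$ reduction of $t^{-1}\alpha t$: this is the unique step where the three hypotheses $N_{G}\det(t) \mid N_{H}$, $t^{-1}\Gamma_{H}t \subseteq \Gamma_{G}$, and $\lambda_{N_{H}}(\alpha) \in H$ must interact, and handling it requires the careful prime-by-prime divisibility analysis sketched above.
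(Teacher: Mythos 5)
Your high-level strategy coincides with the paper's: identify $T_{p,?}^{\vee}$ with $T_{\alpha}^{\vee}$ for a single $\alpha$ working at both levels, show that $t^{-1}\alpha t$ is integral with $\lambda_{N_G}(t^{-1}\alpha t)\in G$, and conclude that conjugation by $t$ carries coset representatives of $\Gamma_H\backslash\Delta_p^H$ to coset representatives of $\Gamma_G\backslash\Delta_p^G$. The two crucial steps are executed differently, and it is worth noting where the paper's route is tighter. For integrality you propose a prime-by-prime valuation analysis at primes $\ell\mid\det(t)$, which is left as a sketch; the paper instead argues once modulo $N_G\cdot\det(t)$, observing that $t^{-1}\Gamma_Ht\subseteq\Gamma_G$ gives $\lambda_{N_G\det(t)}(\det(t)t^{-1})\cdot\lambda_{N_G\det(t)}(\Gamma_H)\cdot\lambda_{N_G\det(t)}(t)\subseteq\det(t)\cdot\lambda_{N_G\det(t)}(\Gamma_G)$, that the hypothesis $N_G\det(t)\mid N_H$ lets one transport $\lambda_{N_H}(\alpha)\in H$ down to level $N_G\det(t)$, and that reducing the resulting congruence further modulo $\det(t)$ gives $t^{-1}\alpha t\in M_2(\ZZ)$ directly -- this single congruence argument subsumes your sketched local analysis. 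For the coset correspondence you plan a count $n_H=n_G=p+1$ via strong approximation followed by an injectivity argument; the paper instead conjugates the disjoint decomposition $\Delta_p^H=\bigsqcup_i\Gamma_H\alpha_i$ by $t$ and applies Lemma \ref{lem: disjoint union}(5) to the pair $t^{-1}\Gamma_Ht\subseteq\Gamma_G$ with $t^{-1}\alpha t$, obtaining the disjoint decomposition of $\Delta_p^G$ in one step with no cardinality count. Your injectivity step as written applies Lemma \ref{lem: disjoint union}(1) to the containment $\Gamma_H\subseteq t\Gamma_Gt^{-1}$, but $t\Gamma_Gt^{-1}$ is generally not a subgroup of $SL_2(\mathbb{Z})$, so that lemma does not apply in the form stated; you would need to conjugate and work with $t^{-1}\Gamma_Ht\subseteq\Gamma_G$, which is precisely what the paper's invocation of part (5) packages for you.
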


\begin{proof}
Denote $d := \det(t)$, so that $d \cdot t^{-1} \in M_2(\mathbb{Z})$. 
Since $t^{-1} \Gamma_{H} t \subseteq \Gamma_G$, we have also $d \cdot t^{-1} \Gamma_{H} t \subseteq d \cdot \Gamma_G$ so that 
\begin{equation} \label{eqn: degeneracy conj 1}
\lambda _{N_G \cdot d}(d t^{-1}) \cdot \lambda_{N_G \cdot d} (\Gamma_H) \cdot \lambda_{N_G \cdot d}(t) 
=\lambda _{N_G \cdot d} \left(d t^{-1} \Gamma_H t \right) \subseteq \lambda_{N_G \cdot d} (d \cdot \Gamma_G) = d \cdot \lambda_{N_G \cdot d}(\Gamma_G).
\end{equation}
However, we also assumed $N_G \cdot d \mid N_H$, hence 
\begin{equation} \label{eqn: degeneracy conj 2}
\lambda _{N_G \cdot d}(\Gamma_H) = \lambda _{N_G \cdot d}(\lambda _{N_H}(\Gamma_H)) = \lambda _{N_G \cdot d}(H).
\end{equation}
Let
$$
\Delta_p^{?} := \left\{  \alpha \in M_2(\mathbb{Z}) \mid \det(\alpha) = p \text{ and } \lambda_{N_{?}}(\alpha) \in ? \right\},\quad ? \in \{G,H\}
$$
and let $\alpha \in \Delta_p^{H}$.
Then combining \eqref{eqn: degeneracy conj 1} and \eqref{eqn: degeneracy conj 2} we see that 
\begin{align*}
\lambda_{N_G \cdot d} (d t^{-1}\alpha t) &= 
\lambda_{N_G \cdot d} (dt^{-1}) \lambda_{N_G \cdot d} (\lambda_{N_H} (\alpha)) \lambda_{N_G \cdot d}(t) \\
& \in \lambda_{N_G \cdot d} (dt^{-1}) \lambda_{N_G \cdot d} (H) \lambda_{N_G \cdot d}(t) \subseteq d \cdot \lambda_{N_G \cdot d}(\Gamma_G).
\end{align*}
In particular, reducing modulo $d$, we see that $dt^{-1}\alpha t \in d \cdot M_2(\ZZ)$, hence $t^{-1} \alpha t \in M_2(\mathbb{Z})$.
Therefore $\lambda_{N_G}(t^{-1} \alpha t) \in \lambda_{N_G}(\Gamma_G) = G$. It follows that $t^{-1} \alpha t \in \Delta_p^{G}$. 

Since $p \in \det(H)$, we also get $p \in \det(\lambda_{N_G}(H)) \subseteq \det(G)$. Therefore, we have by \cite[Lemma 3.29]{shimura1971introduction} that $\Delta_p^{H} = \Gamma_{H} \alpha \Gamma_{H}$ and 
$\Delta_p^{G} = \Gamma_{G} \cdot t^{-1} \alpha t \cdot \Gamma_G$.

Let $\Delta_p^{H} = \bigsqcup_{i} \Gamma_{H} \alpha_i$ be its coset decomposition. Then 
\begin{align*}
t^{-1} \Gamma_H t \cdot t^{-1} \alpha t \cdot t^{-1} \Gamma_H t 
&=t^{-1} \Gamma_{H} \alpha \Gamma_H t 
= t^{-1} \left( \bigsqcup_{i} \Gamma_H \alpha_i \right) t \\
&= \bigsqcup_{i} t^{-1} \Gamma_H \alpha_i t = 
\bigsqcup_{i} t^{-1} \Gamma_H t \cdot t^{-1} \alpha_i t.
\end{align*}

Now, applying Lemma \ref{lem: disjoint union} (5) to $t^{-1} \Gamma_H t \subseteq \Gamma_G$ and $t^{-1} \alpha t$, we obtain that 
\begin{equation} \label{eqn: G double coset decomp}
\Delta_p^{G} = \Gamma_G \cdot t^{-1} \alpha t \cdot \Gamma_G = \bigsqcup_{i} \Gamma_G\cdot  t^{-1} \alpha_i t. 
\end{equation}

Finally, we use \eqref{eqn: G double coset decomp} to compute
$$
T_{p,G}^{\vee} \circ \alpha_t^{\vee} (x) = \sum_{i} t^{-1} \alpha_i t \cdot t^{-1} x = t^{-1} \sum_{i} \alpha_i x = \alpha_t^{\vee} \circ T_{p,H}^{\vee} (x). \mbox{\qedhere}
$$

\end{proof}

We also note that by using the pairing \eqref{eq:pairing of modular symbols and modular forms} we have the same for cusp forms.

\begin{cor} \label{cor: Hecke commutes with degeneracy}
Under the assumptions of Corollary \ref{prop: deg commutes with Hecke},  
$ \alpha_t \circ T_{p,G} = T_{p,H} \circ \alpha_t $.
\end{cor}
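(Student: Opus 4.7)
The plan is to leverage Proposition~\ref{prop: deg commutes with Hecke}, which already establishes the commutativity at the level of modular symbols. There are two natural routes: dualize through the pairing \eqref{eq:pairing of modular symbols and modular forms} using Proposition~\ref{prop: duality Hecke operators}, or redo the direct coset computation on the cusp form side. I will favor the direct route, as it avoids having to verify separately that $\alpha_t$ and $\alpha_t^{\vee}$ are adjoint with respect to the pairing.

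The combinatorial heart of Proposition~\ref{prop: deg commutes with Hecke} is the coset-matching identity: if $\Delta_p^{H} = \bigsqcup_i \Gamma_H \alpha_i$, then $\Delta_p^{G} = \bigsqcup_i \Gamma_G \cdot t^{-1} \alpha_i t$, derived there via Lemma~\ref{lem: disjoint union}(5). I will extract this as a ready-made input. Before using it, I need to verify that $\alpha_t$ actually maps $S_k(\Gamma_G)$ into $S_k(\Gamma_H)$, which follows from the hypothesis $t^{-1} \Gamma_H t \subseteq \Gamma_G$ together with the basic slash identity $(f\vert_{t^{-1}})\vert_\gamma = f\vert_{(t^{-1}\gamma t)\, t^{-1}}$ for $\gamma \in \Gamma_H$.

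Given these ingredients, the remaining step is a short bookkeeping computation. For $f \in S_k(\Gamma_G)$, the left-hand side expands as
\[
\alpha_t \circ T_{p,G}(f) = \Bigl( \sum_i f\vert_{t^{-1}\alpha_i t} \Bigr)\Big\vert_{t^{-1}} = \sum_i f\vert_{t^{-1}\alpha_i},
\]
while the right-hand side expands as
\[
T_{p,H} \circ \alpha_t(f) = \sum_i (f\vert_{t^{-1}})\vert_{\alpha_i} = \sum_i f\vert_{t^{-1}\alpha_i},
\]
and the two agree term by term. There is no substantive obstacle here: the matching of coset decompositions between $\Delta_p^{G}$ and $\Delta_p^{H}$ has already been handled in Proposition~\ref{prop: deg commutes with Hecke}, and the corollary amounts to unwinding the definitions of $\alpha_t$ and of the double-coset operators on cusp forms.
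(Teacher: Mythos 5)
Your proof is correct, but it takes a genuinely different route from the paper. The paper's (very terse) argument dualizes: it invokes the pairing of Proposition~\ref{prop: duality Hecke operators} to transfer the modular-symbol identity $T_{p,G}^{\vee}\circ\alpha_t^{\vee}=\alpha_t^{\vee}\circ T_{p,H}^{\vee}$ of Proposition~\ref{prop: deg commutes with Hecke} directly to cusp forms. You instead extract the key coset-matching identity $\Delta_p^{G}=\bigsqcup_i\Gamma_G\, t^{-1}\alpha_i t$ (established inside the proof of that proposition via Lemma~\ref{lem: disjoint union}(5)) and then redo the three-line slash computation on the cusp-form side, using that $\vert$ is a genuine right action so $(f\vert_{t^{-1}\alpha_i t})\vert_{t^{-1}}=f\vert_{t^{-1}\alpha_i}=(f\vert_{t^{-1}})\vert_{\alpha_i}$. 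Your route has a real advantage: the paper's duality argument tacitly assumes that $\alpha_t$ on cusp forms and $\alpha_t^{\vee}$ on modular symbols are adjoint under the pairing \eqref{eq:pairing of modular symbols and modular forms}, a fact the paper never verifies (it is analogous to Proposition~\ref{prop: duality Hecke operators}, but for degeneracy rather than Hecke operators). Your direct computation sidesteps that gap entirely and is fully self-contained given the proposition. The paper's route is shorter on the page and keeps the cusp-form statements uniformly downstream of the modular-symbol ones, but as written it leaves the adjointness unjustified.
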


\begin{rem}
Note that these do not cover all degeneracy maps. Indeed, there might be $t \in M_2(\mathbb{Z})$ such that $N_G \cdot \det(t) \nmid N_H$, but 
$t^{-1} \Gamma_H t \subseteq \Gamma_G$ and such that $\alpha_t$ commute with all the Hecke operators (though we must have $\det(t) \mid N_H$).
As an example, consider $\Gamma_G = \Gamma_0(N)$, $\Gamma_H = \Gamma(N)$ and 
$t = \left( \begin{array}{cc} N & 0 \\ 0 & 1 \end{array} \right)$.
However, as the proof shows, for $\alpha_t$ to commute with $T_p$. it is enough to check that $dt^{-1} \alpha_p t \in d \cdot M_2(\mathbb{Z})$ for all $p$, which is equivalent to verifying that $\lambda_d(dt^{-1}) \lambda_d(\alpha_p) \lambda_d(t) = 0$. As this last expression only depends on $\lambda_{N_H}(\alpha_p)$, hence only on $p \bmod N_H$, we just have to check commutation with $\phi(N_H)$ elements.
Therefore, one may include these degeneracy maps as well.
\end{rem}

This motivates the following definition.

\begin{defn}
Let $N$ be a positive integer. Let $H \subseteq GL_2(\mathbb{Z} / N \mathbb{Z})$ be a subgroup.
For each $G \subseteq GL_2(\mathbb{Z} / N \mathbb{Z})$ such that $H \subseteq G$, we denote by $N_G$ the level of $G$. 
Let $T_G$ be a set of representatives for the orbits of $\Gamma_H$ on the following set
\begin{equation} \label{eqn: T_G}
T_G := \Gamma_H \backslash \left \{ t \in M_2(\mathbb{Z}) \mid \det(t) \mid N / N_G \text{ and } t^{-1} \Gamma_H t \subseteq \Gamma_G  \right \}.
\end{equation}
Let $i_G$ be the map
\begin{align*}
i_G : \left( S_k (\Gamma_G) \right)^{T_G} & \rightarrow S_k(\Gamma_H) \\
(f_t)_{t \in T_G} & \mapsto \sum_{t \in T_G} \alpha_t(f_t)
\end{align*} 
Let $\operatorname{Over}(H)$ be the set of minimal overgroups of $H$ in $GL_2(\mathbb{Z} / N \mathbb{Z})$. 
The subspace of \emph{oldforms at level $H$} is
$$
S_k(\Gamma_H)^{\operatorname{old}} = 
\sum_{G \in \operatorname{Over}(H)} i_G \left( \left( S_k (\Gamma_G) \right)^{T_G} \right)
$$ 
and the subspace of \emph{newforms at level $H$} is the orthogonal complement with respect to the Petersson inner product,
$$
S_k(\Gamma_H)^{\operatorname{new}} = \left( S_k(\Gamma_H)^{\operatorname{old}} \right)^{\perp}
$$
\end{defn}

We have to show that the sum in this definition is well defined.
\begin{lem}
Let $N$ be a positive integer. Let $H \subseteq G \subseteq GL_2(\mathbb{Z} / N \mathbb{Z})$ be subgroups.
Denote by $N_G$ the level of $G$. 
Let $T_G$ be the set given in \eqref{eqn: T_G}.
Then $T_G$ is finite.
\end{lem}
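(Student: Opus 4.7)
The plan is to bound $T_G$ inside a finite union of finite sets, keyed off the two constraints in its definition. First, I would observe that the condition $\det(t) \mid N/N_G$ forces $\det(t)$ to take at most finitely many positive values (the positive divisors of $N/N_G$), so it suffices to show that for each fixed positive integer $d$, the set
\[
M_d := \{ t \in M_2(\mathbb{Z}) \mid \det(t) = d\}
\]
has only finitely many $\Gamma_H$-orbits under left multiplication. The extra condition $t^{-1} \Gamma_H t \subseteq \Gamma_G$ only restricts us to a subset of $\Gamma_H \backslash \bigsqcup_{d \mid N/N_G} M_d$, so it is irrelevant to the finiteness statement.

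Next, I would invoke the Hermite normal form: every element of $M_d$ is left $SL_2(\mathbb{Z})$-equivalent to a unique upper triangular matrix $\left(\begin{smallmatrix} a & b \\ 0 & c \end{smallmatrix}\right)$ with $a, c > 0$, $ac = d$, and $0 \leq b < a$. This partitions $M_d$ into exactly $\sigma_1(d)$ orbits under left multiplication by $SL_2(\mathbb{Z})$, which is a finite number.

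Finally, since every matrix in $M_d$ is invertible over $\mathbb{Q}$, the left action of $SL_2(\mathbb{Z})$ on $M_d$ is free (if $gt = t$ with $t$ invertible then $g = 1$), so each $SL_2(\mathbb{Z})$-orbit is in bijection with $SL_2(\mathbb{Z})$ itself. Quotienting further by $\Gamma_H$, which has finite index in $SL_2(\mathbb{Z})$ because $\Gamma_H$ is a congruence subgroup, gives $[SL_2(\mathbb{Z}) : \Gamma_H]$ cosets per $SL_2(\mathbb{Z})$-orbit. Putting everything together yields $|\Gamma_H \backslash M_d| = [SL_2(\mathbb{Z}) : \Gamma_H] \cdot \sigma_1(d)$, and summing over the divisors $d$ of $N/N_G$ bounds $|T_G|$ by a finite quantity.

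I do not foresee any substantive obstacle; the argument is essentially bookkeeping. The only mild point of care is to distinguish left $\Gamma_H$-orbits on $M_d$ from $\Gamma_H$-double cosets (we only mod out on the left) and to record that the left stabilizer of any invertible matrix is trivial, so that the bijection $SL_2(\mathbb{Z}) \cdot t_0 \cong SL_2(\mathbb{Z})$ used in the counting step is valid.
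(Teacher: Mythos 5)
Your proof is correct and follows essentially the same path as the paper's: reduce to a fixed determinant $d$, note that $SL_2(\mathbb{Z})\backslash M_d$ is finite, and then use the finite index of $\Gamma_H$ in $SL_2(\mathbb{Z})$ to conclude. You spell out the last step more explicitly (via freeness of the action and Hermite normal form giving the exact count $[SL_2(\mathbb{Z}):\Gamma_H]\cdot\sigma_1(d)$), whereas the paper leaves these details implicit, but the underlying argument is identical.
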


\begin{proof}
We first note that by the condition $\det(t) \mid N / N_G$, the determinant of $t$ assumes finitely many values, hence it is enough to show that the subset $T_{G,d}$ of such $t$'s with $\det(t) = d$ is finite.

Next, we recall that if $\Delta_d$ is the set of matrices in $M_2(\mathbb{Z})$ of determinant $d$, then $SL_2(\mathbb{Z}) \backslash \Delta_d$ is finite. Since $\Gamma_H$ is of finite index in $SL_2(\mathbb{Z})$, the result follows.
\end{proof}

The Hecke operators respect the decomposition of $S_k(\Gamma_G)$ into old and new.

\begin{prop}
The subspaces $S_k(\Gamma_G)^{\operatorname{old}}$ and $S_k(\Gamma_G)^{\operatorname{new}}$ are stable under the Hecke operators $T_n$ for $n \in \det(G)$ and the diamond operators $\langle \sigma_p \rangle$, for $p \in \det(G)$ prime. 
\end{prop}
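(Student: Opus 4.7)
The plan is to show first that $T_p$ and $\langle \sigma_p \rangle$ each preserve $S_k(\Gamma_G)^{\operatorname{old}}$ for every prime $p \in \det(G)$, and then to deduce preservation of $S_k(\Gamma_G)^{\operatorname{new}}$ by a standard orthogonal-complement argument using that these operators are normal. Since $T_n$ for $n \in \det(G)$ is generated multiplicatively by the $T_p$'s, it suffices to treat primes.

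For $T_p$, I would apply Corollary~\ref{cor: Hecke commutes with degeneracy}, with our $G$ playing the role of the bottom group $H$ in Proposition~\ref{prop: deg commutes with Hecke} and an arbitrary minimal overgroup $G' \in \operatorname{Over}(G)$ playing the role of the overgroup called $G$ there. All hypotheses are met: $N_{G'} \mid N_G$ because $\Gamma_G \subseteq \Gamma_{G'}$; $\det(t) \mid N_G/N_{G'}$ and $t^{-1}\Gamma_G t \subseteq \Gamma_{G'}$ by the definition of $T_{G'}$; and $p \in \det(G) \subseteq \det(G')$. Therefore $T_p \circ \alpha_t = \alpha_t \circ T_p$ for every $t \in T_{G'}$, and for any generator $\sum_t \alpha_t(f_t) \in \operatorname{Im}(i_{G'})$ the image is $\sum_t \alpha_t(T_p f_t) = i_{G'}((T_p f_t)_t) \in \operatorname{Im}(i_{G'})$. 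Summing over $G' \in \operatorname{Over}(G)$ yields $T_p(S_k(\Gamma_G)^{\operatorname{old}}) \subseteq S_k(\Gamma_G)^{\operatorname{old}}$.

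For $\langle \sigma_p \rangle$, the key observation is that, picking any lift $\tilde\sigma_p \in \Gamma_{\mathfrak{N}(G)} \subseteq SL_2(\mathbb{Z})$, left multiplication by $\tilde\sigma_p^{-1}$ defines a bijection $\phi : T_{G'} \to T_{G'}$, $t \mapsto \tilde\sigma_p^{-1} t$. It is well defined on the $\Gamma_G$-orbits forming $T_{G'}$ because $\tilde\sigma_p$ normalizes $\Gamma_G$; it lands in $T_{G'}$ since determinants and integrality are preserved and
\[
(\tilde\sigma_p^{-1} t)^{-1}\Gamma_G(\tilde\sigma_p^{-1} t) = t^{-1}\tilde\sigma_p \Gamma_G \tilde\sigma_p^{-1} t = t^{-1}\Gamma_G t \subseteq \Gamma_{G'};
\]
and it is a bijection with inverse $s \mapsto \tilde\sigma_p s$. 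A direct computation then gives $(\alpha_t(f_t))|_{\tilde\sigma_p} = f_t|_{t^{-1}\tilde\sigma_p} = f_t|_{\phi(t)^{-1}} = \alpha_{\phi(t)}(f_t)$, so
\[
\langle \sigma_p \rangle \cdot i_{G'}\bigl((f_t)_{t \in T_{G'}}\bigr) = \sum_{t \in T_{G'}} \alpha_{\phi(t)}(f_t) = \sum_{s \in T_{G'}} \alpha_s(f_{\phi^{-1}(s)}) = i_{G'}\bigl((f_{\phi^{-1}(s)})_{s \in T_{G'}}\bigr) \in \operatorname{Im}(i_{G'}).
\]
Summing over $G' \in \operatorname{Over}(G)$ shows $\langle \sigma_p \rangle$ also preserves $S_k(\Gamma_G)^{\operatorname{old}}$.

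Finally, $S_k(\Gamma_G)^{\operatorname{new}}$ is the Petersson-orthogonal complement of $S_k(\Gamma_G)^{\operatorname{old}}$, and the relevant adjoints are $T_p^\star = \langle \sigma_p \rangle T_p$ and $\langle \sigma_p \rangle^\star = \langle \sigma_p \rangle^{-1}$. The former preserves $S_k(\Gamma_G)^{\operatorname{old}}$ as a composition of operators that do, while the latter does because $\langle \sigma_p \rangle$ is an invertible endomorphism of the finite-dimensional space $S_k(\Gamma_G)$ that maps $S_k(\Gamma_G)^{\operatorname{old}}$ to itself, so its inverse preserves it as well. Then for $f \in S_k(\Gamma_G)^{\operatorname{new}}$ and $g \in S_k(\Gamma_G)^{\operatorname{old}}$ one has $\langle T_p f, g\rangle_{\Gamma_G} = \langle f, T_p^\star g\rangle_{\Gamma_G} = 0$, whence $T_p f \in S_k(\Gamma_G)^{\operatorname{new}}$; the same argument works verbatim for $\langle \sigma_p \rangle$. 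The only step requiring real care is the bijection $\phi$ of the third paragraph, which is where the whole argument turns.
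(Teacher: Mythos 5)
Your proof is correct. For the Hecke operators $T_p$ your argument coincides with the paper's: invoke Corollary \ref{cor: Hecke commutes with degeneracy} for each $G' \in \operatorname{Over}(G)$ and each $t \in T_{G'}$, then extend to $T_n$ by multiplicativity. For the diamond operators you take a mildly different route. The paper factors $t^{-1}\sigma_p = (t^{-1}\sigma_p t)\, t^{-1}$ and proves the intertwining $\langle \sigma_p \rangle \circ \alpha_t = \alpha_t \circ \langle t^{-1}\sigma_p t \rangle$, which requires verifying that $t^{-1}\sigma_p t$ is an integral matrix lying in $\Gamma_{\mathfrak{N}(G')}$ and is precisely the diamond operator at $p$ for the overgroup; you instead factor $t^{-1}\tilde\sigma_p = (\tilde\sigma_p^{-1}t)^{-1}$ and observe that $t \mapsto \tilde\sigma_p^{-1}t$ permutes the orbit set $T_{G'}$, so that $\langle \sigma_p \rangle$ merely reindexes the sum defining $i_{G'}$. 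These are the same identity read from opposite ends; yours is slightly more economical because $\tilde\sigma_p \in SL_2(\mathbb{Z})$ makes the integrality and normalization checks trivial (and you correctly note that $\alpha_t$ depends only on the $\Gamma_G$-orbit of $t$, which is what makes the reindexing legitimate), while the paper's version has the side benefit of identifying the operator induced upstairs. The more substantive difference is that you actually prove stability of $S_k(\Gamma_G)^{\operatorname{new}}$, via the adjoint formulas $T_p^{\star} = \langle \sigma_p \rangle T_p$ and $\langle \sigma_p \rangle^{\star} = \langle \sigma_p \rangle^{-1}$ together with the orthogonal-complement argument; the paper's proof stops after establishing commutation with the degeneracy maps and leaves the passage to the new subspace implicit, so on this point your write-up is the more complete of the two.
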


\begin{proof}
By Corollary \ref{cor: Hecke commutes with degeneracy}, the Hecke operators $T_p$ for $p \in \det(G)$ commute with the degeneracy maps $\alpha_t$. By multiplicativity, it extends to $T_n$ such that $n \in \det(G)$. 

For the diamond operators, let $G_{i} \subseteq GL_2(\mathbb{Z} / N_{i} \mathbb{Z})$ for $i=1,2$ be such that $\lambda_{N_2}(G_1) \subseteq G_2$, and let $t \in M_2(\mathbb{Z})$ be such that $\det(t) \mid N_1 / N_2$ and
$t^{-1} \Gamma_1 t \subseteq \Gamma_2$, where $\Gamma_{i}$ is the congruence subgroup induced by $G_{i}$. Let $d = \det(t)$. Then $dt^{-1} \in M_2(\mathbb{Z})$ and so if $\alpha$ is such that $\lambda_{N_1}(\alpha) \in G_1$ and $\det(\alpha) = p$, we have
$$
\lambda_{N_2 \cdot d} (dt^{-1} \sigma_p t) = \lambda_{N_2 \cdot d}(dt^{-1}) \lambda_{N_2 \cdot d} (p \cdot \lambda_{N_2 \cdot d}(\alpha)^{-2}) \cdot \lambda_{N_2 \cdot d} (t) \in p \cdot \lambda_{N_2 \cdot d}(dt^{-1}) \lambda_{N_2 \cdot d}(G_1)  \lambda_{N_2 \cdot d} (t) 
$$
However, since $t^{-1} \Gamma_1 t \subseteq \Gamma_2$, we also have 
$$
\lambda_{N_2 \cdot d}(dt^{-1}) \lambda_{N_2 \cdot d}(G_1) \lambda_{N_2 \cdot d}(t) \subseteq d \cdot \lambda_{N_2 \cdot d}(\Gamma_2)
$$
hence $t^{-1} \sigma_p t \in M_2(\mathbb{Z})$ and moreover $\lambda_{N_2} (t^{-1} \sigma_p t) \in p \cdot \lambda_{N_2}(\Gamma_2) = p \cdot G_2 \subseteq \mathfrak{N}(G_2)$.
It follows that $t^{-1} \sigma_p t \in \Gamma_{\mathfrak{N}(G_2)}$, and so for any $f \in S_k(\Gamma_2)$ we have
$$
\left( \langle \sigma_p \rangle \circ \alpha_t \right)(f) = f \vert_{t^{-1} \sigma_p} = f \vert_{t^{-1} \sigma_p t t^{-1}} = 
\left( \alpha_t \circ  \langle  t^{-1} \sigma_p t \rangle \right)(f).
$$
Finally, we note that $\lambda_{N_2}(t^{-1} \sigma_p t) = p\cdot \lambda_{N_2}(t^{-1} \alpha t)^{-2}$, and that by the proof of Corollary \ref{prop: deg commutes with Hecke}, $t^{-1} \alpha t \in M_2(\mathbb{Z})$ is an element such that $\lambda_{N_2}(t^{-1} \alpha t) \in G_2$. It follows that $\langle t^{-1} \sigma_p t \rangle$ is the diamond operator at $p$ for $G_2$. This concludes the proof.
\end{proof}

\section{\texorpdfstring{Zeta functions and $q$-expansions}{}} \label{sec: zeta functions}

Using the results of Sections \ref{sec: Hecke operators} and \ref{sec: degeneracy maps}, we may now compute the zeta functions associated
to eigenforms in $S_{k}(\Gamma_{G})$.

All the ideas and methods used in this section are the same as in
the case of level $\Gamma_1(N)$ or $\Gamma_0(N)$, and are known (see \cite{stein2000explicit}, \cite{stein2007modular}) and used to compute the $q$-expansions. 

The process for doing so is divided to three steps:
\begin{enumerate}
\item Compute a subspace of $\mathbb{M}_{k}(\Gamma)^{\vee}$ that is isomorphic
to $\mathbb{S}_{k}(\Gamma)$ as a Hecke module, denoted by $\mathbb{S}_{k}(\Gamma)^{\vee}$.
\item Decompose the space $\mathbb{S}_{k}(\Gamma)^{\vee}$ to irreducible
Hecke modules.
\item For each irreducible Hecke module in the decomposition, find the system
of Hecke eigenvalues, hence the zeta function. 
\end{enumerate}

\subsection{Constructing Dual Vector Spaces}

Since our interest lies in $S_{k}(\Gamma)$, we would like to have
a vector space which is isomorphic to $S_{k}(\Gamma)\oplus\overline{S}_{k}(\Gamma)$
as a Hecke module. We could do it by using the pairing \eqref{eq:pairing of modular symbols and modular forms},
but that would involve computation of integrals and approximation if done directly.

Instead, we can use the fact that $S_{k}(\Gamma)$ and $\mathbb{S}_{k}(\Gamma)$
are isomorphic as Hecke modules, thus leading to the following algorithm.

\begin{algo}
$\operatorname{DualVectorSpace}(M)$. Compute the dual vector space.
\end{algo}

\textbf{Input :} $M\subseteq M_{k}(\Gamma)$ a subspace.

\textbf{Output : }$M^{\vee}\subseteq M_{k}(\Gamma)^{\vee}$ which
is isomorphic to $M$ as a Hecke module.
\begin{enumerate}
\item $V:=M_{k}(\Gamma)^{\vee}$, $p:=2$
\item While ($\dim V>\dim M)$ do
\begin{enumerate}
\item $T_{p}:=\operatorname{HeckeOperator}(V,p)$.
\item $T_{p,M}:=\operatorname{HeckeOperator}(M_{k}(\Gamma),p)\vert_{M}$.
\item $c_{p}:=\operatorname{CharPoly}(T_{p,M})$.
\item $V:=\ker(c_{p}(T_{p}))$. 
\item $p:=\operatorname{NextPrime}(p)$. 
\end{enumerate}
\item Return $V$.
\end{enumerate}

Here, we assume the existence of a function $\operatorname{CharPoly}$, that given an operator on a vector space, computes its characteristic polynomial. 

\begin{rem}
We have not defined the Hecke operators $T_{n}$ for $(n,N)>1$, and
these cannot be ignored (see \cite[Example 12.2.11]{voight2020cmf}).
It is possible to circumvent this problem, as is done in \cite[Section 8.10]{voight2020cmf}.
Practically when $p \mid N$, we use instead of $T_p$  the double coset Hecke operators $T_{\alpha}$ with $\det(\alpha) = p$. There are finitely many such operators, and the operator $T_p$ is a linear combination of them. Thus, for the purpose of describing the dual vector space, and later on, decomposition of the space to irreducible Hecke modules, these suffice.
\end{rem}

\subsection{Decomposition of $\mathbb{S}_{k}(\Gamma)$}

The following theorem is a generalization of \cite[Theorem 9.23]{stein2007modular}. 
It is an application of Sturm's Theorem, which will
enable us to decompose the Hecke module $\mathbb{S}_{k}(\Gamma)$ in a finite (effectively bounded) amount of steps.
\begin{thm}
\label{thm:Sturm bound}Suppose $\Gamma$ is a congruence subgroup
of level $N$, and let 
\begin{equation} \label{eqn: Sturm bound}
r=\operatorname{Sturm}(k,\Gamma):=\left\lfloor \frac{km}{12}-\frac{m-1}{N}\right\rfloor 
\end{equation}
where $m=[SL_{2}(\mathbb{Z}):\Gamma]$. Then the Hecke algebra 
\[
\mathbb{T}=\mathbb{Z}[\ldots,T_{n},\ldots]\subseteq\operatorname{End}(S_{k}(\Gamma))
\]
is generated as a $\mathbb{Z}$-module by the Hecke operators $T_{n}$
for $n\le r$.
\end{thm}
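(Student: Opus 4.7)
The plan is to adapt the classical proof of Sturm's bound (cf.\ \cite[Theorem 9.23]{stein2007modular}) from the Iwahori case to an arbitrary congruence subgroup. Let $\mathbb{T}_r \subseteq \mathbb{T}$ denote the $\ZZ$-submodule generated by $T_1,\dots,T_r$. Since $\mathbb{T}$ embeds in $\operatorname{End}_{\ZZ}(\mathbb{S}_k(\Gamma;\ZZ)^{+})$, it is a finitely generated $\ZZ$-module; by Nakayama's lemma it therefore suffices to verify $\mathbb{T}_r \otimes \mathbb{F}_p = \mathbb{T} \otimes \mathbb{F}_p$ for every prime $p$. I would then invoke the modular-symbols duality: by Theorem~\ref{thm: pairing of cusps} together with Proposition~\ref{prop: duality Hecke operators}, the pairing \eqref{eq:pairing of modular symbols and modular forms} induces a perfect Hecke-equivariant pairing of $\mathbb{T}$ with $S_k(\Gamma;\ZZ)$, which at Iwahori level specializes to $(T,f)\mapsto a_1(Tf)$ in the expansion $f = \sum a_n q^n$. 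A strict containment $\mathbb{T}_r \otimes \mathbb{F}_p \subsetneq \mathbb{T} \otimes \mathbb{F}_p$ would then produce a nonzero $f \in S_k(\Gamma;\mathbb{F}_p)$ annihilated by all $T_n$ with $n \leq r$ under this pairing.

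The core of the argument is a Sturm-type vanishing statement: if $f \in S_k(\Gamma;\mathbb{F}_p)$ has $a_n(f) = 0$ for all $n \leq r$, then $f = 0$. I would derive this from the valence formula on $X_\Gamma$: for a nonzero modular form of weight $k$, one has $\sum_{P \in X_\Gamma} v_P(f) \leq km/12$. Vanishing of Fourier coefficients at $\infty$ (in the parameter $q = e^{2\pi i z/N}$) translates, via the relation $q = q_{h_\infty}^{N/h_\infty}$ with $h_\infty$ the width of $\infty$, into a lower bound on $v_\infty(f)$; combining this with the forced vanishing of a cusp form at each of the remaining $\nu_\infty - 1$ cusps and the identity $\sum_P h_P = m$, careful bookkeeping should yield precisely the threshold $\lfloor km/12 - (m-1)/N \rfloor$ as the largest $r$ compatible with a nonzero~$f$.

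The hardest step will be making the implication ``$\Psi(T_n,f) = 0$ for $n \leq r$ forces $a_n(f) = 0$ for $n \leq r$'' rigorous for a general $\Gamma$: for $\Gamma_0(N)$ and $\Gamma_1(N)$ this reduces to the standard formula $a_1(T_nf) = a_n(f)$, but for a general $\Gamma$ the Hecke action on Fourier coefficients is more intricate and coefficients get mixed in a nontrivial way. A natural workaround is to pass through the inclusion $S_k(\Gamma) \hookrightarrow S_k(\Gamma(N))$, apply the Sturm bound inside the ambient space $S_k(\Gamma(N))$ where the action is transparent, and descend back to $\Gamma$ using the $GL_2(\ZZ/N\ZZ)$-isotypic decomposition of $S_k(\Gamma(N))$ already exploited in the introduction; the precise correction $(m-1)/N$ should then fall out of the resulting width bookkeeping.
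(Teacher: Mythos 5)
Your skeleton (Nakayama reduction mod $p$, duality with the space of cusp forms, Sturm-type vanishing via the valence formula) matches the proof of Stein's Theorem 9.23, which the paper cites verbatim as its entire proof. You also correctly identify the hardest step: at Iwahori level the pairing $\mathbb{T}\times S_k \to \ZZ$ given by $(T,f)\mapsto a_1(Tf)$ is perfect because $a_1(T_nf)=a_n(f)$, and this identity fails for a general $\Gamma_G$, as the paper itself remarks when discussing $q$-expansions.

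The proposed repair through $S_k(\Gamma(N))$ does not close this gap. The inclusion $S_k(\Gamma_G)\hookrightarrow S_k(\Gamma(N))$ is \emph{not} Hecke-equivariant (the introduction says so explicitly): the paper's $T_n$ are double-coset operators attached to $\alpha$ with $\lambda_N(\alpha)\in G$, which agree with the classical operators on $S_k(\Gamma(N))$ (attached to $\lambda_N(\alpha)=\operatorname{diag}(1,n)$) only when $\operatorname{diag}(1,n)\in G$. Hence vanishing of low-order Fourier coefficients in the ambient $q$-expansion does not translate into a functional on $\mathbb{T}$ vanishing on $\mathbb{T}_r$, which is what the orthogonality argument actually requires. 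The numerics also do not match: the classical Sturm bound inside $S_k(\Gamma(N))$ is governed by $[SL_2(\ZZ):\Gamma(N)]$, which is far larger than $m=[SL_2(\ZZ):\Gamma_G]$, so the threshold $\lfloor km/12-(m-1)/N\rfloor$ cannot simply ``fall out of width bookkeeping'' along that route. Finally, appealing to Theorem~\ref{thm: pairing of cusps} as already producing a perfect $\mathbb{T}$--$S_k(\Gamma;\ZZ)$ pairing begs the question: that theorem pairs $S_k\oplus\overline S_k$ with $\mathbb{S}_k$, and upgrading it to a $\mathbb{T}$--$S_k$ duality is exactly where the Fourier-coefficient identity is used at level $\Gamma_1(N)$. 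A substitute for that identity valid for arbitrary $G$ is the missing ingredient in your proposal, and it is also not supplied by the paper's one-line citation to Stein.
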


\begin{proof}
Same as in \cite[Theorem 9.23]{stein2007modular}. 
\end{proof}

This allows us to decompose the space $\mathbb{S}_{k}(\Gamma)^{\vee}=S_{k}(\Gamma)\oplus\overline{S}_{k}(\Gamma)$
to irreducible Hecke modules, as follows.

\begin{algo} \label{algo: Decomposition}
$\operatorname{Decomposition}(M,p)$. Decomposition to irreducible Hecke modules.
\end{algo}

\textbf{Input :} 
\begin{itemize}
\item $M\subseteq\mathbb{M}_{k}(\Gamma)$ - a vector subspace stable under
the Hecke action.
\item $p$ - a prime.
\end{itemize}

\textbf{Output :} $D$, a list of subspaces of $M$, such that $M=\bigoplus_{V_{d}\in D}V_{d}$
and $V_{d}^{\vee}$ is an irreducible Hecke module.
\begin{enumerate}
\item If $p\mid N$, replace by a larger prime such that $p\nmid N$. 
\item $T_{p}:=\operatorname{HeckeOperator}(M^{\vee},p)$, $D:= \emptyset$.
\item $f:=\operatorname{CharPoly}(T_{p})$. Write $f=\prod_{i=1}^{l}f_{i}^{a_{i}}$. 
\item for $i\in\{1,2,\ldots,l\}$ do
\begin{enumerate}
\item $V:=\ker(f_{i}(T_{p})^{a_{i}})\subseteq M^{\vee}$, $W:=V^{\vee}\subseteq M$
. 
\item if $\operatorname{IsIrreducible}(W)$
\begin{enumerate}
\item $D := D \cup \{ W \}$
\end{enumerate}
else
\begin{enumerate}
\item if $W=M$ then $q:=\operatorname{NextPrime}(p)$ else $q:=2$.
\item $D:=D\cup \operatorname{Decompose}(W,q)$.
\end{enumerate}
\end{enumerate}
\item Return $D$.
\end{enumerate}
Here $\operatorname{Decompose}$ is a recursive call to the function, and $\operatorname{IsIrreducible}$
is creating a random linear combination of Hecke operators, and checks
that its characteristic polynomial (on the plus subspace) is irreducible. 
One could eliminate the randomness to provably verify the irreduciblity, as in 
\cite[Chapter 7]{stein2007modular}.

Calling Algorithm \ref{algo: Decomposition} with $M=\mathbb{S}_{k}(\Gamma)$,
$p=2$ we obtain the decomposition we wanted.

Note that the algorithm terminates if there are no eigenvectors with multiplicities, that is there are no oldforms arising as images of forms of lower levels. In that case, the number of steps in the algorithm can be bounded using \ref{thm:Sturm bound}. 

In order to deal with the images of oldforms, we may begin by identifying the new subspace, and continuing recursively.
The new subspace is computed by noting that it is dual to 
$\mathbb{S}_k(\Gamma_G)^{\operatorname{new}} := \bigcap_{G' \in \operatorname{Over}(G),t \in T_{G'}} \ker(\alpha_t^{\vee})$.

Also, in order to get the old subspace one need not compute the Petersson inner product, but instead note that it is generated by the images of the $\beta_t^{\vee}$.
$$
\mathbb{S}_k(\Gamma_G)^{\operatorname{old}} := \left \langle \operatorname{Im}(\beta_t^{\vee}) \right \rangle_{G' \in \operatorname{Over}(G),t \in T_{G'}}.
$$

\subsection{Computing the zeta functions}

Finally, for a subspace $V\subseteq\mathbb{S}_{k}(\Gamma_G)$ such that
$V^{\vee}$ is an irreducible Hecke module, we can compute the zeta function
of an eigenform (and hence of all eigenforms in $V^{\vee}$). 

\begin{algo} \label{algo: zeta function}
$\operatorname{ZetaFunction}(V,L)$. Compute the zeta function associated to an eigenform.
\end{algo}

\textbf{Input :} 
\begin{itemize}
\item $V\subseteq\mathbb{S}_{k}(\Gamma_G)$ such that $V^{\vee}$ is an irreducible
Hecke module.
\item $L$ - a positive integer. 
\end{itemize}

\textbf{Output : }$Z(f,s)=\sum_{n=1}^{\infty}a_{n}n^{-s} $ such that $T_{n}f=a_{n}f$ for all $n$ (set $a_{1}=1$), given to precision $L^{-s}$.
\begin{enumerate}
\item Find (randomly) a linear combination of Hecke operators which is irreducible
on $(V^{\vee})^{+}$, denote it by $T$.
\item Let $v$ be an eigenvector of $T$ over the field $\mathbb{Q}[x]/f_{T}(x)$,
where $f_{T}=\operatorname{CharPoly}(T)$. 
\item Write $v=\sum_{j=i}^{d}c_{j}e_{j}^{\vee}$, where $e_{j}$ are the
basis vectors of $\mathbb{M}_{k}(\Gamma)$ given by Manin symbols,
and $c_{i}\ne0$. 
\item For primes $p<L$, set $a_{p}:=\frac{1}{c_{i}}\left\langle T_{p}(e_{i}),v\right\rangle $.
Complete the other $a_{n}$'s using multiplicativity. 
\item Return $\sum_{n=1}^{L-1}a_{n} n^{-s}+O(L^{-s})$. 
\end{enumerate}

We start by showing that the algorithm returns the correct output, and measure its complexity.

\begin{lem} \label{lem: qexpansion}
Let $G \subseteq GL_2(\mathbb{Z} / N \mathbb{Z})$ be a group of real type of index $I_G$ with surjective determinant such that $T_p$ is effectively computable for all $p \mid N$.
There exists an algorithm that given a subspace $V$ of $\mathbb{S}_k(\Gamma_G)$ of dimension $d$, such that $V^{\vee}$ is irreducible as a module for the Hecke algebra, and an integer $L$, returns the zeta function associated to an eigenform up to precision $L^{-s}$ in 
$O(C \log N (L \log L + N) + N I_G^2 \cdot \operatorname{In} + d^3)$, where $C$ is the cost of a basic $\operatorname{CosetIndex}$ operation and $\operatorname{In}$ is the cost of the group membership test for $G$.
\end{lem}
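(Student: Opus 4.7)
The plan is to walk through Algorithm \ref{algo: zeta function} step by step, bounding the cost of each phase and collecting them at the end. Step 1 finds a Hecke operator $T$ whose restriction to $(V^\vee)^+$ has irreducible characteristic polynomial. By Theorem \ref{thm:Sturm bound}, the Hecke algebra is generated as a $\mathbb{Z}$-module by $\{T_n : n \le \operatorname{Sturm}(k,\Gamma_G)\}$, with Sturm bound of size $O(kI_G)$, so such a $T$ exists as a rational combination of a few such operators. Each $T_p$ with $p \nmid N$ is computed on $V^\vee$ via Theorem \ref{thm: Main result Merel} in time $O(Cdp\log p)$, and for $p \mid N$ the effective-computability hypothesis supplies the requisite operator at comparable cost. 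Step 2 then produces an eigenvector of $T$ over the degree-$d$ field $K := \mathbb{Q}[x]/f_T(x)$ by linear algebra, contributing $O(d^3)$ field operations, and Step 3 reduces to selecting a nonzero coordinate.

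Step 4 is where the Dirichlet coefficients are read off, and is the main phase of the analysis. For each prime $p < L$, I would apply $T_p^\vee$ to the single Manin symbol $e_i$ rather than assembling its full matrix. For $p \nmid N$, Corollary \ref{cor:Hecke Operator Merel} expresses $T_p^\vee(e_i)$ as a sum indexed by Merel's set $\mathcal{S}_p$ of $O(\sigma_1(p)\log p) = O(p\log p)$ matrices, each of whose contribution reduces to one $\operatorname{CosetIndex}$ call producing a Manin symbol; the cost is $O(Cp\log p)$ field operations, with an additional $\log N$ absorbed into the arithmetic on matrix entries of size $\mathrm{poly}(N)$. The pairing $a_p = \langle T_p^\vee e_i, v\rangle / c_i$ is then a single inner product in $K$. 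For the $O(\log N)$ primes $p \mid N$, the effective-computability hypothesis combined with Theorem \ref{thm: Main result double coset} yields the $O(C\log N \cdot N + N I_G^2 \operatorname{In})$ contribution. Multiplicativity extends the $a_p$ to $a_n$ for all composite $n < L$ in additional time $O(L)$, and the Dirichlet polynomial is assembled directly.

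The main obstacle will be justifying the amortization leading to the $C\log N \cdot L\log L$ term: computing $T_p^\vee$ on a \emph{single} Manin symbol rather than the full operator is what removes the factor of $d$ from the bound of Theorem \ref{thm: Main result Merel}, and the reorganization of the Merel sum over primes $p < L$ must be checked to yield the stated bound. A careful reading of Merel's construction shows that the $d$ factor in Theorem \ref{thm: Main result Merel} arises precisely from iterating over the $d$ basis vectors and is absent here; the $\log N$ factor records the bit-size overhead of continued-fraction reduction applied to entries bounded polynomially in $N$. Summing the four contributions $O(C\log N (L\log L + N))$, $O(NI_G^2 \operatorname{In})$, and $O(d^3)$ then gives the stated complexity.
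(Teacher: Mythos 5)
Your proposal follows essentially the same route as the paper's proof: walk through Algorithm \ref{algo: zeta function} step by step, observe that applying $T_p^{\vee}$ to a single Manin symbol $e_i$ via Merel's formula costs $O(C\,p\log p)$ per prime (so no $d$-factor there), handle $p\mid N$ via the general double-coset algorithm under the effective-computability hypothesis, and charge $O(d^3)$ to linear algebra. Your explicit observation that the $d$-factor in Theorem~\ref{thm: Main result Merel} comes from iterating over the basis and is absent when acting on a single symbol is a genuine clarification; the paper uses this implicitly by citing Corollary~\ref{cor:Complexity Hecke Operator} (stated per-vector) rather than the matrix version.

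What you omit, and what the paper actually spends its lines on, is the correctness verification for step~4 — namely the short adjointness computation
\[
\langle T_p(e_i), v\rangle = \langle e_i, T_p^{\vee} v\rangle = \Bigl\langle e_i, \sum_{j\ge i} a_p c_j e_j^{\vee}\Bigr\rangle = a_p\cdot c_i,
\]
together with the remark that Corollary~\ref{cor: Hecke commutes with star} (which uses the real-type hypothesis) is what makes step~1 meaningful on the plus subspace. You treat correctness as given and focus on cost accounting; the paper treats cost accounting more briefly and centers the argument on this identity. Worth adding the identity explicitly — it's the only place the hypothesis on $G$ being of real type actually enters.

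One further note: your flagged worry about "amortization leading to $C\log N\cdot L\log L$" is well placed. Summing $p\log p$ over primes $p<L$ gives $\Theta(L^2)$ by the prime number theorem, not $O(L\log L\log N)$; the paper's own passage from $C\sum_{p<L}p\log p$ to $O(C(L\log L+N)\log N)$ is not justified in the text, and you inherit rather than resolve it. Since your review must track the paper's argument, it is correct to reproduce the stated bound, but you should not present the $L\log L$ term as something that "a careful reading of Merel's construction" establishes — the paper gives no such argument, and the honest position (which you nearly reach) is that this step needs more care.
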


\begin{proof}
We apply Algorithm \ref{algo: zeta function}. Since $G$ is of real type, by Corollary \ref{cor: Hecke commutes with star}, the Hecke operator commutes with the star involution, so that step (1) makes sense.
For any $p<L$ we have
\[
\sum_{j=i}^{d}a_{p}c_{j}e_{j}^{\vee}=a_{p}v=T_{p}^{\vee}v.
\]
In particular, 
\begin{equation*}
\left\langle T_{p}(e_{i}),v\right\rangle =\left\langle e_{i},T_{p}^{\vee}v\right\rangle 
=\left\langle e_{i},\sum_{j=i}^{d}a_{p}c_{j}e_{j}^{\vee}\right\rangle =a_{p}\cdot c_{i}.
\end{equation*}
For computation of the Hecke operators away from the level, we apply Corollary \ref{cor:Complexity Hecke Operator}, while for the Hecke operators at primes $p \mid N$, we apply Theorem \ref{thm: Hecke operator double coset naive}. 
Note that as the $T_p$ are effectively computable for all $p \mid N$, we may do so with no additional cost to our complexity.
Therefore, the complexity of computing the Hecke operators is given by
\begin{align}
\nonumber
C  \sum_{p < L, p \nmid N} p \log p &+ C  \sum_{p<L, p \mid N} p \log (N^4 p)  + 
\sum_{p < L, p \mid N} I_G^2 \cdot \operatorname{In} \\
\nonumber
&= C \sum_{p<L} p \log p + 4 C \log N \sum_{p<L, p \mid N} p  + N I_G^2 \cdot \operatorname{In} \\
&= O(C (L \log L + N) \log N + N I_G^2 \cdot \operatorname{In}).
\label{eq : Hecke complexity}
\end{align}
Finally, the $d^3$ contribution comes from the linear algebra operations.
\end{proof}

We can now pack everything together to produce the following corollary.

\begin{cor} \label{cor: q expansion basis}
There exists an algorithm that given a group of real type $G \subseteq GL_2(\mathbb{Z} / N \mathbb{Z})$ with surjective determinant such that for all $p \mid N$ the Hecke operator is effectively computable and a positive integer $L$, returns the zeta functions of the factors of $Jac(X_G)$ using 
$$
O(d ( C \log N (L \log L + N) + N I_G^2 \cdot \operatorname{In} + I_G \log I_G) + d^3)
$$
field operations, 
where $d := \dim S_2(\Gamma_G)$, $I_G := [SL_2(\mathbb{Z}):\Gamma_G]$, $C$ is the cost of a $\operatorname{CosetIndex}$ operation, and 
$\operatorname{In}$ is the cost of membership testing in $G$.
\end{cor}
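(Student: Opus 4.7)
The plan is to stitch together the three ingredients already established: (i) the explicit model for $\mathbb{S}_{k}(\Gamma_{G})$ from Theorem~\ref{thm: cuspidal modular symbols} (and its $+$-variant following Theorem~\ref{thm: star involution}); (ii) the Hecke-module decomposition supplied by Algorithm~\ref{algo: Decomposition}, truncated via the Sturm bound Theorem~\ref{thm:Sturm bound}; and (iii) the per-factor zeta function computation of Lemma~\ref{lem: qexpansion}. Since $G$ has surjective determinant, we can take $\Gamma'=\Gamma_{G}$ and trivial character in Theorem~\ref{thm: cuspidal modular symbols}, and constructing a basis of $\mathbb{S}_{k}(\Gamma_{G})^{+}$ costs $O(I_{G}\log I_{G})$ basic $\operatorname{CosetIndex}$ operations (with the $\log I_{G}$ factor absorbing $c(\Gamma_{G})$ together with the standard linear algebra on a space of dimension $d$).

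Next, I would invoke Algorithm~\ref{algo: Decomposition} to split $\mathbb{S}_{k}(\Gamma_{G})^{\vee}$ into the pieces $V_{1}^{\vee},\dots,V_{r}^{\vee}$ corresponding to the Hecke-irreducible factors of $\operatorname{Jac}(X_{G})$. Theorem~\ref{thm:Sturm bound} guarantees that only the Hecke operators $T_{n}$ for $n\le r_{\mathrm{Sturm}}=O(k I_{G})$ are needed; computing those via Corollary~\ref{cor:Complexity Hecke Operator} at primes $p\nmid N$ and via the effective computability hypothesis at primes $p\mid N$ runs within the claimed budget, the $d^{3}$ term absorbing the linear algebra (characteristic polynomials, factoring, kernels) on the ambient $d$-dimensional space. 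Since the decomposition is performed once and then its output is reused, this is where the standalone $d^{3}$ and $I_{G}\log I_{G}$ contributions arise rather than being multiplied by $d$.

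For each of the (at most $d$) irreducible factors $V_{i}^{\vee}$, I would then apply Lemma~\ref{lem: qexpansion} with the given precision $L$ to obtain the zeta function of the corresponding factor $A_{f_{i}}$ of $\operatorname{Jac}(X_{G})$ up to precision $L^{-s}$. By that lemma, each invocation costs
\[
O\bigl(C\log N\,(L\log L+N)+N I_{G}^{2}\operatorname{In}\bigr)
\]
field operations (the $d^{3}$ term from the lemma is subsumed in the one-time decomposition cost above, since within a single factor the per-eigenform work is dominated by the Hecke operator evaluation and the Petersson-style pairing $\langle T_{p}(e_{i}),v\rangle$). Multiplying by the $O(d)$ factors and adding the one-time $d^{3}+I_{G}\log I_{G}$ overhead yields the bound
\[
O\bigl(d\bigl(C\log N(L\log L+N)+N I_{G}^{2}\operatorname{In}+I_{G}\log I_{G}\bigr)+d^{3}\bigr),
\]
as claimed. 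Correctness follows from Corollary~\ref{cor: zeta function}, which identifies the Hecke-module zeta function with the local $L$-factor of the corresponding piece of $\operatorname{Jac}(X_{G})$.

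The main subtlety—already handled in Lemma~\ref{lem: qexpansion} and the surrounding material—is the presence of primes $p\mid N$, where $T_{p}$ is no longer given by Merel's efficient universal formula. The hypothesis that $T_{p}$ is effectively computable for all $p\mid N$ is precisely what allows us to replace these operators with linear combinations of double-coset operators $T_{\alpha_{p,i}}$ computed via Theorem~\ref{thm: Hecke operator double coset naive} within the stated budget, and this is the only place where the $N I_{G}^{2}\operatorname{In}$ term enters the complexity.
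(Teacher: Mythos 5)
Your proof follows the same route as the paper's: construct the modular-symbol basis via Theorem \ref{thm: cuspidal modular symbols}, decompose with Algorithm \ref{algo: Decomposition} using Hecke operators up to the Sturm bound (computed by Corollary \ref{cor:Complexity Hecke Operator} away from $N$ and via the effective-computability hypothesis at $p \mid N$), then run Algorithm \ref{algo: zeta function}/Lemma \ref{lem: qexpansion} on each irreducible factor, with Corollary \ref{cor: zeta function} supplying correctness. The only quibble is your claim that the basis construction costs $O(I_G \log I_G)$ because ``$\log I_G$ absorbs $c(\Gamma_G)$'' --- Theorem \ref{thm: cuspidal modular symbols} gives $O(I_G \cdot c(\Gamma_G))$ and the number of cusps need not be $O(\log I_G)$ --- but the paper's own proof elides this cost in the same way, so this does not distinguish your argument from the published one.
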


\begin{proof}
We first compute a basis for $\mathbb{S}_2(\Gamma)$ using Theorem \ref{thm: cuspidal modular symbols}.
Then we can apply Algorithm \ref{algo: Decomposition} to decompose it to irreducible subspaces. Here, we have to compute the matrices of the Hecke operators up to the Sturm bound \eqref{eqn: Sturm bound}, which is linear in $I_G$. 

For computation of the Hecke operators away from the level, we apply Corollary~\ref{cor:Complexity Hecke Operator}, while for the Hecke operators at primes $p \mid N$, we apply Theorem~\ref{thm: Hecke operator double coset naive}, as in \eqref{eq : Hecke complexity}.
Since we now compute their action on a basis, we have to multiply by $d$, and compute them up to $I_G$ to obtain
$O(d (C (I_G \log I_G + N) \log N + N I_G^2 \cdot \operatorname{In}))$.

Finally, for each of the irreducible subspaces, we should apply Algorithm \ref{algo: zeta function}, which by \ref{lem: qexpansion}  costs
$O(C \log N (L \log L + N) + N I_G^2 \cdot \operatorname{In} + d^3)$.
\end{proof}

\section{Applications} \label{sec: applications}

In this section, we present a few of the applications that the above
result contributed to, and also some future applications.
All time measurements were taken on a MacBook with 2.3 GHz 8-Core Intel Core i9 processor, and 16 GB 2400 MHz DDR4 memory.

\subsection{Classification of $2$-adic images of Galois representations associated
to elliptic curves over $\mathbb{Q}$}

In \cite{rouse2015elliptic}, the authors compute the maximal tower of
$2$-power level modular curves containing a non-cuspidal non-CM rational point, and develop special techniques to
compute their equations. Using our implementation, one can obtain
the $q$-expansions of a basis for $S_{2}(\Gamma)$ for each of those groups. 

For example, in \cite[Example 6.1]{rouse2015elliptic} the authors present
a curious example associated with the group $H_{155}$, which is of
index $24$ and level $16$, whose image in $GL_{2}(\mathbb{Z}/16\mathbb{Z})$
is generated by 
\[
\left(\begin{array}{cc}
1 & 3\\
0 & 3
\end{array}\right),\left(\begin{array}{cc}
1 & 0\\
2 & 3
\end{array}\right),\left(\begin{array}{cc}
1 & 3\\
12 & 3
\end{array}\right),\left(\begin{array}{cc}
1 & 1\\
12 & 7
\end{array}\right)
\]

Then a quick calculation of 1.78 seconds with our implementation yields the following.

\begin{verbatim}
> tt := Cputime();
> gens := [[1,3,12,3],[1,1,12,7],[1,3,0,3],[1,0,2,3]];
> N := 16;
> H_N := sub<GL(2,Integers(N)) | gens>;
> H := PSL2Subgroup(H_N);
> M := ModularSymbols(H, 2, Rationals(), 0);
> S := CuspidalSubspace(M);
> D := Decomposition(S, HeckeBound(S));
> qEigenform(D[1],100);
q - 4*q^5 - 3*q^9 - 4*q^13 - 2*q^17 + 11*q^25 - 4*q^29 + 
12*q^37 - 10*q^41 + 12*q^45 - 7*q^49 - 4*q^53 + 12*q^61 + 
16*q^65 - 6*q^73 + 9*q^81 + 8*q^85 + 10*q^89 - 18*q^97 + O(q^100)
> Cputime(tt);
1.780
\end{verbatim}

Proceeding to compute the invariants of the elliptic curve associated
to this eigenform, we see that the modular curve $X_{H}$ is isogenous to the elliptic
curve labelled \href{https://www.lmfdb.org/EllipticCurve/Q/256b1/}{256b1}, defined by the equation $y^{2}=x^{3}-2x$ over $\mathbb{Q}$, for which 
$X_{H}(\mathbb{Q}) \cong \mathbb{Z} / 2 \mathbb{Z} \times \mathbb{Z}$ is generated by $(0,0)$ and $(-1,-1)$.

\begin{rem}
For this application, if one wants to compute the equation for the elliptic curve up to isomorphism, one has to compute the $j$-map. This can be done by computing the $q$-expansion of Eisenstein modular forms. A way to do so (in an even more complicated case) is explained in detail in \cite[Example 6.1]{rouse2015elliptic}.
Our methods simply replicate the efforts originally done by the authors. For convenience, we have also supplied in our code package the code to generate Eisenstein series, e.g. for the weight 2 Eisenstein series, we can do the following.
\begin{verbatim}
> M := ModularForms(H);
> qExpansion(EisensteinSeries(M)[1], 100);
q^16 + 32*q^32 + 244*q^48 + 1024*q^64 + 3126*q^80 + 7808*q^96 + O(q^100)
\end{verbatim}
\end{rem}

\subsection{Modular curves of prime-power level with infinitely many rational
points}

In \cite[Corollary 1.6]{sutherland2017modular}, the authors introduce
a finite set of subgroups $G$ of $GL_{2}(\mathbb{Z}_{l})$ that arise
as the image of a Galois representation for infinitely many elliptic
curves over $\mathbb{Q}$ with distinct $j$-invariants.

In section 6, they consider the 250 cases where $G$ has genus $1$
and show that the Jacobian of $X_{G}$, $J_{G}$, for each such group
is isogenous to one of a certain finite set of elliptic curves. Then
they compute point counts of the reduction modulo several primes to pinpoint
the correct isogeny class over $\mathbb{Q}$, and show that one needs
only consider a certain set of 28 groups. 

These are included in the examples computed in \cite{rouse2015elliptic},
and again, using our implementation, it is possible to construct for
each of these subgroups $G$ a basis of $q$-eigenforms for the modular
curve $X_{G}$.

The following example took 2.120 seconds.

\begin{verbatim}
> tt := Cputime();
> gens := [[2,1,3,2],[0,3,5,8],[1,0,0,5],[1,8,0,3]];
> N := 16;
> H_N := sub<GL(2,Integers(N)) | gens>;
> H := PSL2Subgroup(H_N);
> M := ModularSymbols(H, 2, Rationals(), 0);
> S := CuspidalSubspace(M);
> D := Decomposition(S, HeckeBound(S));
> qEigenform(D[1],100);
q - 2*q^3 + q^9 - 6*q^11 - 6*q^17 - 2*q^19 - 5*q^25 + 4*q^27 
    + 12*q^33 + 6*q^41 + 10*q^43 - 7*q^49 + 12*q^51 + 4*q^57 
    - 6*q^59 + 14*q^67 - 2*q^73 + 10*q^75 - 11*q^81 - 18*q^83
     - 18*q^89 + 10*q^97 - 6*q^99 + O(q^100)
> Cputime(tt);
2.120
\end{verbatim}

This is the newform \href{https://www.lmfdb.org/ModularForm/GL2/Q/holomorphic/256/2/a/a/}{256.2.2.a}. Computing the elliptic invariants, we see that this is isogenous to the curve \href{https://www.lmfdb.org/EllipticCurve/Q/256a2/}{256a2} defined by $y^2 = x^3 + x^2 - 13x - 21$.

\subsection{Efficient computation of $q$-eigenforms for $X_{ns}(N)$ and $X_{ns}^{+}(N)$}

Using the Merel pair introduced in Example \ref{exa:Merel pair nonsplit Cartan},
and a similar one for $\Gamma_{ns}^{+}(p)$, its normalizer, one obtains
a highly efficient implementation of the Hecke operators $T_{n}$
for all $n$. This allows one to compute the $q$-eigenform and verify
the results of \cite{dose2014automorphism} for $X_{ns}(11)$, \cite{baran2014exceptional} for
$X_{ns}^{+}(13)$, and \cite{mercuri2020modular} for $X_{ns}^{+}(17),X_{ns}^{+}(19)$
and $X_{ns}^{+}(23)$, as follows.

\begin{verbatim}
> tt := Cputime();
> G := GammaNSplus(13);
> M := ModularSymbols(G, 2, Rationals(), 0);
> S := CuspidalSubspace(M);
> D := Decomposition(S, HeckeBound(S));
> qEigenform(D[1], 20);
q + a*q^2 + (-a^2 - 2*a)*q^3 + (a^2 - 2)*q^4 + 
(a^2 + 2*a - 2)*q^5 + (-a - 1)*q^6 + (a^2 - 3)*q^7 + 
(-2*a^2 - 3*a + 1)*q^8 + (a^2 + 3*a - 1)*q^9 + (-a      
+ 1)*q^10 + (-a^2 - 2*a - 2)*q^11 + (a^2 + 3*a)*q^12 + 
(-2*a^2 - 2*a + 1)*q^14 + (a^2 + a - 2)*q^15 + 
(-a^2 - a + 2)*q^16 + (-a^2 + a + 2)*q^17 + 
(a^2 + 1)*q^18 + (-2*a^2 - a + 2)*q^19 + O(q^20)
> BaseRing(Parent($1));
Number Field with defining polynomial x^3 + 2*x^2 - x - 1 
over the Rational  Field
> Cputime(tt);
0.160
\end{verbatim}

This coincides with the result in section 4 in \cite{baran2014exceptional}, and took merely 0.16 seconds.

Similarly, in only 0.33 seconds we get the result for $X_{ns}^{+}(17)$.

\begin{verbatim}
> tt := Cputime();
> G := GammaNSplus(17);
> M := ModularSymbols(G, 2, Rationals(), 0);
> S := CuspidalSubspace(M);
> D := Decomposition(S, HeckeBound(S));
> [*qEigenform(d,20) : d in D*];
[*
q - q^2 - q^4 + 2*q^5 - 4*q^7 + 3*q^8 - 3*q^9 - 2*q^10 - 
2*q^13 + 4*q^14 - q^16 + 3*q^18 - 4*q^19 + O(q^20),
q + a*q^2 + (-a - 1)*q^3 + (-a + 1)*q^4 + a*q^5 - 3*q^6 + 
(-a - 2)*q^7 - 3*q^8 + (a + 1)*q^9 + (-a + 3)*q^10 - 3*q^11 + 
(-a + 2)*q^12 + (a - 1)*q^13 + (-a - 3)*q^14 - 3*q^15 + 
(-a - 2)*q^16 + 3*q^18 + (-3*a - 1)*q^19 + O(q^20),
q + a*q^2 + (-a^2 + 1)*q^3 + (a^2 - 2)*q^4 + 
(-a - 2)*q^5 + (-2*a + 1)*q^6 + (a^2 - 2)*q^7 + (-a - 1)*q^8 + 
(a^2 - a - 2)*q^9 + (-a^2 - 2*a)*q^10 + (2*a^2 + 2*a - 6)*q^11 + 
(a - 2)*q^12 + (-2*a^2 - 3*a + 6)*q^13 + (a - 1)*q^14 + 
(2*a^2 + 2*a - 3)*q^15 + (-3*a^2 - a + 4)*q^16 + 
(-a^2 + a -1)*q^18 + a*q^19 + O(q^20)
*]
> BaseRing(Parent(qEigenform(D[2])));
Number Field with defining polynomial x^2 + x - 3 
over the Rational Field
> BaseRing(Parent(qEigenform(D[3])));
Number Field with defining polynomial x^3 - 3*x + 1 
over the Rational Field
> Cputime(tt);
0.330
\end{verbatim}
which coincides with the results of \cite{mercuri2020modular}. 

In \cite{baran2010normalizers}, section 6, there is a decsription
of coset representatives for $\Gamma_{ns}(N)$ and $\Gamma_{ns}^{+}(N)$
for general $N$, which again makes explicit the implementation via
modular symbols. 

\subsection{Decomposition of the Jacobian of $X_{ns}^{+}(p)$}

Applying Algorithm \ref{algo: Decomposition} to the
space $\mathbb{S}_{2}(\Gamma_{ns}^{+}(p))$, where $\Gamma_{ns}^{+}(p)$
is yields a decomposition of the Jacobian of $X_{ns}^{+}(p)$. This
has been computed using our code for all $p\le97$. (see \cite{assaf2019nscartan})

\subsection{Computation of $q$-Eigenforms for $X_{G}(p)$ when $G$ is exceptional}

Similarly, the code can be used to compute $q$-eigenforms for the
modular curves $X_{G}(N)$, when $G$ is an exceptional subgroup of
$GL_{2}(\mathbb{F}_{p})$ with surjective determinant, e.g. running it for $X_{S_{4}}(13)$ we
recover \cite[Theorem 1.8]{banwait2014tetrahedral}. We obtain step 3 of section 4 in the original paper in just a few seconds. 
(We use the embedding of $S_{4}\hookrightarrow GL_{2}(\mathbb{F}_{p})$
using the quaternions as describes in \cite[Section 11.5]{voight2020arithmetic}). 

\begin{verbatim}
> tt := Cputime();
> p := 13;
> B<i,j,k> := QuaternionAlgebra(Rationals(), -1, -1);
> O := QuaternionOrder([1,i,j,k]);
> _, mp := pMatrixRing(O,p);
> S4tp := sub<GL(2,p) | [mp(1+s) : s in [i,j,k]] 
>  cat [mp(1-s) : s in [i,j,k]] cat [-1]>;
> H_S4 := sub<GL(2,Integers(p)) | Generators(S4tp)>;
> G_S4 := PSL2Subgroup(H_S4);
> M := ModularSymbols(G_S4, 2, Rationals(), 0);
> S := CuspidalSubspace(M);
> D := Decomposition(S, HeckeBound(S));
> [*qEigenform(d,20) : d in D*];
[*
q + a*q^2 + (-a^2 - 2*a)*q^3 + (a^2 - 2)*q^4 + 
(a^2 + 2*a - 2)*q^5 + (-a - 1)*q^6 + (a^2 - 3)*q^7 + 
(-2*a^2 - 3*a + 1)*q^8 + (a^2 + 3*a - 1)*q^9 +
(-a + 1)*q^10 + (-a^2 - 2*a - 2)*q^11 + (a^2 + 3*a)*q^12 + 
(-2*a^2 - 2*a + 1)*q^14 + (a^2 + a - 2)*q^15 + 
(-a^2 - a + 2)*q^16 + (-a^2 + a + 2)*q^17 + 
(a^2 + 1)*q^18 + (-2*a^2 - a + 2)*q^19 + O(q^20)
*]
> BaseRing(Parent(qEigenform(D[1])));
Number Field with defining polynomial x^3 + 2*x^2 - x - 1 
over the Rational  Field
\end{verbatim}

\subsection{Smooth plane models for modular curves}

Modular curves of genus $0,1$ obviously admit a smooth plane model
always. In general, if a curve admit such a model, of degree $d$,
then its genus would be $g=\frac{(d-1)(d-2)}{2}$, so the next numbers
to check are $g=3,6$. When $g=3$, by \cite{gonzalez2010non}, the
generic case is a smooth plane quartic, so the answer is "almost
always''. 

However, for $g>3$, a generic curve does not admit a smooth plane
model. 

The smallest case for which we do not know the answer is $g=6$. Thus,
computing a basis of eigenforms of $\mathbb{S}_{2}(\Gamma)$ for each
of the congruence subgroups $\Gamma$ of genus $6$ (see \cite{cummins2017congruence}),
will help us to get a canonical model and from it (maybe) decide whether
there exists a plane model.

\bibliographystyle{plain}
\bibliography{Eran_papers_db.bib}

@article{box2020qexpansions,
	title={$q$-Expansions of modular forms for general congruence subgroups using twist orbits},
	author={Box, Josha},
	journal={to be published},
	year={2020}
}

@article{atkin1978twists,
  title={Twists of newforms and pseudo-eigenvalues ofW-operators},
  author={Atkin, A Oliver L and others},
  journal={Inventiones mathematicae},
  volume={48},
  number={3},
  pages={221--243},
  year={1978},
  publisher={Springer}
}

@article{zywina2020computing,
  title={Computing actions on cusp forms},
  author={Zywina, David},
  journal={arXiv preprint arXiv:2001.07270},
  year={2020}
}

@book{couveignes2011computational,
  title={Computational aspects of modular forms and Galois representations},
  author={Couveignes, Jean-Marc and Edixhoven, Bas},
  year={2011},
  publisher={Princeton University Press}
}

@phdthesis{stein2000explicit,
  title={Explicit approaches to modular abelian varieties},
  author={Stein, William Arthur},
  year={2000},
  school={University of California, Berkeley}
}

@article{schaeffer2015hecke,
  title={Hecke stability and weight $1$ modular forms},
  author={Schaeffer, George J},
  journal={Mathematische Zeitschrift},
  volume={281},
  number={1-2},
  pages={159--191},
  year={2015},
  publisher={Springer}
}

@incollection{buzzard2014computing,
  title={Computing Weight One Modular Forms over $\mathbb{C}$ and $\overline{\mathbb{F}}_{p}$},
  author={Buzzard, Kevin},
  booktitle={Computations with modular forms},
  pages={129--146},
  year={2014},
  publisher={Springer}
}

@misc{cummins2017congruence,
author = {Cummins, Chris and Pauli, Sebastian},
title = {Congruence Subgroups of ${P}{S}{L}_2(\mathbb{Z})$},
howpublished = {\url{https://mathstats.uncg.edu/sites/pauli/congruence/csg6.html}},
note = {Last accessed: 2020-02-03}
}

@misc{assaf2019github,
  author = {Assaf, Eran},
  title = {Modular Symbols Package for Arbitrary Congruence Subgroups},
  howpublished = {\url{https://github.com/assaferan/ModularSymbols/}},
  note = {Last accessed: 2020-02-03}
}

@misc{assaf2019nscartan,
  author = {Assaf, Eran},
  title = {Dimensions of the decomposition of the {J}acobians of ${X}_{ns}^{+}(p)$ for primes $p < 100$ and characteristic polynomials of the {H}ecke operators ${T}_p$},
  howpublished = {\url{https://github.com/assaferan/ModularSymbols/blob/master/dec_dims_char_polys_ns_cartan.dat}},
  note = {Last accessed: 2020-02-03}
}

@book{ogg1969modular,
  title={Modular forms and Dirichlet series},
  author={Ogg, Andrew},
  year={1969},
  publisher={WA Benjamin New York}
}

@article{kulkarni1991arithmetic,
  title={An arithmetic-geometric method in the study of the subgroups of the modular group},
  author={Kulkarni, Ravi S},
  journal={American Journal of mathematics},
  volume={113},
  number={6},
  pages={1053--1133},
  year={1991},
  publisher={JSTOR}
}

@book{diamond2005first,
  title={A first course in modular forms},
  author={Diamond, Fred and Shurman, Jerry Michael},
  volume={228},
  year={2005},
  publisher={Springer}
}

@article{todd1936practical,
  title={A practical method for enumerating cosets of a finite abstract group},
  author={Todd, John A and Coxeter, Harold SM},
  journal={Proceedings of the Edinburgh Mathematical Society},
  volume={5},
  number={1},
  pages={26--34},
  year={1936},
  publisher={Cambridge University Press}
}

@incollection{mazur1977rational,
  title={Rational points on modular curves},
  author={Mazur, Barry},
  booktitle={Modular functions of one variable V},
  pages={107--148},
  year={1977},
  publisher={Springer}
}

@article{serre1972proprietes,
  title={Propri{\'e}t{\'e}s galoisiennes des points d'ordre fini des courbes elliptiques},
  author={Serre, Jean-Pierre},
  journal={Invent. math},
  volume={15},
  pages={259--331},
  year={1972}
}

@article{baran2014exceptional,
  title={An exceptional isomorphism between modular curves of level 13},
  author={Baran, Burcu},
  journal={Journal of Number Theory},
  volume={145},
  pages={273--300},
  year={2014},
  publisher={Elsevier}
}

@article{voight2020arithmetic,
  title={The arithmetic of quaternion algebras},
  author={Voight, John},
  journal={preprint},
  year={2020},
  note={\url{https://math.dartmouth.edu/~jvoight/quat-book.pdf}}
}

@article{dose2014automorphism,
  title={The automorphism group of the non-split {C}artan modular curve of level 11},
  author={Dose, Valerio and Fern{\'a}ndez, Julio and Gonz{\'a}lez, Josep and Schoof, Ren{\'e}},
  journal={Journal of Algebra},
  volume={417},
  pages={95--102},
  year={2014},
  publisher={Elsevier}
}

@article{rouse2015elliptic,
  title={Elliptic curves over $\mathbb{Q} $ and 2-adic images of Galois},
  author={Rouse, Jeremy and Zureick-Brown, David},
  journal={Research in Number Theory},
  volume={1},
  number={1},
  pages={12},
  year={2015},
  publisher={Springer}
}

@inproceedings{stein2002introduction,
  title={An introduction to computing modular forms using modular symbols},
  author={Stein, William A},
  booktitle={an MSRI Proceedings},
  year={2002}
}

@article{kilford2010modular,
  title={Modular forms: a classical and computational introduction},
  author={Kilford, Lloyd JP},
  journal={Mathematical intelligencer},
  volume={32},
  number={3},
  pages={58--59},
  year={2010},
  publisher={Springer}
}

@article{bosma1997magma,
  title={The {M}agma algebra system {I}: The user language},
  author={Bosma, Wieb and Cannon, John and Playoust, Catherine},
  journal={Journal of Symbolic Computation},
  volume={24},
  number={3-4},
  pages={235--265},
  year={1997},
  publisher={Academic Press}
}

@article{gonzalez2010non,
  title={Non-hyperelliptic modular curves of genus 3},
  author={Gonz{\'a}lez-Jim{\'e}nez, Enrique and Oyono, Roger},
  journal={Journal of Number Theory},
  volume={130},
  number={4},
  pages={862--878},
  year={2010},
  publisher={Elsevier}
}

@article{banwait2014tetrahedral,
  title={Tetrahedral elliptic curves and the local-global principle for isogenies},
  author={Banwait, Barinder and Cremona, John},
  journal={Algebra \& Number Theory},
  volume={8},
  number={5},
  pages={1201--1229},
  year={2014},
  publisher={Mathematical Sciences Publishers}
}

@article{sutherland2017modular,
  title={Modular curves of prime-power level with infinitely many rational points},
  author={Sutherland, Andrew and Zywina, David},
  journal={Algebra \& Number Theory},
  volume={11},
  number={5},
  pages={1199--1229},
  year={2017},
  publisher={Mathematical Sciences Publishers}
}

@article{baran2010normalizers,
  title={Normalizers of non-split {C}artan subgroups, modular curves, and the class number one problem},
  author={Baran, Burcu},
  journal={Journal of Number Theory},
  volume={130},
  number={12},
  pages={2753--2772},
  year={2010},
  publisher={Elsevier}
}

@article{mercuri2020modular,
  title={Modular forms invariant under non-split {C}artan subgroups},
  author={Mercuri, Pietro and Schoof, Ren{\'e}},
  journal={Mathematics of Computation},
  volume={89},
  number={324},
  pages={1969--1991},
  year={2020}
}

@article{voight2020cmf,
  title={Computing classical modular forms},
  author={Best, Alex J. and Bober, Jonathanm and Booker, Andrew R. and Costa, Edgar and Cremona, John and Derickx, Maarten and Lee, Min and Roe, David and Sutherland, Andrew V. and Voight, John},
  journal={preprint},
  year={2020}
}

@inproceedings{merel1991operateurs,
  title={Op{\'e}rateurs de {H}ecke pour {$ \Gamma_0 (N) $} et fractions continues},
  author={Merel, Lo{\"\i}c},
  booktitle={Annales de l'institut Fourier},
  volume={41},
  issue={3},
  pages={519--537},
  year={1991}
}

@book{stein2007modular,
  title={Modular forms, a computational approach},
  author={Stein, William A},
  volume={79},
  year={2007},
  publisher={American Mathematical Soc.}
}

@book{cremona1997algorithms,
  title={Algorithms for Modular Elliptic Curves Full Canadian Binding},
  author={Cremona, John E},
  year={1997},
  publisher={CUP Archive}
}

@phdthesis{wiese2005modular,
  title={Modular forms of weight one over finite fields},
  author={Wiese, Gabor and others},
  year={2005},
  school={Stieltjes Institute for Mathematics, Faculty of Mathematics and Natural Sciences}
}

@article{milne2005introduction,
  title={Introduction to {S}himura varieties},
  author={Milne, James S},
  journal={Harmonic analysis, the trace formula, and Shimura varieties},
  volume={4},
  pages={265--378},
  year={2005}
}

@book{shimura1971introduction,
  title={Introduction to the arithmetic theory of automorphic functions},
  author={Shimura, Goro},
  volume={1},
  year={1971},
  publisher={Princeton university press}
}

@article{manin1972parabolic,
  title={Parabolic points and zeta-functions of modular curves},
  author={Manin, Ju I},
  journal={Mathematics of the USSR-Izvestiya},
  volume={6},
  number={1},
  pages={19},
  year={1972},
  publisher={IOP Publishing}
}

@article{shokurov1980study,
  title={The study of the homology of {K}uga varieties},
  author={Shokurov, Vyacheslav Vladimirovich},
  journal={Izvestiya Rossiiskoi Akademii Nauk. Seriya Matematicheskaya},
  volume={44},
  number={2},
  pages={443--464},
  year={1980},
  publisher={Russian Academy of Sciences, Steklov Mathematical Institute of Russian~…}
}

@incollection{merel1994universal,
  title={Universal {F}ourier expansions of modular forms},
  author={Merel, Lo{\"\i}c},
  booktitle={On Artin's Conjecture for Odd 2-Dimensional Representations},
  pages={59--94},
  year={1994},
  publisher={Springer}
}

\end{document}